\newcommand{\spr}[1]{\langle #1\rangle}
\newcommand{\thespr}{\langle\cdot,\cdot\rangle}
\DeclareMathOperator{\ev}{ev}
\DeclareMathOperator{\End}{End}
\DeclareMathOperator{\Hom}{Hom}
\DeclareMathOperator{\GL}{GL}
\DeclareMathOperator{\SL}{SL}
\DeclareMathOperator{\tr}{tr}
\DeclareMathOperator{\Span}{Span}
\DeclareMathOperator{\Id}{Id}
\DeclareMathOperator{\image}{im}
\DeclareMathOperator{\Ad}{Ad} 
\newlength{\argheight}
\newcommand{\fwd}[1]{\settoheight{\argheight}{#1}%
	\overset{\shortrightarrow}{\smash{#1}\rule{0pt}{.75\argheight}}}
\newcommand{\bwd}[1]{\settoheight{\argheight}{#1}%
	\overset{\shortleftarrow}{\smash{#1}\rule{0pt}{.75\argheight}}}
\renewcommand{\Im}{\operatorname{Im}}
\renewcommand{\Re}{\operatorname{Re}}
\renewcommand{\ker}{\operatorname{ker}}
\newcommand{\im}{\operatorname{im}}
\DeclareMathOperator{\ord}{ord}
\newcommand{\R}{\mathbb{R}}
\newcommand{\C}{\mathbb{C}}
\newcommand{\N}{\mathbb{N}}
\renewcommand{\H}{\mathbb{H}}  % seems to make no problems, but ...
\newcommand{\CP}{\mathbb{CP}}
\newcommand{\HP}{\mathbb{HP}}
\newcommand{\ii}{\mathbf{i}}        % the quaternions...
\newcommand{\jj}{\mathbf{j}}
\newcommand{\kk}{\mathbf{k}}
\newcommand{\theset}[2]{\{\,#1\mid#2\,\}}
\newcommand{\dvector}[1]{{\left(\begin{matrix}#1\end{matrix}\right)}}
\newcommand{\tvector}[1]{{\left(\begin{smallmatrix}#1\end{smallmatrix}\right)}}
\newcommand{\dvectork}[1]{{\left[\begin{matrix}#1\end{matrix}\right]}}
\newcommand{\tvectork}[1]{{\left[\begin{smallmatrix}#1\end{smallmatrix}\right]}}
\numberwithin{equation}{subsection}
\theoremstyle{plain}
\newtheorem{The}[subsection]{Theorem}
\newtheorem{Pro}[subsection]{Proposition}
\newtheorem{Lem}[subsection]{Lemma}
\newtheorem{Cor}[subsection]{Corollary}
\newtheorem{TheA}[subsection]{Theorem}
\newtheorem{ProA}[subsection]{Proposition}
\newtheorem{LemA}[subsection]{Lemma}
\newtheorem{CorA}[subsection]{Corollary}
\theoremstyle{definition}
\newtheorem*{Def}{Definition}
\theoremstyle{remark}
\newtheorem{Rem}[subsection]{Remark}
\newtheorem{Exa}[subsection]{Example}
\newtheorem{RemA}[subsection]{Remark}
\begin{document}

\title{Soliton Spheres}

\author{Christoph Bohle}
\author{G. Paul Peters}

\address{Institut f\"ur Mathematik\\ 
Technische Universität Berlin\\
Stra{\ss}e des 17.\ Juni 136\\
10623 Berlin\\
Germany}

\email{bohle@math.tu-berlin.de\\ peters@math.tu-berlin.de}

\subjclass[2000]{Primary: 53C42 Secondary: 53A30, 37K25}

\date{May 14, 2009}

\begin{abstract}
  Soliton spheres are immersed 2--spheres in the conformal 4--sphere
  $S^4=\HP^1$ that allow rational, conformal parametrizations $f\colon
  \CP^1\rightarrow \HP^1$ obtained via twistor projection and dualization from
  rational curves in $\CP^{2n+1}$.  Soliton spheres can be characterized as
  the case of equality in the quaternionic Plücker estimate.  A special class
  of soliton spheres introduced by Taimanov are immersions into~$\R^3$ with
  rotationally symmetric Weierstrass potentials that are related to solitons
  of the mKdV--equation via the ZS--AKNS linear problem.  We show that
  Willmore spheres and Bryant spheres with smooth ends are further examples of
  soliton spheres. The possible values of the Willmore energy for soliton
  spheres in the 3--sphere are proven to be $W=4\pi d$ with
  $d\in\N\backslash\{0,2,3,5,7\}$. The same quantization was previously known
  individually for each of the three special classes of soliton spheres
  mentioned above.
\end{abstract}

\thanks{Both authors supported by DFG SPP 1154 ''Global Differential
  Geometry''.}

\maketitle

%\setcounter{tocdepth}{1}
%\tableofcontents

\section{Introduction}

The study of explicitly parametrized surfaces is one of the oldest subjects in
differential geometry. For more than two centuries the focus was essentially
on local parametrizations.  Towards the end of the 20th century, with the rise
of the global theory of minimal surfaces~\cite{Os86,Co84,HoMe85,cime} and the
developments~\cite{Ab87,PiSt89,Hi90,Bo91} initiated by Wente's
solution~\cite{We86} to the Hopf problem about constant mean curvature
surfaces in $\R^3$, the focus shifted to rather global considerations.
Nevertheless, our knowledge about explicit parametrizations of compact
surfaces is still surprisingly rudimentary compared to, for example, the
highly developed theory of complex algebraic curves.

An important source of explicitly parametrized spheres and tori is integrable
systems theory in combination with complex algebraic geometry.  The use of
integrable systems methods in the study of general conformal immersions
without special curvature properties has been pioneered by
Konopelchenko~\cite{Ko96} and Taimanov~\cite{Ta97,Ta98,Ta99}. Their approach
is based on the Weierstrass representation for conformal immersions into
$\R^3$ which provides an intimate connection between conformal immersions and
Dirac operators, see the survey~\cite{Ta06}.  These ideas were of great
influence in the development of quaternionic holomorphic geometry
\cite{PP98,FLPP01} and the starting point of our investigations.

The present paper is devoted to the study of \emph{soliton spheres}, a class
of immersed 2--spheres in the conformal 4--sphere $S^4$ that admit explicit
rational, conformal parametrizations $f\colon \CP^1\rightarrow\HP^1$ obtained
by some twistorial construction from rational curves in $\CP^{2n+1}$ and are
characterized by the fact that equality holds in the quaternionic Plücker
estimate \cite{FLPP01}
\begin{equation*}
  W \geq 4\pi\big[ (n+1)(n(1-g)-d) + |\ord H| \big],
\end{equation*}
a fundamental estimate for the Willmore energy $W$ of quaternionic holomorphic
line bundles.

The term soliton spheres was introduced by Iskander Taimanov~\cite{Ta99} for
conformal immersions $f\colon \CP^1 \rightarrow \R^3=\Im\H$ related to
multi--solitons of the modified Korteweg--de Vries (mKdV) equation.  With the
more general notion of soliton spheres discussed in the present paper, all
Willmore spheres and Bryant spheres with smooth ends are examples of soliton
spheres.

Section~\ref{sec:twistor_holomorphic_equality} gives an overview about the
basic concepts of quaternionic holomorphic geometry. In particular, we explain
the Möbius invariant representation of conformally immersed surfaces $f\colon
M\rightarrow S^4=\HP^1$ as quotients of quaternionic holomorphic sections.  In
Section~\ref{sec:equal-pluck-estim} we discuss the relation between equality
in the Plücker estimate and twistor holomorphic curves in $\HP^n$.  In
Section~\ref{sec:Dirac_Taimanov} we show how Taimanov's soliton
spheres~\cite{Ta99} can be treated using the quaternionic language. For this
we review the quaternionic version \cite{PP98} of the Weierstrass
representation for conformal immersions into Euclidean 4--space $\R^4=\H$.

In Section~\ref{sec:soliton_spheres} we define soliton spheres using the
Möbius invariant representation of conformal immersions.  We derive an
alternative characterization of soliton spheres in terms of the Weierstrass
representation which shows that Taimanov's soliton spheres are soliton spheres
in our sense.

In Sections~\ref{sec:examples-ii-bryant_spheres} and~\ref{sec:Willmore} we use
Darboux and B\"acklund transformations in order to show that Bryant spheres
with smooth ends and Willmore spheres are examples of soliton spheres. In
Section~\ref{sec:sos_s3} we prove that all soliton spheres in $\R^3$ with
Willmore $W\leq 32\pi$ are Willmore spheres or Bryant spheres with 
smooth ends and show that the possible Willmore energies of immersed soliton
spheres in $\R^3$ are $W=4\pi d$ for $d\in(\N\setminus\{0,2,3,5,7\})$. This 
generalizes previously known results about the quantization of the Willmore
energy for Willmore spheres in $\R^3$~\cite{Br88}, Taimanov's soliton
spheres~\cite{Ta99}, and Bryant spheres with smooth ends~\cite{SmoothEnds}.

\subsection*{Acknowledgment}

We would like to thank Ulrich Pinkall for directing us to the subject and for
supporting our project. We thank Iskander Taimanov, Franz Pedit and Ekkehard 
Tjaden for helpful discussions.

\section{Quaternionic holomorphic geometry}%
\label{sec:twistor_holomorphic_equality}
The principal idea of quaternionic holomorphic geometry
\cite{PP98,FLPP01,BFLPP02} is to approach conformal surface theory using the
concept of quaternionic holomorphic line bundles. From this perspective the
theory of conformal immersions appears as a ``deformation'' of the theory of
holomorphic curves in $\CP^n$.  This section gives a quick overview about the
basic notions of quaternionic holomorphic geometry.  Some special topics of
surface theory in the conformal 4--sphere $S^4=\HP^1$ are discussed in the
appendices.

\subsection{Quaternionic vector spaces}\label{sec:quat-vect-spac}
The quaternions are the 4--dimensional real associative algebra
$\H=\R\oplus \R\ii\oplus\R\jj\oplus\R\kk$ with multiplication rules
$\ii^2=\jj^2=\kk^2=\ii\jj\kk=-1$. A quaternion $\lambda = a+b\ii+c\jj+d\kk$ is
the sum of its real part $\Re(\lambda)=a$ and its imaginary part
$\Im(\lambda)=b\ii+c\jj+d\kk$. We write $\bar \lambda=a-b\ii-c\jj-d\kk$ for
the quaternionic conjugate. We identify $\R^4$ with $\H$ and $\R^3$ with the
space of imaginary quaternions $\Im\H=\R\ii\oplus\R\jj\oplus\R\kk$.

All quaternionic vector spaces in this paper are \emph{right vector
spaces}. The \emph{dual} $V^*$ of a quaternionic right vector space
$V$, naturally a left vector space, is made into a right vector space by
defining $\alpha \lambda :=(x\mapsto \bar\lambda \alpha(x))$ for
$\lambda \in\H$ and $\alpha\in V^*$.

\subsection{Quaternionic holomorphic line bundles}\label{sec:quat-holom-line}
Let $L$ be a quaternionic line bundle over a Riemann surface $M$, i.e., a
vector bundle whose fibers are modeled on $\H$. A complex structure on $L$,
that is, a field $J\in \Gamma(\End L)$ of quaternionic linear endomorphisms
with $J^2=-\Id$, makes $L$ into a so called \emph{complex quaternionic line
  bundle}. Denote by $*$ the action on 1--forms of the complex structure of
$TM$ and by $KL$ and $\bar K L$ the complex quaternionic line bundles whose
sections are $L$--valued 1--forms $\omega$ that satisfy $*\omega=J\omega$ or
$*\omega=-J\omega$, respectively.

A \emph{quaternionic holomorphic line bundle} over a Riemann surface $M$ is a
complex quaternionic line bundle $L$ together with a \emph{quaternionic
  holomorphic structure} $D$, a differential operator $D\colon \Gamma(L) \to
\Gamma(\bar KL)$ satisfying the Leibniz rule
\begin{equation*}
  D(\psi \lambda) = (D \psi)\lambda + (\psi d\lambda)''
\end{equation*}
for all $\psi \in \Gamma(L)$ and $\lambda\colon M \to \H$, where
$\omega'':=\frac12(\omega + J{*}\omega)$.  For the kernel of $D$ one writes
$H^0(L)$ and its elements are called \emph{holomorphic sections} of $L$. The
Leibniz rule implies that a nowhere vanishing holomorphic section uniquely
determines a quaternionic holomorphic structure on a complex quaternionic line
bundle.

The complex structure of a complex quaternionic line bundle $L$ induces
a decomposition
\begin{equation*}
  L=\hat L\oplus \hat L\jj, \qquad \hat L=\theset{\psi\in L}{J\psi=\psi\ii}
\end{equation*}
of $L$ into isomorphic complex line bundles $\hat L$ and $\hat L\jj$ .
The quaternionic holomorphic structure 
\begin{equation*}
  D=\bar\partial +Q
\end{equation*}
decomposes into a $J$ commuting part $\bar\partial$, which induces isomorphic
complex holomorphic structures on $\hat L$ and $\hat L\jj$, and a $J$
anti--commuting part $Q$ called the \emph{Hopf field} of $D$ which is a
1--form with values in $\End(L)$ that satisfies $*Q=-JQ=QJ$. In other words, a
quaternionic holomorphic line bundle is the direct sum of a complex
holomorphic line bundle $\hat L$ with itself plus a Hopf field.

In case $M$ is compact one defines the degree of a complex quaternionic line
bundle $L$ over $M$ as the degree of the underlying complex line bundle~$\hat
L$:
\begin{equation*}
  \deg(L):=\deg(\hat L).
\end{equation*}

\subsection{Holomorphic curves in $\HP^n$}\label{sec:holom-curv-hpn}
The n--dimensional quaternionic projective space $\HP^n$ is the space
of 1--dimensional subspaces $[x]=x\H$ of $\H^{n+1}$. Its tangent space
at $[x]\in \HP^n$ is $T_{[x]}\HP^1=\Hom([x],\H^{n+1}/[x])$.

A smooth map $M\to\HP^n$ is the same as a smooth line subbundle $L$ of the
trivial bundle $\H^{n+1}$ over $M$. In the following we will neither
distinguish between $L$ and the corresponding map $M\to\HP^n$ nor between the
trivial vector bundle and the corresponding vector space. For example, a
smooth map into $\HP^n$ is usually denoted by $L\subset \H^{n+1}$. The
\emph{derivative} of the map $L$ is then given by
\begin{equation*}
  \delta = \pi \nabla_{|L},
\end{equation*} where $\pi\colon \H^{n+1}\to
\H^{n+1}/L$ is the canonical projection and $\nabla$ denotes the trivial
connection on $\H^{n+1}$. A line subbundle $L\subset \H^{n+1}$ of the trivial
$\H^{n+1}$--bundle over a Riemann surface $M$ is called a
\emph{holomorphic curve} if $L$ admits a complex structure $J\in
\Gamma(\End(L))$, $J^2=-\Id$ such that
\begin{equation*}
    {*}\delta = \delta J.
\end{equation*}
A holomorphic curve $\HP^n$ is \emph{immersed} if $\delta$ is nowhere
vanishing. Otherwise, a non--constant holomorphic curve is called
\emph{branched}, cf.\ Appendix~\ref{subsec:branch_points_holomorphic_curves}.

\subsection{The quaternionic projective line $\HP^1$ as a model of the
  conformal 4--sphere $S^4$}\label{sec:ident-hp1-with}
The \emph{affine chart} 
\begin{equation*}
  \sigma\colon \HP^1\setminus\{\tvectork{1\\0}\}\to\H,
  \qquad
  \tvectork{\lambda\\1}\mapsto \lambda
\end{equation*} 
identifies the quaternionic projective
line $\HP^1$ with the conformal 4--sphere, i.e., the conformal
compactification $S^4=\H\cup \{\infty\}$ of $\R^4=\H$.  Under this
identification, the group of projective transformations of $\HP^1$ corresponds
to the orientation preserving Möbius transformations of $S^4$.

An immersed curve $L\subset \H^2$ in $\HP^1$ is holomorphic if and only if the
corresponding immersion $M\to S^4$ is conformal: let $L\subset \H^2$ be given
by
\begin{equation*}
  L=\psi\H\subset \H^2 \qquad\textrm{ with } \qquad
  \psi=\dvector{f\\1}\in\Gamma(L) 
\end{equation*}
where $f:=\sigma\circ L\colon M\to\H$ is $L$ seen in the affine
chart~$\sigma$.  By definition $L$ is a holomorphic curve if and only if
$*\delta =\delta J$ for some $J\in \Gamma(L)$, $J^2=-\Id$. But this is
equivalent to
\begin{equation*}
  *df=-dfR
\end{equation*}
with smooth $R\colon M\to S^2\subset \Im\H$ which is then related to $J$ via
$J\psi =-\psi R$.  In case $L$ is immersed, its holomorphicity is thus
equivalent to the conformality of the corresponding immersion $f\colon
M\rightarrow \H$, because an immersion $f$ is conformal if and only if
there is $R\colon M\to\H$ such that $*df=-dfR$, see \cite[Section
2.2]{BFLPP02}.

\subsection{Holomorphic curves and linear systems}%
\label{sec:holom-curv-and-linear-systems}
The so called \emph{Kodaira correspondence} is a fundamental relation between
holomorphic line bundles and holomorphic curves: let $L\subset \H^{n+1}$ be a
holomorphic curve in $\HP^n$.  To avoid technicalities suppose that $L$ is
full, i.e., not contained in a linear subspace. The complex structure of $L$
induces a complex structure on the dual bundle
\[L^{-1}\cong(\H^{n+1})^*/L^\perp\] 
which we again denote by $J$. Let $\pi\colon
(\H^{n+1})^*\to (\H^{n+1})^*/L^\perp$ be the canonical projection. Then
$L^{-1}$ carries a unique holomorphic structure $D$ such that
\begin{equation*}
  D(\pi\psi)  =\frac12(\pi\nabla\psi +J{*}\pi\nabla\psi)
\end{equation*}
for all $\psi \in \Gamma((\H^{n+1})^*)$, cf.~\cite[Theorem~ 2.3]{FLPP01}.  The
isomorphism type of the holomorphic line bundle $L^{-1}$ is a projective
invariant of the holomorphic curve $L\subset \H^{n+1}$ in $\HP^n$, which we
call the \emph{canonical holomorphic line bundle} of the curve $L$. For an
immersed holomorphic curve $L\subset \H^2$ in $\HP^1$ we therefore refer to
$L^{-1}$ as one of the \emph{M\"obius invariant holomorphic line bundles} of
$L$, see also Section~\ref{sec:sos}.

Like in the complex case, the \emph{degree} of a holomorphic curve $L\subset
\H^{n+1}$ in $\HP^{n}$ is defined as the degree of the corresponding
quaternionic holomorphic line bundle $L^{-1}$. In other words, the degree of
$L \subset \H^{n+1}$ seen as a holomorphic curve is \emph{minus} the degree of
$L$ seen as a complex quaternionic line bundle.

The holomorphic structure on $L^{-1}$ is the unique holomorphic structure with
the property that all projections of constant sections of $(\H^{n+1})^*$ are
holomorphic.  The linear system of holomorphic sections of $L^{-1}$ obtained
by projection from constant sections of $(\H^{n+1})^*$ is called the
\emph{canonical linear system} of the curve $L$ and denote it by
$(\H^{n+1})^*\subset H^0(L^{-1})$. The canonical linear system of a
holomorphic curve is always \emph{base point free}, i.e., there are no
simultaneous zeros of all holomorphic sections in $(\H^{n+1})^*$.

Let conversely $H\subset H^0(\tilde L)$ be a base point free linear system of
a quaternionic holomorphic line bundle $\tilde L$.  For $p\in M$ denote
$L_p\subset H^*$ the 1--dimensional subspace perpendicular to the hyperplane
in $H$ of sections vanishing at $p$. Then $L$ is a holomorphic curve, the
quaternionic holomorphic line bundle $\tilde L$ is isomorphic to $L^{-1}$, and
$H$ corresponds to the canonical linear system of $L$,
cf.~\cite[Section~2.6]{FLPP01}.

In case of holomorphic curves in $\HP^1$, Kodaira correspondence can be seen
as a representation of conformal immersions as quotients of holomorphic
sections: let $L\subset \H^2$ be the holomorphic curve given by $L =
\tvector{f\\1}\H$, see Section~\ref{sec:ident-hp1-with}, then the
holomorphic sections $e_1^*$ and $e_2^*$ of $L^{-1}$ obtained by projecting 
the standard basis of $(\H^2)^*$ are related by
\[ e_1^* = e_2^* \bar f. \] 
Changing the basis of the canonical linear system
amounts to a quaternionic fractional linear transformation of $f$, i.e., an 
orientation preserving M\"obius transformation.

\subsection{Weierstrass gaps and flag, dual
  curve} \label{sec:weierstr-gaps-flag} Let $H\subset H^0(L)$ be an
$(n+1)$--dimensional linear system of holomorphic sections of a holomorphic
line bundle $L$. The \emph{Weierstrass gap sequence} of $H$ at a point $p\in
M$ is the sequence $0\leq n_0(p) < n_1(p) < ... < n_n(p)$ of possible
vanishing orders $\ord_p(\psi)$ at $p$ of sections $\psi \in H$.  The
\emph{Weierstrass points}, i.e., the points at which the Weierstrass sequence
differs from $0,1$, \ldots, $n$, are isolated, see
\cite[Section~4.1]{FLPP01}. The integer
\begin{equation*}
  \ord_p(H)=\sum_{k=0}^n (n_k(p) - k)
\end{equation*}
is the \emph{Weierstrass order} of $H$ at $p$ and $\ord(H)$ is the
\emph{Weierstrass divisor} of $H$.  If $M$ is compact the \emph{Weierstrass
  degree} $|\ord H|=\sum_{p\in M} \ord_p (H)$ is finite.

The members ${H_k}_{|p}=\theset{\psi\in H}{\ord_p(\psi)\geq n_{n-k}(p)}$ of
the \emph{Weierstrass flag} of $H$ form continuous subbundles of $H$ of rank
$k$ which are smooth away from the Weierstrass points \cite[Lemma
4.10]{FLPP01}. The line subbundle $H_0$ is by definition the \emph{dual curve}
$L^d$ of $H$. One can show that, away from the Weierstrass points of $H$, the
dual curve $L^d$ is a holomorphic curve \cite[Theorem 4.2]{FLPP01}.

\subsection{Willmore energy}\label{sec:willmore-energy}
For compact surfaces, in addition to the invariants like degree and branching
order of the osculating curves known from complex curve theory, quaternionic
holomorphic curves have a further invariant: the Willmore energy. The
\emph{Willmore energy} of the holomorphic line bundle $L$ is
\begin{equation*}
  W(L) = \frac12 \int_M \tr_{\R}(Q\land *Q),
\end{equation*}
where $\tr_{\R}$ denotes real trace.  The 2--form $\tr_{\R}(Q\land *Q)$ is
positive and $W(L)=0$ is equivalent to $Q\equiv0$. In other words, the
Willmore energy $W(L)$ of a quaternionic holomorphic line bundle $L$ measures
the deviation from the (double of the) underlying complex holomorphic line
bundle $\hat L$ (Section~\ref{sec:quat-holom-line}).

The Willmore energy of the Möbius invariant holomorphic line bundle
$L^{-1}$ of a holomorphic curve $L\subset \H^2$ satisfies \cite[Section
6.2]{BFLPP02}
\begin{align*}
  W(L^{-1}) &=\int_M(|\mathcal H|^2-G+K^\perp) d\sigma , 
\end{align*}
where $\mathcal H$ is the mean curvature, $G$ the Gaussian curvature, and
$K^\perp$ the curvature of the normal bundle of $L$ with respect to any
compatible metric on the conformal 4--sphere $S^4\cong\HP^1$.

\section{Equality in the Plücker estimate}\label{sec:equal-pluck-estim}

In this section we describe the relation between equality in the
Plücker estimate and twistor holomorphic curves.

\subsection{Plücker estimate}\label{sec:plucker-estimate}
The \emph{quaternionic Plücker estimate} \cite[Cor.~4.8]{FLPP01}
\begin{equation*}
  W(L) \geq 4\pi\big[ (n+1)(n(1-g)-d) + |\ord H| \big]
\end{equation*}
gives a lower bound for the Willmore energy $W(L)$ of a quaternionic
holomorphic line bundle $L$ of degree $d$ over a compact Riemann surface of
genus~$g$ with $(n+1)$--dimensional linear system $H\subset H^0(L)$.

\subsection{Twistor holomorphic curves}\label{sec:twist-holom-curv-1}
The \emph{twistor projection} is the map
\begin{equation*}
  \CP^{2n+1}\rightarrow \HP^n \qquad  v\C  \mapsto v\H,
\end{equation*}
where $v\C$ denotes the complex line spanned by $v\in \H^{n+1}\backslash
\{0\}$ in the complex vector space $(\H^{n+1},\ii)\cong \C^{2n+2}$ obtained by
restricting the scalar field of the quaternionic right vector space
$\H^{n+1}$ to $\C=\R\oplus\R\ii$.

\begin{Def}
  The \emph{twistor lift} of a holomorphic curve $L\subset \H^{n+1}$ in
  $\HP^n$ is the complex line subbundle $\hat L = \theset{\psi \in L}{J\psi =
    \psi {\ii}} \subset(\H^{n+1},\ii)$.  A holomorphic curve in $\HP^n$ is
  called \emph{twistor holomorphic} if it is the twistor projection of a
  complex holomorphic curve in $\CP^{2n+1}$.
\end{Def}

\begin{Lem}[\cite{Fr84}]
  A holomorphic curve $L\subset \H^{n+1}$ in $\HP^n$ is twistor holomorphic if
  and only if its twistor lift $\hat L$ is complex holomorphic.
\end{Lem}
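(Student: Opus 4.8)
The plan is to prove both directions by analyzing the derivative $\delta$ of the holomorphic curve $L$ in relation to the trivial connection $\nabla$ on $\H^{n+1}$, restricted to the twistor lift $\hat L$. Recall from the decomposition $L = \hat L \oplus \hat L\jj$ that $\hat L$ is the $+\ii$--eigenbundle of $J$ inside $(\H^{n+1},\ii)$, and that $\hat L\jj$ is the $-\ii$--eigenbundle. The twistor lift $\hat L$ is, by construction, a complex line subbundle of $\C^{2n+2} = (\H^{n+1},\ii)$, so it determines a smooth map $M \to \CP^{2n+1}$, and the curve $L$ is precisely the twistor projection of $\hat L$. Thus the content of the lemma is entirely about \emph{complex holomorphicity}: $L$ is twistor holomorphic (i.e., the twistor projection of \emph{some} complex holomorphic curve in $\CP^{2n+1}$) if and only if the specific lift $\hat L$ is itself complex holomorphic as a curve in $\CP^{2n+1}$.

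**The two directions.** The direction ``$\hat L$ complex holomorphic $\Rightarrow$ $L$ twistor holomorphic'' is immediate from the definition: $\hat L$ is a complex holomorphic curve in $\CP^{2n+1}$ whose twistor projection is $L$. The substantive direction is the converse. Suppose $L = v\H$ arises as the twistor projection of a complex holomorphic curve $\ell \subset \C^{2n+2}$, say $\ell = v\C$ for a local complex-holomorphic lift $v$. I would first observe that the fiberwise condition pinning down $\hat L$ inside $L$ is $J\psi = \psi\ii$, and that $v \in \ell$ satisfies exactly this (this is how the twistor projection is set up, comparing with the Def.\ of twistor lift), so in fact $\hat L = \ell$ as subbundles of $\C^{2n+2}$ — the lift is unique. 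Hence if \emph{any} complex holomorphic lift exists, it must coincide with $\hat L$, which is therefore complex holomorphic. The one point needing care is checking that the complex structure $J$ on $L$ coming from its being a holomorphic curve in $\HP^n$ is the \emph{same} as the one induced by restriction of scalars from the complex holomorphic lift; this is where the twistor projection's compatibility with complex structures enters, and it follows by differentiating the eigenbundle relation and using $*\delta = \delta J$.

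**Main obstacle.** The delicate step is this identification of complex structures and the verification that the $\C$-holomorphicity of $\hat L$ as a subbundle of $\C^{2n+2}$ is equivalent to the condition $\pi \circ \nabla_{|\hat L}$ being complex-linear (type $(1,0)$), and that this in turn is governed by the Hopf field / the $(0,1)$-part of $\delta$. Concretely, writing $\nabla_{|\hat L}$ and projecting mod $\hat L$, one decomposes the derivative into a part lying along $\hat L\jj$ and a part transverse to $L$; the condition $*\delta = \delta J$ on $L$ controls the transverse part, while the component landing in $\hat L\jj$ is precisely the obstruction to $\hat L$ being $\C$-holomorphic, and it is a multiple of the Hopf field. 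I would make this precise by choosing a local holomorphic frame and computing $\bar\partial v$ modulo $\hat L$: twistor holomorphicity forces this to vanish. Since this is the paper's core computation I would cite \cite{Fr84} and \cite{FLPP01} for the detailed frame calculation rather than reproduce it, and present the argument at the level of the bundle decomposition $\H^{n+1} = \hat L \oplus \hat L\jj \oplus (\text{transverse})$ together with the type conditions on $\delta$ and $Q$.
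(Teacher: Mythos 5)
Your overall skeleton matches the paper's: one direction is immediate from the definition, and for the other you aim to show that any complex holomorphic lift must coincide with $\hat L$. But the step that carries all the weight is missing, and the sentence you offer in its place is wrong. You assert that any $v$ spanning a complex line $\ell$ with $\ell\H=L$ ``satisfies exactly'' $Jv=v\ii$ because ``this is how the twistor projection is set up.'' It is not: the twistor fibre over a point of $\HP^n$ is an entire $\CP^1$ of complex lines inside the quaternionic line, and only one of them is the $+\ii$--eigenspace of $J$. Uniqueness of the lift as a subbundle is therefore false at the pointwise level; what is true --- and is the entire content of the lemma --- is that a complex \emph{holomorphic} lift is forced to be the $+\ii$--eigenbundle. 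You then acknowledge that the identification of complex structures ``needs care'' and finally defer the computation to \cite{Fr84} and \cite{FLPP01}, which leaves the proof empty, since that computation \emph{is} the proof.

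The missing argument is short and you already have every ingredient on the table: holomorphicity of $\ell$ means $*\nabla\psi\equiv\nabla\psi\,\ii \bmod \ell$ for all $\psi\in\Gamma(\ell)$; projecting modulo $L\supset\ell$ gives $*\delta\psi=(\delta\psi)\ii$, while $*\delta=\delta J$ gives $*\delta\psi=\delta(J\psi)$; hence $\delta(J\psi-\psi\ii)=0$, and injectivity of $\delta$ away from the branch points of $L$ yields $J\psi=\psi\ii$ there, hence everywhere by continuity, i.e.\ $\ell=\hat L$. Note that no Hopf field is needed for this: your ``main obstacle'' paragraph, which identifies the $\hat L\jj$--component of $\nabla\psi$ with a multiple of the Hopf field, is in effect describing the proof of Lemma~\ref{lem:twistor_hol_A=0} (twistor holomorphicity is equivalent to $A\equiv0$), a different and stronger statement that presupposes the canonical complex structure --- defined only for full curves and away from Weierstrass points --- and is not what is being asked here.
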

\begin{proof}
  Let $L\subset \H^{n+1}$ be a holomorphic curve and $E\subset(\H^{n+1},\ii)$
  a complex holomorphic line subbundle that twistor projects
  to~$L$. Holomorphicity of $E$ means that every smooth section
  $\psi\in\Gamma(E)$ satisfies $*\nabla\psi\equiv\nabla\psi\ii \mod E$. But
  then $*\delta=\delta J$, cf.~Section~\ref{sec:holom-curv-hpn}, yields
  $\delta\psi\ii=*\delta\psi=\delta J\psi$ such that $J\psi=\psi\ii$ for all
  $\psi\in\Gamma(E)$. This implies that $E$ is the twistor lift $\hat L$ of
  $L$.
\end{proof}

\subsection{Equality in the Plücker estimate}\label{sec:plucker-formula}
The link between equality in the Plücker estimate,
Section~\ref{sec:plucker-estimate}, and twistor holomorphic curves is
established by the quaternionic Plücker formula \cite[Theorem 4.7]{FLPP01}:
let $L$ be a quaternionic holomorphic line bundle of degree $d$ over a compact
Riemann surface of genus $g$ and $H\subset H^0(L)$ an $(n+1)$--dimensional
linear system, then
\begin{equation*}
  W(L) - W((L^d)^{-1}) =  4\pi\big[ (n+1)(n(1-g)-d) + |\ord H| \big]
\end{equation*}
where $L^d\subset H$ denotes the dual curve of $H$, see
Section~\ref{sec:weierstr-gaps-flag}.  In particular, although the dual curve
$L^d$ is only defined away form the Weierstrass points of $H$, the Willmore
energy $W((L^d)^{-1})$ is finite.

Equality in the Pl\"ucker estimate is thus equivalent to $W((L^d)^{-1})=0$
which, by Lemma~\ref{lem:twistor_hol_A=0} below, is equivalent to
holomorphicity of the twistor lift of $L^d$. As we will see in
Lemma~\ref{cont_twist_lift_dual}, the twistor lift of $L^d$ then extends
continuously through the Weierstrass points. This yields the following theorem
(cf.~\cite[Section 4.4]{FLPP01}).

\begin{The}\label{T:equality_is_holomorphic_twistorlift_of_Ld}
  A linear system of a quaternionic holomorphic line bundle over a compact
  Riemann surface has equality in the Plücker estimate if and only if its dual
  curve is twistor holomorphic. The twistor lift then extends holomorphically
  through the Weierstrass points of the linear system.
\end{The}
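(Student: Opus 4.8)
The plan is to deduce the theorem by chaining together the Plücker formula with the two auxiliary lemmas referenced in the text. First I would observe that the Plücker formula
\begin{equation*}
  W(L) - W((L^d)^{-1}) = 4\pi\big[(n+1)(n(1-g)-d) + |\ord H|\big]
\end{equation*}
rewrites the defect in the Plücker estimate as precisely $W((L^d)^{-1})$. Since $\tr_\R(Q\land *Q)$ is a nonnegative 2--form and $W((L^d)^{-1})$ is finite (a consequence of the formula itself, as noted), equality in the estimate holds if and only if $W((L^d)^{-1}) = 0$, which by the positivity statement in Section~\ref{sec:willmore-energy} is equivalent to the Hopf field of the holomorphic line bundle $(L^d)^{-1}$ vanishing identically on $M$ minus the Weierstrass points. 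At this stage one invokes Lemma~\ref{lem:twistor_hol_A=0} (stated later), which translates $Q\equiv 0$ for the canonical holomorphic line bundle of a curve into holomorphicity of that curve's twistor lift; combined with Lemma~\ref{lem:twistor_hol_A=0} applied to $L^d$, this gives: equality holds $\iff$ the twistor lift $\widehat{L^d}$ is complex holomorphic away from the Weierstrass points $\iff$ (by the Lemma of Friedrich quoted above) $L^d$ is twistor holomorphic on that open set.

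The remaining point — and the one I expect to be the genuine obstacle — is the extension statement: a priori the dual curve $L^d = H_0$ is only defined as a subbundle away from the Weierstrass points of $H$, so one must show that once its twistor lift is holomorphic there, it extends holomorphically (not merely continuously) across those finitely many points. Here I would first use Lemma~\ref{cont_twist_lift_dual} to get the continuous extension of $\widehat{L^d}$ through the Weierstrass points; this already upgrades $L^d$ itself to a continuous, indeed smooth, line subbundle of $\H^2$ (or $\H^{n+1}$) there, since the Weierstrass flag subbundles are known to be continuous. Then, having a continuous complex line subbundle of the trivial $\C^{2n+2}$--bundle that is holomorphic off a finite set, I would invoke removability of point singularities for holomorphic subbundles — concretely, pick a local holomorphic frame of the ambient trivial bundle and represent $\widehat{L^d}$ by a meromorphic-looking section whose components are holomorphic off the point and bounded (by continuity) near it, hence holomorphic by Riemann's removable singularity theorem, so the extended subbundle is holomorphic across the point as well.

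One subtlety worth flagging in the write--up is the direction of the biconditional at each link: the Plücker formula is an identity, so the equivalence "equality $\iff W((L^d)^{-1})=0$" is immediate, but to run the argument cleanly I would state explicitly that $W((L^d)^{-1})\ge 0$ always and that it equals $0$ exactly when the Hopf field vanishes, both from Section~\ref{sec:willmore-energy}. A second subtlety is that Lemma~\ref{lem:twistor_hol_A=0} is phrased for a quaternionic holomorphic line bundle's Hopf field whereas here the relevant object is $(L^d)^{-1}$, the canonical holomorphic line bundle of the holomorphic curve $L^d$; the identification of "Hopf field of $(L^d)^{-1}$ vanishes" with "twistor lift of $L^d$ is holomorphic" is exactly the content of that lemma, so no extra work is needed once it is in hand. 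With these pieces assembled, the theorem follows: equality in the Plücker estimate $\iff$ $L^d$ is twistor holomorphic (off the Weierstrass points, hence everywhere after extension), and the twistor lift extends holomorphically through the Weierstrass points by the removable-singularity argument above.
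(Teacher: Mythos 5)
Your proposal is correct and follows essentially the same route as the paper: the Pl\"ucker formula reduces equality to $W((L^d)^{-1})=0$, Lemma~\ref{lem:twistor_hol_A=0} converts this into twistor holomorphicity of $L^d$, and Lemma~\ref{cont_twist_lift_dual} supplies the continuous extension through the Weierstrass points. The only addition is that you spell out the removable-singularity step upgrading the continuous extension to a holomorphic one, which the paper leaves implicit.
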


\subsection{Example: degree formula}\label{sec:degree_formula}
Applying the quaternionic Plücker formula
to a 1--dimensional linear system yields the so called \emph{degree formula}.
The degree of a complex holomorphic line bundle over a compact Riemann surface
equals the degree of the vanishing divisor of an arbitrary holomorphic
section.  In the quaternionic case the degree formula additionally involves
the Willmore energy: let $L$ be a quaternionic holomorphic line bundle over a
compact Riemann surface and $\varphi$ a holomorphic section of $L$.  Denote
by $H$ the 1--dimensional linear system spanned by $\varphi$. The
quaternionic holomorphic line bundle $(L^d)^{-1}$ of the dual curve $L^d$ of $H$
(Section~\ref{sec:weierstr-gaps-flag}) is isomorphic to $L^{-1}$ restricted to
$M\setminus\{\text{zeros of $\varphi$}\}$ equipped with the holomorphic
structure $\nabla''=\frac12(\nabla+J{*}\nabla)$, where $\nabla$ denotes the
connection on $L$ defined by $\nabla \varphi=0$. The Plücker formula applied
to $H$ thus becomes
\begin{equation*}
  W(L)+4\pi\deg(L)=W(L^{-1},\nabla'')+4\pi|\ord \varphi |,
\end{equation*}
where $|\ord \varphi|$ is the zero divisor of $\varphi$.

\subsection{The canonical complex structure}\label{sec:canonical_complex}
The \emph{canonical complex structure} \cite{FLPP01} of an
$(n+1)$--dimensional linear system $H$ of holomorphic sections of a
quaternionic holomorphic line bundle $L$ without Weierstrass points is the
unique complex structure $S\in \Gamma(\End(H))$, $S^2=-1$ that respects the
Weierstrass flag (Section~\ref{sec:weierstr-gaps-flag}), induces the given
complex structure on $L\cong H/H_{n-1}$ and satisfies $H_{n-1}\subset
\ker(Q)$ and $\im(A)\subset H_0=L^d$, where $Q=\frac14(S\nabla S-*\nabla S)$ and
$A=\frac14(S\nabla S+*\nabla S)$ are the so called \emph{Hopf fields}
of $S$. It follows that $S$ restricted to $L^d$ is the complex
structure of the dual curve. Moreover, the restriction of $Q$ to
$L\cong H/H_{n-1}$ coincides with the Hopf field of the
holomorphic structure of $L$,
cf.~Section~\ref{sec:holom-curv-and-linear-systems}.

The \emph{canonical complex structure} $S_L$ of a full holomorphic curve
$L\subset \H^{n+1}$ in $\HP^n$ is defined away from Weierstrass points as the
adjoint $S_L:=S^*$ of the canonical complex structure $S$ of the canonical
linear system $(\H^{n+1})^* \subset H^0(L^{-1})$, see
Section~\ref{sec:holom-curv-and-linear-systems}. Its Hopf fields satisfy
$Q_L=-A^*$ and $A_L=-Q^*$. In case of a holomorphic curve in $\HP^1$, the
canonical complex structure is also called \emph{mean curvature sphere
  congruence}, see Section~\ref{sec:mean-curv-sphere}.

\begin{Lem}\label{lem:twistor_hol_A=0}
  Let $L\subset \H^{n+1}$ be a holomorphic curve in $\HP^n$ and denote by $A$
  the Hopf field of the canonical complex structure of $L$. The following
  properties are equivalent:
  \begin{itemize}
  \item[i)] $L$ is twistor holomorphic, 
  \item[ii)] $A$ restricted to $L$ vanishes identically,
  \item[iii)] $A$ vanishes identically,
  \item[iv)] the Willmore energy $W(L^{-1})$ of the holomorphic line bundle
    $L^{-1}$ vanishes.
  \end{itemize}
\end{Lem}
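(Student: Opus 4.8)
The strategy is to prove the cycle of implications $\text{i)} \Rightarrow \text{ii)} \Rightarrow \text{iii)} \Rightarrow \text{iv)} \Rightarrow \text{i)}$, exploiting the adjoint relationship $S_L = S^*$ between the canonical complex structure of the curve $L \subset \H^{n+1}$ and that of the canonical linear system $(\H^{n+1})^* \subset H^0(L^{-1})$, together with the identities $Q_L = -A^*$ and $A_L = -Q^*$ recorded in Section~\ref{sec:canonical_complex}. The most convenient reformulation is in terms of the twistor lift $\hat L = \{\psi \in L : J\psi = \psi\ii\}$: by the Lemma of \cite{Fr84} proved above, $L$ is twistor holomorphic if and only if $\hat L$ is a complex holomorphic subbundle of $(\H^{n+1}, \ii)$.

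First I would translate the holomorphicity of $\hat L$ into a statement about the Hopf field $A_L$ of the mean curvature sphere congruence $S_L$. A smooth section $\psi \in \Gamma(\hat L)$ satisfies $S_L\psi = \psi\ii$, and the defining equation for $\hat L$ to be complex holomorphic inside the trivial bundle is $*\nabla\psi \equiv \nabla\psi\ii \pmod{\hat L}$. Now $\nabla$ on $\H^{n+1}$ together with $S_L \in \Gamma(\End L)$ and its $S_L$--commuting/anticommuting parts: the Hopf field $A_L$ of $S_L$ is precisely the obstruction to $\nabla$ preserving the $\hat L$--decomposition in the antiholomorphic direction. More concretely, writing $\delta = \pi\nabla_{|L}$ for the derivative of $L$ and using $*\delta = \delta J$, one checks that $A_L$ restricted to $L$ measures exactly the failure of $\psi \in \Gamma(\hat L)$ to satisfy $(\nabla\psi)'' \in \Gamma(\hat L)$, i.e. the failure of $\hat L$ to be holomorphic. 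This gives the equivalence of i) and ii). For ii) $\Leftrightarrow$ iii): by construction of the canonical complex structure, $\im(A) \subset L^d = H_0$ on the linear system side, and dually $A_L$ takes values in $L$ — more precisely $A_L$ is a $1$--form with values in $\Hom(\H^{n+1}/L, L)$ composed appropriately — so that $A_L$ is determined by its restriction to $L$ up to the flag structure; here I would invoke the stated vanishing properties $H_{n-1} \subset \ker Q$, $\im A \subset H_0$ and dualize via $Q_L = -A^*$, $A_L = -Q^*$ to conclude that $A_L|_L \equiv 0$ forces $A_L \equiv 0$.

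For iii) $\Leftrightarrow$ iv): this is the most "formula-driven" step. The Willmore energy of the quaternionic holomorphic line bundle $L^{-1}$ is $W(L^{-1}) = \frac12 \int_M \tr_\R(Q_{L^{-1}} \wedge *Q_{L^{-1}})$ where $Q_{L^{-1}}$ is the Hopf field of the holomorphic structure on $L^{-1}$; but $Q_{L^{-1}}$ coincides (up to the canonical identifications) with the Hopf field $Q$ of the canonical complex structure $S$ of $(\H^{n+1})^*$ restricted to $L^{-1} \cong H/H_{n-1}$, and $Q = -A_L^*$ by the adjoint relations. Since $\tr_\R(Q \wedge *Q)$ is a nonnegative $2$--form that vanishes identically precisely when $Q \equiv 0$ (stated in Section~\ref{sec:willmore-energy}), and $Q \equiv 0 \Leftrightarrow A_L \equiv 0$ by adjointness, the equivalence follows. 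The implication iv) $\Rightarrow$ i) then closes the loop.

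The main obstacle I anticipate is the bookkeeping in step ii) $\Leftrightarrow$ iii): the canonical complex structure $S$ lives on the $(n+1)$--dimensional system $H = (\H^{n+1})^*$, its Hopf field $A$ is a priori only constrained to have image in $H_0$ and kernel containing $H_{n-1}$, and one must carefully dualize these flag conditions to see that $A_L = -Q^*$ is \emph{fully} recovered from $A_L|_L$ — i.e. that no information is lost on the higher osculating bundles. This requires the precise description of how the Hopf field $A$ of the canonical complex structure interacts with the Weierstrass flag (using that $A$ maps into $H_0$ and the flag is $S$--invariant), together with the duality between the osculating flag of $L$ and the Weierstrass flag of $H$. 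The analytic point that everything extends continuously/holomorphically through Weierstrass points is not needed here, since this Lemma is a pointwise statement away from them; it only becomes relevant when combining with Theorem~\ref{T:equality_is_holomorphic_twistorlift_of_Ld}.
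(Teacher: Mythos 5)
Your overall architecture matches the paper's: the equivalence i)$\Leftrightarrow$ii) via the twistor lift, the duality $A_L=-Q^*$, $Q_L=-A^*$ to pass between the curve and its canonical linear system, and the positivity of the Willmore integrand for iii)$\Leftrightarrow$iv) are all exactly the ingredients used there. The computation you sketch for i)$\Leftrightarrow$ii) is the paper's: for $\psi\in\Gamma(\hat L)$ one writes $*\nabla\psi=\nabla\psi\ii+\psi\alpha$ and checks, using $S\psi=\psi\ii$, $\nabla S=2{*}Q-2{*}A$ and $Q\psi=0$, that $2{*}A\psi=\tfrac12(\psi\alpha+S\psi\alpha\ii)$, so $A_{|L}\equiv0$ exactly when $\alpha$ is complex valued, i.e.\ when $\hat L$ is holomorphic. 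Likewise your identification of the Hopf field of $L^{-1}$ with the one induced by $Q=-A^*$ on $H/H_{n-1}$ is the paper's closing step (note, though, that this only gives iv)$\Leftrightarrow$ii) directly, since the induced Hopf field sees $Q$ only modulo $H_{n-1}$; the burden again falls on ii)$\Rightarrow$iii)).

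The genuine gap is precisely the step you flag as "the main obstacle": ii)$\Rightarrow$iii) cannot be obtained by dualizing the flag conditions. The conditions $\im(A)\subset H_0$ and $H_{n-1}\subset\ker(Q)$ dualize to $\im(A_L)\subset L$ and $L_{n-1}\subset\ker(Q_L)$; these are pointwise algebraic constraints on where the Hopf fields map \emph{to} and what they \emph{kill}, and they place no restriction whatsoever on $A_L$ restricted to the intermediate osculating bundles $L_1,\dots,L_{n-1}$. Equivalently, on the dual side, $A_{L}{}_{|L}\equiv0$ only says $\im(Q)\subset L^\perp=H_{n-1}$, which is far from $Q\equiv0$. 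No amount of "careful dualization" closes this; one needs a differential input. The paper's argument is an induction up the osculating flag: if $A\psi=0$ for $\psi\in\Gamma(L_k)$, then $*A\wedge\nabla\psi=(d{*}A)\psi=(d{*}Q)\psi=0$ -- using $d{*}A=d{*}Q$, which follows from $0=d(\nabla S)=2d({*}Q-{*}A)$ by flatness of $\nabla$, together with the fact that $d{*}Q$ vanishes on $L_{n-1}$ -- and then ${*}A=-AS$ combined with $*\nabla\psi\equiv S\nabla\psi\mod L_k$ forces $A_{|L_{k+1}}=0$. This identity $d{*}A=d{*}Q$ and the osculating property $*\nabla\psi\equiv S\nabla\psi\mod L_k$ are the missing ideas in your proposal; without them the implication ii)$\Rightarrow$iii), and with it the whole cycle, does not close.
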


\begin{proof}
  Let $\psi\in \Gamma(\hat L)$ be a section of the twistor lift of $L$.
  Holomorphicity of $L$ implies that $*\nabla\psi=\nabla\psi \ii + \psi
  \alpha$ with $\alpha$ a quaternion valued 1--form.  Using $S\psi =\psi \ii$,
  $\nabla S=2{*}Q-2{*}A$, and $Q\psi=0$ we obtain
  \begin{align*} 
    2{*}A \psi&= *A\psi+SA\psi
    =\tfrac12(-(\nabla S)\psi+S({*}\nabla S)\psi)\\
    &=\tfrac12(-\nabla\psi\ii +S\nabla\psi +S{*}\nabla\psi\ii+{*}\nabla\psi)
    =\tfrac12(\psi\alpha+S\psi\alpha\ii).
  \end{align*} 
  Hence $L$ is twistor holomorphic if and only if $A_{|L}\equiv0$, because
  holomorphicity of the twistor lift $\hat L$ of $L$ is equivalent to $\alpha$
  being complex valued.

  In order to check that $A_{|L}\equiv0$ if and only if $A\equiv0$, we prove
  that $A_{|L_k}\equiv0$ implies $A_{|L_{k+1}}\equiv0$, where $L_k$ denotes
  the $k^{th}$--osculating curve, i.e., the rank $k+1$ subbundles of
  $\H^{n+1}$ dual to $H_{n-k-1}$ in the Weierstrass flag of the canonical
  linear system $(\H^{n+1})^* \subset H^0(L^{-1})$, see
  Section~\ref{sec:weierstr-gaps-flag}. Let $\psi\in \Gamma(L_k)$ and $A\psi
  =0$. Then $*A\wedge \nabla\psi = d{*}A \psi=d{*}Q\psi=0$, because
  $d{*}A=d{*}Q$ vanishes on $L_{n-1}$. Since $*A=-AS$ and $*\nabla\psi \equiv
  S\nabla\psi \mod L_{k}$, this implies $A_{|L_{k+1}}=0$.

  The last equivalence holds because $-A^*$ is the Hopf field of the canonical
  complex structure of the linear system $(\H^{n+1})^* \subset H^0(L^{-1})$
  and hence induces the Hopf field of the quaternionic holomorphic line bundle
  $L^{-1}$.
\end{proof}

\subsection{Twistor lifts extend continuously through Weierstrass
  points}\label{sec:tl_cont}
In order to complete the proof of
Theorem~\ref{T:equality_is_holomorphic_twistorlift_of_Ld} it remains to check
that the twistor lift of the dual curve of a linear system extends
continuously through the Weierstrass points.  The proof of this fact given in
\cite{FLPP01} rests on the false claim that the canonical complex structure
(Section~\ref{sec:canonical_complex}) of a holomorphic curve in $\HP^n$
extends continuously through the Weierstrass points. A counterexample is the
holomorphic curve $L=\psi\H$ in $\HP^1$ defined by $\psi =\tvector{1+\jj
  z\\z^2}$ whose canonical complex structure does not extend continuously into
$z=0$, because $\psi(0)$ and $\psi'(0)$ are linearly dependent over $\H$,
cf.~Lemma~\ref{L:smooth_S_equiv_E1_not_quaternionic}.

We show now how to modify the arguments given in \cite{FLPP01} in order to
prove that the twistor lift of the dual curve extends continuously into the
Weierstrass points.

\begin{Lem}\label{cont_twist_lift_dual}
  Let $H\subset H^0(L)$ be a linear system of a quaternionic holomorphic line
  bundle $L$. The twistor lift of the dual curve $L^d \subset H$ of $H$ then
  extends continuously through the Weierstrass points of $H$.
\end{Lem}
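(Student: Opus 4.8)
The plan is to work locally near a Weierstrass point $p$ of $H$ and show that the twistor lift $\hat L^d$ of the dual curve $L^d$, which is defined and continuous (indeed holomorphic, by Lemma~\ref{lem:twistor_hol_A=0} when equality holds) on a punctured neighborhood, has a limit at $p$. The key point is that although the canonical complex structure $S$ of $H$ need not extend continuously through $p$ (as the counterexample $\psi=\tvector{1+\jj z\\z^2}$ shows), the restriction of $S$ to the dual curve $L^d=H_0$ is better behaved, because $L^d$ is the \emph{bottom} member of the Weierstrass flag and is spanned by sections of maximal vanishing order, which can be renormalized to be holomorphic and nonvanishing across $p$.

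First I would choose a local holomorphic section $\varphi\in\Gamma(L)$ trivializing $L$ and a local coordinate $z$ centered at $p$, and pick $\psi_0,\dots,\psi_n\in H$ realizing the Weierstrass gap sequence $n_0(p)<\dots<n_n(p)$ at $p$, so that $\psi_k=\varphi\,z^{n_k}(c_k+o(1))$ with $c_k\in\H^*$. The dual curve is $L^d=H_0$, the line spanned (away from $p$) by the section of \emph{highest} vanishing order $n_n(p)$; after dividing by $z^{n_n(p)}$ one gets a section $\eta$ of $L^d$ which extends holomorphically and without zeros through $p$. Next I would examine the canonical complex structure $S$: by its defining property it respects the Weierstrass flag, so $S$ preserves $L^d$, and the restriction $S|_{L^d}$ is precisely the complex structure of the dual curve. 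The task is then reduced to showing $S|_{L^d}$ extends continuously through $p$, equivalently that $S\eta$ has a limit. Writing $S\eta=\eta\,q$ for a quaternion-valued function $q$ with $q^2=-1$ on the punctured neighborhood, I want to show $q$ extends continuously. For this I would use the relation $\nabla S=2{*}Q-2{*}A$ together with $\im(A)\subset L^d$ and $H_{n-1}\subset\ker Q$ from Section~\ref{sec:canonical_complex}: differentiating $S\eta=\eta q$ and projecting appropriately, the potentially singular contributions (which are what obstructs continuity of $S$ on all of $H$) land in the directions that are killed because $\eta$ generates $L^d$ and $Q$, $A$ act trivially in the relevant slots. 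Concretely, the $\bar K$-part of $\nabla\eta$ involves $D$, which is regular, and $A\eta\in\Gamma(L^d)$ is a multiple of $\eta$, so the ODE governing $q$ has bounded coefficients near $p$ and $q$ extends continuously; combined with the twistor-lift formula $\hat L^d=\{\psi\in L^d:S\psi=\psi\ii\}$ this gives the continuous extension of $\hat L^d$.

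The main obstacle I anticipate is making the last step rigorous: one must carefully track which components of $\nabla S$ blow up at $p$ and verify that they act trivially on $\eta$, i.e., that the flag condition plus $\im A\subset L^d$, $H_{n-1}\subset\ker Q$ really do suppress the singular terms when restricted to the bottom line $L^d$ — this is exactly where the incorrect argument in \cite{FLPP01} broke down, since it tried to prove the (false) stronger statement that $S$ itself extends. I would therefore isolate a local normal-form lemma: near a Weierstrass point, after the renormalization above, $S|_{L^d}=S|_{L^d}$ extends continuously even though $S|_H$ generally does not, analogous to how in the scalar complex case the osculating flag of a linear system extends continuously through Weierstrass points while individual frames degenerate. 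Once continuity of $S|_{L^d}$ is established, continuity of the twistor lift $\hat L^d$ and (under equality in the Pl\"ucker estimate, via Lemma~\ref{lem:twistor_hol_A=0}) its holomorphicity follow, completing the proof of Theorem~\ref{T:equality_is_holomorphic_twistorlift_of_Ld}.
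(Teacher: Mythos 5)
Your proposal correctly identifies both the difficulty (the canonical complex structure $S$ of $H$ need not extend through a Weierstrass point $p$, only its restriction to the bottom flag member $L^d=H_0$ matters) and the target (continuity of $L^d$ together with continuity of $S|_{L^d}$ gives continuity of the twistor lift). However, there is a genuine gap already in your construction of the local frame. Dividing $\psi_n$ by $z^{n_n(p)}$ produces a nonvanishing local section of the \emph{line bundle} $L$, not a section of the subbundle $L^d\subset H$ of the trivial bundle with fibre the vector space $H$. The dual curve at a point $q$ near $p$ is the line in $H$ of sections vanishing to maximal order \emph{at $q$}; this is a $q$--dependent element of $H$ and is not spanned by $\psi_n$ for $q\neq p$. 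So your $\eta$ does not parametrize $L^d$ near $p$, and the statement that $L^d$ itself extends continuously into $p$ — which is half of what must be proved — is not addressed. The paper handles exactly this by invoking the adapted--frame normal form $B=Z(B_0+O(1))W$ of \cite[Lemma~4.9]{FLPP01} and performing an LU decomposition of $Z^{-1}BW^{-1}$ to produce a frame $\underline\psi U^{-1}$ adapted to the Weierstrass flag that converges to $\underline\psi$; its last column $\varphi$ is the continuous extension of $L^d$.

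The second step of your argument is also not viable as sketched. You propose to differentiate $S\eta=\eta q$ and control the resulting ODE using $\im(A)\subset L^d$ and $H_{n-1}\subset\ker Q$, asserting that the coefficients stay bounded near $p$. But $A$ and $Q$ are built from $\nabla S$, and since $S$ genuinely fails to extend (your own cited counterexample $\tvector{1+\jj z\\ z^2}$), $\nabla S$, $A$ and $Q$ can all blow up at $p$; the inclusions above constrain only where these operators act, not their size, so ``bounded coefficients'' is precisely the unproved content. The paper avoids differentiation entirely: from property (iii) of the adapted frame it computes $S\varphi=\varphi\mu$ with $\mu=z^{-n_n(p)}\lambda^{-1}\ii\lambda\, z^{n_n(p)}$, where $\lambda$ is the lower--right entry of the lower--triangular factor and $\lambda(0)\in\R\setminus\{0\}$ because $B_0$ is a real invertible matrix. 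The reality of $\lambda(0)$ forces $\lambda^{-1}\ii\lambda\to\ii$, hence $\mu\to\ii$ and $S\varphi=\varphi(\ii+O(1))$, so $\varphi(p)\C$ extends the twistor lift. This reality input, coming from the specific normal form of \cite{FLPP01}, has no counterpart in your sketch and is the decisive point; without it one cannot conclude that the limit of $S|_{L^d}$, even if it exists, is conjugate to $\ii$ by something converging to a real scalar.
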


\begin{proof}
  Let $S$ be the canonical complex structure of $H$. 
  We need to show that the twistor lift $\widehat{L^d}=\theset{\psi\in
    L^d}{S\psi=\psi\ii}$ of $L^d$ extends continuously through the
    Weierstrass points of $H$.  

  Let $p$ be a Weierstrass point of $H\subset H^0(L)$. By Lemma~4.9
  in~\cite{FLPP01} there exists a basis $\psi_k$, $k=0,\ldots,n$
  of $H$ that realizes the Weierstrass gap sequence $n_k(p)$ of $H$ at
  $p$ and has the following properties:
  \begin{enumerate}[(i)]
  \item There exists an open neighborhood $V$ of $p$ and a smooth map $B\colon
    V\to M(n+1, \H)$ that is $\GL(n+1,\H)$ valued on $V_0:=V\setminus\{p\}$
    such that $\underline{\psi}B^{-1}$ is an adapted frame of the Weierstrass
    flag ${H_k}_{|V_0} \subset H$.
  \item Let $z\colon V\to\C$, $z(p)=0$ be a coordinate,
    $Z=\operatorname{diag}(1,z^{-1},\ldots,z^{-n})$, and
    $W=\operatorname{diag}(z^{n_0(p)},\ldots,z^{n_n(p)})$. Then there exists
    $B_0\in \GL(n+1,\R)$ such that
    \[ B=Z(B_0+ O(1))W,\]
    where $O(1)$ stands for a continuous map on $V$ that vanishes to
    first order at $p$.
  \item\label{i:fact_S} The canonical complex structure $S$ satisfies
    $S\underline{\psi}B^{-1}= \underline{\psi}B^{-1}{\ii}$ on $V_0$.
  \end{enumerate}

  Let $\tilde L,\tilde U\colon V\to\GL(n+1,\H)$ be the LU--decomposition of
  $Z^{-1}BW^{-1}=B_0+O(1)$ with diagonal entries of $\tilde U$ equal to $1$.
  The upper triangular matrix $U:=W^{-1}\tilde U W$ then converges to the
  identity matrix when $z\rightarrow 0$. With the lower triangular matrix $L:=
  Z\tilde L W$ one obtains the LU decomposition $B=LU$ of $B$ restricted to
  $V_0$. On $V_0$ the frame
  \[ \underline{\psi} B^{-1} L = \underline{\psi}U^{-1} \] is adapted to the
  Weierstrass flag and converges to $\underline\psi$ when $z\rightarrow
  0$. The section
  \[
    \varphi:=\underline{\psi} B^{-1} Le_{n+1}= \underline{\psi}U^{-1} e_{n+1}
  \] 
  thus defines a continuous section of the trivial bundle $H$ over $V$. It
  spans the dual curve $L^d$ on $V_0$ and $\varphi\H$ extends $L^d$
  continuously into $p$. Because the restriction of $S$ to $L^d$ is the
  complex structure of $L^d$ on $V_0$ one concludes from (\ref{i:fact_S})
  above:
  \[ 
  S\varphi=S\underline{\psi} B^{-1} Le_{n+1}=\underline{\psi} B^{-1} {\ii}
  Le_{n+1} =\varphi\mu
  \] 
  on $V_0$, where $\mu=(L^{-1}{\ii} L)_{(n+1,n+1)}\colon V_0\to\H$ denotes the
  lower right entry of the matrix $L^{-1}{\ii} L$. Let $\lambda:=(\tilde
  L)_{(n+1,n+1)}$. Then $\lambda(0)\in\R\setminus\{0\}$, because $B_0$ is an
  invertible real matrix, and
  \[
    \mu=(W^{-1}\tilde L^{-1}{\ii} \tilde LW)_{(n+1,n+1)}
    =z^{-n_n(p)}\lambda^{-1}{\ii} \lambda z^{n_n(p)}.
  \]
  Hence $S\varphi =\varphi ({\ii} + O(1))$ 
  and $\varphi(p)\C\subset(\H,{\ii})$ extends the twistor lift
  $\widehat{L^d}$ of the dual curve continuously into the Weierstrass
  point $p$.
\end{proof}

\subsection{Two--dimensional linear systems with equality in the Plücker
  estimate}\label{sec:two_D_equality}
We have seen that the
canonical complex structure of a linear system does not in general extend 
continuously through the Weierstrass
points. However, in the
special case of a base point free linear system with equality in the Plücker
estimate it does extend smoothly into the Weierstrass points.  We prove this
only for 2--dimensional linear systems (which is what we need in
Section~\ref{sec:relat-superm-curv}). The extension to higher
dimensional systems is straightforward, although slightly more involved.

\begin{Pro}\label{P:2-d_equality_bundle_smooth_S}
  Let $L$ be a quaternionic holomorphic line bundle over a compact Riemann
  surface $M$ with base point free, $2$--dimensional linear system $H\subset
  H^0(L)$ for which equality holds in the Plücker estimate.  Then the dual
  curve $L^d\subset H$ of $H$ has a globally defined holomorphic twistor lift
  and the mean curvature sphere congruence of $L^d$ extends smoothly through
  the branch points of $L^d$.

  Conversely, let $L^d\subset\H^2$ be a twistor holomorphic curve whose mean
  curvature sphere congruence extends smoothly into the branch points.  Then
  $\H^2$ induces a base point free, 2--dimensional linear system of
  holomorphic sections of the quaternionic holomorphic line bundle
  $L=\H^2/L^d$ with equality in the quaternionic Plücker estimate.
\end{Pro}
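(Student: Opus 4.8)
The plan is to prove the two implications separately, the main inputs being Theorem~\ref{T:equality_is_holomorphic_twistorlift_of_Ld}, Lemma~\ref{lem:twistor_hol_A=0}, and the classical fact (contained in the theory of osculating curves in \cite{FLPP01}) that the osculating curves of a holomorphic curve in $\CP^n$ extend holomorphically through its branch points. For the first implication, assume equality in the Pl\"ucker estimate. Theorem~\ref{T:equality_is_holomorphic_twistorlift_of_Ld} immediately gives that $L^d\subset H\cong\H^2$ is twistor holomorphic and that its twistor lift $\widehat{L^d}\subset(\H^2,\ii)$, a priori only defined off the Weierstrass points of $H$, extends holomorphically through them --- this is the globally defined holomorphic twistor lift. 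To handle the mean curvature sphere congruence $S$ of $L^d$ I would first record that, off the branch points of $L^d$, $S$ is the canonical complex structure of $L^d$ (Section~\ref{sec:canonical_complex}), and that, since $A_{L^d}\equiv 0$ by Lemma~\ref{lem:twistor_hol_A=0}, it is the unique complex structure on $\H^2$ whose $\ii$--eigenbundle is the first osculating curve $(\widehat{L^d})^{(1)}=\widehat{L^d}+\nabla\widehat{L^d}$ of the holomorphic curve $\widehat{L^d}\subset\CP^3$; this is checked by differentiating $S\psi=\psi\ii$ for a local holomorphic frame $\psi$ of $\widehat{L^d}$ and using the structure equation $\nabla S=2{*}Q_{L^d}$. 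Next, the branch points of $L^d$ coincide with the Weierstrass points of $H$: since $H$ is base point free, $n_0(p)=0$ for all $p$, and the branch order of $L^d=H_0$ at $p$ equals $n_1(p)-1=\ord_p H$. As $(\widehat{L^d})^{(1)}$ is an osculating curve of the holomorphic curve $\widehat{L^d}\subset\CP^3$, it extends holomorphically through all of its branch points, hence is a globally defined holomorphic rank--two subbundle of $(\H^2,\ii)$; and $S$ extends smoothly over $M$ exactly when $(\widehat{L^d})^{(1)}$ is everywhere transverse to its quaternionic conjugate $(\widehat{L^d})^{(1)}\jj$, equivalently when the rank--two bundle $(\widehat{L^d})^{(1)}$ is nowhere a quaternionic line, i.e.\ $(\widehat{L^d})^{(1)}\neq L^d$ everywhere. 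Off the branch points this is automatic, for otherwise $S$ would act on $L^d$ with all of $L^d$ as its $\ii$--eigenspace, contradicting that $S|_{L^d}$ is the complex structure of the curve.

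The hard part is thus to exclude $(\widehat{L^d})^{(1)}=L^d$ at the finitely many Weierstrass points of $H$. Equivalently, the holomorphic bundle map $(\widehat{L^d})^{(1)}/\widehat{L^d}\to(\H^2,\ii)/L^d$ induced by the inclusion --- which takes values in the holomorphic line bundle $\hat L\subset L$ and is not identically zero, for otherwise $L^d$ would be constant --- must be nowhere vanishing, and this is a degree count: one compares the degree of the derivative line bundle $(\widehat{L^d})^{(1)}/\widehat{L^d}$ with $\deg\hat L=\deg L$, the point being that these two degrees agree precisely when equality holds in the Pl\"ucker estimate (which one sees by pairing the quaternionic Pl\"ucker formula for $H\subset H^0(L)$ against the complex Pl\"ucker relations for $\widehat{L^d}\subset\CP^3$). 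Alternatively one argues locally at each Weierstrass point via the normal form of \cite[Lemma~4.9]{FLPP01} used in the proof of Lemma~\ref{cont_twist_lift_dual}, showing directly that $(\widehat{L^d})^{(1)}\neq L^d$ there. Once $(\widehat{L^d})^{(1)}$ is everywhere transverse to $(\widehat{L^d})^{(1)}\jj$, the complex structure $S$ with $\ii$--eigenbundle $(\widehat{L^d})^{(1)}$ is globally defined and real--analytic and agrees with the mean curvature sphere off the branch points, which completes the first part.

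For the converse, let $L^d\subset\H^2$ be a non--constant twistor holomorphic curve whose mean curvature sphere congruence $S$ extends smoothly through the branch points. Then $S$ is everywhere defined, preserves $L^d$, restricts on $L^d$ to the complex structure of the curve, induces a complex structure on the complex quaternionic line bundle $L:=\H^2/L^d$, and intertwines the derivative $\delta\colon L^d\to L$ with these complex structures; in particular $\delta\psi$ is of type $(1,0)$ for $\psi\in\Gamma(L^d)$. Writing $\pi\colon\H^2\to L$ for the projection, I would set $D(\pi\psi):=\tfrac12(\pi\nabla\psi+S{*}\pi\nabla\psi)$ for $\psi\in\Gamma(\H^2)$; by the type statement just made the ambiguity $\psi\mapsto\psi+\Gamma(L^d)$ does not affect the right--hand side, so $D$ is well defined, it satisfies the Leibniz rule, and hence is a quaternionic holomorphic structure on $L$. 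By construction every constant section of $\H^2$ projects to a $D$--holomorphic section, so $\H^2\subset H^0(L)$ is a $2$--dimensional linear system, base point free since $\pi_p\neq 0$ for all $p$. At any $p$ the sections vanishing to highest order at $p$ are exactly those generating $L^d_p$, so the dual curve of $\H^2\subset H^0(L)$ is $L^d$ itself; being twistor holomorphic, it satisfies the hypothesis of Theorem~\ref{T:equality_is_holomorphic_twistorlift_of_Ld}, which then yields equality in the quaternionic Pl\"ucker estimate for $\H^2\subset H^0(L)$, the holomorphic twistor lift produced by the theorem being the one we started with.
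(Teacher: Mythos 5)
Your reduction of the first statement to the single claim that the tangent line congruence $\widehat{L^d_1}$ of the twistor lift is nowhere quaternionic (via Lemma~\ref{L:smooth_S_equiv_E1_not_quaternionic}), and your converse direction, both match the paper; the converse is fine. The genuine gap is in the step you yourself identify as the hard part. The degree count is circular: writing $E_0=\widehat{L^d}$ and $E_1=\widehat{L^d_1}$, the complex derivative gives $\deg(E_1/E_0)=\deg E_0-(2g-2)+|b(E_0)|$, while the quaternionic derivative $\delta\colon L^d\to KL$ gives $\deg\hat L=\deg L=\deg E_0-(2g-2)+|b(L^d)|$ with $b(L^d)=\ord H$. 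Hence $\deg\hat L-\deg(E_1/E_0)=|b(L^d)|-|b(E_0)|$, and this is exactly the total vanishing divisor of your comparison map $E_1/E_0\to\hat L$, since that map vanishes at $p$ precisely when $b_p(E_0)<b_p(L^d)$, i.e.\ when $E_{1|p}=L^d_{|p}$. So the two degrees agree if and only if the map is nowhere zero, which is the statement to be proved, and neither Pl\"ucker formula supplies an independent evaluation of either side: equality in the quaternionic Pl\"ucker estimate is already exhausted by its equivalence with $W((L^d)^{-1})=0$, i.e.\ with twistor holomorphicity. Indeed the paper's counterexample $\tvector{1+\jj z\\ z^2}$ from Section~\ref{sec:tl_cont} is a twistor holomorphic curve whose tangent line congruence \emph{is} quaternionic at $z=0$, so no argument using only the curve $L^d$ and numerical invariants can succeed; your fallback (``argue locally via the normal form, showing directly that $\widehat{L^d_1}\neq L^d$'') is a restatement of the goal rather than a proof.

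What closes the gap — and what the paper actually uses — is the hypothesis that $H$ is base point free inside $H^0(L)$ for a line bundle $L$ that is smooth across the Weierstrass points. Base point freeness makes the evaluation $\ev\colon H\to L$ fiberwise surjective and identifies $H/L^d\cong L$. One then chooses a local section $\varphi$ of $\hat L$ with $\varphi(p)\neq0$, lifts it to $\psi\in\Gamma(H)$ with $\ev(\psi)=\varphi$, and uses $\ev\circ S=J\circ\ev$ to get $S\psi\equiv\psi\ii \bmod L^d$ on the dense set where $S$ exists, hence $\psi\in\Gamma(\widehat{L^d_1}+L^d)$. Since $\psi(p)\notin L^d_{|p}$, this forces $\widehat{L^d_1}_{|p}\not\subset L^d_{|p}$, so $\widehat{L^d_1}_{|p}$ is not $\jj$--invariant and $S$ extends by Lemma~\ref{L:smooth_S_equiv_E1_not_quaternionic}. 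You should replace the degree count by this use of the evaluation map; the rest of your outline then goes through.
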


\begin{proof}
  By Theorem~\ref{T:equality_is_holomorphic_twistorlift_of_Ld}, the curve
  $L^d\subset H$ is twistor holomorphic and its twistor lift $\hat L^d$
  extends holomorphically through the Weierstrass points of~$H$. The tangent
  line congruence $\widehat{L_1^d}$ of $\widehat{L^d}$ is then also globally
  defined.  By Lemma~\ref{L:smooth_S_equiv_E1_not_quaternionic}, 
  the canonical complex structure $S$ of
  $H$ (which, for a 2--dimensional linear system, is the mean curvature sphere
  congruence $S$ of the dual curve) extends smoothly into a Weierstrass point
  $p\in M$ of $H$ if and only if
  $(\widehat{L^d_1}\oplus\widehat{L^d_1}\jj)_{|p}=H$.

  Because $H$ is base point free, the evaluation map $\ev\colon H\to L$,
  $\ev_p(\psi)=\psi(p)$ induces a quaternionic bundle isomorphism between
  $H/L^d$ and $L$. The subbundle $\hat L=\theset{\varphi\in
    L}{J\varphi=\varphi\ii}$ has a section $\varphi\in\Gamma(\hat L)$ that
  does not vanish at $p$. Let $\psi\in\Gamma(H)$ such that
  $\ev(\psi)=\varphi$. On the open dense set on which $S$ is defined we have
  $S\psi \equiv \psi \ii \mod L^d$, because $\ev\circ S=J\circ\ev$ by
  definition of $S$. This implies $\psi\in \Gamma(\widehat{L^d_1}+L^d)$.
  Because $\psi(p)\not\in L^d_{|p}$ we obtain that
  $\widehat{L^d_1}_{|p}\not\subset L^d_{|p}$, hence 
  $\widehat{L^d_1}_{|p}$ is not $\jj$--invariant and
  $(\widehat{L^d_1}\oplus\widehat{L^d_1}\jj)_{|p}=H$.
  
  Conversely, the extended mean curvature sphere congruence induces a
  complex structure on $(L^d)^\perp\subset (\H^2)^*$ which guarantees
  that $(L^d)^\perp$ is a holomorphic curve. Hence, $L=\H^2/L^d$ is,
  as explained in Section~\ref{sec:holom-curv-and-linear-systems}, a
  quaternionic holomorphic line bundle and the canonical linear 
  system $\H^2$ has equality in the Plücker estimate, by
  Theorem~\ref{T:equality_is_holomorphic_twistorlift_of_Ld}, because
  its dual curve $L^d$ is twistor holomorphic.
\end{proof}

\section{Example: Taimanov soliton spheres}\label{sec:Dirac_Taimanov}

The term soliton sphere was introduced by Iskander Taimanov~\cite{Ta99} for
immersions into $\R^3$ with rotationally symmetric Weierstrass potential
corresponding to mKdV--solitons.  In this section we treat Taimanov's soliton
spheres in the framework of quaternionic holomorphic geometry using Pedit and
Pinkall's Weierstrass representation~\cite{PP98} for conformal immersions into
$\R^4=\H$.

\subsection{Weierstrass representation for conformal immersions into
  $\R^4$}\label{sec:gener-weierstr-repr-R4}
Taimanov's approach~\cite{Ta99} to soliton spheres is based on the
generalization of the Weierstrass representation for minimal surfaces to
arbitrary conformal immersion into $\R^3$. This representation describes the
differential of the immersion as the ``square'' of a Dirac spinor. In contrast
to the Kodaira correspondence
(Section~\ref{sec:holom-curv-and-linear-systems}), the Weierstrass
representation of conformal immersions is not M\"obius invariant, but only
invariant under similarity transformations.

The quaternionic version of the Weierstrass representation for conformal
immersions into $\R^4=\H$ is based on the following observation: given a
quaternionic holomorphic line bundle $L$, the complex quaternionic line bundle
$KL^{-1}$ admits a unique \emph{paired holomorphic structure}
\cite[Theorem~4.2]{PP98} such that
\begin{equation*}
  d(\alpha ,\psi)=0
\end{equation*}
for all local holomorphic sections $\alpha \in H^0(KL^{-1}_{|U})$, $\psi\in
H^0(L_{|U})$, where $(\alpha,\psi)$ denotes the evaluation paring
$\alpha(\psi)$. This pairing in particular satisfies
\[ *(\alpha,\psi)=(J\alpha,\psi)=(\alpha,J\psi).\]

The \emph{Weierstrass representation} of a conformal immersion $f\colon
M\to\R^4=\H$ into Euclidean 4--space then reads as follows:
\begin{The}[{\cite[Theorem~4.3]{PP98}}]\label{th:weierstr4D}
  Let $f\colon M\to\R^4=\H$ be a conformal immersion. Then there exist,
  uniquely up to isomorphism, paired quaternionic holomorphic line bundles $L$
  and $KL^{-1}$ and nowhere vanishing holomorphic sections $\psi\in H^0(L)$,
  $\alpha\in H^0(KL^{-1})$ such that
  \begin{equation*}
    df=(\alpha,\psi).
  \end{equation*}  
\end{The}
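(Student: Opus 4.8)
The plan is to reverse-engineer the data $(L,\psi,KL^{-1},\alpha)$ directly from the immersion $f$, and then use the uniqueness of the paired holomorphic structure quoted just before the statement to get existence and uniqueness simultaneously. First I would set $L$ to be the trivial bundle $\underline{\H}$ over $M$ with the distinguished nowhere-vanishing section $\psi\equiv 1$; the point is that we only need a rank-one quaternionic bundle with a trivializing section, and all the geometry will be carried by the complex structure $J$ and the holomorphic structure $D$. The conformality of $f$ means there is a smooth map $R\colon M\to S^2\subset\Im\H$ (and, symmetrically, a left normal $N$) with $*df=-df\,R$, equivalently $*df=N\,df$, exactly as recalled in Section~\ref{sec:ident-hp1-with}. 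I would use $N$ and $R$ to define the complex structure: on $L=\underline\H$ put $J\psi=\psi(-R)$ (so that $\hat L=\{\psi\in L : J\psi=\psi\ii\}$ is the appropriate complex line subbundle), and dually use $N$ to build the complex structure on $KL^{-1}$ so that the pairing identity $*(\alpha,\psi)=(J\alpha,\psi)=(\alpha,J\psi)$ is consistent with $*df=N\,df=-df\,R$. Because $\psi\equiv1$ is nowhere vanishing, by the Leibniz-rule remark in Section~\ref{sec:quat-holom-line} it determines a unique quaternionic holomorphic structure $D$ on $L$ with $D\psi=0$.

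Next I would produce $\alpha$. Since $df$ is an $\H$-valued $1$-form with $*df=N\,df$, it is a section of $\overline{K}\,\underline\H$ twisted by the complex structure coming from $N$; writing $\alpha$ for the corresponding section of $KL^{-1}$ via the identification $L=\underline\H$, $L^{-1}=\underline\H$, and the tautological pairing $(\alpha,\psi)=\alpha(\psi)=\alpha\cdot 1$, one gets $df=(\alpha,\psi)$ by construction, and $\alpha$ is nowhere vanishing exactly because $f$ is an immersion. The remaining content is that $\alpha$ is holomorphic for the paired holomorphic structure on $KL^{-1}$: here I would invoke the uniqueness/characterization of the paired holomorphic structure from \cite[Theorem~4.2]{PP98} as recalled above, namely that it is the unique complex-quaternionic holomorphic structure on $KL^{-1}$ with $d(\alpha',\psi')=0$ for all local holomorphic pairs. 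Taking $\psi'=\psi\lambda$ ranging over local holomorphic sections of $L$ and using $D\psi=0$, the condition $d(\alpha,\psi\lambda)=0$ for all such $\lambda$ becomes a first-order linear condition on $\alpha$; since $d^2f=0$ automatically (i.e.\ $df$ is closed), this condition is satisfied by our $\alpha$, so $\alpha\in H^0(KL^{-1})$.

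For uniqueness up to isomorphism, suppose $(L,\psi,\alpha)$ and $(\tilde L,\tilde\psi,\tilde\alpha)$ both represent $f$. Trivializing by the nowhere-vanishing holomorphic sections $\psi$ and $\tilde\psi$ identifies both $L$ and $\tilde L$ with $\underline\H$ as complex-quaternionic bundles, and the equation $df=(\alpha,\psi)=(\tilde\alpha,\tilde\psi)$ together with the pairing identity $*(\alpha,\psi)=(\alpha,J\psi)$ forces the two complex structures $J,\tilde J$ to induce the same left and right normals $N,R$ of $f$ — because $N$ and $R$ are determined by $df$ alone via $*df=N\,df=-df\,R$ — hence $J=\tilde J$. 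Once the complex structures agree, the holomorphic structures agree because each is the unique one annihilating the chosen section, and the paired structures on $KL^{-1}$ agree by their own uniqueness; finally $\alpha$ and $\tilde\alpha$ agree under the identification since both pair with $\psi$ to give $df$. The main obstacle I anticipate is the bookkeeping in the second step: checking that the paired holomorphic structure on $KL^{-1}$ is the right target for $df$, i.e.\ that the complex structures on $L$ and $KL^{-1}$ one reads off from $N$ and $R$ are genuinely paired in the sense of \cite{PP98} and that closedness of $df$ is exactly the holomorphicity condition for $\alpha$; this is where one must be careful with the conventions $\omega''=\tfrac12(\omega+J{*}\omega)$, the definition $\alpha\lambda=(x\mapsto\bar\lambda\,\alpha(x))$ of the right action on the dual, and the twist by $K$ versus $\overline K$, but it is a conceptual rather than a computational difficulty.
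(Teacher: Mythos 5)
Your construction is correct and is essentially the same as the one underlying the cited result: the paper itself does not reprove Theorem~\ref{th:weierstr4D} but recapitulates exactly this construction in Section~\ref{sec:char-terms-weierst}, where $L$ is trivialized by the flat connection $\nabla$ with $\nabla\psi=0$, the holomorphic structures are $\nabla''$ on $L$ and $d^\nabla$ on $KL^{-1}$, $\alpha=e_2^*d\bar f$, and holomorphicity of $\alpha$ is precisely the closedness $d\alpha=d^\nabla\nabla e_1^*=0$ that you derive from $d^2f=0$. Your uniqueness argument (normals $N$, $R$ determined by $df$, holomorphic structure determined by the nowhere vanishing section via the Leibniz rule, paired structure unique by \cite[Theorem~4.2]{PP98}) is likewise the intended one.
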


We call $L$ and $KL^{-1}$ the \emph{Euclidean holomorphic line bundles} of the
conformal immersion $f\colon M\to\R^4=\H$. By \cite[Proposition~8]{BFLPP02} we
have
\begin{equation*}
  Q\psi=\psi \frac12\bar{\mathcal H} df
\end{equation*}
and the Willmore energies (Section~\ref{sec:willmore-energy}) of $L$ and
$KL^{-1}$ satisfy
\begin{equation*}
  W(L)=W(KL^{-1})=\int_M|\mathcal H|^2d\sigma,
\end{equation*}
where $\mathcal H$ denotes the mean curvature vector and $d\sigma$ the
area element of $f$.

\subsection{Weierstrass representation for conformal immersions into
  $\R^3$}\label{sec:gener-weierstr-repr-R3} 
The image of the differential $df$ of a conformal immersion lies in
$\Im(\H)=\R^3$ if and only if
\[ *df=Ndf=-dfN,\] where $*$ denotes the complex structure on $T^*M$ and
$N\colon M\to S^2\subset \Im\H$ is the Gauss map of $f$. This implies that
$df=(\alpha ,\psi)$ is $\Im(\H)$--valued if and only if $\alpha \mapsto \psi$
induces a holomorphic bundle isomorphism $KL^{-1}\cong L$. A quaternionic
holomorphic line bundle $L$ that is isomorphic to $KL^{-1}$ is called a
\emph{quaternionic spin bundle} \cite{PP98}.

\begin{The}[{\cite[Theorem~4.4]{PP98}}]\label{the:weisterass_3d}
  For a conformal immersion $f\colon M\to\Im\H$ there exists, uniquely up to
  isomorphism, a quaternionic spin bundle with a holomorphic section $\psi$
  satisfying
  \begin{equation*}
    df=(\psi,\psi).
  \end{equation*}
\end{The}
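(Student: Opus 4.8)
The statement to prove is Theorem~\ref{the:weisterass_3d}: for a conformal immersion $f\colon M\to\Im\H$, there exists a quaternionic spin bundle (a holomorphic line bundle $L$ with $L\cong KL^{-1}$) carrying a holomorphic section $\psi$ with $df=(\psi,\psi)$. The plan is to reduce this to Theorem~\ref{th:weierstr4D}, the $\R^4$ version, and then exploit the special structure forced by $df$ taking values in $\Im\H$. First I would apply Theorem~\ref{th:weierstr4D} to $f$ viewed as a conformal immersion into $\R^4=\H$: this yields paired quaternionic holomorphic line bundles $L$ and $KL^{-1}$ together with nowhere vanishing holomorphic sections $\psi\in H^0(L)$ and $\alpha\in H^0(KL^{-1})$ such that $df=(\alpha,\psi)$. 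Everything now hinges on identifying $\alpha$ with $\psi$ under a holomorphic isomorphism $KL^{-1}\cong L$.

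The key step uses the observation already recorded in Section~\ref{sec:gener-weierstr-repr-R3}: since $*df=Ndf=-dfN$ with $N\colon M\to S^2\subset\Im\H$ the Gauss map, the pairing $(\alpha,\psi)$ is $\Im\H$-valued, and this forces the correspondence $\psi\mapsto\alpha$ to extend to a holomorphic bundle isomorphism $B\colon KL^{-1}\to L$ (equivalently, an isomorphism exhibiting $L$ as a quaternionic spin bundle). I would make this precise as follows. Because $\psi$ and $\alpha$ are nowhere vanishing, at each point $p$ there is a unique quaternion $\mu(p)$ with $\alpha(p)=(\text{the dual-to-}\psi\text{ functional})\cdot\mu(p)$; unwinding the condition that $(\alpha,\psi)=df$ together with $*df=-dfN$ and the compatibility $*(\alpha,\psi)=(J\alpha,\psi)=(\alpha,J\psi)$ from the paired structure, one finds that $\mu$ is in fact a positive real function — that is, after a conformal change one may take $\alpha$ and $\psi$ to be "the same" under the spin identification. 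The cleanest route is: define the bundle isomorphism $KL^{-1}\to L$ sending the holomorphic section $\alpha$ to $\psi$ and check, using the defining property $d(\alpha,\psi)=0$ of the paired holomorphic structures together with the Weierstrass equation and the $\R^3$ condition, that this is compatible with the holomorphic structures; then $KL^{-1}\cong L$, so $L$ is a quaternionic spin bundle, and under this identification $df=(\psi,\psi)$.

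The main obstacle, and the part deserving care, is the uniqueness statement and getting the reality of $\mu$ right — i.e., showing that the isomorphism $KL^{-1}\cong L$ built from $\alpha\leftrightarrow\psi$ genuinely respects the holomorphic structures rather than merely the complex structures, and that no "twisting by a nonreal quaternion" can occur. This is where one must invoke that both $L$ and $KL^{-1}$ carry the \emph{paired} holomorphic structure of \cite[Theorem~4.2]{PP98}, whose defining relation $d(\alpha,\psi)=0$ is rigid enough to pin down the isomorphism. Uniqueness up to isomorphism of the pair (spin bundle, section) then follows from the uniqueness clause in Theorem~\ref{th:weierstr4D} together with the observation that any two spin structures realizing the same $\Im\H$-valued $df$ differ by an automorphism fixing $(\psi,\psi)$, which must be $\pm1$. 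I would also note that existence of \emph{some} quaternionic spin bundle on $M$ is automatic here precisely because the immersion $f$ into $\R^3$ exists and produces one via the construction; no separate topological argument is needed. Overall this is a short argument: the substantive input is Theorem~\ref{th:weierstr4D}, and the new content is the bookkeeping that the $\R^3$ constraint collapses the two Euclidean bundles into a single spin bundle with $\alpha=\psi$.
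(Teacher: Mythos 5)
Your proposal is correct and matches the paper's approach: the paper cites this result from \cite[Theorem~4.4]{PP98} and, in the paragraph immediately preceding the statement, records precisely your key step --- that $df=(\alpha,\psi)$ is $\Im\H$--valued if and only if $\alpha\mapsto\psi$ induces a holomorphic bundle isomorphism $KL^{-1}\cong L$ --- so the theorem follows from Theorem~\ref{th:weierstr4D} exactly as you describe. One cosmetic remark: the detour through the function $\mu$ and its ``positivity'' is unnecessary, since complex linearity and holomorphicity of the bundle map $\alpha\lambda\mapsto\psi\lambda$ follow directly from $*(\alpha,\psi)=(J\alpha,\psi)=(\alpha,J\psi)$ combined with $*df=Ndf=-dfN$.
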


\begin{Exa}[Minimal Surfaces]
  Let $f\colon M \rightarrow \R^3=\Im\H$ be a conformal immersion.  Denote by
  $L$ the corresponding spin bundle. Then $f$ is minimal if and only if $L$
  has vanishing Willmore energy $W(L)=0$. The holomorphic section $\psi \in
  H^0(L)$ in its Weierstrass representation $df=(\psi,\psi)$ is then the sum
  $\psi=\psi_1+ \psi_2\jj$ of two complex holomorphic spinors $\psi_1$,
  $\psi_2\in\Gamma(\hat L)$, where $\hat L = \{ \psi\in L \mid
  J\psi=\psi\ii\}$.  The differential of the minimal immersion $f$ is given by
  \begin{equation*}
    df=(\psi,\psi)=2\Im(\psi_1\otimes
    \psi_2)+\jj(\psi_1\otimes\psi_1+\overline{\psi_2\otimes\psi_2}), 
  \end{equation*}
  where $\psi_i\otimes \psi_j$ stands for the complex valued (1,0)--form
  $(\psi_i\jj,\psi_j)$.  This is the well known spinor Weierstrass
  representation of minimal surfaces.
\end{Exa}

\subsection{Weierstrass representation in coordinates}
The underlying complex holomorphic line bundle $\hat L=\theset{\psi\in
  L}{J\psi=\psi\ii}$ is a complex holomorphic spin bundle over $M$, because
$\hat L\otimes \hat L\to K$, $\varphi \otimes \psi \mapsto (\varphi \jj,\psi)$
is a bundle isomorphism.  In particular, for $z\colon M\supset U\to\C$ a local
chart with $U$ simply connected, there exists a complex holomorphic section
$\varphi$ of $\hat L_{|U}$ such that $dz={\jj}(\varphi,\varphi)$.  The formula
for $Q\psi$ given in Section~\ref{sec:gener-weierstr-repr-R4} implies
\begin{equation*}
  Q\varphi=\varphi\kk q dz, \qquad q=\frac{H|df|}{2|dz|}.
\end{equation*}
The Willmore energy then takes the form $W(L)=2\ii\int q^2(z)dz\land d\bar z$.
Writing a quaternionic holomorphic section $\psi\in\H^0(L)$ with respect to
the complex frame $\varphi,\varphi\kk$ as $\psi=\varphi\mu_1+\varphi\kk\mu_2$
with $\mu_1$, $\mu_2\colon U\to\C^2$, the holomorphicity $D\psi=0$ of $\psi$
becomes equivalent to $\mu=\tvector{\mu_1 \\ \mu_2}$ solving the Dirac equation
\begin{equation*}
  \label{eq:dirac_equation}
  \mathcal D\mu =0, \qquad \mathcal D= \dvector{q&\partial \\ -\bar\partial &
    q}. 
\end{equation*}

\subsection{Taimanov soliton spheres}\label{sec:taim-solit-spher}
Let $L$ be a quaternionic spin bundle over $M=\CP^1$ and $z\colon
\CP^1\setminus \{\infty\}\to\C$ an affine chart.  Then $dz$ is a meromorphic
section of the canonical bundle $K$ with a second order pole at $\infty$ and
no zeroes. The section $\varphi$ above is thus a meromorphic section of $\hat
L$ without zeroes and with a first order pole at $\infty$. A function $q\colon
\C\rightarrow \R$ is then a coordinate representation of a smooth Hopf field
$Q$ of $L$ if and only if $|w|^{-2}q(w^{-1})$ is smooth at $w=0$.  Similarly,
$\mu_1$, $\mu_2\colon \C\rightarrow \C$ are the coordinate representation of a
globally smooth section of $L$ if and only if $ |\mu_1|^2+|\mu_2|^2=O(
|z|^{-2} )$ for $|z|\to\infty$.

Suppose that $q$ is rotationally symmetric, i.e., $q(z)=q( |z|)$.
Taimanov then proves \cite[Lemma~4]{Ta99} that there are $N+1$
integers $0\leq n_0<\ldots <n_N$ and a basis 
$\{\psi_j\}_{0\leq j\leq N}$ of $H^0(L)$ which is
``rotationally symmetric'' in the sense that
\begin{align*}
  \psi_j&=\varphi e^{\frac{\ii}2 y}(\nu_{j1}(x)+\kk
  \nu_{j2}(x))e^{\frac{\ii}2(2n_j +1)y}, \qquad
  z=e^{x+\ii y},\\
  \intertext{where $\nu_j\colon \R\to\C^2$ is a rapidly decaying solution of
    the ZS--AKNS linear problem} \mathcal L\nu_j&=-\frac12(2n_j+1)
  \nu_j,\qquad \mathcal L =\dvector{ -\partial_x&2U\\ 2U & \partial_x},\qquad
  U(x)=q(e^x)e^x.
\end{align*}
The eigenvalues of $\mathcal L$ are preserved under the mKdV hierarchy and the
trace formula for $\mathcal L$ implies
\begin{equation*}
  \frac1{4\pi}W(L)=2 \int_{-\infty}^\infty U^2(x)dx\geq \sum_{j=0}^{N}(2n_j+1).
\end{equation*}

Equality in this estimate holds if and only if $U(x)=q(e^x)e^x$ is a
reflectionless potential of $\mathcal L$, i.e., a multi--soliton of
the mKdV equation, and $\frac\ii2(2n_j+1)$ are all points in the spectrum of
$L$ with positive imaginary part, cf.~\cite{Ta99}. On the other hand, equality
in this estimate, is equivalent to equality in the Plücker estimate
(Section~\ref{sec:plucker-estimate}) for the full linear system $H^0(L)$ of
holomorphic sections of the spin bundle $L$, because
\begin{multline*}
  (N+1)(N(1-g)-\deg(L))+\ord (H^0(L)) \\
  = (N+1)^2+ \sum_{j=0}^{N}(\ord_0\psi_j-j)+\sum_{j=0}^{N}(\ord_\infty
  \psi_j-j)= \sum_{j=0}^{N}(2n_j+1), 
\end{multline*}
since $g=0$, $\deg(L)=g-1=-1$, $\ord_0(\psi_j)=\ord_\infty(\psi)=n_j$, and $0$
and $\infty$ are the only possible Weierstrass points.

\begin{The}[Taimanov~\cite{Ta99}]\label{T:Taimanov}
  Let $L$ be a holomorphic spin bundle $L$ over $\CP^1$ whose potential $q$ is
  rotationally symmetric with respect to some affine coordinate $z\colon
  \CP^1\setminus \{\infty\}\to\C$. Then equality in the Plücker estimate holds
  for $H^0(L)$ if and only if the Willmore energy satisfies
  \begin{align*}
    W(L)=4\pi \sum_{j=0}^N (2n_j+1),
  \end{align*}
  where $0\leq n_0<\ldots <n_N$ are the integers such that there exists a
  basis of $H^0(L)$ consisting of rotationally symmetric holomorphic sections
  $\{\psi_j\}_{0\leq j\leq N}$ with $\ord_0(\psi_j)=\ord_\infty(\psi_j)=n_j$.
\end{The}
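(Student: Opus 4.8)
The plan is to feed the full linear system $H=H^0(L)$ into the quaternionic Pl\"ucker estimate of Section~\ref{sec:plucker-estimate} and to check that the resulting lower bound for $W(L)$ is exactly $4\pi\sum_{j=0}^N(2n_j+1)$; since ``equality in the Pl\"ucker estimate'' means by definition that $W(L)$ attains this bound, the asserted equivalence then follows at once.

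First I would record the parameters entering the estimate. Since $M=\CP^1$ we have $g=0$, and because $L$ is a quaternionic spin bundle the isomorphism $L\cong KL^{-1}$ forces $2\deg(L)=\deg(K)=2g-2$, hence $\deg(L)=-1$. The linear system $H^0(L)$ has some dimension $N+1$, so $n=N$, where $N$ and the integers $0\le n_0<\cdots<n_N$ are those produced by Taimanov's Lemma~4 \cite[Lemma~4]{Ta99}, which yields a basis $\{\psi_j\}_{0\le j\le N}$ of $H^0(L)$ consisting of rotationally symmetric holomorphic sections with $\ord_0(\psi_j)=\ord_\infty(\psi_j)=n_j$. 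Because the rotational symmetry of $q$ makes the whole construction equivariant under the action $z\mapsto e^{\ii\theta}z$, the Weierstrass divisor of $H^0(L)$ is invariant under this $S^1$-action; being finite it must be supported in the fixed-point set $\{0,\infty\}$, so $|\ord H^0(L)|=\ord_0(H^0(L))+\ord_\infty(H^0(L))$. The $N+1$ distinct vanishing orders $n_j=\ord_0(\psi_j)$ exhaust the Weierstrass gap sequence of $H^0(L)$ at $0$, and likewise at $\infty$, whence $\ord_0(H^0(L))=\ord_\infty(H^0(L))=\sum_{j=0}^N(n_j-j)$ and therefore $|\ord H^0(L)|=2\sum_{j=0}^N(n_j-j)$.

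Substituting $g=0$, $d=-1$, $n=N$ and this Weierstrass degree into the Pl\"ucker estimate, and using $n(1-g)-d=N+1$ together with $\sum_{j=0}^N j=\tfrac12 N(N+1)$ and $(N+1)^2-N(N+1)=N+1$, I obtain
\[
  W(L)\ \ge\ 4\pi\Big[(N+1)^2+2\sum_{j=0}^N(n_j-j)\Big]\ =\ 4\pi\sum_{j=0}^N(2n_j+1).
\]
Equality in the Pl\"ucker estimate holds precisely when $W(L)$ equals this lower bound, i.e.\ when $W(L)=4\pi\sum_{j=0}^N(2n_j+1)$, which is the claim. The only substantial ingredient is Taimanov's Lemma~4 --- the spectral fact that under rotational symmetry $H^0(L)$ is spanned by rotationally symmetric holomorphic sections indexed by the points of the spectrum of the ZS--AKNS operator $\mathcal L$ with positive imaginary part --- so the main obstacle in a self-contained proof would be that spectral analysis; granting \cite{Ta99}, everything else is the elementary degree and Weierstrass-order bookkeeping above.
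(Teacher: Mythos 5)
Your proposal is correct and follows essentially the same route as the paper: both reduce the theorem to the observation that, with $g=0$, $\deg L=-1$, $n=N$, and Weierstrass points only at $0$ and $\infty$ with orders $\sum_{j}(n_j-j)$ each, the right--hand side of the Pl\"ucker estimate for $H^0(L)$ evaluates to $4\pi\sum_{j=0}^N(2n_j+1)$, so that ``equality'' is by definition the stated energy identity. The paper's surrounding discussion of the ZS--AKNS trace formula and reflectionless potentials is additional context linking this to mKdV solitons, not part of the proof of the equivalence itself, so your omission of it is harmless.
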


We call a conformal immersion $f\colon \CP^1\to\R^3$ a \emph{Taimanov soliton
  sphere} if its Euclidean holomorphic line bundle has a rotationally
symmetric potential with equality in the Plücker estimate for the full linear
system.  A special example of Taimanov soliton spheres are \emph{Dirac
  spheres} \cite{Ri} which have the most symmetric reflectionless potentials
$q(z)=\frac{N+1}{1+|z|^2}$. They are soliton spheres with $n_j=j$,
$j=0$,...,$N$ such that $\dim H^0(L)=N+1$ and $W(L)=4\pi(N+1)^2$.

Taimanov~\cite{Ta99} gives explicit rational formulae for all $q$ and $\psi_j$
that may arise in Theorem~\ref{T:Taimanov}. For every $(N+1)$--tuple $0\leq
n_0<\ldots <n_N$ of integers there is an $\R^{N+1}$--parameter family of $q$'s
and corresponding $\psi_j$'s. Since we are only interested in immersed
spheres, we need to start with a base point free linear system such that we
have to assume that $n_0=0$. Because the integers that may be written as the
sum of 1 with other pairwise distinct odd integers are
$\N\setminus\{0,2,3,5,7\}$ we obtain the following corollary.

\begin{Cor} \label{cor:taimanov_quant}
  The possible Willmore energies of Taimanov soliton spheres are $W\in 4\pi
  (\N\setminus\{0,2,3,5,7\})$. 
\end{Cor}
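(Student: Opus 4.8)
The plan is to combine Theorem~\ref{T:Taimanov} with the elementary number theory of representing positive integers as sums of distinct odd integers one of which equals~$1$. The starting point is the observation, already made in the text preceding the corollary, that for an \emph{immersed} Taimanov soliton sphere $f\colon\CP^1\to\R^3$ we need the full linear system $H^0(L)$ of the Euclidean spin bundle to be base point free; since $\ord_0(\psi_j)=\ord_\infty(\psi_j)=n_j$ and the $\psi_j$ form a basis, base point freeness forces $n_0=0$. By Theorem~\ref{T:Taimanov}, the Willmore energy is then $W(L)=4\pi\sum_{j=0}^N(2n_j+1)$ with $0=n_0<n_1<\cdots<n_N$, and all such sequences are realized by Taimanov's explicit rational formulae. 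Writing $d=\sum_{j=0}^N(2n_j+1)$, the possible $d$ are exactly the integers of the form $1+(2n_1+1)+\cdots+(2n_N+1)$ where $2n_1+1<\cdots<2n_N+1$ are distinct odd integers $\geq 3$ (the case $N=0$ giving $d=1$).

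First I would reformulate: $d$ is an admissible Willmore index if and only if $d=1+s$, where $s$ is either $0$ (corresponding to $N=0$) or a sum of $N\geq 1$ pairwise distinct odd integers each at least~$3$. So the task reduces to determining the set $T$ of integers expressible as a sum of finitely many (possibly zero) pairwise distinct odd integers $\geq 3$, and then the answer is $\{d : d-1\in T\}$. Equivalently one shows $T=\N\setminus\{1,2,4,6\}$, whence the admissible $d$ form $\N\setminus\{0,2,3,5,7\}$ (shifting by~$1$: excluded $d-1$ values $1,2,4,6$ become excluded $d$ values $2,3,5,7$, and $d=0$ is excluded since $d\geq 1$ always).

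Second I would verify $T=\N\setminus\{1,2,4,6\}$ by a short direct argument. The empty sum gives $0\in T$. The excluded values are checked by hand: $1,2$ cannot be written as any sum of distinct odd integers $\geq 3$; $4$ cannot (the only candidates are $3$ alone, giving $3$, or $\geq 3+5=8$); $6$ cannot ($3$ gives $3$, $5$ gives $5$, $3+5=8$). For all remaining $n$, one exhibits a representation: $3=3$, $5=5$, $7=7$, $8=3+5$, $9=9$, $10=3+7$, $11=11$, $12=5+7$, $13=13$, $14=3+11$, $15=3+5+7$, and for $n\geq 13$ one proceeds by strong induction, writing $n$ as $(\text{smallest odd integer}\geq 3\text{ not yet used})+(\text{a representation of the remainder using larger odd integers})$; concretely, for $n\geq 15$ subtract $3$ to land on $n-3\geq 12$ and use a representation of $n-3$ by distinct odd integers $\geq 5$, which exists by the same inductive bookkeeping (all sufficiently large integers are sums of distinct odd integers from any fixed cofinite set of odd integers, since consecutive odd integers differ by~$2$ and one can greedily build up). The main obstacle is purely one of careful bookkeeping: making the induction clean so that the ``distinctness'' and ``$\geq 3$'' constraints are simultaneously respected; this is routine but must be stated precisely.

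Finally I would assemble: applying Corollary's hypotheses to every immersed Taimanov soliton sphere, the set of attained Willmore energies is $\{4\pi d : d=1+s,\ s\in T\}=\{4\pi d : d\in\N\setminus\{0,2,3,5,7\}\}$, and conversely each such $d$ is realized by Taimanov's explicit construction with $n_0=0$ (so the immersion condition is met). This gives the stated equality $W\in 4\pi(\N\setminus\{0,2,3,5,7\})$, completing the proof of Corollary~\ref{cor:taimanov_quant}.
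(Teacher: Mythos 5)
Your proposal is correct and follows essentially the same route as the paper: the corollary is deduced from Theorem~\ref{T:Taimanov} together with the observation that base point freeness forces $n_0=0$, reducing everything to the fact that the integers expressible as $1$ plus a sum of pairwise distinct odd integers $\geq 3$ are exactly $\N\setminus\{0,2,3,5,7\}$. The paper simply asserts this combinatorial fact in the paragraph preceding the corollary, whereas you verify it explicitly; your induction can even be shortened, since every odd $n\geq 3$ is a single term and every even $n\geq 8$ equals $3+(n-3)$ with $n-3$ odd and $\geq 5$.
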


\begin{Exa}[Catenoid Cousins]\label{exa:cc-taimanov}
  Taimanov soliton spheres for $N=1$, $n_0=0$, $n_1=\mu$,
  $\lambda_0=\frac{\mu+1}\mu$, and $\lambda_1=\frac{(\mu+1)(2\mu+1)}\mu$,
  $\mu\in\N\setminus\{0\}$ (cf.~\cite{Ta99} for the meaning of $\lambda_j$)
  are the catenoid cousins that have smooth ends, see Example~\ref{exa:cc} for
  images.
\end{Exa}

\begin{Exa}[Rotationally symmetric soliton spheres]
  Figures~\ref{fig:tsos1}--\ref{fig:tsos4} show rotationally symmetric,
  branched Taimanov soliton spheres, i.e., primitives of $(\psi_j,\psi_j)$,
  $j=0,\ldots,N$.  The first example in each figure is immersed, the other
  examples have branch points of order $2n_i$ on the axis of revolution.

  \begin{figure}\label{fig:tsos1}
  	\centering
    \includegraphics{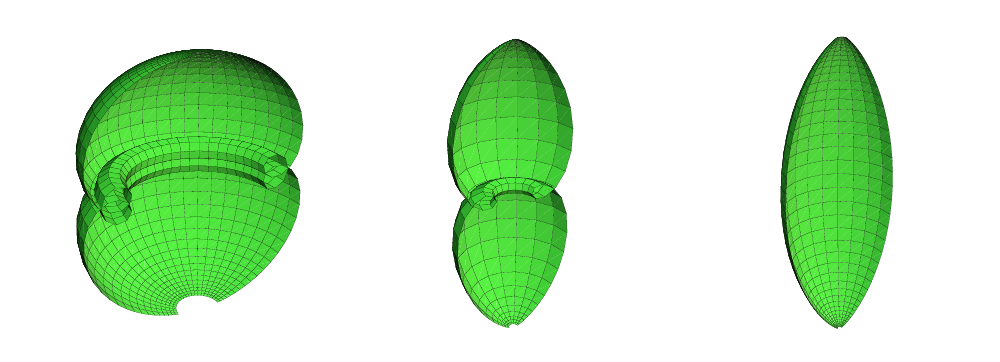}
    
    \caption{$N=2$, $n_0=0$, $n_1=1$, $n_2=2$, $\lambda_0=2$, $\lambda_1=6$, $\lambda_2=3$.}
  \end{figure}

  \begin{figure}\label{fig:tsos2}
  	\centering
    \includegraphics{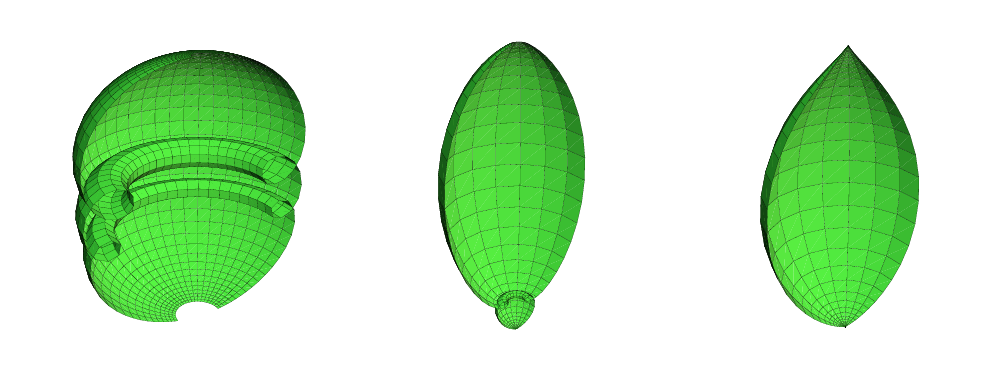}
	
	\caption{$N=2$, $n_0=0$, $n_1=1$, $n_2=2$, $\lambda_0=2$, $\lambda_1=6$, $\lambda_2=120$.}
  \end{figure}

  \begin{figure}\label{fig:tsos3}
  	\centering
    \includegraphics{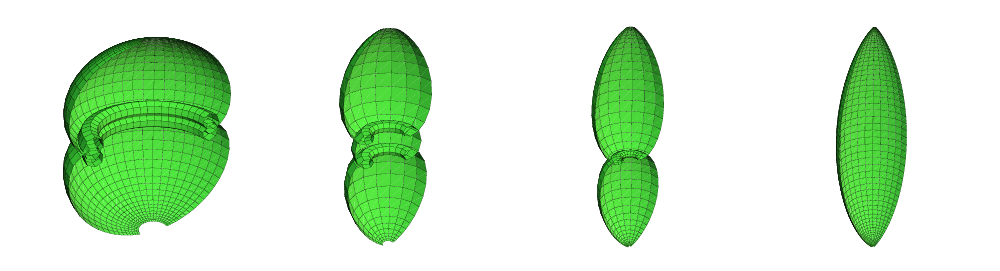}
	
	\caption{$N=3$, $n_0=0$, $n_1=1$, $n_2=2$, $n_3=3$,  
	$\lambda_0=4$, $\lambda_1=48$, $\lambda_2=120$, $\lambda_3=120$.}
  \end{figure}

  \begin{figure}\label{fig:tsos4}
  \centering
    \includegraphics{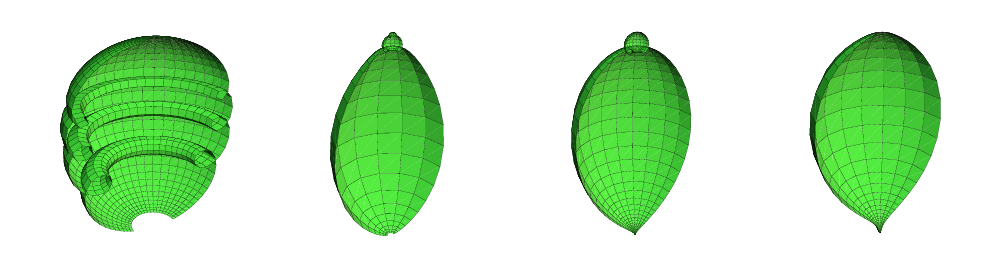}

	\caption{$N=3$, $n_0=0$, $n_1=1$, $n_2=2$, $n_3=3$,  
		$\lambda_0=6$, $\lambda_1=720$, $\lambda_2=120$, $\lambda_3=1$.}
  \end{figure}
\end{Exa}

\begin{Exa}[More Taimanov soliton spheres]\label{exa:dirac}
  Linear combinations of the $\psi_j$ lead to non rotationally symmetric
  Taimanov soliton spheres. Figure~\ref{fig:dirac} shows a deformation of the
  first into the third surface in Figure~\ref{fig:tsos1} through a family of
  surfaces $\int(\psi,\psi)$ where $\psi$ is a linear combination of $\psi_0$
  and $\psi_2$. The last surface in the figure is branched, all others are
  immersed.

	\begin{figure}\label{fig:dirac}
  		\centering
    	\includegraphics{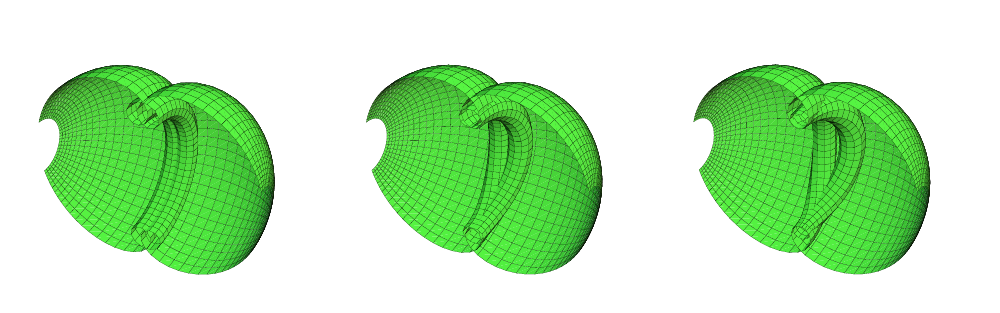}
	
 	  	\includegraphics{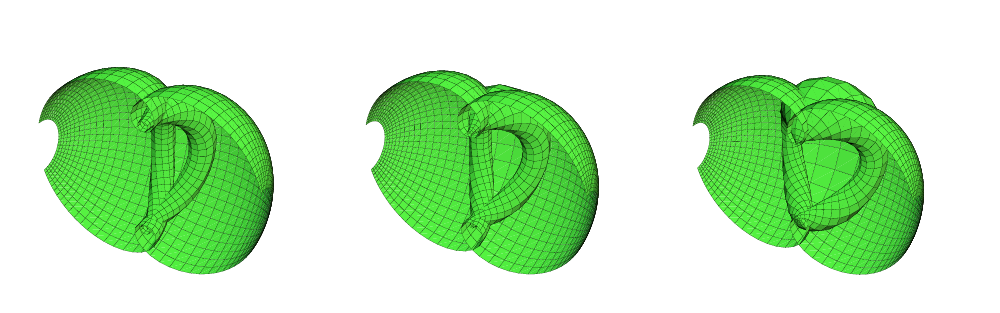}

	   	\includegraphics{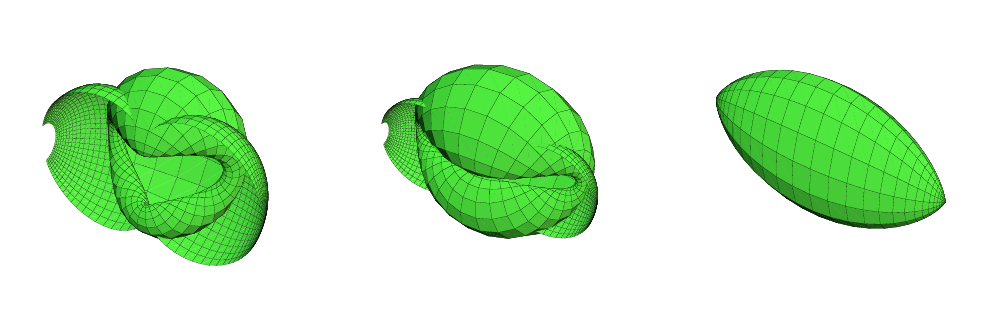}
		\caption{Deformation between the first and third surface in
                  Figure~\ref{fig:tsos1} through a family of Taimanov soliton
                  spheres with isomorphic Euclidean holomorphic line bundles.}
  	\end{figure}
	
\end{Exa}

\section{Soliton spheres}\label{sec:soliton_spheres}
We define soliton spheres in terms of the Möbius invariant holomorphic line
bundles of a conformal immersion $f\colon \CP^1\rightarrow \HP^1$.  We also
give a Euclidean characterization of soliton spheres based on the Weierstrass
representation. This in particular implies that Taimanov's soliton spheres are
examples of soliton spheres in our sense.

\subsection{Soliton spheres}\label{sec:sos}
Let $f\colon \CP^1\to \HP^1$ be a conformal immersion of a Riemann surface
into the conformal 4--sphere. Denote by $L\subset \H^2$ the corresponding
holomorphic curve and $L^\perp\subset (\H^2)^*$ its dual holomorphic curve.
In the following we mean by \emph{Möbius invariant holomorphic line bundles}
of $L$ the quaternionic line bundles $L^{-1}=(\H^2)^*/L^\perp$ and
$(L^\perp)^{-1}=\H^2/L$ with holomorphic structures as defined in
Section~\ref{sec:holom-curv-and-linear-systems}. The spaces of constant
sections of $(\H^2)^*$ and $\H^2$ project to the canonical linear systems
$(\H^2)^* \subset H^0(L^{-1})$ and $\H^2\subset H^0(\H^2/L)$. Soliton spheres
are immersions for which one of the canonical linear systems is contained in a
linear system with equality in the Plücker estimate
(Section~\ref{sec:plucker-estimate}):

\begin{Def}\label{def:moebius}
  A conformal immersion $f\colon \CP^1\to \HP^1$ into the conformal 4--sphere
  is called a \emph{soliton sphere} if one of the two Möbius invariant
  holomorphic line bundles admits a linear system with equality in the Plücker
  estimate that contains the canonical linear system.
\end{Def}

Two fundamental properties of soliton spheres are immediate con\-se\-quen\-ces of
the definition: firstly, \emph{the notion of soliton spheres is Möbius
  invariant}. Secondly, given a soliton sphere $L\subset \HP^1$, 
Theorem~\ref{T:equality_is_holomorphic_twistorlift_of_Ld} implies that either
$L$ or $L^\perp$ is the projection to $\HP^1$ of a holomorphic curve in
$\HP^n$ that is the dual curve (Section~\ref{sec:weierstr-gaps-flag}) of a
twistor holomorphic curve in $\HP^n$
(Section~\ref{sec:twist-holom-curv-1}). In other words, every soliton sphere
is obtained from a rational curve in $\CP^{2n+1}$ via twistor projection
$\CP^{2n+1}\rightarrow \HP^n$, dualization and projection to $\HP^1$
(possibly followed by a dualization in $\HP^1$), i.e., by
\begin{equation*}
  \CP^1\overset{\text{rational}}{-\!\!\!-\!\!\!-\!\!\!\longrightarrow}\CP^{2n+1}
  \overset{\text{twistor}}{-\!\!\!-\!\!\!\longrightarrow}  \HP^n
  \overset{\text{dualization}}{\dashleftarrow-\dashrightarrow}  \HP^n
  \overset{\text{projection}}{-\!\!\!-\!\!\!-\!\!\!-\!\!\!-\!\!\!\longrightarrow}\HP^1  
\end{equation*}
or the same sequence followed by a dualization in $\HP^1$, depending on
whether $H^0(L^{-1})$ or $H^0(\H^2/L)$ has a linear system with equality. The
dual curve of a holomorphic curve $L$ in $\HP^n$ is the solution of a system
of quaternionic linear equations whose coefficients are the $(n-1)^{th}$
derivatives of a generic section of $L$. Hence, \emph{every soliton sphere
  admits a rational, conformal parametrization}.

\begin{Def}
  The \emph{soliton number} of a soliton sphere that is not the round sphere
  is defined as the minimal number $n$ for which one of the canonical linear
  systems is contained in an $(n+1)$--dimensional linear system with equality
  in the Pl\"ucker estimate. The soliton number of the round sphere is defined
  to be $0$.
\end{Def}
The soliton number $n$ is the smallest number for which a soliton sphere can
be obtained via the above construction from a rational curve in
$\CP^{2n+1}$. The only $0$--soliton sphere is the round sphere. A $1$--soliton
sphere is either twistor holomorphic or the dual of a twistor holomorphic
curve, that is, all 1--soliton spheres are superconformal Willmore spheres
(cf.\ Section~8.2 of \cite{BFLPP02}) and vice versa. Examples of $2$--soliton
spheres are Bryant spheres with smooth ends
(Section~\ref{sec:examples-ii-bryant_spheres}) and non--superconformal Willmore
spheres (Section~\ref{sec:Willmore}).

\subsection{Characterization in terms of the Weierstrass
  representation}\label{sec:char-terms-weierst}
We give now a Euclidean characterization of soliton spheres in terms of the
Weierstrass representation.  As an application we show that Taimanov's soliton
spheres are also soliton spheres as in Section~\ref{sec:sos}.

Let $L\subset \H^2$ be an immersed holomorphic curve and fix a point
$\infty\in\HP^1$ that does not lie in the image of $L$. Without loss of
generality we may assume that $\infty =[e_1]$, where $e_1,e_2\in\H^2$ denotes
the standard basis. This basis of $\H^2$ defines the affine chart
(Section~\ref{sec:ident-hp1-with}) 
\begin{equation*}
  \sigma\colon \HP^1\setminus\{\infty\} \to \H, \qquad \dvectork{\lambda\\
    1}\mapsto \lambda. 
\end{equation*}
The affine chart is in fact defined uniquely up to similarity transformation
by the choice of $\infty=[e_1]\in \HP^1$.  The immersed holomorphic curve $L$
can then be written as
\begin{equation*}
  L=\psi\H \qquad \textrm{ with }  \qquad \psi=\dvector{f\\1},
\end{equation*}
where $f=\sigma\circ L\colon M\rightarrow \H$ is the affine representation of
$L$, i.e., the conformal immersion into $\H$ corresponding to $L$
via~$\sigma$.

Let $e_1^*,e_2^*\in\Gamma(L^{-1})$ be the projections to the first and second
coordinate of $\H^2$ seen as sections of $L^{-1}=(\H^2)^*/L^\perp$. The
canonical linear system (Section~\ref{sec:holom-curv-and-linear-systems}) of
$L$ is then spanned by the holomorphic sections $e_1^*$ and $e_2^*$ whose
quotient is $\bar f$, i.e.,
\begin{equation*}
  e_1^* = e_2^* \bar f.
\end{equation*}
Hence \emph{$f\colon \CP^1\to\H$ is a soliton sphere if and only if $f$ or
  $\bar f$ is the quotient of two quaternionic holomorphic sections that are
  contained in a linear system with equality in the Plücker estimate}, because
replacing $f$ by $\bar f$ is equivalent to replacing $L$ by $L^\perp$ and
interchanging the Möbius invariant and the Euclidean holomorphic line bundles.

The choice of $\infty\in \HP^1$ defines a unique flat connection
$\nabla$ on $L^{-1}$ which satisfies $\nabla e^*_2=0$ (where $e^*_2$ is
perpendicular to $\infty$).  The induced connection $\nabla$ on $L$
then satisfies $\nabla\psi =0$. Moreover $d^{\nabla}$ induces a
quaternionic holomorphic structure on $KL^{-1}$ (in the same way as $d$
defines the complex holomorphic structure on complex valued
$(1,0)$--forms) and $\nabla''=\frac12(\nabla +J{*}\nabla)$ induces a
quaternionic holomorphic structures on $L$ with respect to which
$\alpha=\nabla e_1^*=e_2^*d\bar f$ and $\psi$ are holomorphic sections
of $KL^{-1}$ and $L$, respectively.  The holomorphic structures thus
defined make $KL^{-1}$ and $L$ into paired holomorphic lines bundles
and
\begin{equation*}
  df=(\alpha,\psi)
\end{equation*}
is the Weierstrass representation of $f$. Theorem~\ref{th:weierstr4D} implies
that the line bundles $L$ and $KL^{-1}$ with holomorphic structures $d^\nabla$
and $\nabla''$ are the Euclidean holomorphic line bundles of~$f$.

\begin{The}\label{T:Euclidean_Definition}
  A conformal immersion $f\colon \CP^1\to\H=\HP^1\setminus\{\infty\}$ is a
  soliton sphere if and only if one of the two holomorphic sections of the 
  Euclidean holomorphic line bundles arising in
  the Weierstrass representation of $f$ is contained in a linear system with
  equality in the Plücker estimate.
\end{The}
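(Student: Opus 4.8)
The plan is to compare the two notions of linear system — the \emph{Möbius invariant} one from Definition~\ref{def:moebius} and the \emph{Euclidean} one from Theorem~\ref{th:weierstr4D} — using the discussion immediately preceding the theorem statement. The key observation already recorded there is that, after fixing $\infty=[e_1]\in\HP^1$ not in the image of $f$, the Euclidean holomorphic line bundles of $f$ are precisely $L$ (with holomorphic structure $d^\nabla$) and $KL^{-1}$ (with holomorphic structure $\nabla''$), where $\nabla$ is the flat connection on $L^{-1}$ determined by $\nabla e_2^*=0$; and that $\psi=\tvector{f\\1}\in H^0(L)$ and $\alpha=\nabla e_1^*=e_2^*\,d\bar f\in H^0(KL^{-1})$ are the holomorphic sections appearing in $df=(\alpha,\psi)$. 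So the two holomorphic sections named in the statement are exactly $\psi$ and $\alpha$.

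First I would handle the direction ``Euclidean $\Rightarrow$ soliton sphere''. Suppose $\psi\in H^0(L)$ lies in a linear system $\tilde H\subset H^0(L,d^\nabla)$ with equality in the Plücker estimate. The crucial point is that the Euclidean holomorphic line bundle $(L,d^\nabla)$ and the Möbius invariant bundle $(L^\perp)^{-1}=\H^2/L$ — excuse me: one must be careful about which Möbius bundle is relevant. The cleanest route is to use the identification from the ``degree formula'' paragraph (Section~\ref{sec:degree_formula}) and from Section~\ref{sec:holom-curv-and-linear-systems}: the Möbius invariant bundle $L^{-1}=(\H^2)^*/L^\perp$ carries the canonical linear system spanned by $e_1^*,e_2^*$ with $e_1^*=e_2^*\bar f$, and — as noted in the text just before the theorem — the Euclidean bundle is obtained from $L^{-1}$ by restricting to the complement of the zeros of the relevant section and changing the holomorphic structure to $\nabla''$. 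In fact for an \emph{immersion} there are no zeros, $e_2^*$ is nowhere vanishing, and $KL^{-1}$ with structure $\nabla''$ is literally the Euclidean bundle while $L^{-1}$ with its canonical structure is the Möbius one; the two structures differ by the flatness of $\nabla$. The point is that equality in the Plücker estimate is a statement about the dual curve being twistor holomorphic (Theorem~\ref{T:equality_is_holomorphic_twistorlift_of_Ld}), and this property is intrinsic to the \emph{holomorphic curve} $L^d\subset\tilde H$ obtained, not to which bundle structure we view its quotient through. Concretely, from a linear system $\tilde H$ containing $\psi$ with equality, I would produce a linear system of the Möbius bundle $(L^\perp)^{-1}=\H^2/L$ containing the canonical linear system $\H^2$: the holomorphic sections of the Euclidean bundle $KL^{-1}$ are, via the pairing $\beta\mapsto(\beta,\cdot)$, identified with a space of closed $\H$-valued $1$-forms, whose primitives give sections of $\H^2/L$ after adding the constant section $e_2$ suitably normalized. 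The analogous argument starting from $\alpha\in H^0(KL^{-1})$ lying in a linear system with equality produces, after the dualization $L\leftrightarrow L^\perp$ (equivalently $f\leftrightarrow\bar f$, as noted in the text), a linear system of $L^{-1}$ with equality containing the canonical one. Either way $f$ is a soliton sphere.

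For the converse ``soliton sphere $\Rightarrow$ Euclidean'', I would run the same identifications backwards. If $L^{-1}=(\H^2)^*/L^\perp$ admits a linear system $H$ with equality containing the canonical system $\langle e_1^*,e_2^*\rangle$, then since $\nabla$ is the flat connection with $\nabla e_2^*=0$, the sections of $H$ differentiate (via $\nabla$) to holomorphic sections of $KL^{-1}$ with the paired structure $d^\nabla$, and in particular $\nabla e_1^*=\alpha$ lands in a linear system $\nabla H\subset H^0(KL^{-1})$; one must check $\nabla H$ still has equality, which follows from the Plücker \emph{formula} (Section~\ref{sec:plucker-formula}) since differentiation by a flat connection does not change the degree, the genus, or $|\ord|$ in the relevant way — this is exactly the content of the degree-formula identification relating $(L^d)^{-1}$ of a one-dimensional system to $L^{-1}$ with $\nabla''$. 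Symmetrically, if instead $\H^2/L$ has the good linear system, we pass to $\bar f$ and obtain $\psi$ inside a linear system of the Euclidean bundle $(L,d^\nabla)$ with equality.

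I expect the main obstacle to be the bookkeeping of \emph{which} of the two Euclidean bundles ($L$ versus $KL^{-1}$) is paired with \emph{which} of the two Möbius bundles ($L^{-1}$ versus $(L^\perp)^{-1}$), and showing that passing between a linear system of one and a linear system of the other (by differentiation with the flat connection $\nabla$, respectively by taking primitives) preserves both the inclusion of the canonical system and the equality in the Plücker estimate. The equality-preservation is where the Plücker \emph{formula} $W(L)-W((L^d)^{-1})=4\pi[(n+1)(n(1-g)-d)+|\ord H|]$ does the real work: one shows the dual curve $L^d$ of the transformed system is projectively the same holomorphic curve (or its dualization) as the dual curve of the original, so $W((L^d)^{-1})=0$ on one side iff it vanishes on the other, and then invokes Theorem~\ref{T:equality_is_holomorphic_twistorlift_of_Ld}. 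The remaining subtlety — that $\nabla$ is flat and hence $d^\nabla$ really is a holomorphic structure on $KL^{-1}$ making it paired with $L$ — is already asserted in the text preceding the theorem, so it may be quoted.
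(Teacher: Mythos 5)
Your overall strategy coincides with the paper's: fix $\infty=[e_1]$, identify the two Euclidean holomorphic sections as $\psi$ and $\alpha=\nabla e_1^*=e_2^*d\bar f$, and pass between linear systems of the M\"obius invariant bundles containing the canonical system and linear systems of the Euclidean bundles containing $\alpha$ (resp.\ $\psi$, after the dualization $f\leftrightarrow\bar f$) by differentiating with the flat connection $\nabla$, respectively integrating, using that $\CP^1$ is simply connected. This is exactly the reduction the paper makes to Lemma~\ref{L:Equality_on_the_ladder}, so the architecture of your argument is sound.

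The gap is in your justification that equality in the Pl\"ucker estimate is preserved under the ladder map $H\mapsto\nabla H$; both reasons you offer are incorrect as stated. First, the dual curve of $\nabla H$ is \emph{not} projectively the same curve as the dual curve of $H$: since $\ker\nabla=\varphi_0\H$, it is the central projection of $L^d_H\subset\HP^n$ from the point $[\varphi_0]$ into $\HP^{n-1}$. That a central projection of a twistor holomorphic curve is twistor holomorphic is easy, but the converse implication --- which you need for one half of your ``iff'' --- is precisely the nontrivial content, and you give no argument for it. Second, differentiation by the flat connection \emph{does} change the relevant quantities: the dimension of the system drops from $n+1$ to $n$, $\deg(KL)=\deg(L)+2(g-1)$, and $W(KL)=W(L)+4\pi\deg(L)$ (this last being the degree formula applied to the nowhere vanishing section $\varphi_0$, via the pairing of $(KL,d^\nabla)$ with $(L^{-1},\nabla'')$). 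The actual content of Lemma~\ref{L:Equality_on_the_ladder} is that after recording the shift of Weierstrass gaps $\tilde n_k=n_{k+1}-1$ (whence $|\ord H|=|\ord\nabla H|$) and substituting all of these changed quantities into the Pl\"ucker estimate, the changes cancel exactly, so the Pl\"ucker deficiency $W-4\pi[(n+1)(n(1-g)-d)+|\ord H|]$ --- equivalently $W((L^d)^{-1})$ --- is the same for $H$ and $\nabla H$. Without carrying out that cancellation (or, equivalently, independently proving that the central projection from $[\varphi_0]$ preserves the vanishing of $W((L^d)^{-1})$ in both directions), the equivalence you assert is unproved.
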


\begin{Cor}
  Immersed Taimanov soliton spheres are soliton spheres.
\end{Cor}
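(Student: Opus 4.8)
The plan is to read off the corollary from Theorem~\ref{T:Euclidean_Definition}. First I would recall the data a Taimanov soliton sphere $f\colon\CP^1\to\R^3=\Im\H$ carries: by Theorem~\ref{the:weisterass_3d} its Weierstrass representation is $df=(\psi,\psi)$ with $\psi\in H^0(L)$ a nowhere vanishing holomorphic section of a quaternionic spin bundle $L$, and the two Euclidean holomorphic line bundles of $f$ in the sense of Theorem~\ref{th:weierstr4D}, namely $L$ and $KL^{-1}$, are identified by the spin isomorphism $KL^{-1}\cong L$ of Section~\ref{sec:gener-weierstr-repr-R3}, under which both holomorphic sections correspond to $\psi$. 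Since the Euclidean holomorphic line bundles are unique up to isomorphism, this spin bundle $L$ is the bundle called $L$ in the Euclidean discussion of Section~\ref{sec:char-terms-weierst} for the choice $\infty=[e_1]\in\HP^1$, which does not lie on the holomorphic curve $\tvector{f\\1}\H$ associated to $f$ because $f$ takes values in $\R^3\subset\H$.

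Second, I would unwind the hypothesis: by definition a Taimanov soliton sphere has a rotationally symmetric potential with equality in the Pl\"ucker estimate for the \emph{full} linear system $H^0(L)$. Because $\psi\in H^0(L)$, the holomorphic section $\psi$ occurring in the Weierstrass representation of $f$ is contained in a linear system, namely $H^0(L)$ itself, for which equality holds in the Pl\"ucker estimate. Theorem~\ref{T:Euclidean_Definition} then applies verbatim: one of the two holomorphic sections of the Euclidean holomorphic line bundles of $f$ --- here $\psi$ --- lies in a linear system with equality in the Pl\"ucker estimate, hence $f$ is a soliton sphere.

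I do not expect a genuine obstacle. The word ``immersed'' in the statement only records that $\psi$ is nowhere vanishing (equivalently that $n_0=0$, so that $H^0(L)$ is base point free), and the argument does not otherwise use it. The only point worth a moment's care is that the symbol $\infty$ occurs in two unrelated roles --- the point $\infty\in\HP^1$ in the target of $f$ used to set up the Weierstrass representation, and the point $\infty\in\CP^1=M$ in the domain used to phrase the rotational symmetry of $q$ in Section~\ref{sec:taim-solit-spher} --- so the various choices create no inconsistency.
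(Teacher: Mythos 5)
Your argument is correct and is exactly the route the paper intends: the corollary is stated as an immediate consequence of Theorem~\ref{T:Euclidean_Definition}, since by definition a Taimanov soliton sphere has equality in the Pl\"ucker estimate for the full linear system $H^0(L)$ of its Euclidean spin bundle, and the Weierstrass section $\psi$ lies in that linear system. Your additional remarks on the identification of the spin bundle with the Euclidean holomorphic line bundle and on the role of the immersedness hypothesis are accurate and consistent with the paper.
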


\begin{proof}[Proof of Theorem~~\ref{T:Euclidean_Definition}]
  The holomorphic structure of the Möbius invariant holomorphic line bundle
  $L^{-1}=(\H^2)^*/L^\perp$ is given by $D=\nabla''$, where as above $\nabla$
  is the connection defined by $\nabla e_2^*=0$.  It is sufficient to show
  that $L^{-1}$ admits a linear system that contains $e_1^*$ and $e_2^*$ and
  has equality in the Pl\"ucker estimate if and only if $KL^{-1}$ admits a
  linear system with equality that contains $\alpha$. This follows from Lemma
  \ref{L:Equality_on_the_ladder} below, because $\nabla e_2^*=0$, $\nabla
  e_1^*=\alpha$, and $\CP^1$ is simply connected.
\end{proof}

\begin{Lem}\label{L:Equality_on_the_ladder}
  Let $L$ be a quaternionic holomorphic line bundle over a compact Riemann
  surface with a nowhere vanishing holomorphic section $\varphi_0$.  Let
  $\nabla$ be the flat connection on $L$ defined by $\nabla\varphi_0=0$ and
  $d^\nabla$ the induced quaternionic holomorphic structure on $KL$.

  Then $\nabla$ induces a linear map from $H^0(L)$ to $H^0(KL)$ which
  maps every $(n+1)$--dimensional linear system $H\subset H^0(L)$
  containing $\varphi_0$ to an $n$--dimensional linear system $\nabla
  H\subset H^0(KL)$. The linear system $H$ has equality in the Plücker
  estimate if and only if $\nabla H$ has equality.
\end{Lem}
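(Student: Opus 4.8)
The plan is to analyze what the map $\nabla\colon\Gamma(L)\to\Gamma(KL)$ does to holomorphic sections and to Weierstrass data, then invoke the Pl\"ucker \emph{formula} (Section~\ref{sec:plucker-formula}) rather than the estimate directly. First I would check that $\nabla$ maps $H^0(L)$ to $H^0(KL)$: since $d^\nabla$ is the holomorphic structure on $KL$ induced by $\nabla$, for $\psi\in H^0(L)$ one has $D\psi=\nabla''\psi=\frac12(\nabla\psi+J{*}\nabla\psi)=0$, which says precisely that $\nabla\psi\in\Gamma(KL)$ (i.e.\ $*\nabla\psi=-J\nabla\psi$, using that $\nabla\psi$ has no $\bar K$-part) and that $d^\nabla(\nabla\psi)=0$, so $\nabla\psi\in H^0(KL)$; here I use flatness of $\nabla$, $d^\nabla\circ\nabla=R^\nabla=0$. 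The kernel of $\nabla$ on $H^0(L)$ consists of $\nabla$-parallel holomorphic sections; since $\nabla\varphi_0=0$ and $\CP^1$ (more generally a compact Riemann surface with the flat $\nabla$) has only the parallel section $\varphi_0$ up to scale in the relevant line bundle, the kernel is exactly $\varphi_0\H$. Hence $\nabla$ restricted to an $(n+1)$-dimensional linear system $H\ni\varphi_0$ has image $\nabla H$ of quaternionic dimension $n$.

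Next I would compare the two sides of the Pl\"ucker formula for $H$ and for $\nabla H$. Write $d=\deg L$, so $\deg(KL)=d+2g-2$. Applying the Pl\"ucker formula to $H$ (dimension $n+1$) and to $\nabla H$ (dimension $n$) gives
\begin{align*}
  W(L)-W((L^d)^{-1}) &= 4\pi\big[(n+1)(n(1-g)-d)+|\ord H|\big],\\
  W(KL)-W(((KL)^d)^{-1}) &= 4\pi\big[n((n-1)(1-g)-(d+2g-2))+|\ord(\nabla H)|\big].
\end{align*}
Equality in the Pl\"ucker estimate for $H$ means $W((L^d)^{-1})=0$, and for $\nabla H$ means $W(((KL)^d)^{-1})=0$. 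The key point will be to show two things: (a) $W(L)=W(KL)$, and (b) the dual curve of $\nabla H$ in $KL$ is obtained from the dual curve of $H$ in $L$ by applying $\nabla$, so that $W((L^d)^{-1})=0$ if and only if $W(((KL)^d)^{-1})=0$. For (b), the Weierstrass flag of $\nabla H$ at a point $p$ should be the image under $\nabla$ of the Weierstrass flag of $H$, shifted appropriately: if $\psi\in H$ has $\ord_p\psi=k$ then $\nabla\psi$ generically has $\ord_p(\nabla\psi)=k-1$ (a parallel frame near $p$ differentiates the coordinate function), except the section $\varphi_0$ of order $0$ is killed. Thus the gap sequence $n_0(p)<\dots<n_n(p)$ of $H$ with $n_0(p)=0$ (forced if $p$ is not a base point, which holds since $H\ni\varphi_0$ nowhere vanishing) passes to the gap sequence $n_1(p)-1<\dots<n_n(p)-1$ of $\nabla H$, and a short bookkeeping check gives $|\ord(\nabla H)|=|\ord H|$ together with the identity that the two bracketed expressions above are equal. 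Combined with (a), the formula then yields $W((L^d)^{-1})=W(((KL)^d)^{-1})$, in particular one vanishes iff the other does.

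The main obstacle I expect is the bookkeeping in step (b) near branch/Weierstrass points — making precise that $\nabla$ really does drop every vanishing order by exactly one and that the dual curve (bottom of the Weierstrass flag) of $\nabla H$ is $\nabla$ applied to the dual curve of $H$, including continuity of the twistor lift through the Weierstrass points. For this I would argue locally: choose a coordinate $z$ centered at $p$ and a $\nabla$-parallel trivialization of $L$ near $p$ (possible since $\nabla$ is flat), in which holomorphic sections of $H$ are represented by $\H$-valued functions of $z$ whose possible vanishing orders are the $n_k(p)$, and $\nabla$ becomes $dz\otimes\partial_z$, which lowers vanishing order by one except on constants. The identity $W(L)=W(KL)$ in (a) can be taken from the paper's setup — it is exactly the statement (from Theorem~\ref{th:weierstr4D} and the formula for $Q\psi$ in Section~\ref{sec:gener-weierstr-repr-R4}) that paired holomorphic line bundles $L$ and $KL^{-1}$ have equal Willmore energy; applying this with $L$ replaced by $L^{-1}$ (whose pair is $KL$) gives $W(L^{-1})=W(KL)$, and since passing between $L$ and $L^{-1}$ here changes only the sign of the degree, not the Hopf field's norm, $W(L)=W(L^{-1})=W(KL)$. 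With (a) and (b) in hand the equivalence of equality is immediate from the two displayed Pl\"ucker formulas.
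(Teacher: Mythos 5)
Your architecture matches the paper's --- establish that $\nabla$ maps $H^0(L)$ onto an $n$--dimensional system in $H^0(KL)$ with kernel $\varphi_0\H$, shift the Weierstrass gap sequences to get $|\ord\nabla H|=|\ord H|$ (using $n_0(p)=0$ because $\varphi_0$ never vanishes), and compare the two Pl\"ucker expressions --- and those two steps are fine. The gap is in your step (a): the identity $W(KL,d^\nabla)=W(L,D)$ is false. The paired bundle of $(KL,d^\nabla)$ is $(L^{-1},\nabla'')$, and $(L^{-1},\nabla'')$ is \emph{not} a ``dual'' of $(L,D)$ with the same Hopf field norm; its holomorphic structure depends on the choice of $\varphi_0$ and is not canonically induced by $D$ alone. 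The degree formula of Section~\ref{sec:degree_formula}, applied to the nowhere vanishing section $\varphi_0$ (so that $|\ord\varphi_0|=0$), gives exactly $W(KL)=W(L^{-1},\nabla'')=W(L)+4\pi\deg(L)$, which differs from $W(L)$ whenever $\deg L\neq0$ (e.g.\ $\deg L=-1$ for a spin bundle over $\CP^1$). This identity is the heart of the paper's proof and the one place where the absence of zeros of $\varphi_0$ enters quantitatively, rather than merely to compute the kernel of $\nabla$.

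Your companion claim that the two bracketed Pl\"ucker expressions coincide is also false: with $|\ord\nabla H|=|\ord H|$ one computes
\begin{equation*}
(n+1)\bigl(n(1-g)-d\bigr)-n\bigl((n-1)(1-g)-(d+2g-2)\bigr)=-d,\qquad d=\deg L,
\end{equation*}
so the brackets differ by $\deg L$. The two errors are of exactly the same size and cancel in your final subtraction, which is why you still land on the (true) conclusion $W((L^d)^{-1})=W(((KL)^d)^{-1})$; but neither intermediate identity holds, so the derivation is not valid as written. Once you replace (a) by $W(KL)=W(L)+4\pi\deg L$ and carry the extra $-d$ through, the computation closes, and your step (b) (identifying the dual curve of $\nabla H$ with the projection of $L^d$, which you only sketch) becomes unnecessary: the equality of the two Pl\"ucker deficiencies is then pure arithmetic, which is exactly how the paper argues.
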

  
\begin{proof}
  The Leibniz rule (Section~\ref{sec:quat-holom-line}) and $D=\nabla''$ imply
  that a section $\varphi\in\Gamma(L)$ is holomorphic if and only if
  $\nabla\varphi\in \Gamma(KL)$.  Because $\nabla$ is flat, it thus induces a
  quaternionic linear map $\varphi \in H^0(L) \mapsto \nabla \varphi \in
  H^0(KL)$ with kernel $\varphi_0\H$. The Weierstrass gap sequences $(n_k)$ of
  $H$ and $(\tilde n_k)$ of $\nabla H$ are related by $\tilde n_{k}= n_{k+1}-
  1$, hence $|\ord H| = |\ord \nabla H|$,
  cf.~Section~\ref{sec:weierstr-gaps-flag}.  Furthermore,
  $\deg(KL)=\deg(L)+2(g-1)$ and $W(KL)=W(L) + 4\pi \deg(L)$. The latter
  follows from the degree formula (Section~\ref{sec:degree_formula}) applied
  to $\varphi=\varphi_0$ using the fact that $W(KL)=W(L^{-1},\nabla'')$ (where
  $\nabla$ denotes the dual of $\nabla$ above), because $(KL,d^\nabla)$ and
  $(L^{-1},\nabla'')$ are paired bundles
  (Section~\ref{sec:gener-weierstr-repr-R4}). Plugging all these identities
  into the Plücker estimate (Section~\ref{sec:plucker-estimate}) shows that
  equality for $H$ is equivalent to equality for $\nabla H$.
\end{proof}

\section{Example: Bryant spheres with smooth
  ends}\label{sec:examples-ii-bryant_spheres}

We characterize 2--soliton spheres using the Darboux
transformation~\cite{BLPP} for conformal immersions into $S^4$.  Because the
hyperbolic Gauss map of a Bryant surface is a totally umbilic Darboux
transform \cite{JMN01} we conclude that Bryant spheres with smooth
ends~\cite{SmoothEnds} are soliton spheres.

\subsection{2--Soliton spheres and Darboux  transformations}\label{sec:2soli-dt}
Let $L\subset \H^2$ be an immersed holomorphic curve. The canonical projection
$\pi\colon \H^2\to\H^2/L$ onto the Möbius invariant holomorphic line bundle
$\H^2/L$, cf.~Section~\ref{sec:sos}, induces a 1--1--correspondence
\begin{equation*}
  \label{eq:prolongation}
  \theset{\psi^\sharp\in \Gamma(\H^2)}{\nabla\psi^\sharp \in
    \Omega^1(L)}\longrightarrow H^0(\H^2/L),\qquad \psi^\sharp\longmapsto
  \pi\psi^\sharp.  
\end{equation*}
The formula in Section~\ref{sec:holom-curv-and-linear-systems} for the
holomorphic structure of $\H^2/L$ shows that $\pi \psi^\sharp\in \Gamma(\H^2)$
is indeed holomorphic if $\nabla\psi^\sharp \in \Omega^1(L)$.  The
correspondence is bijective because $\delta=\pi\nabla_{|L}\colon L\to
K(\H^2/L)$ is a bundle isomorphism since $L$ is an immersion. The section
$\psi^\sharp\in\Gamma(\H^2)$ is called \emph{prolongation} of the holomorphic
section $\varphi = \pi\psi^\sharp$.

A \emph{Darboux transform} \cite{BLPP} of a conformal immersion $L\subset \H^2$
is a map $L^\sharp\subset \H^2$ defined away from the zeros of the prolongation
$\psi^\sharp$ of a holomorphic section $\varphi = \pi \psi^\sharp \in
H^0(\H^2/L)$ by the lines $L^\sharp=\psi^\sharp\H\subset \H^2$ spanned
by~$\psi^\sharp$ (in case of non simply connected surfaces one has to
allow for holomorphic sections with monodromy).  Darboux transforms of $L$
thus correspond to 1--dimensional linear systems of holomorphic sections of
$\H^2/L$. A Darboux transform is constant if and only if the 1--dimensional
linear system is contained in the 2--dimensional canonical linear system;
otherwise it is a branched conformal immersion.

\begin{The}\label{the:2--solitons_DT}
  A conformal immersion of $\CP^1$ into $\HP^1$ is a 2--soliton
  sphere if and only if it has a non--constant twistor holomorphic Darboux 
  transform. 
\end{The}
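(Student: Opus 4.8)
The plan is to translate the characterization of $2$--soliton spheres (Definition in Section~\ref{sec:sos}) into the Darboux transform picture via the correspondence between $1$--dimensional linear systems of holomorphic sections of $\H^2/L$ and Darboux transforms (Section~\ref{sec:2soli-dt}), and then invoke Proposition~\ref{P:2-d_equality_bundle_smooth_S}. Recall that, after possibly replacing $L$ by $L^\perp$ (dualizing in $\HP^1$, which preserves the soliton property), being a $2$--soliton sphere means that one of the two canonical linear systems---say the canonical system $\H^2\subset H^0(\H^2/L)$---is contained in a $3$--dimensional linear system $\tilde H\subset H^0(\H^2/L)$ with equality in the Pl\"ucker estimate, and $n=2$ is minimal. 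First I would note that any such $\tilde H$ is spanned by the two constant sections together with one additional holomorphic section $\varphi_0\in H^0(\H^2/L)$; the line it spans is a $1$--dimensional linear system, hence gives a Darboux transform $L^\sharp = \psi^\sharp\H$, the prolongation of $\varphi_0$. Conversely a non--constant Darboux transform supplies such a section $\varphi_0$ not in the canonical system, and $\tilde H := \H^2 + \varphi_0\H$ is $3$--dimensional.

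The technical core is to identify \emph{when} this three--dimensional $\tilde H$ has equality in the Pl\"ucker estimate with the condition that the Darboux transform $L^\sharp$ is twistor holomorphic. Here I would apply Theorem~\ref{T:equality_is_holomorphic_twistorlift_of_Ld}: equality for $\tilde H$ holds iff the dual curve $(\tilde H)^d$ of $\tilde H$ is twistor holomorphic. So the task reduces to showing that the dual curve of the linear system $\H^2 + \varphi_0\H\subset H^0(\H^2/L)$ is precisely the Darboux transform $L^\sharp$ (up to the Kodaira identification $\tilde H^* \supset$ the curve). This is where I expect the main obstacle: one must compute the Weierstrass flag of $\tilde H$ at a generic point and check that the bottom line $H_0$ of the flag---the subspace of $\tilde H$ consisting of sections of maximal vanishing order---corresponds, under the Kodaira correspondence $\tilde H\to (\tilde H)^*$, to the line $\psi^\sharp\H$. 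Intuitively this is plausible because $\psi^\sharp$ is the prolongation, so $\pi\psi^\sharp=\varphi_0$ and $\nabla\psi^\sharp\in\Omega^1(L)$ means $\varphi_0$ ``osculates'' the canonical system to higher order than a generic section; but making this precise requires care with the derivatives of the holomorphic sections and the fact that $\H^2$ is already base point free, so that the Weierstrass gap sequence at a generic point is $(0,1,2)$.

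Once that identification is in place, the theorem follows in both directions. For the forward direction: a $2$--soliton sphere gives $\tilde H$ with equality, so $(\tilde H)^d=L^\sharp$ is twistor holomorphic; it is non--constant because otherwise $\tilde H$ would be only $2$--dimensional, contradicting minimality of the soliton number $n=2$ (and excluding the round--sphere case $n=0$). For the converse: a non--constant twistor holomorphic Darboux transform $L^\sharp$ determines $\varphi_0=\pi\psi^\sharp\in H^0(\H^2/L)$, the $3$--dimensional system $\tilde H=\H^2+\varphi_0\H$ has dual curve $L^\sharp$ which is twistor holomorphic, hence $\tilde H$ has equality by Theorem~\ref{T:equality_is_holomorphic_twistorlift_of_Ld}, so $L$ is a soliton sphere with soliton number $\leq 2$; it is exactly $2$ because a $0$--soliton sphere is the round sphere (no non--constant Darboux transforms of the relevant type) and a $1$--soliton sphere is superconformal Willmore, whose minimal system is $2$--dimensional---one checks that adding $\varphi_0$ genuinely increases the dimension. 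A clean way to handle these exclusions uniformly is to observe that $L^\sharp$ non--constant is precisely the statement that $\varphi_0\notin\H^2$, i.e.\ $\dim\tilde H=3$, and then minimality is automatic from the classification of lower soliton numbers recalled after the definition of soliton number. The smoothness hypothesis on the mean curvature sphere congruence in Proposition~\ref{P:2-d_equality_bundle_smooth_S} is not needed here since we only use the direction of Theorem~\ref{T:equality_is_holomorphic_twistorlift_of_Ld} that does not require it.
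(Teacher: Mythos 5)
Your overall framing---trading the Darboux transform for a $1$--dimensional linear system of $\H^2/L$, forming the $3$--dimensional system $\tilde H=\H^2+\varphi_0\H$, and deciding equality in the Pl\"ucker estimate via Theorem~\ref{T:equality_is_holomorphic_twistorlift_of_Ld}---is the same reduction the paper makes (Proposition~\ref{prop:2--solitons_DT}). The gap sits exactly where you locate the ``technical core'': the dual curve of the $3$--dimensional system $\tilde H$ is a line subbundle of $\tilde H\cong\H^3$, i.e.\ a curve in $\HP^2$, so it cannot be ``precisely the Darboux transform $L^\sharp$'', which lives in $\HP^1$. Concretely, write $L=\tvector{f\\1}\H$, $\varphi=\pi e_1$, $\varphi_0=\varphi h$ with $df\,g+dh=0$ and $f^\sharp=f+hg^{-1}$. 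The sections of $\tilde H=\Span\{\varphi,\varphi f,\varphi h\}$ vanishing to second order at $p$ are spanned by $\varphi\bigl(-(fg+h)(p)+f\,g(p)+h\bigr)$, so in the basis $(\varphi,\varphi f,\varphi h)$ the dual curve is $p\mapsto[-f^\sharp g:g:1]\in P(\tilde H)\cong\HP^2$ (using $f^\sharp g=fg+h$). This curve projects to $[-f^\sharp:1]$ when the last coordinate is forgotten and to $[g:1]$ when the first is, but twistor holomorphicity of a curve in $\HP^2$ is \emph{not} equivalent to twistor holomorphicity of a linear projection of it to $\HP^1$: a projection of a holomorphic twistor lift is again holomorphic, so one implication survives, but the converse fails in general. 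As written, your argument therefore proves at most one direction of the equivalence and leaves the other open; the hard half is precisely showing that twistor holomorphicity of $L^\sharp$ forces equality for the full $3$--dimensional system.

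The paper closes this gap by different means. Lemma~\ref{L:Equality_on_the_ladder} (differentiation with the flat connection $\nabla$ annihilating the nowhere vanishing section $\varphi$) converts equality for $\tilde H\subset H^0(\H^2/L)$ into equality for the $2$--dimensional system $\nabla\tilde H\subset H^0(K(\H^2/L))$ spanned by $\varphi\,df$ and $\varphi\,dh=-\varphi\,df\,g$; the dual curve of \emph{that} system genuinely lives in $\HP^1$ and equals $\tvector{g\\1}\H$, so Theorem~\ref{T:equality_is_holomorphic_twistorlift_of_Ld} gives: equality holds iff $g$ is twistor holomorphic. A separate computation (Lemma~\ref{lem:dt_twistor}: $df^\sharp=h\,d(g^{-1})$ forces $R_{f^\sharp}=R_{g^{-1}}$) then identifies twistor holomorphicity of $g$ with that of $f^\sharp$. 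You would need both ingredients---or an ad hoc proof that the $\HP^2$--curve $[-f^\sharp g:g:1]$ is twistor holomorphic iff its projection $L^\sharp$ is, which amounts to the same work---to complete the proof. Your bookkeeping of the soliton number and of the non--constancy of $L^\sharp$ is correct and matches the role the minimality discussion plays in the paper.
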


\begin{proof}
  The theorem is a direct consequence of the following proposition.
\end{proof}

\begin{Pro} \label{prop:2--solitons_DT} Let $L^\sharp\subset \H^2$ a
  non--constant Darboux transform of a conformally immersed sphere $L\subset
  \H^2$ in $\HP^1$. Then $L^\sharp$ is twistor holomorphic if and only if the
  corresponding 1--dimensional linear system of $\H^2/L$ together with its
  canonical linear system spans a 3--dimensional linear system with equality
  in the Pl\"ucker estimate.
\end{Pro}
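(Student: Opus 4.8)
The plan is to relate the Darboux transform $L^\sharp$ directly to the dual-curve picture underlying soliton spheres. Starting from a non-constant Darboux transform $L^\sharp=\psi^\sharp\H$, the prolongation $\psi^\sharp\in\Gamma(\H^2)$ has $\nabla\psi^\sharp\in\Omega^1(L)$, and $\varphi=\pi\psi^\sharp\in H^0(\H^2/L)$ spans a $1$--dimensional linear system $H_1$ of the M\"obius invariant bundle $\H^2/L$. Adjoining $H_1$ to the canonical linear system $\H^2\subset H^0(\H^2/L)$ yields a linear system $H$ of dimension at most $3$; since $L^\sharp$ is non-constant, $H_1$ is not contained in the canonical system, so $\dim H=3$. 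The key observation is that the holomorphic curve dual to $H$ in $\H^2\oplus\H$ (the space $H^*$) is, up to the identification via evaluation, exactly the twistor lift-controlled object: concretely, I would show that the line bundle $H_0=L^d$ of the Weierstrass flag of $H$ is spanned by $\psi^\sharp$ itself, viewed as a section of $(\H^2/L)^*{}^{-1}\cong \H^2/\cdots$ — more precisely, that $L^d$ is canonically isomorphic to $L^\sharp\subset\H^2$ as a line subbundle, so that $L^\sharp$ is the dual curve of $H$.

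Once that identification is in place, Theorem~\ref{T:equality_is_holomorphic_twistorlift_of_Ld} does the work: the $3$--dimensional system $H\supset\H^2$ has equality in the Pl\"ucker estimate if and only if its dual curve is twistor holomorphic, i.e.\ if and only if $L^\sharp$ is twistor holomorphic. So the heart of the proof is the claim that \emph{the dual curve of the $3$--dimensional system $H$ spanned by the canonical system together with the $1$--dimensional system of a non-constant Darboux transform $L^\sharp$ is $L^\sharp$ itself}. To prove this I would work locally: pick a nowhere-vanishing holomorphic section $\varphi=\pi\psi^\sharp$ of $\H^2/L$ and the two constant-section images $\pi e_1,\pi e_2$ spanning $\H^2\subset H^0(\H^2/L)$. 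The Weierstrass flag of $H=\Span\{\pi e_1,\pi e_2,\varphi\}$ at a generic point is governed by vanishing orders; since $L$ is immersed and $L^\sharp$ non-constant, a generic point is a non-Weierstrass point and the dual curve $L^d=H_0\subset H$ is the line of sections of $H$ vanishing to order $\geq 2$. Dualizing the evaluation sequence $H\to \H^2/L$ and using that $\psi^\sharp$ is a prolongation (so $\pi\psi^\sharp=\varphi$, $\nabla\psi^\sharp\in\Omega^1(L)$), one checks that under the natural pairing $H^*\supset (\H^2)^{**}=\H^2$, the annihilator of the hyperplane of sections vanishing at $p$ is precisely $\psi^\sharp(p)\H\subset\H^2$. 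This is the calculation that makes the proposition true, and it is essentially the content of the prolongation correspondence in Section~\ref{sec:2soli-dt} combined with the definition of the dual curve in Section~\ref{sec:weierstr-gaps-flag}.

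For the two directions: if $H$ has equality in the Pl\"ucker estimate, then by Theorem~\ref{T:equality_is_holomorphic_twistorlift_of_Ld} its dual curve $L^d=L^\sharp$ is twistor holomorphic (and the twistor lift extends holomorphically through the Weierstrass points of $H$). Conversely, if $L^\sharp$ is twistor holomorphic, then since $L^\sharp$ is the dual curve of the base point free system $H$ — base point freeness is inherited from the canonical system $\H^2\subset H$ — Theorem~\ref{T:equality_is_holomorphic_twistorlift_of_Ld} again gives equality in the Pl\"ucker estimate for $H$. One subtlety to address is that $L^\sharp$, and hence $L^d$, may have branch points (zeros of $\psi^\sharp$) and that these are exactly the Weierstrass points of $H$; here the continuous/holomorphic extension of the twistor lift through Weierstrass points (Lemma~\ref{cont_twist_lift_dual} and Theorem~\ref{T:equality_is_holomorphic_twistorlift_of_Ld}) guarantees everything is consistent, and the notion of ``twistor holomorphic'' for the possibly-branched $L^\sharp$ is the one for non-constant holomorphic curves from Section~\ref{sec:holom-curv-hpn}.

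The main obstacle I anticipate is the identification $L^d\cong L^\sharp$: one has to be careful about which projective space and which dual one is in — $H$ lives inside $H^0(\H^2/L)$, its dual curve lives in $H^*$, and the claim is that under the evaluation-induced inclusion $\H^2\hookrightarrow H^*$ (dual to $H\twoheadrightarrow \H^2/L$ being, loosely, ``evaluation composed with the canonical system'') this dual curve is the original $L^\sharp\subset\H^2$. Getting the bookkeeping right — in particular checking that the prolongation condition $\nabla\psi^\sharp\in\Omega^1(L)$ is exactly what forces $\psi^\sharp$ to span the order-$\geq 2$ vanishing line, and handling the branch points of $L^\sharp$ as Weierstrass points of $H$ — is the technical core; the rest is a clean application of Theorem~\ref{T:equality_is_holomorphic_twistorlift_of_Ld}.
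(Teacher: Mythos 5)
Your reduction to Theorem~\ref{T:equality_is_holomorphic_twistorlift_of_Ld} hinges on the claim that the dual curve of the $3$--dimensional system $H$ is $L^\sharp$ itself, and that claim is false. By the definition in Section~\ref{sec:weierstr-gaps-flag} the dual curve of $H$ is the line subbundle $H_0\subset H$, so for a $3$--dimensional $H$ it is a curve in $P(H)\cong\HP^2$, not in $\HP^1$. There is also no ``evaluation--induced inclusion'' $\H^2\hookrightarrow H^*$: the canonical system gives an inclusion $\H^2\hookrightarrow H$, hence a surjection $H^*\to(\H^2)^*$, under which the annihilator of the sections vanishing at $p$ projects to $L^\perp_p$, not to $\psi^\sharp(p)\H$. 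Concretely, writing $H=\Span\{\varphi,\varphi f,\varphi h\}$ with $df\,g+dh=0$, a section $\varphi(a+fb+hc)$ vanishes to order $\geq2$ at $p$ if and only if $a+f(p)b+h(p)c=0$ and $b=g(p)c$ (using that $f$ is immersed and quaternions have no zero divisors), so the dual curve of $H$ is $\tvector{-(fg+h)\\ g\\ 1}\H\subset\H^3\cong H$ --- a curve in $\HP^2$. Applying Theorem~\ref{T:equality_is_holomorphic_twistorlift_of_Ld} to $H$ therefore says something about the twistor lift of this $\HP^2$--curve in $\CP^5$, which is not the assertion of the proposition and would still have to be related to $L^\sharp$.

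The paper closes this gap in two steps, both absent from your argument. First, Lemma~\ref{L:Equality_on_the_ladder}, applied to the nowhere vanishing holomorphic section $\varphi=\pi e_1$, replaces $H$ by the $2$--dimensional system $\nabla H=\Span\{\varphi\,df,\varphi\,dh\}\subset H^0(K(\H^2/L))$ with the same equality property; only after this reduction is the dual curve a curve in $\HP^1$, namely $\tvector{g\\1}\H$, since $\varphi\,dh=-\varphi\,df\,g$. Second --- and this is a point your proposal elides even if one grants a reduction to $\HP^1$ --- the curve that appears as a dual curve is $g$, not $f^\sharp=f+hg^{-1}$; these are genuinely different maps, and one needs Lemma~\ref{lem:dt_twistor} (via $R_{f^\sharp}=R_{g^{-1}}$ from $df^\sharp=h\,d(g^{-1})$, together with the invariance of twistor holomorphicity under inversion) to conclude that $g$ is twistor holomorphic if and only if $f^\sharp$ is. Without both steps the argument does not close.
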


\begin{proof}
  Let $\infty=e_1\H$ be a point that does not lie on the image of the immersed
  sphere $L\subset \H^2$ in $\HP^1$.  As in Section~\ref{sec:ident-hp1-with},
  denote by $f\colon\CP^1\to\H$ the representation of $L$ in the affine chart
  corresponding to a basis $e_1,e_2\in\H^2$, i.e., $L=\psi\H$ for
  $\psi=\tvector{f\\1}$.  The canonical linear system of $\H^2/L$ is then
  spanned by the sections $\varphi:=\pi e_1$ and $\pi e_2=-\varphi f$.  By the
  Leibniz rule in Section~\ref{sec:quat-holom-line}, a section $\varphi h$ of
  $\H^2/L$ is holomorphic if and only if $*dh=Ndh$ with $N\colon\CP^1\to\H$,
  $N^2=-1$ defined by $J\varphi=\varphi N$.  Hence $\varphi h$ is holomorphic
  if and only if there exists $g\colon\CP^1\to\H$ such that
  \[
    df g + dh =0.
  \]
  The prolongation $\psi^\sharp$ of $\varphi h$ is then given by
  \[
    \psi^\sharp = \dvector{f\\1} g + \dvector{1\\0}h.
  \]
  The Darboux transform $L^\sharp\subset \H^2$ corresponding to $\varphi h$ is
  defined away from the zeros of $\psi^\sharp$ as the line subbundle spanned by
  $\psi^\sharp$. Its affine part $f^\sharp$ is defined away from the zeros of $g$
  and satisfies
  \[
  f^\sharp = f + h g^{-1} \qquad \text{and} \qquad df^\sharp = h\, d(g^{-1}).
  \]
  This proves the following lemma.
  \begin{Lem}\label{lem:dt_affine}
    The map $f^\sharp = f + h g^{-1}$ is a Darboux transform of the conformal
    immersion $f$ if and only if $df g + dh =0$.
  \end{Lem}
  Unless $\varphi h$ is contained in the canonical linear system, away from the
  isolated zeros of $g$, $h$, and $dg$, the affine part $f^\sharp$ of $L^\sharp$
  is a conformal immersion.\renewcommand{\qedsymbol}{}
\end{proof}

  \begin{Lem}\label{lem:dt_twistor}
    Let $f^\sharp$ be a Darboux transform of $f$ given by $f^\sharp = f + h
    g^{-1}$ with $df g + dh =0$ and nowhere vanishing $g$, $h$, and $dg$. Then
    \begin{enumerate}[i)]
      \item $f^\sharp$ is twistor holomorphic if and only if $g$ is twistor
        holomorphic,
      \item $f^\sharp$ is totally umbilic if and only if $g$ is twistor
        holomorphic and $h^{-1}$ is Euclidean minimal, and
      \item $f^\sharp$ is planar if and only if both $g^{-1}$ and $h^{-1}$ are
        twistor holomorphic and Euclidean minimal.
    \end{enumerate}
  \end{Lem}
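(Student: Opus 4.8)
The plan is to reduce everything to the explicit formula $df^\sharp = h\,d(g^{-1}) = -hg^{-1}(dg)g^{-1}$ obtained in the proof of Proposition~\ref{prop:2--solitons_DT}, together with the identities it forces between the left and right normals of $f^\sharp$, $g$, $g^{-1}$, $h$, $h^{-1}$, and the standard characterizations of twistor holomorphicity, Euclidean minimality, total umbilicity and planarity in terms of these normals. Writing $N_u$, $R_u$ for the left and right normal of a conformal immersion $u\colon M\to\H$ (so $*du=N_u du=-du R_u$), I would first record that $dh=-(df)g$ makes $h$ a conformal immersion with $N_h=N_f$ and $R_h=g^{-1}R_f g$; that $df^\sharp$ being nowhere zero makes $f^\sharp$, and hence also $g$, $g^{-1}$, $h$, $h^{-1}$, conformal immersions; and that reading off $*df^\sharp$ from $df^\sharp=h\,d(g^{-1})$ gives $R_{g^{-1}}=R_{f^\sharp}$ and $N_{g^{-1}}=h^{-1}N_{f^\sharp}h$, while $N_{h^{-1}}=h^{-1}N_f h$ and $R_{h^{-1}}=hg^{-1}R_f g h^{-1}$. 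Each of these is a one-line computation using that $g$ and $h$ are functions.

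For (i), by the lemma of \cite{Fr84} quoted above $f^\sharp$ is twistor holomorphic iff its twistor lift $\widehat{L^\sharp}$ is complex holomorphic, and the holomorphicity of the twistor lift of a conformal immersion $u\colon\CP^1\to\H$ depends only on $R_u$ and the complex structure of $\CP^1$ (it is equivalent to $*dR_u=R_u\,dR_u$). Since $R_{g^{-1}}=R_{f^\sharp}$ and $g^{-1}$, $f^\sharp$ are conformal immersions of the same Riemann surface, $f^\sharp$ is twistor holomorphic iff $g^{-1}$ is. Finally $w\mapsto w^{-1}$ is a projective transformation of $\HP^1$, so by the M\"obius invariance of twistor holomorphicity $g^{-1}$ is twistor holomorphic iff $g$ is; this proves (i). (Alternatively one verifies the identity $*dR_g-R_g\,dR_g=g^{-1}(*dR_{f^\sharp}-R_{f^\sharp}\,dR_{f^\sharp})g$ directly.)

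For (ii), I would use that $f^\sharp$ is totally umbilic iff its mean curvature sphere congruence $S^\sharp$ is constant, iff both Hopf fields $A^\sharp$, $Q^\sharp$ vanish, iff by Lemma~\ref{lem:twistor_hol_A=0} both $L^\sharp$ and its dual curve $(L^\sharp)^\perp$ are twistor holomorphic. The first condition is equivalent to $g$ being twistor holomorphic by (i). The second, again by \cite{Fr84}, is the twistor condition for the right normal of $(L^\sharp)^\perp$, which by duality equals $-\overline{N_{f^\sharp}}$; translating this through $N_{f^\sharp}=hN_{g^{-1}}h^{-1}$, $N_{h^{-1}}=h^{-1}N_f h$ and $R_{h^{-1}}=hg^{-1}R_f g h^{-1}$, and using that $g$ — hence $f^\sharp$ — is already twistor holomorphic, I expect it to collapse exactly to the pair of (anti)holomorphicity conditions on $N_{h^{-1}}$ and $R_{h^{-1}}$ saying that the Euclidean holomorphic line bundle of $h^{-1}$ has vanishing Willmore energy, i.e.\ that $h^{-1}$ is Euclidean minimal. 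For (iii), I would use that $f^\sharp$ is planar iff the enveloping round sphere of the (constant) mean curvature sphere congruence passes through $\infty=[e_1]$, iff $N_{f^\sharp}$ and $R_{f^\sharp}$ are both constant; since a conformal immersion that is both twistor holomorphic and Euclidean minimal has constant normals (the two conditions together force $dR=dN=0$) and conversely, the identities $R_{f^\sharp}=R_{g^{-1}}$, $N_{f^\sharp}=hN_{g^{-1}}h^{-1}$ together with the expressions for $N_{h^{-1}}$, $R_{h^{-1}}$ should give that $f^\sharp$ is planar iff $g^{-1}$ and $h^{-1}$ are each twistor holomorphic and Euclidean minimal.

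The main obstacle is the bookkeeping in (ii) and (iii): matching ``$(L^\sharp)^\perp$ twistor holomorphic, given $L^\sharp$ twistor holomorphic'' with ``$h^{-1}$ Euclidean minimal'', and matching ``$N_{f^\sharp}$, $R_{f^\sharp}$ constant'' with the four conditions on $g^{-1}$, $h^{-1}$, requires combining the non-commutative conjugation identities above with the precise (anti)holomorphicity types appearing in the characterizations of twistor holomorphicity and Euclidean minimality, and checking that the hypothesis ``$g$ twistor holomorphic'' is exactly what makes the reduction in (ii) close up. Keeping all of these conventions consistent with the paper's sign choices is the delicate, if routine, part; everything else is immediate from $df^\sharp=h\,d(g^{-1})$.
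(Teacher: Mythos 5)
Part (i) of your argument is correct and coincides with the paper's: from $df^\sharp = h\,d(g^{-1})$ one reads off $R_{f^\sharp}=R_{g^{-1}}$, twistor holomorphicity is a condition on the right normal alone ($dR''=0$), and it is invariant under inversion. For (ii) and (iii), however, there is a genuine gap. The identities you record split into two disconnected families: on one side $N_h=N_f$, $R_h=g^{-1}R_fg$, $N_{h^{-1}}=h^{-1}N_fh$, $R_{h^{-1}}=hg^{-1}R_fgh^{-1}$ tie the normals of $h$ and $h^{-1}$ back to those of $f$ (with $g$ entering only as a conjugating factor, never through its own normals); on the other side $R_{g^{-1}}=R_{f^\sharp}$ and $N_{g^{-1}}=h^{-1}N_{f^\sharp}h$ tie $g^{-1}$ to $f^\sharp$. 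Nothing in your list links the left normal $N_{f^\sharp}$ (equivalently $N_{g^{-1}}$ or $N_g$) to the right normal of $h^{-1}$ (equivalently $R_f$ or $R_h$), and that link is exactly what (ii) and (iii) require, since total umbilicity and planarity of $f^\sharp$ involve $dN_{f^\sharp}''$ and the constancy of $N_{f^\sharp}$, which must be converted into statements about $h^{-1}$. The step you defer with ``I expect it to collapse'' cannot be carried out from the identities you have.

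The missing ingredient is the integrability condition: differentiating $d(g^{-1})=h^{-1}\,df^\sharp$ gives $0=d(h^{-1})\wedge df^\sharp$, and the standard type argument (using $*d(h^{-1})=-d(h^{-1})R_{h^{-1}}$ and the pointwise invertibility of $d(h^{-1})$) yields the bridge identity $N_{f^\sharp}=-R_{h^{-1}}$; equivalently, $df\wedge dg=0$ yields $N_g=-R_f$. This is the second displayed equation in the paper's proof, and with it everything does close up: $dN_{f^\sharp}''=-dR_{h^{-1}}'$, so $(L^\sharp)^\perp$ is twistor holomorphic iff $h^{-1}$ is Euclidean minimal, which together with (i) gives (ii); and $N_{f^\sharp},R_{f^\sharp}$ are constant iff $R_{h^{-1}}$ and $R_{g^{-1}}$ are constant iff $dR'=dR''=0$ for both $h^{-1}$ and $g^{-1}$, which is (iii). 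A minor further slip: you assert that ``twistor holomorphic and Euclidean minimal'' forces $dR=dN=0$; it forces only $dR=0$ (and $dN'=0$), but this is all that is needed, since planarity of $f^\sharp$ is detected by the constancy of the two \emph{right} normals $R_{g^{-1}}$ and $R_{h^{-1}}$, not of the left normals of $g^{-1}$ and $h^{-1}$.
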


  \begin{proof}
    From $df^\sharp = h\, d(g^{-1})$ we obtain that the right normal vectors
    (cf.\ Appendix~\ref{sec:left-right-normal}) of $f^\sharp$ and $g^{-1}$
    coincide
    \begin{equation*}
       R_{f^\sharp}=R_{g^{-1}}.
    \end{equation*}
    On the other hand $d(g^{-1}) = h^{-1} df^\sharp$ implies
    $0=d(h^{-1})\wedge df^\sharp$ and thus
    \begin{equation*}
      N_{f^\sharp} = -R_{h^{-1}}.
    \end{equation*}

    The lemma now follows, because (as shown in
    Appendix~\ref{sec:mean-curv-sphere-affine}) a conformal immersion is twistor
    holomorphic if and only if its inversion is twistor holomorphic if and only
    if $dR''=0$; it is Euclidean minimal if and only if $dR'=0$; it is totally
    umbilic if and only if $dN''=dR''=0$; and it is planar if and only if its
    normal vectors $N$ and $R$ are both constant.
  \end{proof}

\begin{proof}[Proof of Proposition~\ref{prop:2--solitons_DT} continued]
  We have to show that $L^\sharp=\tvector{f^\sharp
    \\1}\H$ is twistor holomorphic if and only if the linear system $H$
  spanned by $\varphi$, $\varphi f$, and $\varphi h$ has equality in the
  Pl\"ucker estimate.  Applying Lemma~\ref{L:Equality_on_the_ladder} to the
  nowhere vanishing holomorphic section $\varphi$ shows that equality in the
  Pl\"ucker estimate for $H$ is equivalent to equality in the Pl\"ucker
  estimate for the 2--dimensional linear system $\nabla H$ of $K(\H^2/L)$
  spanned by $\varphi df$, $\varphi dh=-\varphi dfg$.
  Theorem~\ref{T:equality_is_holomorphic_twistorlift_of_Ld} now implies that
  equality for $\nabla H$ is equivalent to $g$ being twistor holomorphic,
  since $\tvector{g\\1}\H\subset\H^2$ is the dual curve of $\nabla H$. This
  proves the claim, because $g$ is twistor holomorphic if and only if
  $f^\sharp$ is twistor holomorphic (Lemma~\ref{lem:dt_twistor}).
\end{proof}

\begin{Rem}
 Proposition~\ref{prop:2--solitons_DT} holds verbatim for compact Riemann
 surfaces of higher genus if one allows for linear systems with monodromy.
\end{Rem}

\begin{Rem}
  In general, a Darboux transform of a conformal immersion $L\subset \H^2$ may
  not extend smoothly through the isolated zeros of the corresponding
  holomorphic section of $\H^2/L$. We show now that, \emph{in the situation of
    Proposition~\ref{prop:2--solitons_DT}, the Darboux transform $L^\sharp$
    extends smoothly through the zeros of the defining holomorphic section of
    $\H^2/L$ and has a globally smooth twistor lift (which is hence a rational
    curve in $\CP^3$).}

  Let $L\subset \H^2$ be a conformal immersion and $\varphi h$ a holomorphic
  section of $\H^2/L$ that, together with the canonical linear system, spans a
  3--dimensional linear system $H\subset H^0(\H^2/L)$ with equality in the
  Plücker estimate. Then $L^\sharp=\psi^\sharp\H$, $\psi^\sharp =
  \tvector{f\\1} g + \tvector{1\\0}h$ is defined and smooth away from the
  common zeros of $g$ and $h$. Moreover, $L^\sharp$ is a holomorphic curve
  with complex structure $J^\sharp \psi^\sharp =- \psi^\sharp R_g$ for $R_g$
  the right normal of $g$, because $\nabla \psi^\sharp = \psi \, dg$.
  
  By Theorem~\ref{T:equality_is_holomorphic_twistorlift_of_Ld}, the curve
  $\tvector{g\\1}\H$ has a globally defined holomorphic twistor lift that
  locally is of the form $\tvector{g_1+ \jj g_2\\ g_3 + \jj
    g_4}\C\subset(\H^2,\ii)$ with complex holomorphic functions
  $g_1,\ldots,g_4$. Let $p$ be a common zero of $g$ and $h$.  Without loss of
  generality we may assume that $g_3 +\jj g_4$ does not vanish at $p$, because
  $g$ has no ``poles''. Now $R_g = -(g_3 + \jj g_4) \ii (g_3 + \jj g_4)^{-1}$
  implies that the twistor lift of $L^\sharp$ is locally given as the complex
  line spanned by \[ \psi^\sharp (g_3 + \jj g_4) = \dvector{f\\1} (g_1 + \jj
  g_2) + \dvector{1\\0}h (g_3 + \jj g_4).
     \] If $n$ is the vanishing order of $g$ at $p$, then $h$ vanishes to
     order $n+1$ at $p$, because $dfg=-dh$. Since $g_3 + \jj g_4$ does not
     vanish, $n$ is the vanishing order of $g_1 + \jj g_2$. The twistor lift
     of $L^\sharp$ can be extended continuously through $p$, because the limit
     of $\psi^\sharp (g_3 + \jj g_4)z^{-n}$ at $p$ exists and is not zero,
     where $z$ is a local holomorphic chart centered at~$p$.  The claim now
     follows from Riemann's removable singularity theorem.
\end{Rem}

\subsection{Bryant spheres with smooth ends are 2--soliton spheres}
Bryant spheres with smooth ends \cite{SmoothEnds} are surfaces of mean
curvature one in hyperbolic space that compactify to immersed spheres by
adding points on the ideal boundary of hyperbolic space. In~\cite{JMN01} it is
shown that Bryant surfaces are characterized by the existence of a totally
umbilic Darboux transform which is then the hyperbolic Gauss map, see also
\cite[Theorem~9]{SmoothEnds}.  In order to apply
Theorem~\ref{the:2--solitons_DT} it remains to check that the holomorphic
section defining this Darboux transform extends smoothly through smooth Bryant
ends.

\begin{Rem} \label{rem:bryant_dt_in_s3} It seems worthwhile to note that the
  characterization of Bryant surfaces~\cite{JMN01,SmoothEnds} by the existence
  of a totally umbilic Darboux transform requires that both the surface and
  its Darboux transform take values in the same round 3--sphere in
  $S^4=\HP^1$. This holds automatically for the ``classical'' Darboux
  transform in the isothermic surface sense as used in
  \cite{JMN01,SmoothEnds}, but not for the Darboux transform of \cite{BLPP} as
  used in Section~\ref{sec:2soli-dt}.

\end{Rem}

An immersion $L\subset\H^2$ of $\CP^1$ into $\HP^1$ is a Bryant sphere with
smooth ends if and only if, up to Möbius transformation,
\begin{equation*}
  L=\psi\H,\qquad \psi=F\tvector{\kk\\1},
\end{equation*}
for a rational null immersion $F$ into $\SL(2,\C)$ such that all poles of $dF
F^{-1}$ have order 2, cf.~\cite{SmoothEnds}. Here null immersion means that
$\det(dF)=0$ and $dF$ has no zeros. The kernels and images of $dFF^{-1}$ then
coincide and extend holomorphically through the poles of $F$. The holomorphic
map
\begin{equation*}
  L^\sharp=\ker(dFF^{-1})=\image(dFF^{-1})
\end{equation*}
into the round 2--sphere $\theset{[x,1]}{x\in\C}\cup\{[1,0]\}\subset\HP^1$ is
then called the hyperbolic Gauss map of $L$.  The hyperbolic Gauss map
$L^\sharp$ of a Bryant sphere $L$ with smooth ends extends through the ends to
a rational map from $\CP^1$ to $\CP^1$.

  \begin{figure}[t!]
  \centering\newcommand{\scaling}{.08}
  \includegraphics{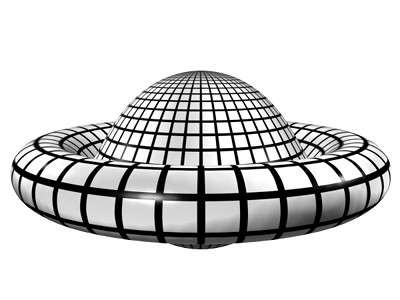}\hfill
  \includegraphics{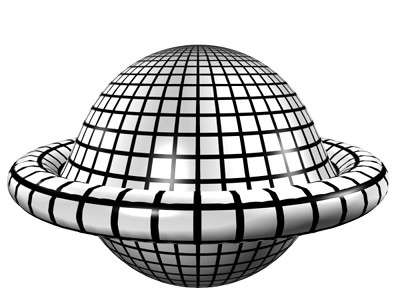}\hfill
  \includegraphics{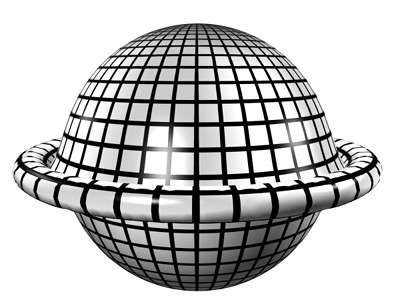}
  
  \vspace{2ex}
  \includegraphics{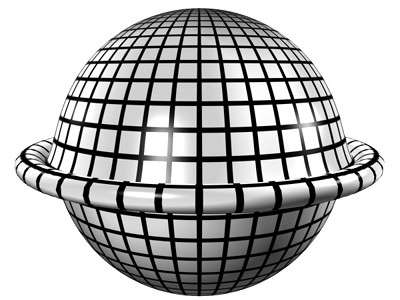}\hfill
  \includegraphics{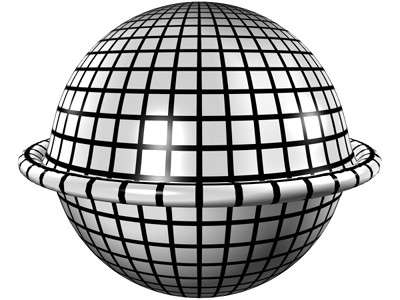}\hfill
  \includegraphics{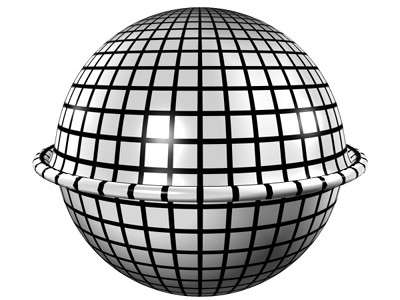}
  
  \caption{Catenoid cousins: $W=8\pi(\mu+1)$, $\mu=1,2,3,4,5,7$.}\label{ccs}
  \end{figure}

\begin{The}[\cite{SmoothEnds}]\label{the:bryant_soliton}
  Bryant spheres with smooth ends are 2--soliton spheres.
\end{The}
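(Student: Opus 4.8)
The plan is to combine the explicit description of Bryant spheres with smooth ends just given with Theorem~\ref{the:2--solitons_DT}, which reduces the claim to exhibiting a non-constant twistor holomorphic Darboux transform. The natural candidate is the hyperbolic Gauss map $L^\sharp=\ker(dFF^{-1})=\image(dFF^{-1})$, which by~\cite{JMN01} (see also~\cite[Theorem~9]{SmoothEnds}) is a totally umbilic Darboux transform of the Bryant surface away from the ends, and in particular twistor holomorphic, since a totally umbilic sphere is the round sphere. The section $\psi^\sharp\in\Gamma(\H^2)$ representing $L^\sharp$ as a prolongation is $\psi^\sharp = F\tvector{0\\1}$: indeed $\nabla\psi^\sharp = dF\tvector{0\\1}$ and one checks, using $\psi = F\tvector{\kk\\1}$ and $\det(dF)=0$, that $dF\tvector{0\\1}$ is pointwise a scalar multiple of $\psi$, so $\nabla\psi^\sharp\in\Omega^1(L)$ and $\pi\psi^\sharp\in H^0(\H^2/L)$; moreover $\psi^\sharp\H = \image(dFF^{-1}) = L^\sharp$ where it is non-zero.

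Once this is in place, the only thing that needs checking beyond~\cite{JMN01,SmoothEnds} is that the Darboux transform $L^\sharp$, together with its defining holomorphic section of $\H^2/L$, extends smoothly through the smooth Bryant ends -- this is precisely the remaining point flagged in the paragraph preceding the theorem. Here I would use that $F$ is rational and that all poles of $dFF^{-1}$ have order $2$: near an end, $F$ has a pole, but the line $\image(dFF^{-1})$ extends holomorphically through the pole (this is built into the hypothesis, and is exactly why the ends are ``smooth''). Concretely, in a local holomorphic chart $z$ centered at an end, $\psi^\sharp = F\tvector{0\\1}$ has a pole, but after rescaling by the appropriate power of $z$ it extends to a non-vanishing section spanning $L^\sharp$; since $L^\sharp$ is the twistor projection of a rational, hence globally smooth, curve in $\CP^1\subset\CP^3$, Riemann's removable singularity theorem finishes the extension across the end. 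This parallels the argument in the Remark following Proposition~\ref{prop:2--solitons_DT}.

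Finally, since $L^\sharp$ is the hyperbolic Gauss map of a non-constant Bryant surface it is not constant, so Proposition~\ref{prop:2--solitons_DT} (equivalently Theorem~\ref{the:2--solitons_DT}) applies: the $1$-dimensional linear system of $\H^2/L$ spanned by $\pi\psi^\sharp$ together with the canonical linear system $\H^2$ spans a $3$-dimensional linear system with equality in the Pl\"ucker estimate, exhibiting $L$ as a $2$-soliton sphere (its soliton number is $2$ rather than $1$ because a generic Bryant sphere with smooth ends is not superconformal Willmore, but for the statement as given we only need ``$2$-soliton''). I expect the main obstacle to be the verification at the ends: one must confirm both that $\pi\psi^\sharp$ extends as an honest holomorphic section of $\H^2/L$ across the ideal-boundary points (not merely that the line $L^\sharp$ extends) and that the resulting linear system is still exactly $3$-dimensional with base points under control, so that the Pl\"ucker count is not thrown off. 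The null condition $\det(dF)=0$ and the order-$2$ pole condition on $dFF^{-1}$ are exactly what make this work, and the bookkeeping is essentially that already carried out in the Remark after Proposition~\ref{prop:2--solitons_DT}.
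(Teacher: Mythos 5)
Your overall strategy --- exhibit the hyperbolic Gauss map $L^\sharp=\ker(dFF^{-1})=\image(dFF^{-1})$ as a non--constant twistor holomorphic Darboux transform and invoke Theorem~\ref{the:2--solitons_DT} --- is exactly the paper's, and that reduction is fine. But your concrete candidate for the prolongation, $\psi^\sharp=F\tvector{0\\1}$, is wrong, and the error sits at the one place where something actually has to be computed. Since $F$ is invertible and $\det(dF)=0$, the image of $dF$ as a quaternionic linear map is $\image(dF)=\image\bigl((dFF^{-1})F\bigr)=\image(dFF^{-1})=L^\sharp$. Hence $\nabla\psi^\sharp=dF\tvector{0\\1}$ takes values in $L^\sharp$, \emph{not} in $L=F\tvector{\kk\\1}\H$: the null condition $\det(dF)=0$ forces $dF\tvector{0\\1}$ to lie on the hyperbolic Gauss map, which is a different line from $L$ wherever the two maps do not intersect, so the prolongation condition $\nabla\psi^\sharp\in\Omega^1(L)$ fails. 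For the same reason $F\tvector{0\\1}$ need not span $L^\sharp$ at all; one has $F\tvector{0\\1}\in L^\sharp$ exactly when $\tvector{0\\1}\in F^{-1}L^\sharp=\ker(dF)$, which happens only at isolated points in general.

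The missing idea is the normalization by the indefinite Hermitian form $\spr{\tvector{x_1\\x_2},\tvector{y_1\\y_2}}=\bar x_2\jj y_1-\bar x_1\jj y_2$ whose null lines form the round $3$--sphere containing both $L$ and $L^\sharp$. Taking $\psi^\sharp\in\Gamma(L^\sharp)$ with $\spr{\psi^\sharp,\psi}=1$, one gets $\spr{\nabla\psi^\sharp,\psi}=-\spr{\psi^\sharp,dF\tvector{\kk\\1}}=0$ because $dF$ takes values in the null line $L^\sharp$ which also contains $\psi^\sharp$; since $\psi$ is null, $\spr{\,\cdot\,,\psi}=0$ characterizes $L$, so this $\psi^\sharp$ really is a prolongation. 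Your second step --- extension through the ends --- is correctly flagged but not carried out: rescaling by a power of $z$ extends the \emph{line} $L^\sharp$, which was never in doubt (it is rational), whereas what must extend smoothly is the holomorphic section $\pi\psi^\sharp$ of $\H^2/L$, and that depends on the normalization above. The paper settles this by putting $F$ into the normal form $F=z^{-n}\tvector{a&b\\c&d}$ with $d(0)\neq0$ near an end and computing $\pi\psi^\sharp=\pi e_1(-\jj\bar z^n(-c\kk+\bar d)^{-1})$, which is visibly smooth at $z=0$; some such explicit computation is unavoidable here.
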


\begin{proof}
  Denote by $\spr{\tvector{x_1\\x_2},\tvector{y_1\\y_2}}=\bar x_2 \jj y_1 -
  \bar x_1 \jj y_2$ the indefinite Hermitian form whose null lines are the
  round 3--sphere $\{[x,1]\mid x\in \Span_\R\{1,\ii,\kk\}\} \cup \{ [1,0]\}$
  in $\HP^1$.  Then $L=\psi\H$, $\psi=F\tvector{\kk\\1}$ and
  $L^\sharp=\ker(dFF^{-1})=\image(dFF^{-1})$ as above are maps into this
  3--sphere. Denote by $\psi^\sharp$ the section of $\Gamma(L^\sharp)$ defined
  away from the poles of $F$ by
  \begin{equation*}
    \langle \psi^\sharp,\psi\rangle=1.
  \end{equation*}
  The claim follows from Theorem~\ref{the:2--solitons_DT} once we show that
  $\psi^\sharp$ is the prolongation of a holomorphic section of $\H^2/L$ that
  extends smoothly through the poles of $F$. Using that $dF$ takes values in
  the null lines $L^\sharp$, we obtain
   \begin{equation*}
   	\spr{\nabla \psi^\sharp, \psi}
   	= -\spr{\psi^\sharp, \nabla \psi}
   	= -\spr{\psi^\sharp, dF\tvector{\kk\\1}}=0.
   \end{equation*}
   Hence $\nabla\psi^\sharp$ takes values in the null lines $L$ and
   $\psi^\sharp$ is the prolongation of the holomorphic section
   $\pi\psi^\sharp$ of $\H^2/L$.  To see that $\pi\psi^\sharp$ extends
   smoothly through the poles of $F$, let $z$ be a local coordinate centered
   at a pole of~$F$. Changing coordinates in $\H^2$ one may assume that
   $F=z^{-n}\tvector{a&b\\c&d}$ for some $n\in\N$ with holomorphic functions
   $a,b,c,d$ and $d(0)\neq0$, see \cite[Lemma~3]{SmoothEnds}. Then
   $\pi\psi^\sharp=\pi e_1(-\jj\bar z^n(-c\kk+\bar d)^{-1})$ which implies
   that the holomorphic section $\pi\psi^\sharp$ extends smoothly through the
   poles of $F$.
\end{proof}

\begin{Exa}[Catenoid Cousins]\label{exa:cc}
  The holomorphic null immersion
  \begin{equation*}
  	[a,b,c,d,e]=\left[-\mu, (\mu+1) z^{2\mu+1}, 
  	-(\mu+1) z, \mu z^{2\mu+2}, \sqrt{2\mu+1} z^{\mu+1}\right]
  \end{equation*}
  into the 3--quadric $Q^3=\theset{[a,b,c,d,e]\in \CP^4}{ad-bc=e^2}$ has as
  affine part the holomorphic null immersion $F=\frac1e\tvector{a&b\\c&d}$
  into $\SL(2,\C)$. The parameters $\mu>-1$, $\mu\neq0$ yield, via
  $L=F\tvector{\kk\\1}\H$, Bryant's catenoid cousins~\cite{Br87}.  The ends of
  a catenoid cousin are smooth if and only if $\mu\in\N\setminus\{0\}$.
  Catenoid cousins with smooth ends are the simplest examples of Taimanov
  soliton spheres, cf.~\ref{exa:cc-taimanov}.  Their Willmore energy is
  $W=8\pi(\mu+1)$. Figure~\ref{ccs} shows $f\colon\CP^1\to\R^3=\Im\H$ defined
  by $\tvectork{f\\1}=\tvector{\jj&\ii\\\kk&1}F\tvector{\kk\\1}\H$ for
  different $\mu\in\N_*$.
\end{Exa}

\begin{Exa}[Bryant spheres with arbitrarily many smooth ends]
  The two ends of a catenoid cousin have order $\mu+1$.  Applying the
  transformation $(a,b,c,d,e)\mapsto (a,b,c, s^2 a+d-2se,-sa+e)$ followed by
  $(a,b,c,d,e)\mapsto (a+t^2d-2te,b,c,d,-td+e)$ allows to deform each end to
  $\mu+1$ ends of order 1, see Figures~\ref{cc2-1}--\ref{cc8-2}.
  
  \begin{figure}[t!]
  \centering\newcommand{\scaling}{.14}
  \parbox{.48\textwidth}{%
	  \includegraphics{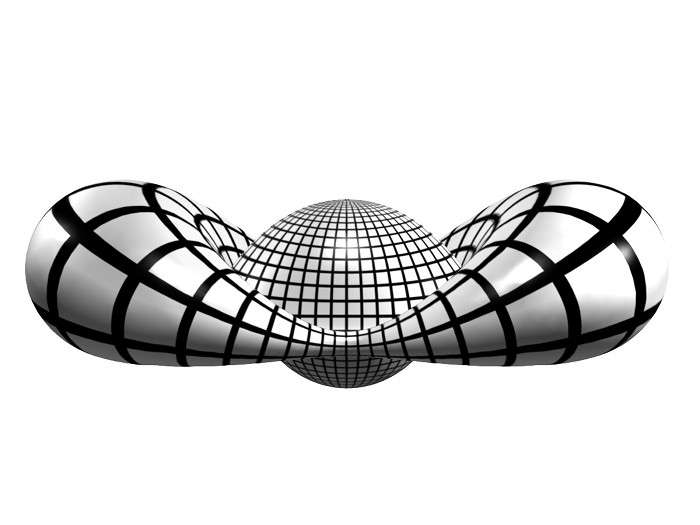}
	  \caption{$\mu=2$, $W=16\pi$, $s=0.22$, $t=0$.}
	  \label{cc2-1}}\hfill
	  \refstepcounter{figure}
  \parbox{.48\textwidth}{%
	  \includegraphics{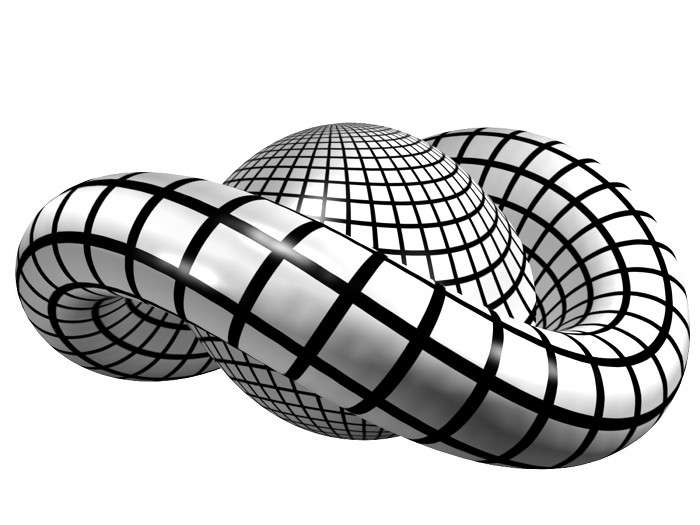}
	  \caption{$\mu=2$, $W=16\pi$, $s=0.22$, $t=0.22$.}
	  \label{cc2-2}}
  \end{figure}

  \begin{figure}[t!]
  \centering\newcommand{\scaling}{.14}
  \includegraphics{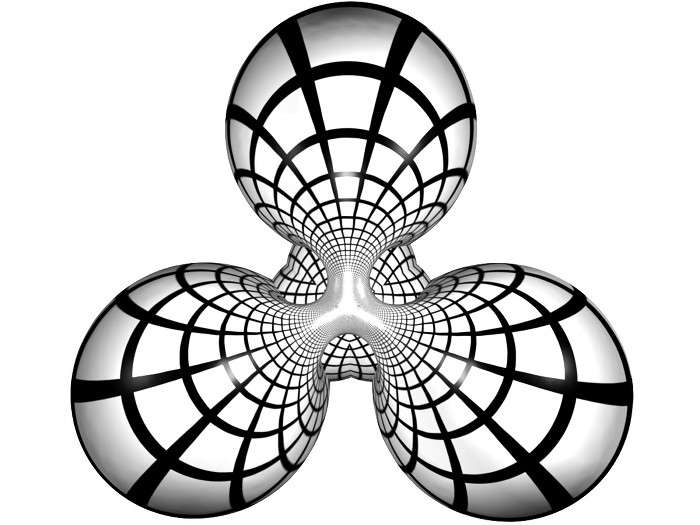}\hfill
  \includegraphics{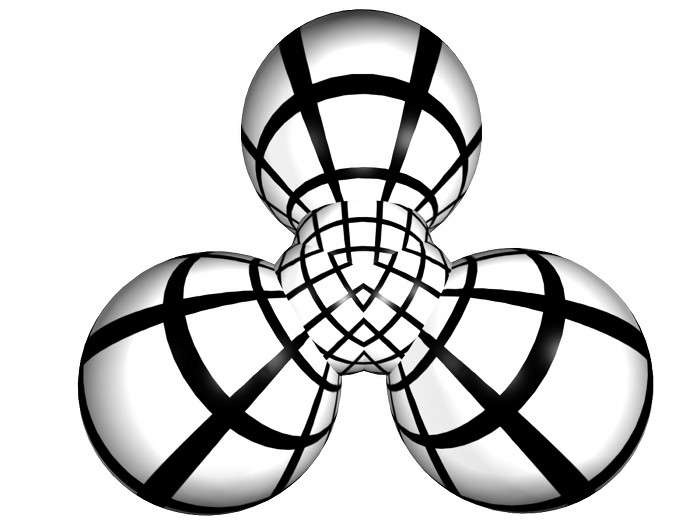}
  
  \caption{$\mu=3$, $W=24\pi$, $s=2.3\ii$, $t=-0.33\ii$, two views.}\label{cc3}
  \end{figure}  

  \begin{figure}[t!]\vspace{3ex}
  \centering\newcommand{\scaling}{.14}
  \parbox{.48\textwidth}{%
	  \includegraphics{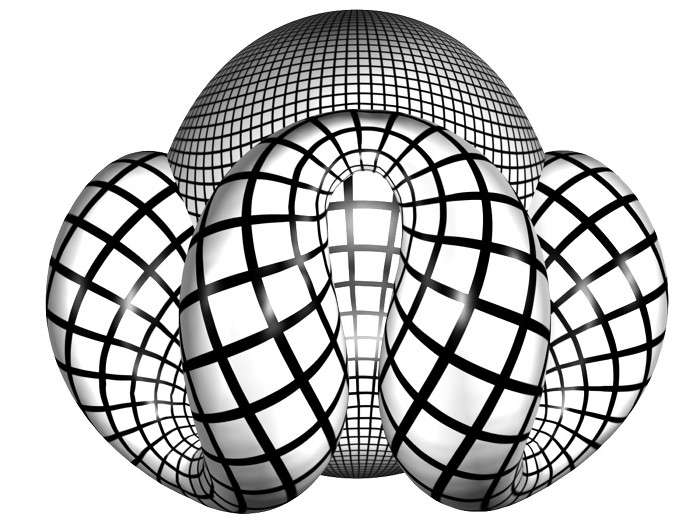}
	  \caption{$\mu=4$, $W=32\pi$, $s=0.72\ii$, $t=-0.54\ii$.}
	  \label{cc4-1}}\hfill
	  \refstepcounter{figure}
  \parbox{.48\textwidth}{%
	  \includegraphics{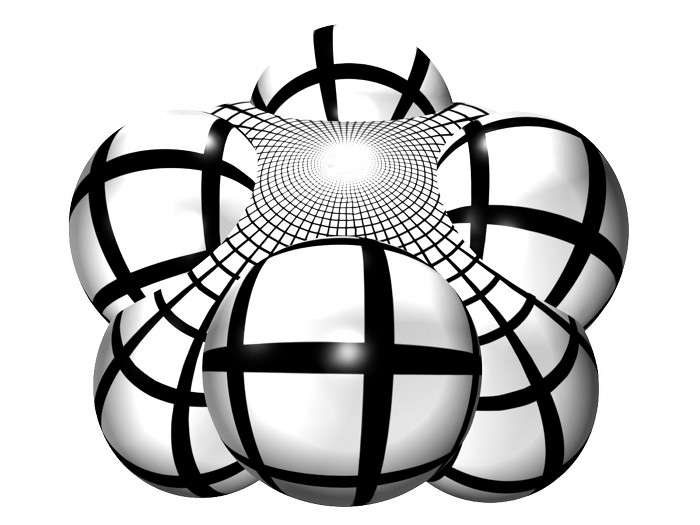}
	  \caption{Möbius inversion of Figure~\ref{cc4-1}.}
	  \label{cc4-2}}
  \end{figure}

  \begin{figure}[t!]\vspace{3ex}
  \centering\newcommand{\scaling}{.14}
  \parbox{.48\textwidth}{%
	  \includegraphics{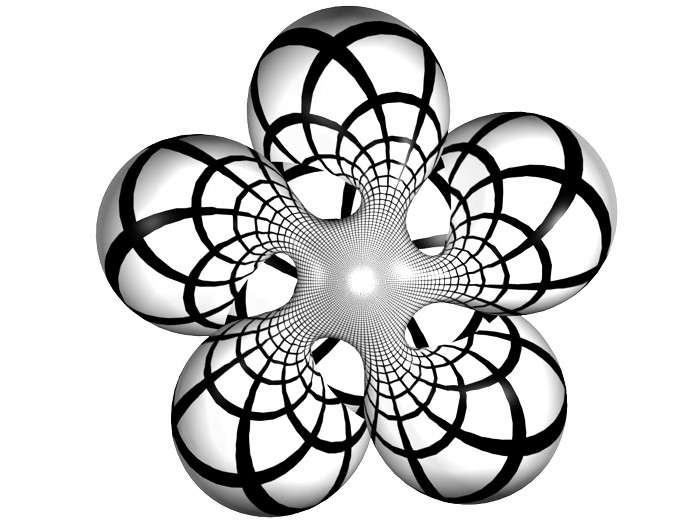}
	  \caption{$\mu=5$, $W=40\pi$, $s=1.6\ii$, $t=-0.36$.}
	  \label{cc5}}\hfill
	  \refstepcounter{figure}
  \parbox{.48\textwidth}{%
	  \includegraphics{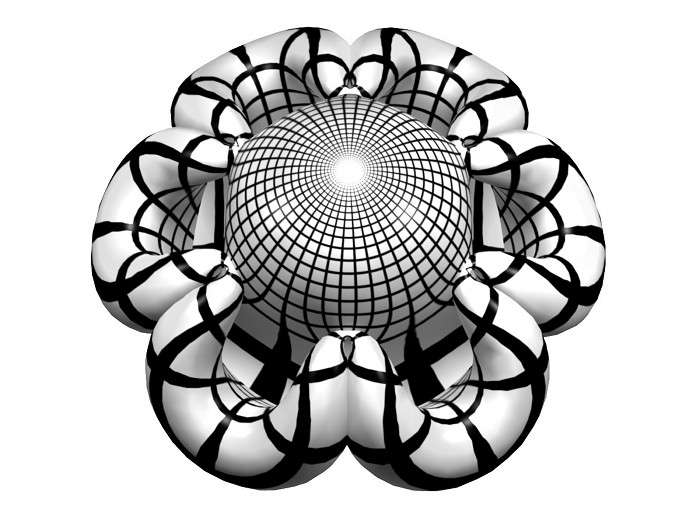}
	  \caption{$\mu=6$, $W=48\pi$, $s=0.22$, $t=-0.57$.}
	  \label{cc6}}
  \end{figure}

  \begin{figure}[t!]\vspace{3ex}
  \centering\newcommand{\scaling}{.14}
  \parbox{.48\textwidth}{%
	  \includegraphics{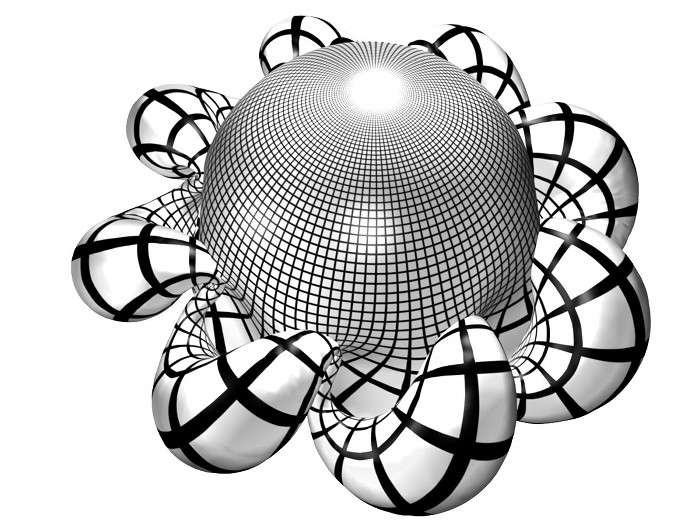}
	  \caption{$\mu=8$, $W=64\pi$, $s=-0.09-0.24\ii$, $t=-0.42$.}
	  \label{cc8-1}}\hfill
	  \refstepcounter{figure}
  \parbox{.48\textwidth}{%
	  \includegraphics{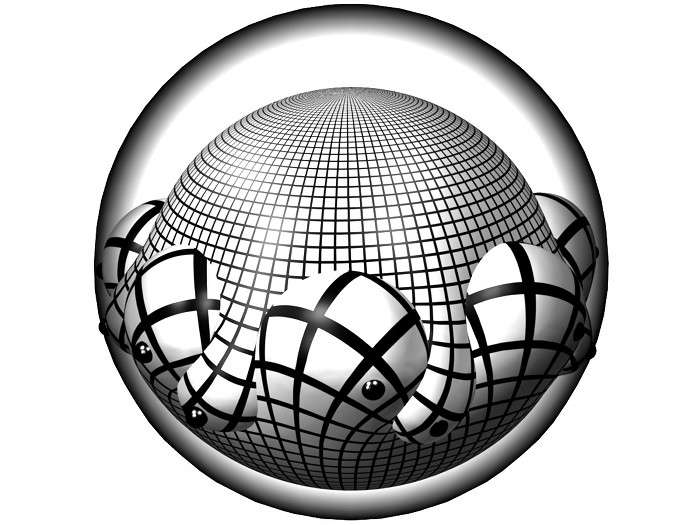}
	  \caption{Möbius inversion of Figure~\ref{cc8-1} in Poincar\'e ball
            with marked ends.} 
	  \label{cc8-2}}
  \end{figure}
\end{Exa}

\section{Example: Willmore spheres}\label{sec:Willmore}

We show that Willmore spheres in the conformal 4--sphere $S^4=\HP^1$ are
examples of soliton spheres.

\subsection{Mean curvature sphere congruence}\label{sec:mean-curv-sphere}
In case of immersed holomorphic curves in $\HP^1$, the canonical complex
structure defined in Section~\ref{sec:canonical_complex} can be interpreted as
the mean curvature sphere congruence.

The oriented totally umbilic 2--spheres in the conformal 4--sphere $\HP^1$ are
in one--to--one correspondence with the quaternionic linear complex structures
on $\H^2$: let $S\in
\End(\H^2)$ such that $S^2=-1$, then
\begin{align*}
  \mathcal S=\theset{[x]\in\HP^1}{[Sx]=[x]}
\end{align*}
is a totally umbilic 2--sphere in $\HP^1$. The complex structures $S$ and $-S$
define the same 2--sphere, but different orientations on the tangent spaces
$T_{p}\mathcal S=\H^2/p$, $p\in\mathcal S$. One therefore calls a map $S\colon
M\to\End(\H^2)$ with $S^2=-1$ a \emph{sphere congruence}. Its derivative may
be decomposed
\begin{equation*}
  \nabla S= 2({*}Q-{*}A)
\end{equation*}
into its $\bar K$--part $2{*}Q$ and its $K$--part $-2{*}A$ satisfying
${*}Q=-SQ=QS$ and ${*}A=SA=-AS$. The $\End(\H^2)$--valued 1--forms $A$ and $Q$
are called the \emph{Hopf fields} of $S$. An immersed holomorphic curve
$L\subset \H^2$ in $\HP^1$ admits \cite[Section~5]{BFLPP02} a unique sphere
congruence satisfying
\begin{align*}
  SL&=L,& *\delta&=S\delta=\delta S,& L\subset \ker(Q)\quad\text{(or,
    equivalently, }\image(A)\subset L)
\end{align*}
which is called the \emph{mean curvature sphere congruence} of $L$ and
coincides with the canonical complex structure defined in
Section~\ref{sec:canonical_complex}.  The name mean curvature sphere
congruence reflects the fact that the sphere $S_p$ at a point $p\in M$ is the
unique sphere that touches the curve at $L_p$ and has the same mean curvature
vectors with respect to any compatible metric of the conformal 4--sphere
$S^4$, cf.~\cite[Section~5.2]{BFLPP02}.

The Hopf fields $A$ and $Q$ measure the change of $S$ along the curve. The
integrals $2\int\langle A\wedge *A\rangle$ and 
$2\int\langle Q\wedge *Q\rangle$ measure the global
change of $S$ and coincide with the Willmore energies of the quaternionic
holomorphic line bundles $L^{-1}$ and $(L^\perp)^{-1}=\H^2/L$ which Kodaira
correspond, as in Section~\ref{sec:holom-curv-and-linear-systems}, to $L$ 
and $L^\perp$, respectively.

It can be shown, e.g.~\cite[Section~6]{BFLPP02}, that an immersed holomorphic
curve in $\HP^1$ is Willmore, i.e., a critical point of the Willmore energy,
if and only if its mean curvature sphere is harmonic. This is equivalent to
\begin{align*}
  d{*}A=0 \qquad\text{ which is again equivalent to }\qquad d{*}Q=0.
\end{align*}
Special examples of Willmore surfaces are twistor holomorphic curves which are
characterized by $A\equiv0$, see
Lemma~\ref{lem:twistor_hol_A=0}, and curves with $Q\equiv0$ for which 
the dual curve $L^\perp$ is twistor holomorphic.

\subsection{Willmore spheres in the 4--sphere}\label{sec:willmore_s4}
Bryant's classification~\cite{Br84} of Willmore spheres in the conformal
3--sphere has the following extension to the conformal 4--sphere
\cite{Ejiri,Musso,Montiel,BFLPP02}: an immersed Willmore sphere $L\subset
\H^2$ in the conformal 4--sphere $S^4=\HP^1$ is either
\begin{itemize}
	\item twistor holomorphic, which is equivalent to $A\equiv0$,
	\item its dual $L^\perp$ is twistor holomorphic, which is equivalent to
  		$Q\equiv0$,
	\item or it is Euclidean minimal,
\end{itemize}
where we call a holomorphic curve $L$ in $\HP^1$ \emph{Euclidean minimal} if
it is minimal in the Euclidean space $\H=\HP^1\setminus \{\infty\}$ for some
point $\infty\in\HP^1$. This is equivalent to the Möbius invariant condition
that all mean curvature spheres of $L$ intersect in one point. If a Euclidean
minimal curve in $\HP^1$ is immersed, the corresponding minimal immersion into
$\H=\HP^1\backslash \{\infty\}$ has planar ends~\cite{Br84} at the points
where the curve goes through $\infty$.

Theorem~\ref{T:equality_is_holomorphic_twistorlift_of_Ld} immediately implies
that the first two cases are soliton spheres with equality in the Pl\"ucker
estimate for the canonical linear system: if $A\equiv0$, then $L$ itself is
twistor holomorphic and equality in the Pl\"ucker estimate holds for the
canonical linear system of $(L^\perp)^{-1}=\H^2/L$. If $Q\equiv0$, then
$L^\perp$ is twistor holomorphic and equality holds for the canonical linear
system of $L^{-1}=\H^2/L^\perp$. It therefore remains to show that Euclidean
minimal spheres are soliton spheres.

\subsection{Euclidean minimal curves}\label{sec:eucl-minim-curv}
Let $L\subset\H^2$ be an immersed Euclidean minimal curve with mean curvature
sphere congruence $S$ and $\infty=[x]\in\HP^1$ the point contained in all mean
curvature spheres. Then $[S_px]=[x]$ for all $p\in M$ such that $\nabla Sx$
takes values in the subspace $[x]\subset \H^2$.  Using the type decomposition
$\nabla S=2{*}Q-2{*}A$, this implies
\begin{equation*}
  \im({*}Q)\subset [x]\subset \ker(*A),
\end{equation*}
because $L\subset \ker(Q)$, $\im(A) \subset L$ and $[x]=L_p$ at isolated $p\in
M$ only.  In particular $L$ is Willmore, because $d{*}Q=0$ which follows from
$d{*}Qx=\frac12 d (\nabla S)x=0$ and $d{*}Q\psi ={*}Q\land \delta\psi =0$ for
all $\psi\in\Gamma(L)$.

\begin{The}\label{T:willmore_spheres_are_soliton_spheres}
  Immersed Willmore spheres in $\HP^1$ are soliton spheres. 
\end{The}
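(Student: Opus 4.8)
The plan is to reduce the statement, via the classification of Willmore spheres recalled in Section~\ref{sec:willmore_s4}, to the case of Euclidean minimal spheres, and then to show that every Euclidean minimal sphere is a $2$--soliton sphere by producing a non-constant twistor holomorphic Darboux transform and invoking Theorem~\ref{the:2--solitons_DT}. An immersed Willmore sphere $L\subset\H^2$ is twistor holomorphic, has twistor holomorphic dual curve $L^\perp$, or is Euclidean minimal. In the first case equality in the Pl\"ucker estimate holds for the canonical linear system of $\H^2/L$ and in the second for that of $L^{-1}$, both by Theorem~\ref{T:equality_is_holomorphic_twistorlift_of_Ld}, so these are soliton spheres (the $1$--soliton spheres); only the Euclidean minimal case remains.

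So let $L\subset\H^2$ be an immersed Euclidean minimal sphere, which we may assume is not the round sphere (the $0$--soliton sphere). Let $\infty\in\HP^1$ be the point lying in all mean curvature spheres of $L$ and pass to the affine chart centred at $\infty$, in which $L$ is represented by a minimal conformal immersion $\hat f$ of $\CP^1$ minus finitely many points into $\H$, with planar ends at the punctures (Section~\ref{sec:willmore_s4}). Since $\hat f$ is minimal, one of its Gauss maps $N$ is a conformal map of $\CP^1$ into a round $2$--sphere; as a holomorphic curve in $\HP^1$ such a map coincides with that $2$--sphere, so its mean curvature sphere congruence is constant, its Hopf field $A$ vanishes, and $N$ is twistor holomorphic by Lemma~\ref{lem:twistor_hol_A=0}. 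The plan is to realise $N$, using the minimality of $\hat f$ (equivalently $dR'=0$ for its right normal) together with Lemma~\ref{lem:dt_affine}, as a Darboux transform $\hat f^\sharp=\hat f+hg^{-1}$ of $\hat f$ with $d\hat f\,g+dh=0$; by Lemma~\ref{lem:dt_twistor} the transforming function $g$ is then itself twistor holomorphic. Since $L$ is not a round sphere, $\hat f^\sharp=N$ is non-constant, so $L$ admits a non-constant twistor holomorphic Darboux transform, and Theorem~\ref{the:2--solitons_DT} shows that $L$ is a $2$--soliton sphere, hence a soliton sphere, which finishes the proof.

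Two steps carry the substance. First, one must check that the Gauss map $N$ genuinely is a Darboux transform in the sense of Section~\ref{sec:2soli-dt}, i.e.\ exhibit the prolongation $\psi^\sharp$ with $\nabla\psi^\sharp\in\Omega^1(L)$ explicitly; this amounts to solving $(dg)g^{-1}=(\hat f-N)^{-1}dN$ for $g$, and the integrability of the right-hand side is exactly the classical Christoffel--Darboux relation for minimal immersions --- this is the step where minimality enters essentially, a Gauss map of a non-minimal surface not being a Darboux transform. Second, and this is the main obstacle, $\hat f$ is not a compact immersion: one has to verify that $\psi^\sharp$, the Darboux transform $L^\sharp=\psi^\sharp\H\subset\H^2$, and its twistor lift extend smoothly (respectively continuously) through the punctures where $\hat f$ has its planar ends, so that $L^\sharp$ is an honest non-constant Darboux transform of the compact immersed curve $L$. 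This is a local order-of-vanishing computation in a holomorphic coordinate centred at an end --- planarity of the end being precisely the condition that makes the relevant vanishing orders agree --- completed by Riemann's removable singularity theorem, in the same spirit as the end analysis in the proof of Theorem~\ref{the:bryant_soliton} and in the Remark following Proposition~\ref{prop:2--solitons_DT}.
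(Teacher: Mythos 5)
Your reduction to the Euclidean minimal case is fine, but the central step of the remaining argument fails: the Euclidean Gauss map of a minimal immersion is \emph{not} a Darboux transform in the sense of Section~\ref{sec:2soli-dt}. Indeed, by Lemma~\ref{lem:dt_affine}, realising $N$ as $\hat f^\sharp=\hat f+hg^{-1}$ forces $dg=\omega g$ with $\omega=(\hat f-N)^{-1}dN$, and the integrability condition $d\omega=\omega\wedge\omega$ reduces after a short computation to $d\hat f\wedge(\hat f-N)^{-1}dN=0$. Since $*d\hat f=-d\hat f\,N$ and, by minimality ($dN'=0$), $*dN=-N\,dN$, evaluating this $2$--form on a pair $(X,JX)$ gives
\begin{equation*}
  d\hat f(X)\bigl(N-(\hat f-N)^{-1}N(\hat f-N)\bigr)(\hat f-N)^{-1}dN(X),
\end{equation*}
which vanishes only where $\hat fN=N\hat f$, i.e.\ where $\hat f$ and $N$ are parallel imaginary quaternions --- certainly not identically for a non--planar minimal surface. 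What is true is that $N$ is the \emph{Christoffel} transform of $\hat f$; the Christoffel and Darboux transformations are genuinely different, and there is no ``Christoffel--Darboux relation'' identifying them. The paper itself signals the obstruction: Theorem~\ref{the:special_darboux_of_soliton_spheres} shows that for a Euclidean minimal Willmore sphere the distinguished Darboux transform is the \emph{constant} point $\infty$. Euclidean minimal spheres do admit non--constant twistor holomorphic Darboux transforms (they are $2$--soliton spheres), but these are not the Gauss map, and your argument provides no way to produce one; so the appeal to Theorem~\ref{the:2--solitons_DT} is not justified.

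For contrast, the paper's proof avoids Darboux transforms altogether in this case. It places $\infty=[e_1]$ \emph{off} the image of $L$, so that $f$ is a globally defined compact immersion (no ends) which is \emph{not} minimal in that chart, forms the $1$--step forward B\"acklund transform $g$ with $dg=e_2^*(2{*}Ae_1)$, invokes Theorem~\ref{the:BT}~\ref{item:1-step-of-Euclidean-minimal}) to conclude that $g$ is twistor holomorphic precisely because some inversion $(f-a)^{-1}$ is Euclidean minimal, and then deduces from Theorem~\ref{T:equality_is_holomorphic_twistorlift_of_Ld} that the linear system $\Span\{\psi,\psi g\}$ has equality in the Pl\"ucker estimate, so that $L$ is a soliton sphere by the Euclidean characterization Theorem~\ref{T:Euclidean_Definition}. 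This choice of chart also sidesteps the puncture/end--extension analysis that your second step would otherwise have to carry out.
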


\begin{proof}
  As seen in Section~\ref{sec:willmore_s4} it suffices to show that every
  immersed Euclidean minimal sphere $L\subset \H^2$ whose Hopf field $A$ does
  not vanish identically is a soliton sphere. We fix a point
  $\infty=[e_1]\in\HP^1$ such that $L$ does \emph{not} go through $\infty$
  and, using the notation of Section~\ref{sec:char-terms-weierst}, write
  $L=\psi\H$ with $\psi=\tvector{f \\ 1}$ for $f\colon \CP^1\rightarrow \H$.
  Then $f\colon \CP^1\rightarrow \H$ is not minimal in the Euclidean space
  $\R^4=\H$.

  Because $\CP^1$ is simply connected, there is a globally defined 1--step
  forward B\"acklund transform $g\colon \CP^1\to\H$ of $f$ that satisfies
  \begin{equation*}
    dg=e_2^*(2{*}Ae_1)
  \end{equation*}
  (see Appendix~\ref{sec:1-step-baecklund}).  It is non--constant, because $f$
  is not Euclidean minimal.  By assumption there is $a\in \H$ such that
  $(f-a)^{-1}$ is Euclidean minimal.
  Theorem~\ref{the:BT}~\ref{item:1-step-of-Euclidean-minimal}) thus implies
  that $g$ is twistor holomorphic.  By
  Theorem~\ref{T:equality_is_holomorphic_twistorlift_of_Ld} this yields that
  the linear system $H=\Span\{\psi ,\psi g\}\subset H^0(L)$ of the Euclidean
  holomorphic structure on $L$ defined by $\infty$ has equality in the
  Pl\"ucker estimate.  Hence $L$ is a soliton sphere by
  Theorem~\ref{T:Euclidean_Definition}.
\end{proof}

Using Proposition~\ref{P:2-d_equality_bundle_smooth_S} and
Corollaries~\ref{Cor:minimal_vs_twistor} and~\ref{cor:min_r3}, the proof of
Theorem~\ref{T:willmore_spheres_are_soliton_spheres} gives rise to the
following representation of Willmore spheres in the conformal 3-- and
4--sphere in terms of twistor holomorphic curves.
Appendix~\ref{sec:weierstrass-representation-and-1-step-bt} explains how this
representation is related to the Weierstrass representation of minimal
surfaces.

\begin{Cor}\label{cor:willmore-directly-from-S}
  Let $f\colon \CP^1\to\H$ be a conformally immersed sphere. Suppose that
  neither $f$ nor $\bar f$  is twistor holomorphic. Then $f$ is Willmore if 
  and only if there is a twistor holomorphic curve $L\colon \CP^1\to\HP^1$ 
  with smoothly immersed mean curvature sphere congruence $S$ such that
  \begin{equation*}
    f=e_2^*(Se_1)+c,
  \end{equation*}
  for some $c\in\H$, $e_1\in\H^2\backslash\{0\}$, and
  $e_2^*\in(\H^2)^*\backslash\{0\}$ such that $e_2^*(e_1)=0$.

  The Willmore sphere $f$ takes values in $\R^3=\Im\H$ if and only if the
  twistor holomorphic curve $L$ is hyperbolic superminimal with respect to the
  hyperbolic geometry defined by the Hermitian form
  $\spr{\tvector{x_1\\x_2},\tvector{y_1\\y_2}}=\bar x_2 y_1 + \bar x_1 y_2$
  (see Appendix~\ref{subsec:superminimal}) and \[f=\spr{a,Sa}+c\] for some
  $c\in \Im\H$ and $a\in\H^2\backslash\{0\}$ with $\spr{a,a}=0$.
\end{Cor}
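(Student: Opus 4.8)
The plan is to leverage the proof of Theorem~\ref{T:willmore_spheres_are_soliton_spheres} almost verbatim, re-expressing everything in M\"obius invariant terms. Recall that the proof of the theorem produces, for a Euclidean minimal immersion $f=\sigma\circ L$ (where we have fixed $\infty=[e_1]$), a $1$--step forward B\"acklund transform $g$ satisfying $dg=e_2^*(2{*}Ae_1)$, and shows $g$ is twistor holomorphic whenever $f$ (equivalently $(f-a)^{-1}$, equivalently $L$) is Euclidean minimal. The first step is to observe that the primitive $g$ is only determined up to an additive constant $c\in\H$, and that (by inspecting the B\"acklund construction in Appendix~\ref{sec:1-step-baecklund}) one may in fact write $g=e_2^*(Se_1)+c$: the $1$--form $dg=e_2^*(2{*}Ae_1)$ is $d\big(e_2^*(Se_1)\big)$ because $\nabla S=2{*}Q-2{*}A$ and the $2{*}Q$--part annihilates $e_1$ in the minimal case (since $\im({*}Q)\subset[x]=[e_1]$ and $e_2^*$ kills $[e_1]$), while $\nabla e_1=0$ and $\nabla e_2^*=0$ for the flat connection attached to $\infty$. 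This already identifies the formula $f=e_2^*(Se_1)+c$ with $L=$ twistor holomorphic (playing the role of ``$g$'') and $f$ the Willmore sphere (playing the role of the original immersion, after the inversion/translation bookkeeping in Theorem~\ref{the:BT}).

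The second step is the reverse direction and the smoothness of $S$. Here the point is Proposition~\ref{P:2-d_equality_bundle_smooth_S}: starting from a twistor holomorphic curve whose mean curvature sphere congruence extends smoothly through the branch points, one gets a $2$--dimensional base point free linear system with equality in the Pl\"ucker estimate on $L=\H^2/L^d$, and Theorem~\ref{T:Euclidean_Definition} together with Lemma~\ref{L:Equality_on_the_ladder} (run backwards, integrating rather than differentiating --- legitimate since $\CP^1$ is simply connected) turns this into the statement that $f=e_2^*(Se_1)+c$ is a soliton sphere whose dual B\"acklund-type partner has $A\not\equiv0$, hence is a non-twistor-holomorphic Willmore sphere. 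One must check that the immersion obtained is genuinely Willmore and not accidentally twistor holomorphic: this is exactly the hypothesis that neither $f$ nor $\bar f$ is twistor holomorphic, which by Section~\ref{sec:willmore_s4} forces $f$ into the Euclidean minimal case, closing the loop. The extra input beyond Theorem~\ref{T:willmore_spheres_are_soliton_spheres} is Corollary~\ref{Cor:minimal_vs_twistor} (relating Euclidean minimality of $f$ to twistor holomorphicity of its B\"acklund transform, which is what Theorem~\ref{the:BT}\ref{item:1-step-of-Euclidean-minimal}) encodes) and the smoothness half of Proposition~\ref{P:2-d_equality_bundle_smooth_S}.

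The third step is the $\R^3$ refinement. If $f$ takes values in $\Im\H$, then by Section~\ref{sec:willmore_s4} and Bryant's classification all the data live in a fixed round $3$--sphere, which we realize as the null locus of $\spr{\tvector{x_1\\x_2},\tvector{y_1\\y_2}}=\bar x_2 y_1+\bar x_1 y_2$. The formula $f=e_2^*(Se_1)+c$ must then be rewritten using a null vector $a\in\H^2$, $\spr{a,a}=0$, in place of the pair $(e_1,e_2^*)$: concretely one takes $e_2^*=\spr{a,\cdot\,}$ and $e_1=a$, so that $e_2^*(e_1)=\spr{a,a}=0$ as required, and $e_2^*(Se_1)=\spr{a,Sa}$. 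The content is that, under the $\Im\H$ constraint, the twistor holomorphic curve $L$ is not merely twistor holomorphic in the conformal $4$--sphere sense but \emph{hyperbolic superminimal} with respect to the hyperbolic geometry of this Hermitian form --- this is precisely Corollary~\ref{cor:min_r3}, which identifies minimal surfaces in $\R^3$ with hyperbolic superminimal twistor curves. One then just records $\spr{a,a}=0$ and $c\in\Im\H$ by tracking reality.

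The main obstacle I expect is bookkeeping rather than conceptual: getting the relation between ``$f$ itself'' and ``the inversion/translation of $f$ that the B\"acklund theorem naturally acts on'' exactly right, so that the constant $c$, the vector $e_1$ and the functional $e_2^*$ appear in the stated positions (and, in the $\R^3$ case, so that the single null vector $a$ really does the work of both). A secondary subtlety is verifying that smoothness of $S$ through branch points is both \emph{necessary} (it follows from the construction, since $L$ comes from a genuine immersed Willmore sphere whose mean curvature sphere is smooth everywhere) and \emph{sufficient} (this is the converse half of Proposition~\ref{P:2-d_equality_bundle_smooth_S}); without it the ``if and only if'' would fail, because a generic twistor holomorphic curve need not produce an immersion at all.
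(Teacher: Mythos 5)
Your list of ingredients --- the proof of Theorem~\ref{T:willmore_spheres_are_soliton_spheres} together with Proposition~\ref{P:2-d_equality_bundle_smooth_S} and Corollaries~\ref{Cor:minimal_vs_twistor} and~\ref{cor:min_r3} --- is exactly what the paper points to (it gives no separate proof). But your Step~1 derivation of the central formula is wrong on two counts. First, you identify the point $[x]$ lying on all mean curvature spheres of the Euclidean minimal curve with $\infty=[e_1]$. In the proof of Theorem~\ref{T:willmore_spheres_are_soliton_spheres} these are different by construction: $[e_1]$ is chosen off the image of $L$, so $f$ is \emph{not} minimal in that chart, and the special point is $[\tvector{a\\1}]$ for some $a\in\H$, on which $e_2^*$ takes the value $1$, not $0$. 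Hence $e_2^*({*}Qe_1)$ does not vanish; by the formulae of Appendix~\ref{sec:mean-curv-sphere-affine} one has $e_2^*(2{*}Qe_1)=w-dH$ and $d\bigl(e_2^*(S_fe_1)\bigr)=-dH_f\neq w=dg$. (If $[e_1]$ \emph{were} the special point, then $w=e_2^*(2{*}Ae_1)\equiv0$ and $g$ would be constant.) Second, even a repaired version of this computation yields the wrong identity: it expresses the B\"acklund transform $g$ through the mean curvature sphere of $f$, whereas the corollary needs $f$ expressed through the mean curvature sphere $S$ of the \emph{twistor holomorphic} curve. The correct mechanism is Theorem~\ref{the:BT}~\ref{item:forward=backward}): since $g$ is twistor holomorphic, $w_g=e_2^*(2{*}A_ge_1)\equiv0$, so $f+H_g$ (a forward B\"acklund transform \emph{of $g$}) is constant, and $-H_g=e_2^*(S_ge_1)$ by the matrix formula for $S$; this is precisely Corollary~\ref{Cor:minimal_vs_twistor}, which you cite but describe only as a restatement of Theorem~\ref{the:BT}~\ref{item:1-step-of-Euclidean-minimal}).

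Step~2 also contains a circularity: you invoke the classification of Section~\ref{sec:willmore_s4} to ``force $f$ into the Euclidean minimal case'', but that classification applies to immersed \emph{Willmore} spheres and cannot be used before Willmore--ness of $f$ is established; likewise ``soliton sphere, hence Willmore'' is a non sequitur, since soliton spheres form a strictly larger class. The converse should run: $f=e_2^*(Se_1)+c$ with $S$ the mean curvature sphere congruence of a twistor holomorphic curve means, by Corollary~\ref{Cor:minimal_vs_twistor}, that all mean curvature spheres of $\tvector{f\\1}\H$ pass through $[\tvector{c\\1}]$, i.e.\ the curve is Euclidean minimal, and Euclidean minimal curves are Willmore by the computation in Section~\ref{sec:eucl-minim-curv}. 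Your Step~3 (the $\R^3$ refinement via Richter's theorem and Corollary~\ref{cor:min_r3}) is in line with the paper.
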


\begin{Rem}\label{rem:willmore_energy}
  The Willmore energy of a Willmore sphere $f$ obtained as in
  Corollary~\ref{cor:willmore-directly-from-S} from a twistor holomorphic
  curve $L$ is
  \begin{equation*}
    W(f)=4\pi (2d-2-b),
  \end{equation*}
  where $d=-\deg(L)$ is the degree of $L$ and $b$ its branching degree,
  cf.~the proof of Lemma~\ref{lem:degree_d_spin}. If $f$ takes values in
  $\Im(\H)$, then $b=d-3$ and hence $W(f)=4\pi (d+1)$.
\end{Rem}

\begin{Exa}[Willmore spheres in $S^3$ with Willmore energy $16\pi$]%
  \label{exa:willmor_sphere_16pi}
  As an application of Corollary~\ref{cor:willmore-directly-from-S} we derive
  a formula for Willmore spheres in the conformal 3--sphere with Willmore
  energy $16\pi$, the lowest critical value of the Willmore energy for spheres
  in $S^3$ above the minimum~$4\pi$.

  Remark~\ref{rem:willmore_energy} implies that $d=3$ and $b=0$ for Willmore
  spheres in $S^3$ with Willmore energy $16\pi$. By
  Proposition~\ref{P:superminimal_polar_to_time_space_light_like_vector}, the
  twistor projection $L$ of the holomorphic curve $\hat
  L=[\varphi]\colon\CP^1\to\CP^3$ given by
  \[
    \varphi:=e_1 z +e_1\jj \tfrac16z^3 - e_2+e_2\jj\tfrac12z^2
  \]
  is hyperbolic minimal with respect to the Hermitian form in
  Corollary~\ref{cor:willmore-directly-from-S}, because in the basis $\hat
  e_1,\ldots,\hat e_6$ of
  Appendix~\ref{sec:weierstrass-representation-and-1-step-bt} the curve $\hat
  L$ has the tangent line congruence $\hat L_1=[\hat S]\colon\CP^1\to
  Q^4=\theset{[v]\in P(\Lambda^2(\H^2,\ii))}{v\land v=0}$ given by
   \[ \hat S=\varphi\land\varphi' = \tfrac1{24}
    (0,-12\ii z^2,12-z^4,-12\ii-\ii z^4,8z^3,-24 z)
  \]
  which is polar to the space like vector $\hat e_1$.

  Figure~\ref{fig:purple_red} shows the Willmore spheres $f=\spr{a,Sa}$ in
  $\R^3$ obtained for $a=e_2+e_2\jj$ (left) and $a=-e_1+e_2$ (right).  The
  left image in Figure~\ref{fig:blue_yellow} shows $f=\spr{a,Sa}$ with
  $\varphi$ replaced by $\varphi+e_2\jj3z$ and $a=e_2+e_2\jj$; the right
  images is obtained for $a=-e_1+e_2$ when $\varphi$ is replaced by
  $\varphi+e_2\jj7z$.
   
    \newcommand{\shorter}{\hspace{-4cm}\ }
  
  	\begin{figure}\label{fig:purple_red}
  		\centering\shorter
    	\includegraphics{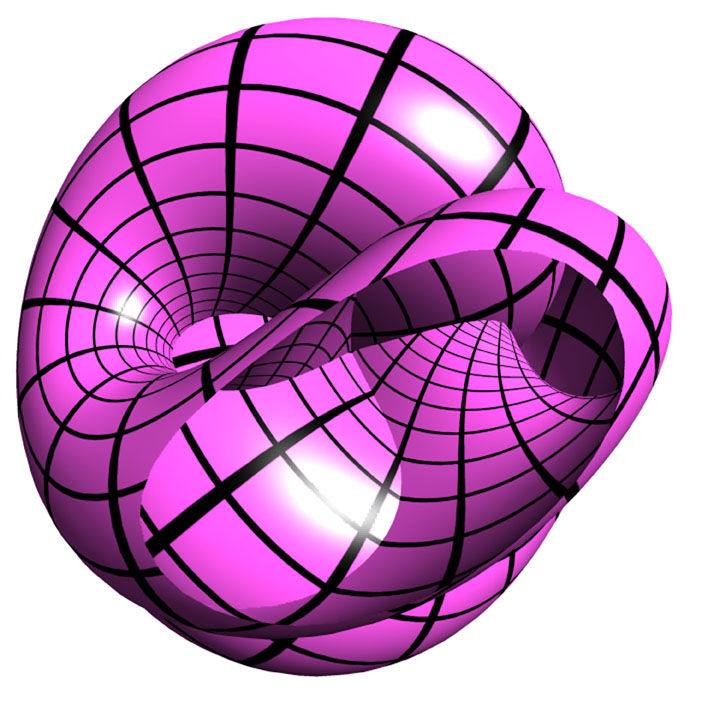}\qquad
    	\includegraphics{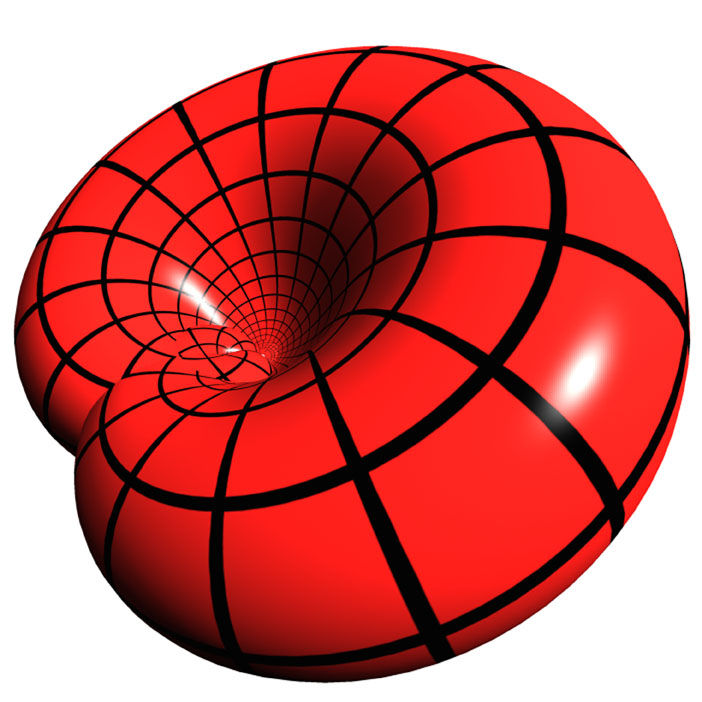}\shorter
    
    	\caption{Willmore spheres with Willmore energy $16\pi$.}
  	\end{figure}

   	\begin{figure}\label{fig:blue_yellow}
  		\centering
    	\includegraphics{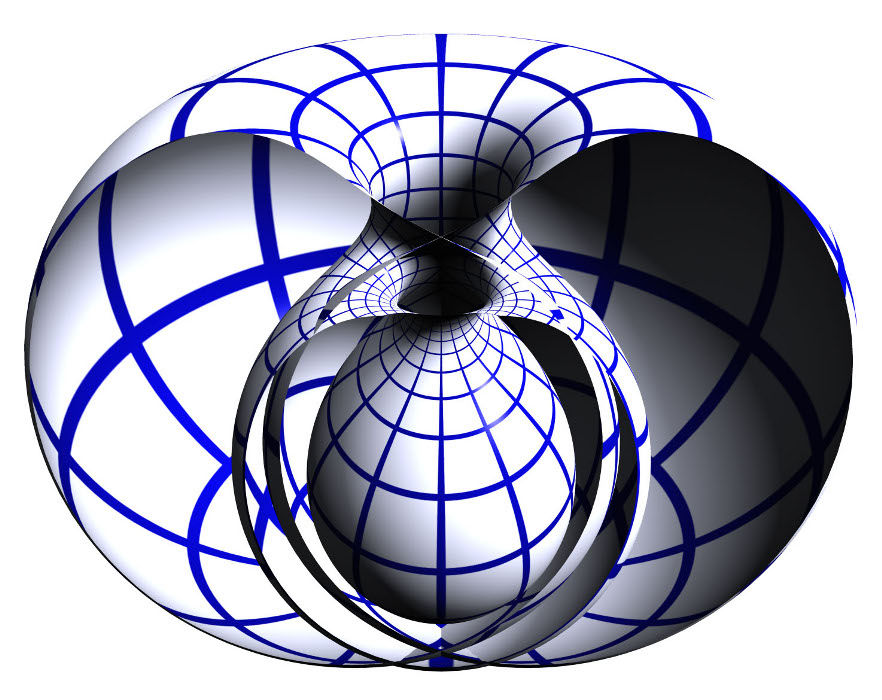}\qquad
    	\includegraphics{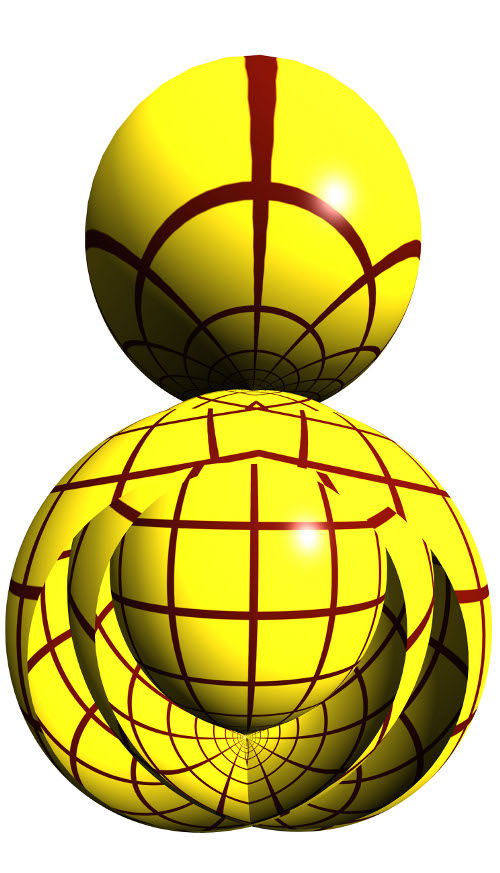}
    	    
    	\caption{Willmore spheres with Willmore energy $16\pi$.}
  	\end{figure}
\end{Exa}

\section{Willmore numbers of soliton spheres in 3--space}\label{sec:sos_s3}

In \cite{Br88} Bryant shows that the possible Willmore energies $W=\int
|\mathcal{H}|^2$ of Willmore spheres in $\R^3$ are $W=4\pi d$ with
$d\in(\N\setminus\{0,2,3,5,7\})$.  The same quantization holds for Bryant
spheres with smooth ends~\cite{SmoothEnds} and for Taimanov soliton spheres,
Corollary~\ref{cor:taimanov_quant}. In the present section we show that this
quantization more generally holds for all immersed soliton spheres in
3--space.  The main ingredient in the proof of this is
Theorem~\ref{the:special_darboux_of_soliton_spheres} which says that all
soliton spheres in 3--space with Willmore energy $W\leq 32\pi$ are Willmore
spheres or Bryant spheres with smooth ends.

\subsection{Equality in the Plücker estimate for spin bundles over
  $\CP^1$}\label{sec:equality_spin}
In order to investigate soliton spheres in the conformal 3--sphere we apply
the characterization in terms of Euclidean holomorphic line bundles given in
Section~\ref{sec:char-terms-weierst}. The advantage of the Euclidean point of
view is that, if a conformal immersion $L\subset \H^2$ into $\HP^1$ takes
values in a totally umbilic 3--sphere $S^3\subset \HP^1$, the Euclidean
holomorphic structure on $L$ defined by a point $\infty \in S^3$ not on $L$
makes $L$ into a quaternionic spin bundle, see
Section~\ref{sec:gener-weierstr-repr-R3}. 

Let $L=\psi\H$, $\psi=\tvector{f\\1}$ with affine representation
$f\colon\CP^1\to\Im\H$ and $\infty =\tvector{1\\0}\H$.  As explained in
Section~\ref{sec:char-terms-weierst}, the Euclidean holomorphic line bundle
corresponding to $\infty$ is $L$ equipped with the unique holomorphic
structure for which $\psi$ is holomorphic. Since $f$ takes values in
$\Im(\H)$, this quaternionic holomorphic line bundle $L$ is spin, i.e.,
$KL^{-1}\cong L$, and
\begin{equation*}
  d f=(\psi,\psi),
\end{equation*} 
see Theorem~\ref{the:weisterass_3d}. The Willmore energy of the Euclidean
holomorphic structure on $L$ is then $W(L)=\int |\mathcal{H}|^2$, see
Section~\ref{sec:gener-weierstr-repr-R4}.  In the following we call $W(L)$ the
Willmore energy of a soliton sphere in the conformal 3--sphere. Its relation
to the Willmore energy of the M\"obius invariant holomorphic line bundle
$\H^2/L$ is $W(L)=W(\H^2/L) +4\pi$, see Section~\ref{sec:willmore-energy}.

By Theorem~\ref{T:Euclidean_Definition}, the conformal immersion $f$ is a
soliton sphere if and only if the spin bundle $L$ admits a base point free
linear system $H\subset H^0(L)$ that contains $\psi$ and has equality in the
Plücker estimate.  Equality in the Plücker estimate for an
$(n+1)$--dimensional linear system $H\subset H^0(L)$ of a quaternionic spin
bundle $L$ over $\CP^1$ reads
\begin{equation*}
  \label{eq:equality_spin_bundle}	
  W(L)=4\pi\left[(n+1)^2+|\ord H|\right],
\end{equation*}
because $KL^{-1}\cong L$ and hence $\deg(L)=\frac12\deg(K)=-1$.

\subsection{The possible gaps in the sequence of Willmore
  numbers}\label{sec:only-20pi-28pi}
Examples of soliton spheres in the conformal 3--sphere with Willmore energy
$W(L)\in4\pi(\N\setminus\{0,2,3,5,7\})$ can be found in each of the special
classes of soliton spheres discussed in Sections~\ref{sec:Dirac_Taimanov},
\ref{sec:examples-ii-bryant_spheres}, and~\ref{sec:Willmore}, that is, among
immersed Taimanov soliton spheres, Corollary~\ref{cor:taimanov_quant}, Bryant
spheres with smooth ends~\cite{SmoothEnds}, and Willmore spheres in the
conformal 3--sphere, see~\cite{Br88}.  It is therefore sufficient to show that
$W\in 4\pi\{2,3,5,7\}$ does not occur as Willmore energy of immersed soliton
spheres in 3--space.

\begin{Lem}\label{lem:below_16pi}
  The only soliton sphere in 3--space with $W(L)<16\pi$ is the round sphere
  for which $W(L)=4\pi$.
\end{Lem}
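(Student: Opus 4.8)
The plan is to combine the rigidity of the equality case in the quaternionic Plücker estimate for spin bundles over $\CP^1$ (Section~\ref{sec:equality_spin}) with the classical Willmore inequality for spheres in $\R^3$.

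First I would unwind the Euclidean description of a soliton sphere in $3$--space from Section~\ref{sec:equality_spin}: writing $L=\psi\H$ with $\psi=\tvector{f\\1}$ and equipping $L$ with the Euclidean holomorphic structure for which $\psi$ is holomorphic, the bundle $L$ becomes a quaternionic spin bundle over $\CP^1$ with $\deg L=-1$, and by Theorem~\ref{T:Euclidean_Definition} the immersion $f$ is a soliton sphere precisely when $L$ carries a base point free linear system $H\subset H^0(L)$ with $\psi\in H$ for which equality holds in the Plücker estimate, i.e.\ $W(L)=4\pi\big[(n+1)^2+|\ord H|\big]$ where $n+1=\dim_\H H$. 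Since $W(L)=\int_{\CP^1}|\mathcal H|^2\,d\sigma$ depends only on $f$ (see Section~\ref{sec:gener-weierstr-repr-R4}), this value is the same for every admissible $H$.

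Next I would split on $n$. If $n\ge 1$, then $|\ord H|\ge 0$ forces $W(L)\ge 4\pi(n+1)^2\ge 16\pi$, contradicting $W(L)<16\pi$; hence $n=0$, so $H=\psi\H$. Because $\psi=\tvector{f\\1}$ vanishes nowhere, $\ord_p H=0$ for every $p$, whence $|\ord H|=0$ and equality reads $W(L)=4\pi$. Finally, to identify $f$ I would use $W(L)=\int_{\CP^1}|\mathcal H|^2\,d\sigma=4\pi$ together with Gauss--Bonnet $\int_{\CP^1}G\,d\sigma=4\pi$, giving $\int_{\CP^1}\big(|\mathcal H|^2-G\big)\,d\sigma=0$; the integrand equals a quarter of the squared difference of the principal curvatures, hence is non--negative and vanishes only at umbilics, so $f$ is totally umbilic and therefore a round sphere. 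Conversely, the round sphere is a soliton sphere with $W(L)=4\pi$: applying the quaternionic Plücker formula (Section~\ref{sec:plucker-formula}) to the one--dimensional system $\psi\H$ yields $W(L)-W((L^d)^{-1})=4\pi$, so $W(L)=4\pi$ is exactly the equality case.

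There is no substantial obstacle here; the one point deserving care is the observation that as soon as the spanning linear system has quaternionic dimension $\ge 2$ the Plücker--equality identity already forces $W(L)\ge 16\pi$, which is precisely what collapses every case with $W(L)<16\pi$ to the trivial one $n=0$, $W(L)=4\pi$.
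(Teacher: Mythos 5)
Your proposal is correct and follows essentially the same route as the paper: equality in the Pl\"ucker estimate for a spin bundle over $\CP^1$ reads $W(L)=4\pi[(n+1)^2+|\ord H|]$, so $W(L)<16\pi$ forces $n=0$, base point freeness kills the Weierstrass divisor, and $W(L)=4\pi$ identifies the round sphere. The only cosmetic differences are that you spell out the classical step ``$W=4\pi$ implies round'' via Gauss--Bonnet where the paper simply cites Willmore, and you add a (harmless, correct) converse check that the round sphere attains equality.
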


\begin{proof}
  If $W(L)< 16\pi$, then $n=0$ and the linear system $H\subset H^0(L)$ above
  is 1--dimensional. Because it is base point free, it has no Weierstrass
  points.  Thus $W(L)=4\pi$ which implies that the immersion is
  the round sphere~\cite{W}.
\end{proof}

This shows that the Willmore energies $8\pi$ and $12\pi$ do not occur so that
it remains to check that $20\pi$ and $28\pi$ are impossible.

\subsection{Soliton spheres in 3--space related to superminimal
  curves}\label{sec:relat-superm-curv}
If $L$ is the spin bundle of a soliton sphere in 3--space with $16\pi\leq
W(L)\leq 32\pi$, the base point free linear system $H$ with equality in the
quaternionic Plücker estimate in Section~\ref{sec:equality_spin} is
2--dimensional and ``full'' in the sense that $H=H^0(L)$.  In particular we
are in the situation described by
Proposition~\ref{P:2-d_equality_bundle_smooth_S}, i.e., the dual curve $L^d$
of $H\cong \H^2$ is twistor holomorphic, extends through the Weierstrass
points of $H$, and has a mean curvature sphere congruence which is
everywhere defined and smooth.

\begin{Lem}\label{lem:degree_d_spin}
  Let $L^d\subset \H^2$ be a twistor holomorphic curve over $\CP^1$ with mean
  curvature sphere congruence that is everywhere defined and smooth.  Then
  $L\cong \H^2/L^d$ has the degree $\deg(L)=-1$ of a spin bundle if and only
  if
  \[ 
  	|b(L^d)|=d-3, 
  \] 
  where $|b(L^d)|$ is the branching order and $d=-\deg(L^d)$ the degree of the
  holomorphic curve $L^d$. The Willmore energy $W(L)$ of $L$ is then
  \[ W(L) =4\pi (d+1).\]
\end{Lem}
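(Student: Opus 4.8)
The plan is to read off both $\deg(L)$ and $W(L)$ from the degree $d$ and the branching order $|b(L^d)|$ of the curve $L^d$, using the degree formula for branch points of a holomorphic curve together with the quaternionic Pl\"ucker formula applied to the canonical linear system $\H^2\subset H^0(L)$.

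First I would settle the equivalence by a degree count. Since $L^d$ is a holomorphic curve its derivative $\delta\colon L^d\to K(\H^2/L^d)=KL$ is complex linear ($*\delta=\delta J$), and, being nontrivial, its zero divisor over $\CP^1$ is by definition the branching divisor $b(L^d)$. Comparing degrees of the underlying complex line bundles gives $\deg(L^d)+|b(L^d)|=\deg(KL)=\deg(K)+\deg(L)=-2+\deg(L)$; with $d=-\deg(L^d)$ this reads
\[ \deg(L)=2-d+|b(L^d)|, \]
so $\deg(L)=-1$ holds if and only if $|b(L^d)|=d-3$.

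Next I would identify $|\ord H|$ with $|b(L^d)|$ for the canonical linear system $H:=\H^2\subset H^0(L)$, whose dual curve (in the Weierstrass-flag sense) is $L^d$. As $H$ is base point free we have $n_0(p)=0$, hence $\ord_p(H)=n_1(p)-1$, where $n_1(p)$ is the common vanishing order at $p$ of the sections $\pi(v)$ with constant $v\in L^d_p$. Choosing near a branch point $p$ of order $k$ a lift $\psi$ of $L^d$ in the normal form $\psi(z)=v_0+z^{k+1}w+O(z^{k+2})$ with $v_0$ spanning $L^d_p$ and $w\notin L^d_p$, one gets $\pi(v_0)(z)=-z^{k+1}w-O(z^{k+2})\bmod L^d_z$, which has a zero of order exactly $k+1$ since $w\notin L^d_z$ for $z$ near $0$. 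Thus $\ord_p(H)=k=b_p(L^d)$ and $|\ord H|=|b(L^d)|$; in particular the Weierstrass points of $H$ are precisely the branch points of $L^d$, so the hypothesis that the mean curvature sphere congruence of $L^d$ be smooth through the branch points is exactly what is needed to invoke Proposition~\ref{P:2-d_equality_bundle_smooth_S}.

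By Proposition~\ref{P:2-d_equality_bundle_smooth_S} the linear system $H=\H^2\subset H^0(L)$ has equality in the Pl\"ucker estimate, so with $n+1=2$ and $g=0$,
\[ W(L)=4\pi\bigl[2(1-\deg(L))+|\ord H|\bigr]=4\pi\bigl[2(1-\deg(L))+|b(L^d)|\bigr] \]
(equivalently, apply the Pl\"ucker formula and use $W((L^d)^{-1})=0$, which holds by Lemma~\ref{lem:twistor_hol_A=0} since $L^d$ is twistor holomorphic). Substituting $\deg(L)=2-d+|b(L^d)|$ gives $W(L)=4\pi(2d-2-|b(L^d)|)$, and when $\deg(L)=-1$, i.e.\ $|b(L^d)|=d-3$, this becomes $W(L)=4\pi(d+1)$. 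The two degree counts are routine; the one step demanding genuine care is the local normal-form computation identifying $\ord_p(H)$ with the branch order $b_p(L^d)$, and — throughout — keeping the two opposite sign conventions for ``degree'' (holomorphic curve versus complex quaternionic line bundle) consistent.
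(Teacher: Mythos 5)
Your proof is correct and follows essentially the same route as the paper's: the degree count via the complex holomorphicity of $\delta\colon L^d\to KL$, the identification $|\ord H|=|b(L^d)|$, and equality in the Pl\"ucker estimate via Proposition~\ref{P:2-d_equality_bundle_smooth_S}. The only difference is that you carry out the local normal-form computation identifying $\ord_p(H)$ with $b_p(L^d)$ explicitly, where the paper simply cites Proposition~\ref{P:Branch_points_of_holomorphic_curves}, whose proof is the same calculation.
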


\begin{proof}
  Because the derivative $\delta \colon L^d\to KL$ of $L^d$ is complex
  holomorphic \cite[Section~13.2]{BFLPP02} one gets
  \begin{equation*}
    |b(L^d)|=\deg(KL)-\deg(L^d).
  \end{equation*}
  Hence, $\deg(L)=-1$ if and only if $|b(L^d)|=d-3$.  By
  Proposition~\ref{P:Branch_points_of_holomorphic_curves}, the Weierstrass
  order of $H=H^0(L)$ coincides with the branching order $b(L^d)$ of~$L^d$.
  Equality in the Plücker estimate for $H$ thus yields
  \begin{equation*}
    W(L)=4\pi\left[4+|b(L^d)|\right].
  \end{equation*}
\end{proof}

In order to derive a condition on the dual curve $L^d$ which guaranties that
$L$ is spin, i.e., $KL^{-1}\cong L$, we make use of a bundle isomorphism
induced by~$L^d$: let $L^d\subset H\cong \H^2$ be a twistor holomorphic curve
with everywhere defined and smooth mean curvature sphere congruence $S$, cf.\
Proposition~\ref{P:2-d_equality_bundle_smooth_S}.  Assuming that the Willmore
energy $W(L)$ of $L\cong H/L^d$ is not zero, the Hopf field $Q$ of $L^d$ does
not vanish identically and defines a unique holomorphic curve
\begin{displaymath}
	\tilde L\subset \ker(Q^*)\subset H^*
\end{displaymath}
called the 2--step forward Bäcklund transformation of $(L^d)^\perp$, see
Appendix~\ref{sec:bt}.  Because $L^d\subset\ker(Q)$ and $*Q=-S Q=Q S$, the
bundle homomorphism $*Q^*$ maps the trivial $H^*$--bundle to the bundle
$KL^{-1}$ of $(1,0)$--forms with values in $L^{-1}\cong(L^d)^\perp\subset
H^*$. This gives rise to a quaternionic line bundle homomorphism
\begin{align*}
  *Q^*\colon H^*/ \tilde L\to KL^{-1}  
  \qquad\quad (x\mod \tilde L) \mapsto
  {*}Q^*x
\end{align*}
which is complex linear with respect to the complex structure on $H^*/\tilde
L$ induced by $-S^*$ and the usual complex structure induced by $S^*$
on $L^{-1}\cong(L^d)^\perp\subset H^*$. 

\begin{Lem}\label{lem:*Q*hol} 
  Let $L^d\subset H\cong \H^2$ be a twistor holomorphic curve with everywhere
  defined and smooth mean curvature sphere congruence $S$ and non--trivial
  $Q$. Denote by $\tilde L\subset \ker(Q^*)$ the 2--step Bäcklund transform
  of~$(L^d)^\perp$.  Then $*Q^*\colon H^*/ \tilde L\to KL^{-1}$ is a
  holomorphic bundle homomorphism, where $KL^{-1}$ is equipped with the
  holomorphic structure paired with that on $L=H/L^d$
  (Section~\ref{sec:gener-weierstr-repr-R4}) and $H^*/ \tilde L$ is equipped
  with the unique holomorphic structure with respect to which all projections
  of constant sections of $H^*$ are holomorphic and whose complex structure is
  induced by $-S^*$.
\end{Lem}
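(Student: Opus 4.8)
The plan is to exploit that $KL^{-1}$ carries the holomorphic structure \emph{paired} with the one on $L=H/L^d$ (Section~\ref{sec:gener-weierstr-repr-R4}): a section $\alpha$ of $KL^{-1}$ is holomorphic precisely when $d\,(\alpha,\psi)=0$ for every local holomorphic section $\psi$ of $L$, where $(\,\cdot\,,\,\cdot\,)$ is the evaluation pairing between $L^{-1}\cong(L^d)^\perp\subset H^*$ and $L=H/L^d$. Since the holomorphic structure on $H^*/\tilde L$ is the one for which all projections of constant sections of $H^*$ are holomorphic, since these frame $H^*/\tilde L$ locally, and since $*Q^*$ is complex linear for the complex structures described just before the lemma, it is enough to show that $*Q^*x$ is a holomorphic section of $KL^{-1}$ for each \emph{constant} $x\in\Gamma(H^*)$; that $*Q^*\colon H^*/\tilde L\to KL^{-1}$ is a holomorphic bundle homomorphism then follows by the Leibniz rule. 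Thus the whole statement reduces to $d\,(*Q^*x,\psi)=0$.

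Two observations turn this into a short computation. First, $L^d$ is twistor holomorphic, so by Lemma~\ref{lem:twistor_hol_A=0} the Hopf field $A$ of the mean curvature sphere congruence $S$ of $L^d$ vanishes, and the structure equation $\nabla S=2({*}Q-{*}A)$ becomes $\nabla S=2{*}Q$; as $\nabla$ is the trivial connection on $H\cong\H^2$, this says $dS=2{*}Q$. Second, away from the isolated branch points of $L^d$ the curve is immersed, so a local holomorphic section $\psi$ of $L=H/L^d$ has a prolongation $\psi^\sharp\in\Gamma(H)$ with $\pi\psi^\sharp=\psi$ and $\eta:=\nabla\psi^\sharp=d\psi^\sharp\in\Omega^1(L^d)$ (Section~\ref{sec:2soli-dt}). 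In particular $d\eta=0$, and $S\eta\in\Omega^1(SL^d)=\Omega^1(L^d)$ takes values in $L^d\subset\ker Q$.

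Using $\psi^\sharp$ as a lift of $\psi$ — the result is lift independent because $*Q^*x$ takes values in $(L^d)^\perp$ — and writing $x$ also for the evaluation functional, I would compute, with $dS=2{*}Q$ and the Leibniz identity $(dS)\psi^\sharp=d(S\psi^\sharp)-S\eta$,
\begin{align*}
  (*Q^*x,\psi)=(*Q^*x)(\psi^\sharp)
  &=x\bigl(({*}Q)\psi^\sharp\bigr)=\tfrac12\,x\bigl((dS)\psi^\sharp\bigr)\\
  &=\tfrac12\,d\bigl(x(S\psi^\sharp)\bigr)-\tfrac12\,x(S\eta),
\end{align*}
and then, since $x$ is constant and $d\eta=d^2\psi^\sharp=0$,
\begin{equation*}
  d\,(*Q^*x,\psi)=-\tfrac12\,x\bigl((dS)\wedge\eta\bigr)
  =-\,x\bigl(({*}Q)\wedge\eta\bigr)=0,
\end{equation*}
because $\bigl(({*}Q)\wedge\eta\bigr)(Y,Z)=Q(Y)\bigl(S\eta(Z)\bigr)-Q(Z)\bigl(S\eta(Y)\bigr)=0$ as $S\eta$ is $L^d$--valued and $L^d\subset\ker Q$. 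Hence $*Q^*x$ is holomorphic off the branch points of $L^d$; since $S$, and therefore $*Q^*x$, is globally smooth and the holomorphic structure of $KL^{-1}$ is a smooth first order operator, $*Q^*x$ is holomorphic everywhere.

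The sign and type bookkeeping around ${*}Q=QS=-SQ$, $(*Q)^*={*}(Q^*)$, and the evaluation pairing is routine. The one substantive choice is to take the prolongation $\psi^\sharp$ rather than an arbitrary lift of $\psi$: this is exactly what makes $\eta$ take values in $L^d\subset\ker Q$, so that $({*}Q)\wedge\eta$ vanishes identically (with a general lift one only gets $S\eta\equiv{*}\eta$ modulo $L^d$, and the analogous term does not obviously vanish). I expect the extension of holomorphicity across the branch points of $L^d$ to be the only point calling for a remark, and it is covered by the smoothness of the mean curvature sphere congruence that is assumed in the hypothesis.
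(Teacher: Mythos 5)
Your main computation is correct and is in substance the same argument the paper gives: everything rests on the fact that for a twistor holomorphic curve $\nabla S=2{*}Q$ with $\nabla$ flat, hence $d{*}Q=0$, combined with $L^d\subset\ker(Q)$, and on the characterization of the paired holomorphic structure on $KL^{-1}$ by closedness of evaluations against holomorphic sections of $L=H/L^d$. The paper takes a shorter route at exactly the step where you work hardest: instead of prolonging an arbitrary local holomorphic section $\psi$ and tracking the correction term $x(S\eta)$, it evaluates ${*}Q^*x$ on the \emph{constant} sections $a\in H$, whose projections form the base point free canonical linear system of $L$, so that $d\bigl(({*}Q^*x)(a)\bigr)=x\bigl((d{*}Q)a\bigr)=0$ with no further work. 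Your detour through the prolongation is nevertheless valid: the lift independence and the vanishing of $x({*}Q\wedge\eta)$ are argued correctly, and the extension across the branch points of $L^d$ by smoothness is fine.

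What you omit is the first half of the paper's proof: the well--definedness of the holomorphic structure on $H^*/\tilde L$. Declaring the projections of constant sections of $H^*$ to be holomorphic determines a quaternionic holomorphic structure whose complex structure is induced by $-S^*$ only if a compatibility condition holds, and verifying this is part of proving the lemma. If $\tilde L$ is constant one simply takes $D=\nabla''$ for the induced flat connection; but if $\tilde L$ is non--constant one must check that $\tilde L$ is a holomorphic curve for the complex structure $-S^*$, i.e.\ that ${*}\tilde\delta=-S^*\tilde\delta$, so that $H^*/\tilde L$ is the canonical holomorphic line bundle of $\tilde L^\perp$ in the sense of Section~\ref{sec:holom-curv-and-linear-systems}. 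The paper deduces this from $0=d\bigl({*}Q^*\psi\bigr)=-{*}Q^*\wedge\nabla\psi$ for $\psi\in\Gamma(\tilde L)$, using ${*}Q^*=S^*Q^*$ --- that is, from the very identity $d{*}Q=0$ you already use. This is a contained and easily repaired gap, but it is a step the statement requires and your write--up passes over it.
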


\begin{proof}
  In order to check that the above holomorphic structure on $H^*/\tilde{L}$ is
  well defined we have to distinguish two cases: if $\tilde L$ is constant,
  the holomorphic structure is given by $D=\nabla''=\frac12(\nabla+*J\nabla)$
  where $J$ denotes the complex structure on $H^*/\tilde{L}$ induced by $-S^*$
  and $\nabla$ the trivial connection of $H^*/\tilde L$.  If $\tilde L$ is
  non--constant, then $H^*/\tilde{L}$ is the canonical holomorphic line bundle
  of the holomorphic curve $\tilde{L}^\perp$ as defined in
  Section~\ref{sec:holom-curv-and-linear-systems}, because $0=d{*}Q^*|_{\tilde
    L}=-{*}Q^*\wedge\tilde \delta$ implies $\tilde \delta
  =-S^*\tilde\delta$. Holomorphicity of the bundle homomorphism induced by
  $*Q^*$ also follows from $d{*}Q=0$: by definition of the holomorphic
  structure on $KL^{-1}$, a section $\alpha \in \Gamma(KL^{-1})$ is
  holomorphic if and only if $\alpha(a)$ is closed for all $a\in H$. But $*Q^*
  y\in\Gamma(KL^{-1})$ is closed for every $y\in H^*$ and hence $*Q^*$ maps
  the projection to $H^*/\tilde L$ of every constant section of $H^*$ to a
  holomorphic section of $KL^{-1}$,
  cf.~Section~\ref{sec:gener-weierstr-repr-R4}.
\end{proof}

Assume now that $L$ is a spin bundle, i.e., $KL^{-1}\cong L$.  The composition
of the bundle homomorphism $2{*}Q^*$ with the isomorphism $KL^{-1}\cong L$ is
then a holomorphic bundle homomorphism
\begin{align*}
  B\colon H^*/ \tilde L \to L.
\end{align*}
The image $B(H^*)\subset H^0(L)$ under $B$ of the space of sections of
$H^*/\tilde L$ obtained by projecting constant sections of $H^*$ is a 1-- or
2--dimensional subspace of $H^0(L)$ depending on whether $\tilde L$ is
constant or non--constant.

\begin{Rem}
  One can show that if $H$ has equality in the Plücker
  estimate, then $B(H^*)$ has equality and is either contained in the canonical
  linear system $H\subset H^0(L)$ of the curve $L^d$ or $H\, \cap \,
  B(H^*)=\{0\}$ and $B(H^*)$ is 2--dimensional. 
\end{Rem}

The following proposition characterizes the twistor holomorphic curves
$L^d\subset H\cong \H^2$ with smooth $S$ and non--trivial $Q$ for which
$L=H/L^d$ is spin and $B(H^*)$ is contained in the canonical linear system
$H\subset H^0(L)$ of~$L^d$.  Note that if $W(L)\leq 32\pi$ the Plücker
estimate implies $B(H^*) \subset H$, because then $H=H^0(L)$.

\begin{Pro}\label{P:2-d_equality_in_spin_bundle}
  Let $L$ be a quaternionic holomorphic line bundle over $\CP^1$ and $H\subset
  H^0(L)$ a base point free, $2$--dimensional linear system with equality in
  the Plücker estimate and dual curve $L^d\subset H\cong \H^2$.  Then $L$ is a
  quaternionic spin bundle with $B(H^*)\subset H$ if and only if either
  \begin{enumerate}[a)]		
  \item\label{i:type_a} $L^d$ is spherical or hyperbolic superminimal with 
    everywhere defined and immersed mean curvature sphere congruence, or
  \item\label{i:type_b} $L^d$ is Euclidean superminimal in $\HP^1\setminus
    \{\infty\}$ for some point $\infty\in\HP^1$ and the intersection divisor
    of $L^d$ with $\infty$ is equal to the branching divisor of the globally
    defined mean curvature sphere congruence of $L^d$.
  \end{enumerate}
\end{Pro}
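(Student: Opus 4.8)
The plan is to reduce, via Proposition~\ref{P:2-d_equality_bundle_smooth_S}, to a study of the twistor holomorphic curve $L^d$, and then to translate the two conditions ``$L$ is spin'' and ``$B(H^*)\subset H$'' into the existence of a conserved quantity of the mean curvature sphere congruence of $L^d$, which by Appendix~\ref{subsec:superminimal} is precisely superminimality in one of the three model geometries. Concretely: since $H\subset H^0(L)$ is base point free, $2$--dimensional and has equality in the Pl\"ucker estimate, Proposition~\ref{P:2-d_equality_bundle_smooth_S} makes $L^d\subset H\cong\H^2$ twistor holomorphic with everywhere defined and smooth mean curvature sphere congruence $S$, with $\H^2$ its canonical linear system. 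In particular $L^d$ is Willmore, $S$ is harmonic and $d{*}A=d{*}Q=0$, so Lemma~\ref{lem:*Q*hol} applies; I may assume $Q\not\equiv 0$, since on the spin side $W(L)=2\int\langle Q\wedge{*}Q\rangle=4\pi(d+1)>0$ by Lemma~\ref{lem:degree_d_spin} (with $d=-\deg(L^d)$), while on the geometric side $Q\equiv 0$ is excluded by the nondegeneracy in a) and b). Let $\tilde L\subset\ker(Q^*)$ be the $2$--step forward B\"acklund transform of $(L^d)^\perp$ and $B\colon H^*/\tilde L\to L$ the holomorphic bundle homomorphism of Lemma~\ref{lem:*Q*hol}.

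I would first dispose of the spin condition itself: by Lemma~\ref{lem:degree_d_spin}, $\deg L=-1$ is equivalent to $|b(L^d)|=d-3$, a relation between the branching order and the degree of $L^d$. For the second condition, $H$ being base point free means evaluation embeds $H\cong\H^2$ into $H^0(L)$, so $B(H^*)\subset H$ holds exactly when the quaternionic linear map $H^*\to H^0(L)$ induced by $B$ factors through this embedding. Unwinding $B=2{*}Q^*\circ(KL^{-1}\cong L)$, this says that $Q$, read modulo the spin structure, is the projection of a \emph{constant} endomorphism of $\H^2$ -- equivalently that the harmonic congruence $S$ carries a non--trivial conserved quantity in the sense of the appendix. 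Whether $B(H^*)$ is $1$-- or $2$--dimensional is governed by $\tilde L$ being constant or not; in either case ``$B(H^*)\subset H$'' is precisely the alternative, among the two of the Remark preceding this Proposition, in which $B(H^*)$ lies in the canonical system of $L^d$.

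By Appendix~\ref{subsec:superminimal} (cf.\ Proposition~\ref{P:superminimal_polar_to_time_space_light_like_vector}, as used in Example~\ref{exa:willmor_sphere_16pi}) a conserved quantity of the mean curvature sphere of a twistor holomorphic curve corresponds, according as the tangent line congruence of its twistor lift is polar in $P(\Lambda^2(\H^2,\ii))$ to a space--like, time--like, or light--like vector, to $L^d$ being hyperbolic, spherical, or Euclidean superminimal, the light--like case distinguishing a point $\infty\in\HP^1$; the first two cases land in a), the third in b). It then remains to identify, case by case, the extra hypothesis with ``$B(H^*)\subset H$'' plus spin. In the space-- and time--like cases the extra hypothesis is that $S$, besides smooth, is immersed, i.e.\ $Q$ is nowhere zero; this is what prevents the zeros of $B$ from pushing $B(H^*)$ out of the canonical system, and conversely, for a superminimal curve in $S^3$ or $H^3$ with immersed $S$ the dimension of the ambient space form forces $|b(L^d)|=d-3$. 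In the light--like case $\tilde L$ is the constant line annihilating $\infty$, $B$ amounts to a single holomorphic section of $L$, and one computes its zero divisor to be the difference between the incidence divisor of $L^d$ with $\infty$ and the branching divisor of $S$; then $B(H^*)\subset H$ together with spin forces these divisors to coincide, which is condition b) and, in Euclidean terms, the ``$b=d-3$'' phenomenon of Remark~\ref{rem:willmore_energy}. Running all this backwards reconstructs the spin isomorphism and $B(H^*)\subset H$ from a) or b).

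The delicate part, and the main obstacle, is this last divisor bookkeeping: pinning down the zero locus of the bundle homomorphism $B$ in terms of the branching of the smooth (possibly non--immersed) mean curvature sphere congruence and, in the light--like case, the incidence divisor of $L^d$ with $\infty$. In the quaternionic setting this zero locus is not controlled by a naive degree count and must be read off from the local normal forms behind Proposition~\ref{P:2-d_equality_bundle_smooth_S} and Lemma~\ref{lem:*Q*hol}; once it is, the conserved--quantity trichotomy and the degree relation $|b(L^d)|=d-3$ are routine bookkeeping on top of the appendix.
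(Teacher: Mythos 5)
Your overall architecture (reduce via Proposition~\ref{P:2-d_equality_bundle_smooth_S} to the twistor holomorphic dual curve, split on whether $\tilde L$ is constant, and land in the trichotomy of Appendix~\ref{subsec:superminimal}) matches the paper's, but the two steps that carry the actual content of the proposition are missing. First, you treat ``$L$ is spin'' as if it were the numerical condition $\deg L=-1$, i.e.\ $|b(L^d)|=d-3$ from Lemma~\ref{lem:degree_d_spin}. Over $\CP^1$ the degree determines the underlying \emph{complex} line bundle but not the quaternionic holomorphic structure: being spin means an isomorphism $KL^{-1}\cong L$ of quaternionic holomorphic line bundles, matching Hopf fields, and this is exactly what has to be produced (in the converse direction) or exploited (in the forward direction). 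The paper builds it explicitly: in case~a) the map $*Q\colon L\to KL^{-1}$ is the spin pairing, using $\tilde L=\ker(Q)=L^d$ and immersedness of $S$; in case~b) it comes from matching the vanishing divisors and ``normal vectors'' of $x$ and $By$ plus Riemann's removable singularity theorem. Your ``dimension of the ambient space form forces $|b(L^d)|=d-3$'' does not substitute for this.

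Second, the heart of the forward direction in case~a) is the identification of the Hermitian form. The paper shows that $B$ being an isomorphism (forced by base point freeness) induces $\tilde B\colon H^*\to H$ conjugating $-S^*$ to $S$, that $\tilde B^*$ induces the same bundle map up to a real constant, hence $\tilde B^*=\pm\tilde B$, and then rules out $\tilde B^*=-\tilde B$ by a type computation on the $1$--form $\langle *Q^*x,\tilde Bx\rangle$ (it would have to be simultaneously real--valued and of type $(1,0)$ for a nonreal $N$). Only then is $\tilde B$ a Hermitian form with $S$ skew, i.e.\ Lemma~\ref{lem:minimal_s_skew} applies. Your proposal replaces this with an appeal to a ``conserved quantity'' of $S$ corresponding to a polar vector, a notion the paper neither defines nor proves equivalent to ``$B(H^*)\subset H$ and spin''; without the self--adjointness argument the equivalence with superminimality is asserted, not proved. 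Likewise in case~b) the statement that the intersection divisor of $L^d$ with $\infty$ equals the branching divisor of $S$ \emph{is} the assertion to be proved; the paper's argument that $By=x\lambda_1$ with $\lambda_1\in\R$ (via the common ``normal vector'' $N$ and the Leibniz rule, using $Q\not\equiv0$) is precisely the ``divisor bookkeeping'' you defer, so the main obstacle you name is left unresolved rather than overcome.
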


\begin{Rem}
  By Proposition~\ref{P:superminimal_polar_to_time_space_light_like_vector},
  the tangent line congruence $\widehat{L^d_1}$ of the twistor lift
  $\widehat{L^d}$ of a curve $L^d$ belonging to type~\ref{i:type_a}) is a
  rational null immersion into the complex 3--quadric.  Denote by $d$ and $d_1$
  the degree of $L^d$ and $\widehat{L^d_1}$. Using that
  $\widehat{L^d}$ is self dual, see 
  Remark~\ref{rem:E_isom_to_Ed}, the complex Plücker 
  Formula~\cite[p.~270]{GriHa} for $\widehat{L^d_1}$ implies $0=-2d+2d_1-2$,
  because $\widehat{L^d_1}$ is unbranched if $S$ is immersed, see
  Lemma~\ref{L:branching_of_twistorlift}. By Lemma~\ref{lem:degree_d_spin} we
  thus have $\frac1{4\pi}W(L)=d+1=d_1$. This excludes type~\ref{i:type_a})
  curves for which $W(L)=20\pi$ or $W(L)=28\pi$, because rational null
  immersions of degree $5$ and $7$ into the complex 3--quadric do not exist,
  cf.~\cite{Br88}.
  
  To prove the quantization of the Willmore energy for soliton spheres in
  3--space it therefore remains to exclude type~\ref{i:type_b}) curves of
  degree $4$ and $6$. A direct proof of this turns out to be very
  technical. Below such curves are excluded by a M\"obius geometric argument,
  see the proof of Theorem~\ref{the:special_darboux_of_soliton_spheres}.
\end{Rem}

\begin{proof}
  From Proposition~\ref{P:2-d_equality_bundle_smooth_S} we know that a base
  point free, 2--dimensional linear system $H\subset H^0(L)$ with equality in
  the Pl\"ucker estimate has a globally defined, twistor holomorphic dual
  curve $L^d\subset H$ whose mean curvature sphere congruence $S$ extends
  smoothly through its branch points.

  Assume that $L$ is spin and $B(H^*)\subset H$. We distinguish the cases of
  non--constant and constant B\"acklund transform $\tilde L\subset \ker(Q^*)$.

  \ref{i:type_a}) If $\tilde L$ is \emph{non--constant}, then $B$ maps the
  space $H^*$ of holomorphic sections of $H^*/ \tilde L$ onto $H\subset
  H^0(L)$.  Because by assumption $H$ is base point free, the bundle
  homomorphism $B$ is an isomorphism.  Since $\nabla S= 2{*}Q$,
  Lemma~\ref{lem:twistor_hol_A=0}, this implies that the mean curvature sphere
  congruence $S$ of $L^d$ is immersed.

  The bundle isomorphism $B$ induces an isomorphism $\tilde B\colon H^* \to H$
  of vector spaces between the 2--dimensional linear systems $H^*$ and $H$. It
  maps $\tilde L\subset H^*$ onto $L^d$ and thus maps the mean curvature
  sphere congruence $-S^*$ of $\tilde L$ onto the mean curvature sphere
  congruence $S$ of $L$, i.e., $S\tilde B=-\tilde BS^*$. Differentiating the
  last equation one obtains that $\tilde B$ maps $(L^d)^\perp=\im(Q^*)$ onto
  $\tilde L^\perp=\im(Q)$. This implies that the adjoint $\tilde B^*$ of
  $\tilde B$ also induces a bundle isomorphism $H^*/ \tilde L\to L\cong
  H/L^d$.  Because every automorphism of a quaternionic holomorphic line
  bundle with non--trivial Hopf field acts by multiplication with a real
  constant, we obtain that $\tilde B^*=\pm \tilde B$.  Now $\tilde B^*=-
  \tilde B$ is impossible: for every $x\in H^*\backslash\{0\}$ the
  non--trivial 1--form $\langle *Q^*x,\tilde B x\rangle ={*}Q^*x(Bx)$ had to
  be real--valued, because $S\tilde B=-\tilde BS^*$ implies $Q\tilde B=-\tilde
  BQ^*$ and hence
  \begin{multline*}
    \overline{\langle {*}Q^*x,\tilde B x\rangle } 
    = \langle x,\tilde B^*{*}Q^*x\rangle = -\langle x,\tilde
    B{*}Q^* x\rangle 
    = \langle x,{*}Q\tilde B x\rangle  
    = \langle {*}Q^*x , \tilde B x\rangle.
  \end{multline*}
  But this contradicts $*\langle {*}Q^*x,\tilde B x\rangle =- \langle
  {*}Q^*S^*x,\tilde B x\rangle =N\langle {*}Q^*x , \tilde B x\rangle$ 
  with $N\in\Im\H$ defined by
  $S^*x=xN\mod \tilde L$.  Hence $\tilde B^*=\tilde B$ and $\tilde B$ defines a
  non--degenerate Hermitian form on $H$ with respect to which $S$ is
  skew. Thus $L$ is spherical or hyperbolic superminimal, by
  Lemma~\ref{lem:minimal_s_skew}.

  \ref{i:type_b}) If $\tilde L$ is \emph{constant}, then $\infty=\tilde
  L^\perp\subset H$ lies on all mean curvature spheres of $L^d$. Hence
  $L^d$ is Euclidean superminimal. The statement about the intersection
  divisor may be seen as follows: the vanishing divisor of a non--trivial
  holomorphic section $x\in H\subset H^0(L)$ with $x\in \infty$ is the
  intersection divisor of $L^d$ with $\infty$.  Let $y \in H^*$ with
  $y(x)=1$. Because $y\not\in \tilde L=\infty^\perp$ it induces a nowhere
  vanishing holomorphic section of $H^*/\tilde L$.  The vanishing divisor of
  $By$ thus equals the branching divisor of $S$, since $\nabla S=2{*}Q$.

  We prove now that the holomorphic sections $By$ and $x$ differ by a real
  constant only and hence have the same vanishing divisor: because $S$ is
  smooth and $S\infty=\infty$, there is a map $N\colon \CP^1\to S^2$ with
  $Sx=xN$. On the other hand, since $y(x)=1$ we have $-S^*y \equiv y N
  \mod \tilde L$ and $S(By)=(By)N$.  Thus, away from the isolated zeros
  of $By$ and $x$ there exist real valued functions $\lambda_1,\lambda_2$ with
  $By=x(\lambda_1+\lambda_2N)$.  Since $Q$ is non--trivial, the Leibniz rule
  in Section~\ref{sec:quat-holom-line} implies that $\lambda_1$ is constant
  and $\lambda_2\equiv0$.  Hence $By=x\lambda_1$ with $\lambda_1\in \R$ such
  that $By$ and $x$ have the same vanishing divisor.

  We show now that, conversely, if $L^d\subset H$ is a superminimal curve as
  in \ref{i:type_a}) or \ref{i:type_b}), then $L=H/L^d$ is a quaternionic spin
  bundle, i.e., $KL^{-1}\cong L$, which by construction satisfies
  $B(H^*)\subset H$.

  \ref{i:type_a}) Let $L^d$ be \emph{spherical or hyperbolic superminimal}.
  Then there exists a non--degenerate Hermitian form $\thespr$ with respect to
  which $S=-S^*$, see Lemma~\ref{lem:minimal_s_skew}, and $\nabla S=2{*}Q$
  because $L^d$ is twistor holomorphic. Identifying $H$ and $H^*$ via
  $\thespr$, this implies $Q=-Q^*$ and hence $\tilde L= \ker(Q^*)=
  \ker(Q)=L^d$. Since $\im(Q)=\tilde L^\perp = (L^d)^\perp \cong
  (H/L^d)^{-1}=L^{-1}$, the Hopf field $*Q$ induces a quaternionic holomorphic
  bundle homomorphism $L\to KL^{-1}$ which is an isomorphism because by
  assumption $S$ is an immersion.

  \ref{i:type_b}) If $L^d$ is \emph{Euclidean superminimal}, then
  $\infty={\tilde L}^\perp=\image(Q)$ is a constant point. As above, let
  $x\in\infty$ and $y\in(\H^2)^*$ such that $y(x)=1$. Then $*Q^*y$ is a
  holomorphic section of $KL^{-1}$ whose vanishing divisor by assumption
  coincides with the vanishing divisor of $x$ seen as a holomorphic section of
  $L$. As before, there exists a quaternion valued map $N$ defined away from
  the zeros of $x$ and $*Q^*y$, such that $Jx=xN$ and $J{*}Q^*y=*Q^*yN$, where
  $J$ denotes the complex structures of the respective quaternionic
  holomorphic line bundles $L$ and $KL^{-1}$. This proves $L\cong KL^{-1}$,
  because the identification of two holomorphic sections with the same
  vanishing divisor and the same ``normal vector'' $N$ gives rise to a
  holomorphic line bundle isomorphism.  This follows from Riemann's removable
  singularity theorem, because holomorphic homomorphisms between two
  quaternionic holomorphic line bundles are in particular complex holomorphic
  sections of the line bundle of homomorphisms between the underlying complex
  holomorphic line bundles.
\end{proof}

\subsection{Classification of soliton spheres in 3--space with Willmore energy
  $16\pi\leq W\leq 32\pi$}\label{sec:geometry_type_ab}
Let $L\subset \H^2$ be a soliton sphere with $16\pi\leq W\leq 32\pi$ in the
conformal 3--sphere $S^3\subset \HP^1$. For every $\infty \in S^3$ not on $L$,
the induced quaternionic spin structure on $L$ then admits a 2--dimensional
linear system related to a superminimal curve as described in
Proposition~\ref{P:2-d_equality_in_spin_bundle}.  Investigation of the
corresponding 3--dimensional linear system of the Möbius invariant holomorphic
line bundle $\H^2/L$ shows that $L$ is either a Willmore sphere or a Bryant
sphere with smooth ends.

\begin{The}\label{the:special_darboux_of_soliton_spheres}
  Let $L\subset \H^2$ be a soliton sphere in the conformal 3--sphere with
  Willmore energy $16\pi\leq W \leq 32\pi$. Then the full space of
  holomorphic sections $H^0(\H^2/L)$ of the Möbius invariant holomorphic line
  bundle $\H^2/L$ is a 3--dimensional linear system with equality in the
  Plücker estimate that also contains a (unique) 1--dimensional linear system
  $H\subset H^0(\H^2/L)$ with equality in the Plücker estimate. Moreover,
  either
  \begin{itemize}
  \item $L$ is a Willmore sphere or 
  \item $L$ is a Bryant sphere with smooth ends,
  \end{itemize}
  depending on whether $H$ is contained in the canonical linear system or not.
  In the Bryant case, the Darboux transform corresponding to $H$ is the
  hyperbolic Gauss map of $L$.  In the Willmore case, the Darboux 
  transform is constant and coincides with the
  point $\infty$ for which $L$ is Euclidean minimal in
  $\HP^1\setminus\{\infty\}$.
\end{The}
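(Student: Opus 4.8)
The plan is to fix a point $\infty=[e_1]\in S^3$ not lying on $L$, write $L=\psi\H$ with $\psi=\tvector{f\\1}$ and $f\colon\CP^1\to\Im\H$ as in Section~\ref{sec:equality_spin}, and work with the Euclidean spin structure it defines (so $df=(\psi,\psi)$ and $W=W(L)$ in the spin sense). Since $16\pi\le W\le 32\pi$, Section~\ref{sec:relat-superm-curv} shows that $H^0(L)$ is the full, $2$--dimensional linear system $H$ of the spin bundle $L$, that it has equality in the Pl\"ucker estimate, that its dual curve $L^d\subset H\cong\H^2$ is twistor holomorphic with everywhere defined, smooth mean curvature sphere congruence $S$ (Proposition~\ref{P:2-d_equality_bundle_smooth_S}), and that $L^d$ is of type~\ref{i:type_a}) or of type~\ref{i:type_b}) in the sense of Proposition~\ref{P:2-d_equality_in_spin_bundle}. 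The whole argument is a case distinction along this dichotomy.

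First I would prove that $H^0(\H^2/L)$ is $3$--dimensional with equality and that $\deg(\H^2/L)=1$. The indefinite Hermitian form whose null lines form $S^3$ identifies $\H^2$ with $(\H^2)^*$ and, since $L$ is null, carries $L$ to $L^\perp$; hence $\H^2/L\cong(\H^2)^*/L^\perp=L^{-1}$ as quaternionic holomorphic line bundles, compatibly with the canonical linear systems. Applying Lemma~\ref{L:Equality_on_the_ladder} with the nowhere vanishing holomorphic section $e_2^*$ of $L^{-1}$ and $\nabla e_2^*=0$, and using that $\CP^1$ is simply connected so that $\nabla\colon H^0(L^{-1})\to H^0(KL^{-1})$ is onto with one--dimensional kernel (and carries $\nabla e_1^*=\alpha$ to $\psi$ under $KL^{-1}\cong L$, preserving the Weierstrass degree and equality), one gets $\dim H^0(\H^2/L)=\dim H^0(L)+1=3$ and equality; inserting $\dim=3$, $W(\H^2/L)=W-4\pi$ and the equality for the spin system into the Pl\"ucker formula forces $\deg(\H^2/L)=1$. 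Moreover a $1$--dimensional system $\varphi\H\subset H^0(\H^2/L)$ has equality iff the vanishing divisor of $\varphi$ attains the maximal degree $\tfrac1{4\pi}W$ allowed by the one--dimensional estimate, and two such $\varphi$ must be $\H$--dependent: otherwise the $2$--dimensional system they span would have Weierstrass degree $\ge 2\cdot\tfrac1{4\pi}W$ and hence, by the Pl\"ucker estimate with $\deg(\H^2/L)=1$, Willmore energy exceeding $W(\H^2/L)$. So a $1$--dimensional system $H$ with equality, once produced, is automatically unique.

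Now the two types. In type~\ref{i:type_a}), the Remark following Proposition~\ref{P:2-d_equality_in_spin_bundle} together with Lemma~\ref{lem:degree_d_spin} gives $\tfrac1{4\pi}W=d_1$, the degree of the rational null immersion $\widehat{L^d_1}\colon\CP^1\to Q^3$; since $16\pi\le W\le 32\pi$ forces $d_1\in\{4,5,6,7,8\}$ and there are no rational null immersions of degree $5$ or $7$ into $Q^3$ (cf.\ \cite{Br88}), we get $W\in\{16\pi,24\pi,32\pi\}$. The bundle isomorphism $B$ (equivalently $\tilde B$) of Proposition~\ref{P:2-d_equality_in_spin_bundle} then identifies $L$ with a curve $\spr{a,Sa}+c$ built from the mean curvature sphere congruence $S$ of $L^d$ as in Corollary~\ref{cor:willmore-directly-from-S}, so $L$ is a Willmore sphere, Euclidean minimal in $\HP^1\setminus\{\infty'\}$ for a unique point $\infty'=[x]$; the image $\pi x$ of $x$ in $\H^2/L$ vanishes at the planar ends of $f$, and a degree count (using $W=4\pi(d+1)$, $b=d-3$, cf.\ Remark~\ref{rem:willmore_energy}) shows this vanishing divisor attains the maximal degree $\tfrac1{4\pi}W$, so $\pi x\H$ is the unique system $H$, it lies in the canonical system, and the corresponding Darboux transform is the constant $\infty'$. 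In type~\ref{i:type_b}), the $2$--step B\"acklund transform $\tilde L$ of $(L^d)^\perp$ is a constant point $\infty_0=\tilde L^\perp$ lying on all mean curvature spheres of $L^d$; unwinding through $B$ produces a holomorphic section $\varphi\in H^0(\H^2/L)$ outside the canonical system whose Darboux transform $L^\sharp$ is totally umbilic, i.e.\ a holomorphic map into a round $2$--sphere. By \cite{JMN01} (see also \cite[Theorem~9]{SmoothEnds}), $L$ is then a Bryant surface; the divisor condition on $L^d$ in type~\ref{i:type_b}) is precisely what makes its ends smooth in the sense of \cite{SmoothEnds}, so $L$ is a Bryant sphere with smooth ends, $L^\sharp$ is its hyperbolic Gauss map, and a local analysis at the smooth ends shows $\varphi$ attains vanishing degree $\tfrac1{4\pi}W$, so $\varphi\H$ is the system $H$ and it has equality. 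Finally, Bryant spheres with smooth ends have $W\in4\pi(\N\setminus\{0,2,3,5,7\})$ by \cite{SmoothEnds}, so type~\ref{i:type_b}) curves of degree $4$ and $6$ (that is, $W=20\pi$ and $W=28\pi$) cannot occur; this is the M\"obius geometric exclusion promised in the Remark after Proposition~\ref{P:2-d_equality_in_spin_bundle}.

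The step I expect to be the main obstacle is the identification work in the previous paragraph: the curve $L^d$ a priori lives only in the abstract space $\H^2$ of holomorphic sections of the spin bundle, and one must transport it back to the concrete immersion $f$ through the isomorphism $B$ of Proposition~\ref{P:2-d_equality_in_spin_bundle} --- recognizing $L$ as $\spr{a,Sa}+c$ in type~\ref{i:type_a}) and as a Bryant sphere with smooth ends in type~\ref{i:type_b}), deducing smoothness of the Bryant ends from the divisor hypothesis on $L^d$, and pinning down in each case exactly which Darboux transform of $L$ the linear system $H$ is.
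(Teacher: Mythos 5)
Your setup and the first part of the argument (that $H^0(\H^2/L)$ is $3$--dimensional with equality, obtained from the $2$--dimensional spin system by going up the ladder of Lemma~\ref{L:Equality_on_the_ladder} on the simply connected $\CP^1$) agree in substance with the paper, which implements the same step via the holomorphic isomorphism $\delta\colon L\to K\H^2/L$. But there are two genuine gaps. The central one is your claimed dichotomy ``type~\ref{i:type_a}) $\Rightarrow$ Willmore, type~\ref{i:type_b}) $\Rightarrow$ Bryant''. This is false, and the error comes from dropping the nullity hypothesis in Corollary~\ref{cor:willmore-directly-from-S}: the representation $f=\spr{a,Sa}+c$ yields a Willmore sphere only when $\spr{a,a}=0$ (equivalently $e_2^*(e_1)=0$). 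In type~\ref{i:type_a}) one gets $f=\spr{\psi,S\psi}$ for the \emph{specific} section $\psi$ determined by the immersion, and the proof must split according to whether $\spr{\psi,\psi}=0$ or not: if $\spr{\psi,\psi}=0$ one indeed obtains a Euclidean minimal, hence Willmore, sphere; if $\spr{\psi,\psi}\neq0$ the paper computes $f=H_gg-R_g$ and exhibits the totally umbilic Darboux transform $f^\sharp=-R_g$, so $f$ is a Bryant sphere with smooth ends (ball model) --- exactly as stated in the Remark following the theorem. Your version would misclassify these type~\ref{i:type_a}) Bryant spheres (e.g.\ the ones arising from skew $S$ with non--null $\psi$) as Willmore spheres, and with them the identification of the distinguished system $H$ and of the corresponding Darboux transform fails.

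The second gap is the uniqueness of $H$. Your counting argument assumes that two $\H$--independent sections $\varphi_1,\varphi_2$ with $|\ord\varphi_i|=\tfrac{W}{4\pi}$ span a $2$--dimensional system of Weierstrass degree at least $2\cdot\tfrac{W}{4\pi}$. That bound is wrong: at a point where the two vanishing orders are $a\neq b$ the gap sequence is $\{a,b\}$ and the Weierstrass order is $a+b-1$, and at a point where only one section vanishes (to order $a$) it is $a-1$; if the zero divisors are disjoint the lower bound collapses to $0$ and no contradiction with the Pl\"ucker estimate results. The paper avoids this entirely via Lemma~\ref{lem:1--dim-eq}~\ref{i:1-d-eq-unique}): two distinct $1$--dimensional systems with equality would force both $f$ and $f^{-1}$ to be Euclidean minimal, hence $f$ planar and $W(L)=0$. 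Finally, your closing exclusion of $W=20\pi,28\pi$ is the business of Theorem~\ref{The:gaps}, not of this statement, and as written it leans on the broken type~\ref{i:type_a})/\ref{i:type_b}) dichotomy.
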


\begin{Rem}
  The assumption $16\pi\leq W \leq 32\pi$ in
  Theorem~\ref{the:special_darboux_of_soliton_spheres} may be replaced by the
  weaker assumption $B(H^*)\subset H$ of
  Proposition~\ref{P:2-d_equality_in_spin_bundle}.  In the proof of
  Theorem~\ref{the:special_darboux_of_soliton_spheres} we show that a soliton
  sphere $L$ in $\R^3=\Im (\H)$ whose spin bundle is of type a) in
  Proposition~\ref{P:2-d_equality_in_spin_bundle} is a Willmore sphere or a
  Bryant sphere with smooth ends (in the ball model of hyperbolic space). A
  soliton sphere in $\R^3$ whose spin bundle is of type b) in
  Proposition~\ref{P:2-d_equality_in_spin_bundle} is a Bryant sphere with
  smooth ends (in the half space model of hyperbolic space).
\end{Rem}

For the proof of Theorem~\ref{the:special_darboux_of_soliton_spheres} we need
to derive some properties of 1--dimensional linear systems with equality in
the Pl\"ucker estimate.  If $\varphi$ is an arbitrary quaternionic holomorphic
section, then $N$ with $J\varphi =\varphi N$ is continuous at the zeros of
$\varphi$ and smooth elsewhere, cf.\ the appendix to~\cite{Boh}.  The
following lemma together with Riemann's removable singularity theorem implies
that $N$ is everywhere smooth in case $\varphi$ spans a 1--dimensional linear
system with equality in the Pl\"ucker estimate.

\begin{Lem}\label{lem:1--dim-eq}
  Let $L$ be a quaternionic holomorphic line bundle with complex structure $J$
  over a compact Riemann surface.
  \begin{enumerate}[i)]
  \item\label{i:1-d-eq-N-hol} Let $\varphi\in H^0(L)\setminus\{0\}$ and 
    define $N$ by $J\varphi =\varphi N$. The
    1--dimensional linear system spanned by $\varphi$  has equality in the
    Plücker estimate if and only if $d N'=\frac12(dN-N{*}dN)\equiv0$.
  \item\label{i:1-d-eq-Eucl-min} Let $\varphi$, $\psi\in H^0(L)$ and 
  	$\psi=\varphi f$ with
    non--constant $f$ defined away from the zeros of $\varphi$. 
    The 1--dimensional
    linear system spanned by $\varphi$ has equality in the Pl\"ucker estimate
    if and only if $f$ is Euclidean minimal.
  \item\label{i:1-d-eq-unique} If the Willmore energy of $L$ is non--zero, 
  	then $L$ has at most one linear system with equality in the Pl\"ucker 
  	estimate.
  \end{enumerate}
\end{Lem}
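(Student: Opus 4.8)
The plan is to derive all three parts from the degree formula of Section~\ref{sec:degree_formula} together with a short local computation of a Hopf field. \emph{Part i).} Let $H=\varphi\H$ be the $1$--dimensional linear system spanned by $\varphi$, and let $\nabla$ be the flat connection on $L$ determined by $\nabla\varphi=0$, defined away from the isolated zeros of $\varphi$. Applying the degree formula to $\varphi$, the bundle $(L^d)^{-1}$ of the dual curve $L^d$ of $H$ is isomorphic to $(L^{-1},\nabla'')$, and the Pl\"ucker formula reads $W(L)+4\pi\deg(L)=W((L^d)^{-1})+4\pi|\ord\varphi|$; comparing this with the $1$--dimensional Pl\"ucker estimate $W(L)\geq 4\pi(|\ord\varphi|-\deg(L))$ shows that $H$ has equality if and only if $W((L^d)^{-1})=0$, i.e.\ by Section~\ref{sec:willmore-energy} if and only if the Hopf field of $(L^{-1},\nabla'')$ vanishes identically. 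I would compute this Hopf field in a suitable local frame (given by $\varphi$, or by a local section of $L^{-1}$ dual to it), in which the connection is trivial; splitting the holomorphic structure into its parts commuting and anticommuting with the complex structure, a direct calculation shows that the Hopf field is multiplication by a nonzero constant multiple of $dN'$. Since the zeros of $\varphi$ are isolated and $N$ extends continuously across them (cf.\ the paragraph preceding the lemma), the Hopf field vanishes identically if and only if $dN'\equiv0$.

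\emph{Part ii).} With $\psi=\varphi f$ and $J\varphi=\varphi N$, the Leibniz rule together with $D\varphi=D\psi=0$ forces $(\varphi\,df)''=0$, i.e.\ ${*}df=N\,df$, so $N$ is the left normal of the conformal map $f$ off the zeros of $\varphi$. By part~i) the system $\varphi\H$ has equality if and only if $dN'=0$, and $dN'=0$ is exactly the condition that $f$ be Euclidean minimal, cf.\ the characterisations recalled in the proof of Lemma~\ref{lem:dt_twistor} and in Appendix~\ref{sec:mean-curv-sphere-affine}.

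\emph{Part iii).} The hypothesis $W(L)\neq0$ is essential: if $W(L)=0$ then the Pl\"ucker formula forces $W((L^d)^{-1})=0$ for \emph{every} linear system, so all of them have equality. So assume $W(L)\neq0$ and let $H_1,H_2\subset H^0(L)$ both have equality. By Theorem~\ref{T:equality_is_holomorphic_twistorlift_of_Ld} the dual curves $L^{d_i}$ are twistor holomorphic with holomorphic twistor lift, so the canonical complex structures $S_i$ of $H_i$ have vanishing $A$--Hopf field and thus $\nabla S_i=2{*}Q_i$, with $Q_i$ restricting on the quotient line $L$ to the non-trivial Hopf field of $L$. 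I would then argue that this data is rigid: from $L$ and $A_i\equiv0$ one recovers the curve $L^{d_i}$, its everywhere smooth mean curvature sphere congruence, its Weierstrass (osculating) flag, and hence $H_i$ as a subspace of $H^0(L)$; since $W(L)\neq0$ every automorphism of $L$ is a real scalar, which rigidifies the identification and forces $H_1=H_2$. For the $1$-- and $2$--dimensional systems actually needed in Section~\ref{sec:sos_s3} one can argue more directly instead: if $\varphi\H$ and $\varphi f\,\H$ both have equality, then part~ii) applied to $(\varphi,\varphi f)$ and to $(\varphi f,\varphi)$ shows that $f$ and $f^{-1}$ are both Euclidean minimal, which forces $f$ to be planar and hence $W(L)=0$, a contradiction.

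The step I expect to be the main obstacle is the rigidity argument in part~iii): making it precise that an equality linear system is determined by $L$ alone once $W(L)\neq0$ requires carefully tracking how the Weierstrass flag of $H$, the canonical complex structure of $L^d$, and the isomorphism $L\cong H/H_{n-1}$ interact. Parts~i) and~ii) are routine once the conventions for the $(1,0)$-- and $(0,1)$--parts and for the dual complex structure on $L^{-1}$ are fixed.
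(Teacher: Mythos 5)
Your parts i) and ii) follow the paper's own route: equality for the one--dimensional system $\varphi\H$ is reduced via the degree formula to $W(L^{-1},\nabla'')=0$, which the paper identifies with $\tfrac12\int dN'\wedge{*}dN'=0$ and hence with $dN'\equiv0$ (your plan of computing the Hopf field of $(L^{-1},\nabla'')$ in the frame dual to $\varphi$ is just the local form of that identity), and ii) then follows because the Leibniz rule makes $N$ the left normal of $f$ and $dN'=df\,H$. For iii), your ``more direct'' fallback --- applying ii) to the pairs $(\varphi,\varphi f)$ and $(\varphi f,\varphi)$, so that $f$ and $f^{-1}$ are both Euclidean minimal, hence $f$ is planar and $W(L)=0$ --- is precisely the paper's proof, and it is the \emph{whole} proof: the lemma is a statement about one--dimensional linear systems (see the sentence introducing it), and the rigidity argument you single out as the main obstacle is aimed at a uniqueness statement across linear systems of all dimensions that is in fact false. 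Indeed, Theorem~\ref{the:special_darboux_of_soliton_spheres} produces a bundle with nonzero Willmore energy carrying both a $3$--dimensional linear system with equality and a $1$--dimensional one with equality inside it. So that digression should be dropped rather than completed; with it removed, your argument coincides with the paper's.
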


\begin{proof}
  \ref{i:1-d-eq-N-hol}) Equality in the Plücker estimate for the
  1--dimensional linear system spanned by $\varphi$ is equivalent to
  $W(L^{-1},\nabla'')=\frac12\int dN'\wedge*dN'=0$, see
  Section~\ref{sec:degree_formula}. This is equivalent to $dN'\equiv0$,
  because $dN'\wedge*dN$ is a positive real valued 2--form.
  \ref{i:1-d-eq-Eucl-min}) follows from \ref{i:1-d-eq-N-hol}), because $*df
  =Ndf$ and $dN'=df H$, see Appendix~\ref{sec:mean-curv-sphere-affine}.
  \ref{i:1-d-eq-unique}) Assume $\varphi$ and $\psi\in H^0(L)$ span two
  different 1--dimensional linear systems with equality.  Then $f$ and
  $f^{-1}$ defined by $\psi = \varphi f$ are both Euclidean minimal. This
  implies that $f$ is planar and, consequently, the Willmore energy of $L$
  vanishes, cf.\ Section~\ref{sec:eucl-minim-curv}.
\end{proof}

\begin{proof}[Proof of
Theorem~\ref{the:special_darboux_of_soliton_spheres}]
Let $L=\psi\H\subset \H^2$, $\psi=\tvector{f\\1}$ be a soliton sphere in the
conformal 3--sphere $S^3=\Im \H\cup \{\infty\}\subset \HP^1$ with affine
representation $f\colon\CP^1\to\Im\H$.  As explained in
Section~\ref{sec:equality_spin}, the choice $\infty=\tvector{1\\0}\H$ makes
$L$ into a quaternionic spin bundle for which $\psi\in H^0(L)$. Using the
Plücker estimate and Theorem~\ref{T:Euclidean_Definition}, the assumption
$16\pi\leq W(L)\leq 32\pi$ implies that the full space of holomorphic sections
$H^0(L)$ of $L$ is 2--dimensional and has equality in the Plücker estimate.
Since $H=H^0(L)$, we obtain in particular that the assumption $B(H^*)\subset H$
of Proposition~\ref{P:2-d_equality_in_spin_bundle} is satisfied.
  
Denote by $\varphi\in H^0(\H^2/L)$ the projection of $e_1$ to the Möbius
invariant holomorphic line bundle $\H^2/L$ of $L$. The canonical linear system
of $L$ is then spanned by $\varphi, \varphi f$,
cf.~Section~\ref{sec:holom-curv-and-linear-systems}. Let $\nabla$ be the flat
connection of $\H^2/L$ such that $\nabla\varphi=0$.  Then $\varphi d f$ is a 
holomorphic section of $K\H^2/L$ when $K\H^2/L$ is, as in
Section~\ref{sec:char-terms-weierst}, equipped with the holomorphic
structure~$d^\nabla$. The derivative $\delta\colon L\to K\H^2/L$ of $L$ is
then a complex quaternionic line bundle isomorphism, cf.\
Section~\ref{sec:holom-curv-hpn}, which is holomorphic because
$\delta$ maps the holomorphic section $\psi\in H^0(L)$ to $\varphi d f\in
H^0(K\H^2/L)$. Using this isomorphism, Lemma~\ref{L:Equality_on_the_ladder}
applied to $\varphi$ implies that $H^0(\H^2/L)$ is 3--dimensional and has 
equality in the Plücker estimate,
because $\CP^1$ is simply connected.
  
\ref{i:type_a}) Suppose that the dual curve $L^d$ of $H^0(L)$ is of
type~\ref{i:type_a}) in Proposition~\ref{P:2-d_equality_in_spin_bundle}, i.e.,
the mean curvature sphere congruence $S$ of $L^d$ is everywhere defined,
immersed, and skew with respect to a non--degenerate Hermitian form
$\thespr$. Since $L^d$ is twistor holomorphic one has $\nabla S=2{*}Q$. In the
proof of Proposition~\ref{P:2-d_equality_in_spin_bundle} it is shown that the
isomorphism of $L$ and $KL^{-1}$ is provided by $\nabla S=2{*}Q$, i.e., $d
f=(\psi,\psi)=2\spr{\psi,{*}Q\psi}$. Thus, without loss of generality
\begin{equation*}
  f=\spr{\psi,S\psi}.
\end{equation*}
  
If $\spr{\psi,\psi}=0$, then $f^{-1}$ is Euclidean minimal by
Corollary~\ref{cor:min_r3}, and  Lemma~\ref{lem:1--dim-eq} implies that the
1--dimensional linear system spanned by $\varphi f$ has equality in the
Plücker estimate.
  
If $\spr{\psi,\psi}\neq0$ we may assume that $\spr{\psi,\psi}=1$. Let
$e_1,e_2$ be a basis of $H^0(L)$ such that $e_2=\psi$, $\spr{e_1,e_2}=0$, and
$L^d=\tvectork{g\\1}$ with nowhere vanishing~$g$. From the formula for $S$
in Appendix~\ref{sec:mean-curv-sphere-affine} we then obtain
\begin{equation*}
  f=\spr{\psi,S\psi}=H_g g -R_g.
\end{equation*}
Since $L^d$ is twistor holomorphic, $d R_g=d R'=H_g d g$ and hence $df=d H_g
g$. By Lemma~\ref{lem:dt_affine}, this implies that $f^\sharp=f-H_g g=-R_g$ is
a Darboux transform of $f$.  This shows that $f$ is a Bryant sphere with
smooth ends: because $f^\sharp =-R_g$ is totally umbilic and both $f$ and
$f^\sharp=-R_g$ take values in $\Im(\H)$, we obtain from
\cite{JMN01,SmoothEnds} (see also Remark~\ref{rem:bryant_dt_in_s3}) that, away
from the isolated points where $f$ and $f^\sharp$ intersect, $f$ is a Bryant
surface. Since the immersion $f$ is defined on all of~$\CP^1$, the isolated
intersection points are smooth Bryant ends.

Moreover, by Lemma~\ref{lem:1--dim-eq} the 1--dimensional linear system of
$\H^2/L$ that is spanned by $\varphi H_g$ has equality in the Plücker
estimate, since $H_g^{-1}$ is Euclidean minimal by
Corollary~\ref{Cor:minimal_vs_twistor}.
  
\ref{i:type_b}) Suppose now that the dual curve $L^d$ of $H^0(L)$ is of
type~\ref{i:type_b}) in Proposition~\ref{P:2-d_equality_in_spin_bundle}, i.e.,
there exists a line $\infty\subset H^0(L)$ such that $L^d$ is Euclidean
superminimal in $PH^0(L)\setminus\{\infty\}$. Let $e_1,e_2\in H^0(L)$ be a
basis such that $e_1=\psi$ and $e_2\in\infty$ and let $L^d=\tvectork{g\\1}$
with respect to this basis. Then $g^{-1}$ is Euclidean superminimal. The
description of the spin pairing $KL^{-1}\cong L$ in the proof of
Proposition~\ref{P:2-d_equality_in_spin_bundle} implies $(e_2,e_2) =
e_2^*(2{*}Qe_2)=d H_g g$ and
\begin{align*}
  df=(\psi,\psi)= (e_2 g^{-1},e_2 g^{-1}) = \bar g^{-1} dH_g = -d\bar H_g
  g^{-1},
\end{align*}
where the last equation holds because $df$ is $\Im(\H)$--valued. Thus
$dfg+d\bar H_g=0$ and Lemma~\ref{lem:dt_affine} implies that $f^\sharp=f+\bar
H_g g^{-1}$ is a Darboux transform of~$f$. Moreover, $\bar H_g^{-1}$ is
Euclidean minimal by Corollary~\ref{Cor:minimal_vs_twistor} and twistor
holomorphic, by \ref{item:1-step-of-Euclidean-minimal}) of
Theorem~\ref{the:BT}, because $\bar H_g$ is a $1$--step forward Bäcklund
transform of $\bar g^{-1}$ which is Euclidean minimal. Thus $f^\sharp$ is
planar by Lemma~\ref{lem:dt_twistor}, because both $g^{-1}$ and $\bar
H_g^{-1}$ are Euclidean superminimal.  By Lemma~\ref{lem:1--dim-eq}, the
1--dimensional linear system spanned by $\varphi \bar H_g$ has equality in the
Pl\"ucker estimate.

To see that $f$ is a Bryant sphere with smooth ends, by
\cite{JMN01,SmoothEnds} it remains to proof that the planar Darboux transform
$f^\sharp$ is $\Im(\H)$--valued (see Remark~\ref{rem:bryant_dt_in_s3}): by
assumption $e_2\H$ is a fixed line of $S$ such that $e_2^*(Se_2)$ is a
quaternionic that squares to $-1$ and is hence imaginary.  Because on the
other hand $e_2^*(Se_2)=H_gg-R_g$ we obtain that $H_gg$ takes values in
$\Im(\H)$. This implies $f^\sharp=f+\bar H_g g^{-1}$ is $\Im(\H)$--valued.

% By Lemma~\ref{lem:1--dim-eq}, the 1--dimensional linear system spanned by
% $\varphi \bar H_g$ has equality in the Pl\"ucker estimate.  The uniqueness of
% such a 1--dimensional linear system implies that uniqueness of totally umbilic
% Darboux transforms.  Because the notion of Darboux transformation is Möbius
% invariant, after applying a suitable Möbius transformation we can assume that
% $f$ takes values in $\Im\H$ and its unique totally umbilic Darboux transform
% $f^\sharp$ takes values in a round 2--sphere in $\H$ instead of a plane. But
% then we are in case~\ref{i:type_a}) of
% Proposition~\ref{P:2-d_equality_in_spin_bundle} and $f$ is a Bryant sphere
% with smooth ends. (For this we use that $16\pi\leq W(L)\leq 32\pi$ such that
% the assumption $B(H^*)\subset H$ in
% Proposition~\ref{P:2-d_equality_in_spin_bundle} is satisfied for every choice
% of $\infty \in S^3$ not on $L$.)
\end{proof}

Theorem~\ref{the:special_darboux_of_soliton_spheres} yields that the possible
Willmore energies of soliton spheres in 3--space coincide with the possible
Willmore energies of Willmore spheres in 3--space and of Bryant spheres with
smooth ends.
	
\begin{The}\label{The:gaps}
  The possible Willmore energies $W=\int |\mathcal{H}|^2$ of immersed soliton
  spheres in 3--space are $W\in 4\pi (\N\setminus\{0,2,3,5,7\})$.
\end{The}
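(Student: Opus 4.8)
The strategy is to assemble three pieces that are already available: the low-energy rigidity of Lemma~\ref{lem:below_16pi}, the structure Theorem~\ref{the:special_darboux_of_soliton_spheres} governing the window $16\pi\le W\le32\pi$, and the realization of all admissible energies by Taimanov examples. First I would record that the Willmore energy of a soliton sphere in the conformal $3$-sphere is automatically an integer multiple of $4\pi$: by Section~\ref{sec:equality_spin} the spin bundle $L$ carries a base point free $(n+1)$-dimensional linear system $H\subset H^0(L)$ with equality in the Pl\"ucker estimate, hence $W(L)=4\pi[(n+1)^2+|\ord H|]$ with $(n+1)^2\ge1$ and $|\ord H|\ge0$. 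Thus $W=4\pi d$ with $d\ge1$ an integer, which already rules out $d=0$. Next, $d\in\{2,3\}$, i.e. $W\in\{8\pi,12\pi\}<16\pi$, is excluded by Lemma~\ref{lem:below_16pi}, since the only soliton sphere in $3$-space with $W<16\pi$ is the round sphere, for which $d=1$.

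It then remains to exclude $d\in\{5,7\}$, i.e. $W\in\{20\pi,28\pi\}$. Here I would apply Theorem~\ref{the:special_darboux_of_soliton_spheres}: since $16\pi\le W\le32\pi$, such a soliton sphere is either a Willmore sphere or a Bryant sphere with smooth ends. But the possible Willmore energies of Willmore spheres in $\R^3$ are $4\pi(\N\setminus\{0,2,3,5,7\})$ by Bryant's classification~\cite{Br88}, and the same quantization holds for Bryant spheres with smooth ends~\cite{SmoothEnds}; in particular neither class attains $20\pi$ or $28\pi$. Since every $d\ge9$ already lies in $\N\setminus\{0,2,3,5,7\}$, no exclusion is needed for $W>32\pi$, so no forbidden value survives. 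Conversely, every $4\pi d$ with $d\in\N\setminus\{0,2,3,5,7\}$ occurs: by Corollary~\ref{cor:taimanov_quant} the immersed Taimanov soliton spheres, which are soliton spheres in the sense of Definition~\ref{def:moebius}, have exactly these Willmore energies. Combining the exclusions with this realization gives the asserted list $W\in4\pi(\N\setminus\{0,2,3,5,7\})$.

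The genuine content of the argument has been pushed upstream: Theorem~\ref{the:special_darboux_of_soliton_spheres}, and behind it Proposition~\ref{P:2-d_equality_in_spin_bundle} together with the M\"obius-geometric exclusion of type~\ref{i:type_b}) curves of degree $4$ and $6$, is what forces soliton spheres in the low-energy window to be Willmore or Bryant spheres with smooth ends. Granting those results, the proof of the present theorem is essentially bookkeeping; the only point that requires care is that Theorem~\ref{the:special_darboux_of_soliton_spheres} covers only the interval $[16\pi,32\pi]$, so one must observe separately that all remaining energies $W>32\pi$ correspond to values $d\ge9$, which are admissible and hence need no discussion.
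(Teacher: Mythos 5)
Your proof is correct and follows essentially the same route as the paper: the paper likewise reduces to excluding $W\in4\pi\{2,3,5,7\}$ via the realization by known examples, handles $8\pi,12\pi$ with Lemma~\ref{lem:below_16pi}, and disposes of $20\pi,28\pi$ by combining Theorem~\ref{the:special_darboux_of_soliton_spheres} with the known quantizations for Willmore spheres and Bryant spheres with smooth ends. Your explicit remarks on the $4\pi$-integrality and on why no case analysis is needed above $32\pi$ are implicit in the paper but correctly supplied.
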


\appendix
%\numberwithin{equation}{section}

\section{Formulae in affine coordinates}\label{app:affine_coordinates}

This appendix relates projective invariants of holomorphic curves in $\HP^1$
to the Euclidean invariants of their representations in affine charts.

\subsection{Branch points of holomorphic curves in $\HP^1$}%
\label{subsec:branch_points_holomorphic_curves}
A \emph{branch point of order $k\in\N$\/} of a smooth map $f\colon M\to N$
from a 2--dimensional manifold $M$ to an $(n+2)$--dimensional manifold $N$ is a
point $p\in M$ for which there exists an integer $k\geq1$ and centered
coordinates $z\colon M\supset U\to\C$ and $u=(u_1,...,u_{n+2})\colon N\supset
V\to\R^{n+2}$ at $p$ and $f(p)$ that satisfy
\begin{align*}
  u_1\circ f+{\ii} u_2 \circ   f &= z^{k+1} + O(k+2),&
  u_l\circ f&=O(k+2),\ l=3,\ldots,n+2,
\end{align*}
cf.~\cite{GOR}. We write $b_p(f)=k$ for the \emph{branching order} of $f$ at
$p$ and $b(f)$ for the \emph{branching divisor} of $f$. A map $f$ is called a
\emph{branched immersion} if all points at which $df$ fails to be injective
are branch points. If $M$ is a Riemann surface, a branched immersion is called
\emph{conformal} if it is conformal away from its branch points. The following
proposition relates the branching divisor of a holomorphic curve in $\HP^1$ to
the Weierstrass divisor of its canonical linear system, cf.\
Sections~\ref{sec:holom-curv-and-linear-systems}
and~\ref{sec:weierstr-gaps-flag}.

\begin{ProA}\label{P:Branch_points_of_holomorphic_curves}
  A non--constant holomorphic curve in $\HP^1$ is a branched conformal
  immersion. Its branching divisor coincides with the Weierstrass divisor of
  its canonical linear system.
\end{ProA}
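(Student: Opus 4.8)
The statement concerns divisors, so it is pointwise, and the plan is to fix $p\in M$ and prove $b_p(L)=\ord_p(H)$ for the canonical linear system $H=(\H^2)^*\subset H^0(L^{-1})$, together with the easier assertion that $f$ is conformal and immersed away from such points. First I would localize to an affine chart: choose $\infty\in\HP^1$ with $\infty\ne L_p$ and, on a neighbourhood $U$ of $p$ on which $L$ avoids $\infty$, write $L=\psi\H$ with $\psi=\dvector{f\\1}$, $f=\sigma\circ L\colon U\to\H$, as in Section~\ref{sec:ident-hp1-with}. Then $L$ and $f$ agree as maps into $\HP^1\supset\H$, and $\delta=\pi\nabla_{|L}$ corresponds to $df$ under $\H^2/L\cong\H$; hence $p$ is a branch point of $L$ exactly when $df(p)=0$, with equal orders. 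On the Weierstrass side, near $p$ the canonical linear system is spanned by $e_2^*$, which is nowhere vanishing on $U$ since $L$ avoids $\infty$, and by $e_1^*=e_2^*\bar f$; so every $\alpha=e_1^*a+e_2^*b=e_2^*(\bar f a+b)\in H$ has $\ord_p\alpha=\ord_p(\bar f a+b)$, which is $0$ unless $\bar f(p)a+b=0$, in which case it equals $\ord_p(f-f(p))$. Thus the Weierstrass gap sequence of $H$ at $p$ is $\{0,m\}$ with $m:=\ord_p(f-f(p))$, and $\ord_p(H)=m-1$. Here $m<\infty$ because a quaternionic holomorphic section vanishing to infinite order at a point vanishes identically; this is also why the Weierstrass points of $H$ are isolated, cf.~\cite[Section~4.1]{FLPP01}.

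Second, I would compute the branching order of $f$ at such a point, where $df(p)=0$ and hence $m\ge 2$. Off the zeros of $df$---which are isolated, being the Weierstrass points of $H$---the immersed holomorphic curve $L$ corresponds to a conformal immersion (Section~\ref{sec:ident-hp1-with}, \cite[Section~2.2]{BFLPP02}), so $f$ is a conformal immersion on $U\setminus\{p\}$ with an isolated singularity of $df$ at $p$. The local normal form for conformal branched immersions \cite{GOR} then yields centred coordinates $z$ on $U$ and $u=(u_1,\dots,u_4)$ near $f(p)$ with $u_1\circ f+\ii u_2\circ f=z^{k+1}+O(z^{k+2})$ and $u_3\circ f,u_4\circ f=O(z^{k+2})$ for some $k\ge 1$; reading off $df$ in these coordinates gives $\ord_p(df)=k$. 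On the other hand, for any smooth $f$ one has $\ord_p(df)=\ord_p(f-f(p))-1=m-1$, because the Taylor expansion of $f-f(p)$ starts with a nonzero homogeneous polynomial of degree $m$ whose differential is again nonzero. Hence $k=m-1$, i.e. $b_p(L)=b_p(f)=m-1=\ord_p(H)$.

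Combining the two steps gives $b_p(L)=\ord_p(H)$ at every $p\in M$, so the branching divisor of $L$ equals the Weierstrass divisor of its canonical linear system. Moreover $f$ is a conformal immersion off the finitely many Weierstrass points and has the displayed normal form at them, so $L$ is a branched conformal immersion, and the only points at which $df$ fails to be injective are these branch points, since a conformal map with nonvanishing differential is an immersion.

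The hard part is the normal form invoked in the second step. Its content is that conformality forces the leading behaviour of $f-f(p)$ to be holomorphic in suitable coordinates: after an appropriate change of source and target coordinates the degree-$m$ part becomes $z^m$ in the first two target coordinates, so the remaining two target coordinates contribute only from order $m+1$ and $z$ itself serves as source coordinate. For a general smooth map with $\ord_p(df)=m-1$ this fails, so conformality is genuinely used; it enters through the identities $|f_x|^2=|f_y|^2$ and $\langle f_x,f_y\rangle=0$ on $U\setminus\{p\}$, which pass to Taylor identities at $p$ and pin down the degree-$m$ part of $f-f(p)$ up to the coordinate freedom. If one preferred not to cite \cite{GOR} one would reprove this local analysis; that and the isolatedness/finiteness statements are the only substantial inputs---the rest is the bookkeeping of the first two paragraphs.
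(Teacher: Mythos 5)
Your proposal is correct, and it reaches the identification $b_p(L)=\ord_p(H)$ by a genuinely different route for the one nontrivial local step. The paper gets everything at once from the normal form of \cite[Section~3.3]{FLPP01} for holomorphic sections of a \emph{quaternionic} holomorphic line bundle: writing $e_1^*=e_2^*\bar f=z^{k+1}\varphi+O(k+2)$ with $\varphi$ nowhere vanishing, it reads off simultaneously that $p$ is a branch point of $f$ of order $k$ and that $\ord_p((\H^2)^*)=\ord_p(e_1^*)-1=k$; the elliptic input (the similarity principle behind that expansion) carries the whole weight. You instead use \cite{FLPP01} only for the soft facts (finiteness of the vanishing order $m$ and isolation of Weierstrass points) and extract the branch--point normal form from the lowest--order Taylor coefficients of the conformality identities. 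One caution: your main line cites \cite{GOR} for ``the local normal form for conformal branched immersions,'' but \cite{GOR} is used in this paper only for the \emph{definition} of branch point; it does not supply, as a black box, the statement that a smooth weakly conformal map with an isolated zero of $df$ of finite order is a branch point. So the burden really does fall on the fallback argument in your last paragraph --- which, fortunately, is sound: if $f-f(p)=P_m+O(m+1)$, the degree $2m-2$ part of $\sum_k(\partial f_k)^2=0$ forces $\partial P_m$ to be $z^{m-1}$ times an isotropic vector (the mixed coefficients $v_{ab}$, $b\geq1$, of $\partial P_m$ are killed because the reality constraint turns the coefficient of $z^{m-2}\bar z^m$ into a sum of positive multiples of $|v_{ab}|^2$), so $P_m=\Re(cz^m v)$ maps into a $2$--plane and the asymptotic normal form with $k=m-1$ follows. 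What the paper's route buys is brevity and uniformity (the same expansion also governs higher--codimension curves and is reused elsewhere); what yours buys is that the shape of the singularity is seen to follow from conformality plus finite vanishing order alone, with the quaternionic holomorphic structure needed only to rule out infinite--order vanishing and non--isolated zeros of $df$.
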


\begin{proof}
  Using the affine chart induced by the basis $e_1$, $e_2$
  (cf.~Section~\ref{sec:ident-hp1-with}), a holomorphic curve can be written
  as $L=\tvector{f\\1}\H \subset\H^2$ with $f\colon M\rightarrow \H$.  The
  induced basis $e_1^*$, $e_2^*\in H^0(L^{-1})$ of the canonical linear system
  then satisfies $e_1^*=e_2^*\bar f$.  Assume that $L_p=e_2\H$ at $p\in M$ and
  hence $f(p)=0$.  By \cite[Section~3.3]{FLPP01} there exists a centered
  holomorphic coordinate $z\colon M\supset U\to\C$ at $p$ and a nowhere
  vanishing section $\varphi\in\Gamma(L^{-1}_{|U})$, such that the holomorphic
  section $e_1^*=e_2^*\bar f$ of $L^{-1}=(\H^2)^*/L^\perp$ can be written as
  \begin{equation*}
    e_2^*\bar f=e_1^*= z^{k+1}\varphi+ O(k+2) \text{\qquad as $z\to\infty$}.
  \end{equation*}
  This proves the claim, because $b_p(f)=k=\ord_p(e^*_1)-1$ and
  $\ord_p(\H^2)^*=\ord_p(e_1^*)-1=k$ as $e_2^*$ induces a section of $L^{-1}$
  which does not vanish at~$p$.
\end{proof}

\subsection{Left-- and right normal vectors}\label{sec:left-right-normal}
Let $L=\tvector{f\\1}\H\subset \H^2$ be a smooth map into $\HP^1$. Then
$f\colon M \rightarrow \H$ is the image of $L$ under the affine chart $\sigma$
defined by $e_1,e_2$. As explained in Section~\ref{sec:ident-hp1-with}, the
map $L$ is a holomorphic curve if and only if there exists a smooth map
$R\colon M \rightarrow S^2\subset \Im\H$ such that
\[ *df = -df R.\] 
Analogously, the dual map
$L^\perp$ is a holomorphic curve if and only if there exists a smooth map
$N\colon M \rightarrow S^2\subset \Im\H$ such that
\[*df = N df,\] because $L^\perp= (e_1^*-e_2^* \bar f)\H \subset
(\H^2)^*$. The maps $N$ and $R$ are called \emph{left normal} and \emph{right
  normal} of $f$, respectively. If $f$ is a conformal immersion, both normal
vectors exist and are smooth. One can prove (cf.\ appendix to \cite{Boh}) that
if $f$ is a non--constant map for which one of the two normals exists and is
smooth, the other normal vector can be globally defined as a continuous map
which is smooth away from the branch points of $f$.

Recall from Section~\ref{sec:holom-curv-and-linear-systems} that if $L$ is a 
holomorphic curve then $L^{-1}=(\H^2)^*/L^\perp$ is a Möbius invariant
holomorphic line bundle. The constant sections of $(\H^2)^*$ project to the
canonical linear system of holomorphic sections which is spanned by
$e_1^*=e_2^* \bar f$ and $e_2^*$. In particular, the complex structure $J$ of
$L^{-1}$ satisfies $Je_2^* = e_2^* R$. Similarly, if $L^\perp$ is holomorphic,
then $\H^2/L=(L^\perp)^{-1}$ is a Möbius invariant holomorphic line bundle
 whose canonical
linear system is spanned $e_1$ and $e_2=-e_1 f$ and whose complex structure
$J$ satisfies $Je_1 = e_1 N$.

\subsection{Mean curvature sphere congruence}\label{sec:mean-curv-sphere-affine}
Assume now that $L$ is immersed. Then both M\"obius invariant quaternionic
holomorphic line bundles are defined. The left and right normals $N$ and $R$
are both smooth and $*df= Ndf = -df R$. In \cite[Section~7.1]{BFLPP02} 
it is shown that the mean curvature vector $\mathcal{H}$ of $f$ is
related to $N$ and $R$ via 
\[ dN'=\frac12(dN-N{*}dN)=df H\quad \textrm{ and } \quad
dR'=\frac12(dR-R{*}dR)=Hdf,\] where $H=-R\bar{\mathcal{H}}=-\bar{\mathcal{H}}N$.
In particular $f$ is Euclidean minimal if and only if $N$ and $R$ are
anti--holomorphic, i.e., $dN'=dR'=0$.

The mean curvature sphere congruence of $L$
(Section~\ref{sec:mean-curv-sphere}) is given by
\[ S=\Ad\dvector{1 & f \\ 0 & 1} \dvector{N & 0 \\ -H & -R}= \dvector{N-fH & f
  H f -N f -f R 
  \\ -H & H f -R} \] (see \cite[Section~7.2]{BFLPP02}) and its Hopf fields are
\[
2{*}A = \Ad \dvector{ 1 & f \\ 0 & 1} \dvector{0 & 0 \\ w & dR''}
\quad\textrm{ and } \quad 2{*}Q = \Ad \dvector{1 & f \\ 0 & 1} \dvector{dN'' &
  0 \\w-dH & 0},
\]
with $dR'' =\tfrac12(dR+R{*}dR)$, $dN''=\tfrac12(dN+N{*}dN)$, and
\[w=\tfrac12(dH+R{*}dH-H{*}dN'')=\tfrac12(dH+{*}dHN-{*}dR''H).\] The mean
curvature sphere congruence of the dual curve $L^\perp$ is $S^\perp= S^*$ with
Hopf fields $A^\perp=-Q^*$ and $Q^\perp=-A^*$.

By Lemma~\ref{lem:twistor_hol_A=0}, the holomorphic curve $L$ is twistor
holomorphic if and only if $dR''=0$.  Similarly $L^\perp$ is twistor
holomorphic if and only if $dN''=0$. Moreover, $L$ is totally umbilic if and
only if $S$ is constant which is equivalent to both $L$ and $L^\perp$ being
twistor holomorphic., i.e., to $dN''=dR''=0$.

\section{B\"acklund transformations of Willmore surfaces} \label{sec:bt} 

In this appendix we collect some results from \cite[Section~9]{BFLPP02} about
1-- and 2--step B\"acklund transformations of Willmore surfaces in the
conformal 4--sphere $S^4=\HP^1$ and derive consequences that are central for
the present paper. As an example we show how the Weierstrass representation of
minimal surfaces fits into this context.

\subsection{2--step Bäcklund transformation}\label{sec:2-step-baecklund}
A holomorphic curve $L\subset \H^2$ in $\HP^1$ is Willmore if and only if its
Hopf fields $A$ and $Q$ are co--closed, see Section~\ref{sec:mean-curv-sphere}.
If $A$ does not vanish 
identically, then $\fwd L =\ker(A)$ extends smoothly 
through the isolated zeroes of $A$ to either a branched conformal
immersion or a constant map. If $Q$ does not
vanish identically, the same is true for $\bwd L=\im(Q)$, 
cf.~\cite[Appendix~13.1]{BFLPP02}.
The maps $\fwd L$ and $\bwd L$ defined by
\[
\fwd L \supset \ker(A) \qquad 
\text{and}\qquad 
\bwd L \subset \im(Q)
\]
are then called the \emph{2--step forward} or \emph{backward B\"acklund
  transform} of $L$, respectively. If $\fwd L$ is defined and non--constant,
by \cite[Theorem~7]{BFLPP02} its Hopf field is $\fwd Q=A$ which implies that
$\fwd L$ is again Willmore and $\bwd{\text{$\fwd L$}}=L$. Analogously, 
$\bwd A = Q$ and $\fwd{\text{$\bwd L$}}$.

\subsection{1--step Bäcklund transformation}\label{sec:1-step-baecklund}
In contrast to 2--step B\"acklund transformations, 1--step B\"acklund
transformations are not M\"obius invariant but depend on the choice of a point
$\infty\in \HP^1$: as in Appendix~\ref{app:affine_coordinates} we chose
$\infty = \tvectork{1\\0}$ and write $L=\tvector{f\\1}\H\subset \H^2$ with
$f\colon M\rightarrow \H$ the representation in the affine chart defined by
$e_1$, $e_2$.  Because $w$ in Appendix~\ref{sec:mean-curv-sphere-affine}
satisfies $*w=-Rw$ and $*(w-dH)=(w-dH)N$ we have
\[df\wedge w= (w-dH)\wedge df =0.\] 

One can check \cite[Section~7]{BFLPP02} that $w$ is closed if and only if $A$
or, equivalently, $Q$ is co--closed which in turn is equivalent to $f$ being
Willmore (see Section~\ref{sec:Willmore}).

If $f$ is a Willmore immersion, a smooth map $g\colon M\rightarrow \H$ that
satisfies
\begin{equation*}
	dg=w=e_2^*(2{*}Ae_1)
\end{equation*} 
is called a \emph{1--step forward B\"acklund transform} of $f$. Up to
similarity transformation $g$ is uniquely determined by the choice of
$\infty$. If $w$ does not vanish identically, then $g$ is a branched conformal
immersion.  Similarly, a map $h$ with $dh = w-dH=e_2^*(2{*}Qe_1)$ is called a
\emph{1--step backward B\"acklund transform}.

\subsection{Properties of the Bäcklund transformations}
The following theorem describes the relation between 1--step and 2--step
Bäcklund transformations.

\begin{TheA}\label{the:BT} 	
  Let $f\colon M\rightarrow \H$ be a Willmore immersion and $g\colon M\to\H$ a
  1--step forward Bäcklund transform, i.e., $dg=w$. If $w\not\equiv 0$, then:
  \begin{enumerate}[i)]
  \item\label{item:forward=backward} the 1--step forward Bäcklund 
    transform $g$ is a
    branched conformal Willmore immersion. Away from its branch points $df
    =w_g-d H_g$. In particular, $f$ is a 1--step backward Bäcklund transform 
    of $g$	 and $f+H_g$ is a 1--step forward Bäcklund transform of $g$.
  \item\label{item:2times1is2} the 2--step forward B\"acklund transform $\fwd
    f$ of $f$ coincides with the 1--step forward B\"acklund transform $\fwd f
    = f+H_g$ of $g$.
  \item\label{item:1-step-of-Euclidean-minimal} 
    the 1--step forward B\"acklund transform $g$ of $f$ is 
    twistor holomorphic, i.e., 
    $A_g\equiv0$, if and only if there exists a point $a \in \H$ such that the
    inversion $(f-a)^{-1}$ is Euclidean minimal.
  \end{enumerate}
  The analogous statements hold for backward B\"acklund transformations.
\end{TheA}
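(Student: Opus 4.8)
\emph{Strategy and set--up.} The whole statement is an affine--coordinate translation of the M\"obius invariant facts about $2$--step B\"acklund transformations recalled in Appendix~\ref{sec:2-step-baecklund} (that $\fwd L=\ker A$ is again Willmore with Hopf field $\fwd Q=A$ and $\bwd{\fwd L}=L$), the dictionary being the explicit formulae for $S$, $A$, $Q$, $w$, $dH$ in Appendix~\ref{sec:mean-curv-sphere-affine}. Write $L=\tvector{f\\1}\H$, let $N,R$ be the left and right normals of $f$, and recall $*w=-Rw$, $*(w-dH)=(w-dH)N$, $dN'=df\,H$, $dR'=H\,df$, $dN''=\tfrac12(dN+N{*}dN)$, $dR''=\tfrac12(dR+R{*}dR)$, together with $2{*}A=\Ad\tvector{1&f\\0&1}\tvector{0&0\\w&dR''}$.

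\emph{Part \ref{item:forward=backward}.} From $*w=-Rw$ and $dg=w$ the map $g$ is, away from the isolated zeros of $w$, a conformal immersion with left normal $N_g=-R$; hence $g$ is a branched conformal immersion whose branching divisor is the zero divisor of $w$. Next I apply the formulae of Appendix~\ref{sec:mean-curv-sphere-affine} to $g$ in place of $f$, i.e.\ with left normal $-R$, right normal $R_g$ and mean curvature function $H_g$. Since $N_g=-R$ one computes $dN_g'=\tfrac12(d(-R)-(-R){*}d(-R))=-dR''$ and $dN_g''=-dR'$; comparing the first with $dN_g'=dg\,H_g=w\,H_g$ gives the key relation $w\,H_g=-dR''$, and feeding $dN_g''=-dR'$ into $w_g=\tfrac12(dH_g+R_g{*}dH_g-H_g{*}dN_g'')$ yields, after simplification, $w_g-dH_g=df$. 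All of \ref{item:forward=backward}) follows at once: $df=w_g-dH_g$ says $f$ is a $1$--step backward B\"acklund transform of $g$; $d(f+H_g)=df+dH_g=w_g$ says $f+H_g$ is a $1$--step forward B\"acklund transform of $g$; and $w_g=d(f+H_g)$ is closed, so $g$ is Willmore by the criterion of \cite[Section~7]{BFLPP02}. This bookkeeping step --- tracking the normals and $H_g$ of $g$ through the sign-- and side--sensitive formulae of Appendix~\ref{sec:mean-curv-sphere-affine} and matching them to the data of $f$ one rung down the B\"acklund ladder --- is the technically delicate point; everything else is formal.

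\emph{Part \ref{item:2times1is2}.} From the formula for $2{*}A$ one reads off that, away from the common zeros of $w$ and $dR''$, the line $\ker A$ is spanned by $\tvector{f-\lambda\\1}$, where $\lambda$ is the quaternion valued function with $dR''=w\lambda$ (such a $\lambda$ exists because $\ker A=\fwd L$ is a line bundle). By the relation $w\,H_g=-dR''$ from Part~\ref{item:forward=backward} we have $\lambda=-H_g$, hence $\fwd f=f-\lambda=f+H_g$; combined with Part~\ref{item:forward=backward} this is the assertion of \ref{item:2times1is2}). Alternatively, since $\fwd Q=A$, the $1$--step backward B\"acklund transform of $\fwd f$ has derivative $e_2^*(2{*}\fwd Q\,e_1)=e_2^*(2{*}A\,e_1)=w=dg$, so $g$ is that backward transform, and the backward form of \ref{item:forward=backward}) then identifies $\fwd f$ with the $1$--step forward B\"acklund transform $f+H_g$ of $g$.

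\emph{Part \ref{item:1-step-of-Euclidean-minimal} and the backward statements.} By Lemma~\ref{lem:twistor_hol_A=0} and Appendix~\ref{sec:mean-curv-sphere-affine}, $g$ is twistor holomorphic iff $dR_g''\equiv0$. Expanding $dR_g''$ with the formulae of Appendix~\ref{sec:mean-curv-sphere-affine} applied to $g$ (using $N_g=-R$, the defining relation $w\,R_g=R\,w$, and $w\,H_g=-dR''$), I expect to find that $dR_g''\equiv0$ is equivalent to $\nabla S$ annihilating one and the same direction in $\H^2$, i.e.\ to all mean curvature spheres $S_p$ of $f$ passing through a common point of $\HP^1$; by Part~\ref{item:2times1is2} this is also equivalent to $\fwd f=f+H_g$ being constant. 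Since $w\not\equiv0$ this common point is not $\infty=\tvectork{1\\0}$ (the case $w\equiv0$, in which $f$ is Euclidean minimal and $g$ is constant), so it is some finite $a\in\H$, and ``all $S_p$ pass through $a$'' is, by the M\"obius invariance of the mean curvature sphere congruence together with the description of Euclidean minimal curves in Section~\ref{sec:eucl-minim-curv}, exactly the condition that $(f-a)^{-1}$ be Euclidean minimal. Finally, the backward statements follow by symmetry: applying \ref{item:forward=backward})--\ref{item:1-step-of-Euclidean-minimal}) to the dual curve $L^\perp$, whose mean curvature sphere congruence is $S^*$ with Hopf fields $A^\perp=-Q^*$ and $Q^\perp=-A^*$, interchanges forward and backward B\"acklund transformations. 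The main obstacles are the two explicit expansions above ($w_g-dH_g=df$ in Part~\ref{item:forward=backward} and the vanishing criterion for $dR_g''$ in Part~\ref{item:1-step-of-Euclidean-minimal}); once these are in hand, Parts~\ref{item:2times1is2} and~\ref{item:1-step-of-Euclidean-minimal} are immediate.
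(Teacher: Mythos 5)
Your strategy --- proving everything by direct computation with the affine formulae of Appendix~\ref{sec:mean-curv-sphere-affine} --- is genuinely different from the paper's, which outsources the two hard identities to \cite[Proposition~16]{BFLPP02} (for $df=w_g-dH_g$) and \cite[Lemma~10]{BFLPP02} (for $\fwd f=f+H_g$), and then proves part~iii) by a short logical argument rather than by a computation. Several of your intermediate steps are correct and worth keeping: $N_g=-R$, $dN_g'=-dR''$, hence $wH_g=-dR''$ and $wR_g=Rw$; and your identification of $\ker A$ as the span of $\tvector{f+H_g\\1}$ read off from the explicit matrix for $2{*}A$ is a clean, self-contained derivation of $\fwd f=f+H_g$ that the paper does not spell out.

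However, there are two genuine gaps, and you flag them yourself. First, in Part~i) the identity $w_g-dH_g=df$ is announced as following ``after simplification,'' but the simplification is never performed; since this identity \emph{is} the assertion being proved, and Parts~ii) and~iii) as you present them both lean on it, nothing has actually been established. The computation is not routine: $w_g=\tfrac12(dH_g+R_g{*}dH_g-H_g{*}dN_g'')$ involves $dH_g$, and $H_g$ is only defined implicitly by $wH_g=-dR''$, so differentiating it requires the closedness of $w$ (i.e.\ the Willmore equation for $f$) in an essential way --- this is exactly the content of the cited \cite[Proposition~16]{BFLPP02}. Second, Part~iii) is written in the subjunctive (``I expect to find\ldots''): the equivalence of $dR_g''\equiv0$ with the concurrence of the mean curvature spheres of $f$ is never derived, and on its face it is not obvious how $dR_g''$, an invariant of the \emph{other} curve $\tvector{g\\1}\H$, expands in terms of the data of $f$. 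The paper avoids this expansion entirely: it shows $(f-a)^{-1}$ Euclidean minimal $\iff$ $\tvectork{a\\1}\subset\ker A$ $\iff$ $\fwd f$ constant $\iff$ $w_g\equiv0$ (using i) and ii)), and then argues $w_g\equiv0\iff A_g\equiv0$ by observing that $w_g\equiv0$ together with $A_g\not\equiv0$ would force $g$ to be Euclidean minimal, i.e.\ $H_g\equiv0$, contradicting the constancy of $a=f+H_g$. If you want to keep the computational route you must actually carry out the two expansions; otherwise the cleanest repair is to import the two results from \cite{BFLPP02} as the paper does.
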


\begin{proof}
  Statement~\ref{item:forward=backward}) is an immediate consequence of
  the fact \cite[Proposition~16]{BFLPP02} that if two branched conformal
  immersions satisfy $df\wedge dg=0$, then $dg(w_g-dH_g)=w df$.
  Statement~\ref{item:2times1is2}) follows from
  \cite[Lemma~10]{BFLPP02}.  To see
  \ref{item:1-step-of-Euclidean-minimal}), note that since $A\not
  \equiv 0$ the characterization of Euclidean minimal surfaces given in
  Section~\ref{sec:eucl-minim-curv} yields that $(f-a)^{-1}$ is Euclidean
  minimal if and only if $\tvectork{a\\1}\subset \ker A$. But this means that
  $\tvectork{a\\1}$ is the 2--step forward Bäcklund transform of~$f$ which,
  by~\ref{item:forward=backward}) and~\ref{item:2times1is2}), is
  equivalent to $w_g\equiv0$. Because $w_g\equiv0$, the assumption
  $A_g\not\equiv0$ would imply that $g$ is Euclidean minimal, i.e.,
  $H_g\equiv0$, which contradicts $a=\fwd f = f+H_g$. Thus, for the 1--step
  B\"acklund transformation $g$ we have that $w_g\equiv0$ is equivalent to
  $A_g\equiv0$.
\end{proof}

Using the formula for $S$ in Section~\ref{sec:mean-curv-sphere-affine} we
obtain:

\begin{CorA}\label{Cor:minimal_vs_twistor}
  Let $f\colon M\rightarrow \H$ be a nowhere vanishing conformal immersion of
  a simply connected Riemann surface.  Its inversion $f^{-1}$ is Euclidean
  minimal if and only if there is a branched twistor holomorphic immersion $g$
  with mean curvature sphere congruence $S_g$ such that
  \begin{equation*}
  	f=-H_g =e_2^*(S_g e_1)
  \end{equation*}	
  away from the branch points of $g$.
\end{CorA}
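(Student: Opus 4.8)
The plan is to deduce Corollary~\ref{Cor:minimal_vs_twistor} from Theorem~\ref{the:BT}, parts~\ref{item:1-step-of-Euclidean-minimal}) and~\ref{item:2times1is2}), together with the affine coordinate formula for the mean curvature sphere congruence from Appendix~\ref{sec:mean-curv-sphere-affine}. Applied to a branched immersion $g$, that formula gives $S_g e_1 = \tvectork{N_g-gH_g\\ -H_g}$, so that $e_2^*(S_g e_1) = -H_g$ off the branch points of $g$; hence the two expressions for $f$ in the statement coincide, and it suffices to prove that $f^{-1}$ is Euclidean minimal if and only if $f = -H_g$ for some branched twistor holomorphic immersion $g$.

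For the ``only if'' part, suppose $f^{-1}$ is Euclidean minimal. Then $f^{-1}$, hence also $f$ --- the Willmore property being M\"obius invariant --- is a Willmore immersion, cf.\ Section~\ref{sec:eucl-minim-curv}. As $M$ is simply connected there is a globally defined $1$--step forward B\"acklund transform $g$ of $f$ with $dg = w$, and (assuming $f$ is not twistor holomorphic itself, which is the only interesting case) $w\not\equiv 0$, so $g$ is a branched conformal immersion by Theorem~\ref{the:BT}~\ref{item:forward=backward}). Theorem~\ref{the:BT}~\ref{item:1-step-of-Euclidean-minimal}), applied with $a=0$ since $(f-0)^{-1}=f^{-1}$ is Euclidean minimal, shows that $g$ is twistor holomorphic; its proof moreover identifies the $2$--step forward B\"acklund transform $\fwd f$ with the constant curve $\tvectork{a\\1}\H=\tvectork{0\\1}\H$, i.e.\ the point $0$ in the affine chart, while Theorem~\ref{the:BT}~\ref{item:2times1is2}) gives $\fwd f = f+H_g$. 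Comparing the two, $f+H_g=0$, i.e.\ $f=-H_g=e_2^*(S_g e_1)$ away from the branch points of $g$.

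For the ``if'' part, let $g$ be a branched twistor holomorphic immersion with $f=-H_g$. Since $A_g\equiv 0$ (Lemma~\ref{lem:twistor_hol_A=0}), the affine formula for the Hopf field $A$ in Appendix~\ref{sec:mean-curv-sphere-affine} forces $w_g\equiv 0$, whence $e_2^*(2{*}Q_g e_1) = -dH_g = df$; thus $df = w_g-dH_g$, which is exactly the statement that $f$ is a $1$--step backward B\"acklund transform of $g$. By the forward--backward symmetry of B\"acklund transforms (the ``analogous statements'' clause of Theorem~\ref{the:BT}), $g$ is then a $1$--step forward B\"acklund transform of $f$; in particular $w = dg\not\equiv 0$ since $g$ is an immersion, and we are back in the setting of the previous paragraph. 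Theorem~\ref{the:BT}~\ref{item:1-step-of-Euclidean-minimal}) together with~\ref{item:2times1is2}) now yields a point $a$ with $(f-a)^{-1}$ Euclidean minimal and $\tvectork{a\\1}\H = \fwd f = f+H_g = 0$, so $a=0$ and $f^{-1}$ is Euclidean minimal.

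The step I expect to be the main obstacle is the second half of the ``if'' direction: using the forward--backward symmetry correctly to recognize the given $g$ as the $1$--step \emph{forward} B\"acklund transform of $f=-H_g$, and then tracking the integration constant carefully enough to conclude that the point $a$ furnished by Theorem~\ref{the:BT}~\ref{item:1-step-of-Euclidean-minimal}) is exactly $0$ --- so that $f^{-1}$ itself, and not merely a translate $(f-a)^{-1}$, is Euclidean minimal. One must also pin down the non--degeneracy hypothesis that $f$ (equivalently $f^{-1}$) is not twistor holomorphic, so that all the $1$-- and $2$--step B\"acklund transforms in play are non--constant; as in Corollary~\ref{cor:willmore-directly-from-S} this is best treated as a standing assumption rather than something to be proved. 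Everything else is a routine combination of the cited results with the affine coordinate formulas of Appendix~\ref{sec:mean-curv-sphere-affine}.
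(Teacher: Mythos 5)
Your proposal is correct and follows essentially the same route as the paper, which states the corollary as an immediate consequence of Theorem~\ref{the:BT} combined with the affine formula for $S$ in Appendix~\ref{sec:mean-curv-sphere-affine}; you have simply spelled out the forward--backward bookkeeping and the identification $\fwd f=f+H_g=\tvectork{0\\1}\H$ that the paper leaves implicit. Your observation that the degenerate case ($f$ itself twistor holomorphic, so that $w\equiv0$ and $g$ degenerates to a constant) must be excluded as a standing assumption matches the paper's own implicit treatment, e.g.\ the hypotheses of Corollary~\ref{cor:willmore-directly-from-S}.
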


In other words, $f^{-1}$ is Euclidean minimal if and only if $\bar
f=-N_g\mathcal{H}_g=-\mathcal{H}_gR_g$, i.e., $\bar f$ is the rotation of the
mean curvature vector $\mathcal{H}_g$ of a twistor holomorphic curve~$g$ by
minus $\pi/2$ in the normal bundle of $g$. Note that locally every branched
twistor holomorphic immersion $g$ into $\H$ that is neither totally umbilic
nor Euclidean minimal is a 1--step forward B\"acklund transformation of $f$ as
in the preceding corollary.

By Richter's theorem \cite[Theorem 9]{BFLPP02}, a Willmore surface $f\colon M
\rightarrow \H$ that is not Euclidean minimal takes values in $\R^3=\Im\H$
if and only if it has a 1--step forward B\"acklund transform $g$ that is
minimal with respect to the hyperbolic geometry defined by the Hermitian form
\begin{equation*}
	\spr{\tvector{x_1\\x_2},\tvector{y_1\\y_2}}=\bar x_2 y_1 + \bar x_1 y_2.
\end{equation*}
(By Lemma~\ref{lem:minimal_s_skew} the latter is equivalent to $S_g$ being
skew with respect to $\thespr$.  The hyperbolic minimal B\"acklund transforms
are then characterized by the property that $\Re(g)=\frac12 H$.)  We obtain:

\begin{CorA}\label{cor:min_r3} 
  Let $f\colon M\rightarrow \R^3=\Im\H$ be a nowhere vanishing conformal
  immersion of a simply connected Riemann surface. Its inversion $f^{-1}$ is
  Euclidean minimal if and only if there is a branched twistor holomorphic
  immersion $g$ with mean curvature sphere congruence $S_g$ 
  that is hyperbolic minimal with respect to $\thespr$ and a null 
  vector $e_1\in\H^2$ such that
  \begin{equation*}
  	f=-H_g =\langle e_1,S_g e_1\rangle
  \end{equation*}	
  away from the branch points of $g$.
\end{CorA}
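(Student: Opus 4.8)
The plan is to deduce the corollary from Corollary~\ref{Cor:minimal_vs_twistor} together with the version of Richter's theorem recalled above; the former already produces a twistor holomorphic $g$ with $f=-H_g$, and the extra content here is only that, when $f$ is $\Im\H$--valued, this $g$ can be taken hyperbolic minimal. I would begin by recording the arithmetic fact that makes the two statements match: the point $\infty=\tvectork{1\\0}$ of the affine chart is a null line of $\thespr$, and for $e_1=\tvectork{1\\0}$ the standard dual covector $e_2^*$ (the one with $e_2^*(e_1)=0$) equals $\spr{e_1,\cdot}$; hence the identity $f=-H_g=e_2^*(S_ge_1)$ of Corollary~\ref{Cor:minimal_vs_twistor} reads $f=-H_g=\spr{e_1,S_ge_1}$ with $e_1$ a null vector.

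For the forward implication I would assume $f^{-1}$ is Euclidean minimal, note that $f$ is then Willmore by M\"obius invariance of the Willmore condition, and reduce to the case where $f$ is not Euclidean minimal (otherwise $f$ and $f^{-1}$ are both Euclidean minimal, $f$ is planar by Section~\ref{sec:eucl-minim-curv}, and this degenerate case is checked directly). Since $f$ takes values in $\Im\H$, Richter's theorem then supplies a $1$--step forward B\"acklund transform $g$ of $f$ that is hyperbolic minimal with respect to $\thespr$, equivalently (Lemma~\ref{lem:minimal_s_skew}) with $S_g$ skew, and being minimal this $g$ is a branched conformal immersion. Next I would use that $f^{-1}$ Euclidean minimal forces $\ker(A_f)$ to contain the constant line $\tvectork{0\\1}\H$ (this is the characterization from the proof of Theorem~\ref{the:BT}~\ref{item:1-step-of-Euclidean-minimal})), so the $2$--step forward B\"acklund transform $\fwd f$ equals the point $0$; Theorem~\ref{the:BT}~\ref{item:2times1is2}) then yields $0=\fwd f=f+H_g$, i.e.\ $f=-H_g$, and Theorem~\ref{the:BT}~\ref{item:1-step-of-Euclidean-minimal}) yields that $g$ is twistor holomorphic. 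Evaluating the formula for $S_g$ in Appendix~\ref{sec:mean-curv-sphere-affine} on $e_1=\tvectork{1\\0}$ gives $e_2^*(S_ge_1)=-H_g=f$, so $f=\spr{e_1,S_ge_1}$ with the null vector $e_1$ and hyperbolic minimal $g$, as wanted.

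For the converse I would start from a branched twistor holomorphic immersion $g$ that is hyperbolic minimal with respect to $\thespr$ and a null $e_1$ with $f=-H_g=\spr{e_1,S_ge_1}$; after changing the basis of $\H^2$ so that $e_1\H=\tvectork{1\\0}\H$ I may assume $e_1=\tvectork{1\\0}$, so $-H_g=e_2^*(S_ge_1)$ holds automatically and the ``if'' part of Corollary~\ref{Cor:minimal_vs_twistor} gives that $f^{-1}$ is Euclidean minimal. (Here skewness of $S_g$ forces $\overline{\spr{e_1,S_ge_1}}=\spr{S_ge_1,e_1}=-\spr{e_1,S_ge_1}$, so $f=\spr{e_1,S_ge_1}$ is automatically $\Im\H$--valued, consistent with the hypothesis.)

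The step I expect to need the most care is matching the representative of the similarity class of $1$--step forward B\"acklund transforms singled out by Richter's theorem (the hyperbolic minimal one) with the representative appearing in Corollary~\ref{Cor:minimal_vs_twistor} (the one normalized by $f=-H_g$). The resolution I have in mind is the computation above: when $f^{-1}$ is Euclidean minimal the M\"obius invariant $2$--step forward B\"acklund transform $\fwd f$ is the constant point $0$, so $f=-H_g$ holds for \emph{every} $1$--step forward B\"acklund transform $g$ of $f$ and in particular for the hyperbolic minimal one. Beyond this, the only thing to track carefully is the conjugation conventions built into the quaternionic dual and into $\thespr$.
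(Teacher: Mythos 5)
Your route is exactly the one the paper intends: the paper gives no written proof of this corollary beyond the words ``We obtain'', the argument being precisely the concatenation of Corollary~\ref{Cor:minimal_vs_twistor} with the quoted form of Richter's theorem, plus the observation that for $e_1=\tvectork{1\\0}$ and the stated Hermitian form one has $\spr{e_1,e_1}=0$ and $\spr{e_1,\cdot}=e_2^*$. The details you supply are correct and go beyond what the paper records: in particular your resolution of the representative-matching issue --- that $f^{-1}$ Euclidean minimal forces $\fwd f$ to be the constant point $0$, so $f=-H_g$ holds for \emph{every} $1$--step forward B\"acklund transform $g$ of $f$ (since $H_g$ depends only on $dg$), hence in particular for the hyperbolic minimal one produced by Richter's theorem --- and your check that skewness of $S_g$ makes $\spr{e_1,S_ge_1}$ automatically imaginary.

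The one step I cannot accept as written is ``this degenerate case is checked directly.'' If $f$ is itself Euclidean minimal, then $f$ is planar (in a plane through $0$), and the forward implication in fact \emph{fails} there rather than checking out: for any twistor holomorphic $g$ one has $A_g\equiv0$, hence $w_g\equiv0$ by the formula for $2{*}A$ in Appendix~\ref{sec:mean-curv-sphere-affine}, hence ${*}dH_g=dH_gN_g$; the identity $f=-H_g$ would then give ${*}df=df\,N_g$ and therefore $N_g=-R_f$, which is constant for planar $f$, whence $dN_g'=dg\,H_g=0$ and $H_g\equiv0$ away from the branch points of $g$ --- contradicting that $f=-H_g$ is nowhere vanishing. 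So no such $g$ exists in that case, and the corollary (like Corollary~\ref{Cor:minimal_vs_twistor} and the quoted form of Richter's theorem) implicitly carries the hypothesis that $f$ is not Euclidean minimal. This is a defect of the statement rather than of your strategy, but you should record the planar case as an explicit exception (or add the missing hypothesis) instead of asserting that it goes through.
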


In other words, $f^{-1}$ is Euclidean minimal if and only if
$f=N_g\mathcal{H}_g=\mathcal{H}_gR_g$, i.e., $f$ is the rotation of the mean
curvature vector $\mathcal{H}_g$ of a twistor holomorphic curve~$g$ by $\pi/2$
in the normal bundle of $g$.

\subsection{Weierstrass representation and 1--step Bäcklund transformation of
  twistor holomorphic curves}\label{sec:weierstrass-representation-and-1-step-bt}
The formula $f=-H_g =e_2^*(Se_1)$ in Corollary~\ref{Cor:minimal_vs_twistor}
can be seen as an integral free version of the Weierstrass representation of
the minimal surface $f^{-1}$: we show that $f^{-1}=(e_2^*(Se_1))^{-1}$ is the
imaginary part of a holomorphic null curve in $\C^4$ obtained from $g$.

Right multiplication by $\ii$ makes $\H^2$ into a complex 4--dimensional
vector space $\C^4\cong(\H^2,\ii)$.  Let $\tvectork{g\\1} \subset\H^2$ be a
twistor holomorphic curve as in Corollary~\ref{Cor:minimal_vs_twistor} (i.e.,
$g$ is neither Euclidean minimal nor totally umbilic).  Away from the branch
points of $\tvectork{g\\1}$, the $\ii$--eigenspaces of its mean curvature
sphere congruence $S$ coincide with the tangent line congruence (or first
osculating curve) of its twistor lift, cf.\
Appendix~\ref{sec:twist-holom-curv}.  Because by assumption $H_g$ is an
immersion, the vector $e_1$ is only at isolated points an eigenvector of
$S$. The tangent line congruence of the twistor holomorphic curve
$\tvectork{g\\1}$ is, away from these isolated points, the holomorphic null
immersion
\begin{align*}
  [\hat S] \colon M\to Q^4,\qquad \hat S = (e_1-Se_1\ii)\land
  (e_1+Se_1\ii)\jj{}
\end{align*}
in $Q^4=\theset{[v]\in P(\Lambda^2(\H^2,\ii))}{v\land v=0}$, where $e_1$,
$e_2$ is the standard basis of $\H^2$ and where \emph{null} means that
the tangent lines of $[\hat S]$ are contained in $Q^4$. Consider the real
structure on $\Lambda^2(\H^2,\ii)$ defined by $x\land y\mapsto x\jj\land
y\jj$.  A real basis of $\Lambda^2(\H^2,\ii)$ is then given by
\begin{align*}
  \hat e_1&=e_1\land e_2\jj-e_1 \jj\land e_2,&\hat e_2&=(e_1\land e_2\jj+e_1
  \jj\land e_2)\ii,\\
  \hat e_3&=e_1\land e_2+e_1 \jj\land e_2\jj,&\hat e_4&=(e_1\land e_2-e_1
  \jj\land e_2\jj)\ii,\\
  \hat e_5&=e_1\land e_1\jj,&\hat e_6&=e_2\land e_2\jj.&
\end{align*}
A complex bilinear symmetric form $\thespr$ on $\Lambda^2(\H^2,\ii)$ can be
defined by $\spr{x,y}\hat e_5\land \hat e_6=x\land y$.  The affine part of the
stereographic projection with ``pole'' $[e_5]$ onto the tangent plane to $Q^4$
at $[e_6]$ is then
\begin{equation*}
  \hat\sigma (\hat S)
  = (\hat S - \spr{\hat S, \hat e_6}\hat e_5)\spr{\hat S,\hat
    e_5}^{-1}-\hat e_6,
\end{equation*}
which is a meromorphic null curve in $\C^4\cong \Span\{\hat e_1, \hat e_2,\hat
e_3, \hat e_4\}$. Writing $Se_1=e_1 h_1+e_2 h_2$ we obtain
\begin{align*}
  \spr{\hat S,\hat e_5}=-|h_2|^2\text{\qquad and\qquad} \Im(\hat S - \spr{\hat
    S, \hat e_6}\hat e_5-\spr{\hat S, \hat e_5}\hat e_6)=-\bar h_2,
\end{align*}
where we identify $\Im(\C^4)\cong\H$ via $\hat e_1\ii\mapsto 1$, $\hat
e_2\ii\mapsto \ii$, $\hat e_3\ii\mapsto \jj$, $\hat e_4\ii\mapsto \kk$.  Hence
\begin{equation*}
  \Im(\hat\sigma (\hat S))=\bar h_2|h_2|^{-2}=
  h_2^{-1}=(e_2^*(Se_1))^{-1}=f^{-1}. 
\end{equation*}

The minimal surface $f^{-1}$ and hence $f$ are contained in $\Im\H=\R^3$ if
and only if $\hat S$ is orthogonal to $\hat e_1$ in $\Lambda^2(\H^2,\ii)$ with
respect to $\thespr$.  But this can be shown to be equivalent to $S$ being
skew Hermitian with respect to the Hermitian form $\thespr$ in
Corollary~\ref{cor:min_r3} which, by Lemma~\ref{lem:minimal_s_skew}, is again
equivalent to $g$ being hyperbolic superminimal.

\section{Twistor holomorphic curves in $\HP^1$}\label{sec:twist-holom-curv}
  
This appendix relates properties of the mean curvature sphere congruence $S$
of a twistor holomorphic curve to properties of the osculating curves of its
twistor lift.

A twistor holomorphic curve in $\HP^1$ is a holomorphic curve whose Hopf field
$A$ vanishes identically (Lemma~\ref{lem:twistor_hol_A=0}) and which hence is
in particular Willmore (cf.\
Section~\ref{sec:mean-curv-sphere}). Equivalently, a holomorphic curve in
$\HP^1$ is twistor holomorphic if and only if the derivative $\nabla S$ of its
mean curvature sphere congruence $S$ satisfies
  \begin{equation*}
  	\nabla S=2{*}Q.
  \end{equation*}

\subsection{Mean curvature sphere congruence}
The \emph{$k$--th osculating curve $\hat L_k$} of a holomorphic curve $\hat
L\subset \C^{n+1}$ is the analytic continuation of the rank $k+1$ subbundle
spanned by the $k$--th derivatives of sections of $\hat L$, cf.\
\cite[p.~262]{GriHa}.  The first osculating curve or tangent line congruence
$\hat L_1$ of the twistor lift $\hat L=\theset{\psi\in
  L}{J\psi=\psi\ii}\subset (\H^2,{\ii})$ of a twistor holomorphic curve $L$ in
$\HP^1$ is the ${\ii}$--eigenspace of the mean curvature sphere congruence $S$
of $L$, because $S\nabla\varphi=(\nabla S)\varphi \ii+
\nabla\varphi\ii=\nabla\varphi\ii$ for all $\varphi\in \Gamma(\hat
L)$. Although $S$ is not defined at branch points of $L$, the tangent line
congruence $\hat L_1$ is, because all osculating curves of \emph{complex}
holomorphic curves extend through their Weierstrass points. We obtain the
following characterization of the branch points of $L$ through which the mean
curvature sphere congruence extends smoothly.

\begin{LemA}\label{L:smooth_S_equiv_E1_not_quaternionic}
  The mean curvature sphere congruence of a twistor holomorphic curve $L$
  extends smoothly through a branch point $p$ of $L$ if and only if at $p$ the
  tangent line congruence $\hat L_1$ of its twistor lift is not tangent to the
  fiber of the twistor projection, i.e., if\/ $\H^2=\hat {L_1}_{|_p}\oplus
  \hat L_1{\jj}_{|_p}$.
\end{LemA}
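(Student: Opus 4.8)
The plan is to reduce the assertion to one elementary fact about quaternionic complex structures, combined with the two inputs supplied by the discussion above: for a twistor holomorphic curve $L$ the $\ii$--eigenspace of its mean curvature sphere congruence $S$ is the tangent line congruence $\hat L_1$ of the twistor lift, and $\hat L_1$, being an osculating curve of the \emph{complex} holomorphic curve $\hat L\subset\CP^3$, is globally smooth (indeed analytic), in particular at branch points of $L$. The elementary fact is: for \emph{any} quaternionic linear $S\in\End(\H^2)$ with $S^2=-\Id$ one has the splitting $(\H^2,\ii)=E^+(S)\oplus E^-(S)$ into $\pm\ii$--eigenspaces, with $E^-(S)=E^+(S)\jj$; hence $S$ is determined by the complex $2$--plane $E^+(S)$, and a complex $2$--plane $W\subset(\H^2,\ii)$ arises this way if and only if $\H^2=W\oplus W\jj$. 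Where $S$ is defined (off the branch points of $L$) we thus have $E^+(S)=\hat L_1$ and, automatically, $\H^2=\hat L_1\oplus\hat L_1\jj$; so the only possible obstruction to extending $S$ smoothly through a branch point $p$ is that this splitting degenerate at $p$. I would also record the M\"obius geometric reformulation underlying the ``i.e.'' in the statement: $\hat L_1|_p$ is tangent to the twistor fibre over $[L_p]$ (the $\CP^1$ of complex lines contained in $L_p$) exactly when $\hat L_1|_p\subset(L_p,\ii)=\hat L|_p\oplus(\hat L|_p)\jj$, i.e.\ exactly when the complex $2$--plane $\hat L_1|_p$ is $\jj$--invariant, which (since $\hat L_1|_p$ always contains the complex line $\hat L|_p$) happens precisely when $\H^2\neq\hat L_1|_p\oplus(\hat L_1|_p)\jj$.

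For the direction ``$S$ extends $\Rightarrow$ splitting survives'': if $S$ extends smoothly near $p$, then $S^2=-\Id$ there by continuity, so $v\mapsto\tfrac12(v-Sv\,\ii)$ is a smooth field of idempotents of rank $2$ with image $E^+(S)$; thus $E^+(S)$ is a smooth rank--$2$ complex subbundle of $(\H^2,\ii)$ agreeing with $\hat L_1$ on a dense set, hence agreeing with it at $p$. Therefore $\H^2=E^+(S_p)\oplus E^-(S_p)=\hat L_1|_p\oplus(\hat L_1|_p)\jj$.

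For the converse, assume $\H^2=\hat L_1|_p\oplus(\hat L_1|_p)\jj$. Since $\hat L_1$ is smooth this open condition holds on a neighborhood $V$ of $p$, and on $V$ I would define $\tilde S$ to act as right multiplication by $\ii$ on $\hat L_1$ and by $-\ii$ on $\hat L_1\jj$. The point requiring care is that $\tilde S$ is then a well-defined \emph{quaternionic} linear complex structure: the prescription is consistent because $\jj\,\ii=-\ii\,\jj$ forces $\tilde S(v\jj)=(\tilde Sv)\jj$ and hence $\tilde S(v\lambda)=(\tilde S v)\lambda$ for all $\lambda\in\H$, while $\tilde S^2=-\Id$ is immediate on each summand; smoothness of $\tilde S$ follows from that of $\hat L_1$. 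Off the branch points $\tilde S$ is a quaternionic complex structure with $\ii$--eigenspace $\hat L_1=E^+(S)$ and therefore equals $S$, since two such structures with the same $\ii$--eigenspace agree on $\hat L_1\oplus\hat L_1\jj=\H^2$. Hence $\tilde S$ is a smooth extension of $S$ through $p$; its defining properties as the mean curvature sphere congruence ($\tilde S L=L$, $*\delta=\tilde S\delta$, $L\subset\ker Q$) hold off the branch points and thus, by density together with smoothness of $\tilde S$ and of $L$, also at $p$.

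The step I expect to be the main obstacle is the bookkeeping in this last paragraph: verifying cleanly that the fibrewise recipe ``$\ii$ on $\hat L_1$, $-\ii$ on $\hat L_1\jj$'' really yields a right--$\H$--linear endomorphism rather than merely a $\C$--linear one on $(\H^2,\ii)$, and that its restriction to $L$ recovers the complex structure of the curve, so that it coincides with the mean curvature sphere congruence across the branch point. All remaining ingredients — the identification $E^+(S)=\hat L_1$ for twistor holomorphic $L$ and the smooth (analytic) extendability of osculating curves of complex holomorphic curves through their Weierstrass points — are available from the discussion preceding the lemma.
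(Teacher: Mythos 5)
Your proposal is correct and follows essentially the same route as the paper: the paper states this lemma without a separate proof, presenting it as an immediate consequence of the preceding observations that away from branch points $\hat L_1$ is the $\ii$--eigenspace of $S$ and that $\hat L_1$, as an osculating curve of a complex holomorphic curve, extends through the Weierstrass points. Your write-up simply makes explicit the intended argument — that a quaternionic complex structure on $\H^2$ is the same as a complex $2$--plane $W$ with $\H^2=W\oplus W\jj$ — and fills in the details correctly.
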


\subsection{2--step Bäcklund transformation}
The complex dual curve of the twistor lift of a twistor holomorphic curve $L$
in $\HP^1$ can again be projected to $\HP^1$. We prove now that the curve thus
obtained is the dual curve of the $2$--step backward Bäcklund transform $\bwd
L$ of $L$ (Appendix~\ref{sec:2-step-baecklund}).

\begin{LemA}\label{L:twistor_projection_of_the_dual}
  Let $L\subset \H^2$ be a non totally umbilic twistor holomorphic curve in
  $\HP^1$ with twistor lift $\hat L$.  Its $2$--step backward Bäcklund
  transform $\bwd L=\im(Q)$ extends smoothly through zeros of $Q$ and branch
  points of $L$ (where the mean curvature sphere $S$ of $L$ and hence $Q$ is
  not defined) and the dual $\bwd L^\perp\subset (\H^2)^*$ is a twistor
  holomorphic curve with mean curvature sphere congruence $-S^*$ whose twistor
  lift is the complex dual curve $\hat L^d$ of $\hat L$.
\end{LemA}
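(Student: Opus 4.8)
The plan is to work entirely with the structural equation $\nabla S = 2{*}Q$, which characterizes twistor holomorphic curves in $\HP^1$, and with the osculating curves of the complex holomorphic twistor lift $\hat L$. First I would recall that for a non totally umbilic twistor holomorphic curve $L$, the Hopf field $Q$ does not vanish identically (by Lemma~\ref{lem:twistor_hol_A=0} together with the fact that $L$ is not totally umbilic, equivalently $S$ is not constant), so $\bwd L = \im(Q)$ is defined off the isolated zeros of $Q$. To show $\bwd L$ extends smoothly through both the zeros of $Q$ and the branch points of $L$, I would pass to the twistor lift: the tangent line congruence $\widehat{L_1}\subset(\H^2,{\ii})$ of $\hat L$ is the ${\ii}$--eigenspace of $S$ wherever $S$ is defined (as in the appendix, since $S\nabla\varphi = \nabla\varphi{\ii}$ for $\varphi\in\Gamma(\hat L)$), and $\widehat{L_1}$ is a complex holomorphic subbundle of the trivial $\C^4$--bundle, hence extends through the Weierstrass points of $\hat L$, i.e.\ through the branch points of $L$. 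The key identification is that $\im(Q)$ agrees, away from degeneracies, with $\widehat{L_1}\oplus\widehat{L_1}{\jj}$ when this is a quaternionic line, or more precisely that $\bwd L$ is the quaternionic span of $\widehat{L_1}$; since $\widehat{L_1}$ is globally defined and complex holomorphic of rank one, its quaternionic span $\widehat{L_1}\H$ is a smooth quaternionic line bundle, and by continuity it must coincide with $\bwd L = \im(Q)$ off the bad set. This gives the smooth extension.

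Next I would establish that $\bwd L^\perp$ is twistor holomorphic with mean curvature sphere $-S^*$. Here I would invoke the 2--step B\"acklund machinery from Appendix~\ref{sec:bt}: since $L$ is Willmore with $A\equiv 0$, the backward transform $\bwd L$ is again Willmore (cf.\ \cite[Theorem~7]{BFLPP02}), with $\bwd A = Q \not\equiv 0$. Dualizing, $\bwd L^\perp$ has Hopf fields governed by $-S^*$; specifically the relation $S^\perp = S^*$, $A^\perp = -Q^*$, $Q^\perp = -A^*$ (from Section~\ref{sec:canonical_complex} / Appendix~\ref{sec:mean-curv-sphere-affine}) applied to $\bwd L$ shows that the Hopf field $A$ of $\bwd L^\perp$ is $-(\bwd A)^* \cdot(\text{sign})$ — I would track signs carefully — and that $\bwd L^\perp$ inherits $A\equiv 0$ from $\fwd A_{\bwd L} = Q_L$ and the fact that $\bwd{\bwd L} $ relations collapse the forward Hopf field. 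The cleanest route: $\bwd L = \im(Q)$ with $Q = Q_L$, and $\im(Q)\subset\ker(Q)$ fails but $\im(Q)$ is exactly where the $K$--part of $\nabla(-S^*)$ has its kernel, so $-S^*$ restricted appropriately is the mean curvature sphere of $\bwd L^\perp$; twistor holomorphicity then follows from $\nabla(-S^*) = -2{*}Q^*$ having vanishing $K$--part on $\bwd L^\perp$, which is $A_{\bwd L^\perp}\equiv 0$.

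Finally, for the identification of the twistor lift of $\bwd L^\perp$ with the complex dual curve $\hat L^d$ of $\hat L$: the twistor lift of $\bwd L^\perp$ is the $({\ii})$--eigenspace of its mean curvature sphere $-S^*$, which is the $(-{\ii})$--eigenspace of $S^*$, i.e.\ the annihilator in $(\C^4)^* = ((\H^2,{\ii}))^*$ of the $({\ii})$--eigenspace of $S$ on $\widehat{L_1}$-complement. Since $\hat L^d$ is by definition the complex line in $(\C^4)^*$ perpendicular to the hyperplane spanned by $\hat L$ and its derivatives of all orders up to $n-1$ — in the rank--$4$ setting, the dual of the top osculating flag — I would match this with the annihilator of $\widehat{L_1}$ (equivalently the kernel of the derivative of the dual curve), using that $\widehat{L_1} = \hat L \oplus (\nabla\hat L)$ and that the complex holomorphic structure makes $S$'s eigenspace decomposition compatible with the osculating flag of $\hat L$. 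The main obstacle I expect is bookkeeping the signs and the precise identification of $-S^*$'s eigenspaces with $\hat L^d$ through the branch points of $L$ — i.e.\ verifying that the a priori only-continuous extensions from Lemma~\ref{L:smooth_S_equiv_E1_not_quaternionic} and the smooth extension of $\widehat{L_1}$ as a complex holomorphic bundle actually agree with the smooth extension of $\bwd L^\perp$ obtained from the B\"acklund side; this is where Riemann's removable singularity theorem, applied to the complex holomorphic bundle of homomorphisms, does the final work.
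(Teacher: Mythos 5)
There is a genuine gap at the heart of your first paragraph, and it propagates through the rest of the argument. You identify $\bwd L=\im(Q)$ with ``the quaternionic span of $\widehat{L_1}$'' and describe $\widehat{L_1}$ as complex holomorphic \emph{of rank one}. Both halves of this are wrong: the tangent line congruence $\hat L_1$ (spanned by $\hat L$ and its first derivatives) has complex rank two, and by Lemma~\ref{L:smooth_S_equiv_E1_not_quaternionic} its quaternionic span is all of $\H^2$ wherever $S$ is smooth, so it is not a quaternionic line bundle and certainly not $\im(Q)$. The correct identification --- and the entire content of the paper's proof --- is that $\im(Q)$ is contained in the \emph{second} osculating curve $\hat L_2$, so that the complex dual curve $\hat L^d=\hat L_2^\perp$ sits inside $\bwd L^\perp$ and is its twistor lift. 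This containment is established by a short computation you never perform: writing $\nabla\varphi=\omega_1+\omega_2\jj$ with $\omega_1,\omega_2\in\Omega^1(\hat L_1)$ for $\varphi\in\Gamma(\hat L_1)$ (possible since $\H^2=\hat L_1\oplus\hat L_1\jj$), the identity
\begin{equation*}
  2{*}Q\varphi=\nabla S\,\varphi=\nabla\varphi\,\ii-S\nabla\varphi=2\omega_2\jj\ii
\end{equation*}
exhibits $Q\varphi$ and $Q\varphi\jj$ as $\Omega^1(\hat L_2)$--valued. Without some such step you have no control over where $\im(Q)$ actually lies, and your ``by continuity it must coincide'' conclusion has nothing to latch onto.

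Once $\hat L^d\subset\bwd L^\perp$ is known, the remaining assertions are far more direct than your second and third paragraphs suggest: $\bwd L^\perp$ is the quaternionic span of the globally defined complex holomorphic curve $\hat L^d$, so it (and hence $\bwd L$) extends smoothly through the zeros of $Q$ and the branch points of $L$, and it is twistor holomorphic with twistor lift $\hat L^d$ simply because it is the twistor projection of a complex holomorphic curve (the Friedrich lemma in Section~\ref{sec:twist-holom-curv-1}); the identification of the mean curvature sphere congruence with $-S^*$ then follows from the eigenspace property of $S$ on $\hat L_2$. Your alternative route --- sign--tracking through the B\"acklund formalism and matching eigenspaces of $-S^*$ with ``the annihilator of $\widehat{L_1}$'' --- again confuses $\hat L_1^\perp$ (rank two) with the dual curve $\hat L_2^\perp$ (rank one), and as written does not constitute a proof of any of the three claims in the lemma.
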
  

\begin{proof}
  Since $L$ is twistor holomorphic we have $\nabla S=2{*}Q$, and $\hat L_1\oplus \hat
  L_1{\jj}=\H$ away from the isolated branch points of $L$, by
  Lemma~\ref{L:smooth_S_equiv_E1_not_quaternionic}.  It suffices to
  show that $\image(Q)\subset \hat L_2$, because $\hat L^d=\hat
  L_2^\perp\subset (\H^2,{\ii})^*=(\C^4)^*$. Let $\varphi\in\Gamma(\hat L_1)$
  and $\omega_1,\omega_2\in\Omega^1(\hat L_1)$ such that $\nabla
  \varphi=\omega_1+\omega_2{\jj}$. Then
   \begin{equation*}
    \begin{aligned}
      2{*}Q\varphi &= \nabla S\varphi=\nabla(S\varphi)-S\nabla\varphi
      =\nabla \varphi{\ii} -S\nabla\varphi\\
      &=\omega_1{\ii}+\omega_2{\jj}{\ii}
      -\omega_1{\ii}-\omega_2{\ii}{\jj}=2\omega_2\jj\ii.
    \end{aligned}
  \end{equation*}
  Thus $\image(Q)\subset \hat L_2$, because
  $Q\varphi=*\omega_2\ii\jj=*\omega_1\ii-*\nabla\varphi\ii\in\Omega^1(\hat
  L_2)$ and $Q\varphi\jj=-*\omega_2\ii \in\Omega^1(\hat L_2)$ and $Q\varphi\jj
  \in\Omega^1(\hat L_1)$.
\end{proof}

\subsection{Euclidean minimal twistor holomorphic curves}
Using the characterization of Euclidean minimal curves in
Section~\ref{sec:eucl-minim-curv} we obtain the following characterization of
holomorphic curves in $\HP^1$ that are twistor holomorphic and Euclidean
minimal.

\begin{LemA}\label{lem:eucl_supermin} 
  Let $L\subset \H^2$ be a twistor holomorphic curve in $\HP^1$ that is not
  totally umbilic. Then $L$ is Euclidean minimal in $\H = \HP^1\backslash
  \{\infty\}$ for some $\infty \in \HP^1$ if and only if $\bwd L=\im(Q)$ is
  constant if and only if the twistor lift of $\hat L\subset (\H^2,\ii)$ lies
  in a complex plane. The point $\infty\in\HP^1$ then corresponds to the
  unique $\jj$--invariant line in the plane containing $\hat L$.
\end{LemA}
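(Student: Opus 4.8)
The plan is to prove the two biconditionals in the statement by treating in turn the three conditions: $L$ Euclidean minimal for some $\infty$, $\bwd L=\im(Q)$ constant, and $\hat L$ contained in a complex plane. Throughout, since $L$ is twistor holomorphic we have $A\equiv0$ by Lemma~\ref{lem:twistor_hol_A=0}, hence $\nabla S=2{*}Q$ (Section~\ref{sec:mean-curv-sphere}); and since $L$ is not totally umbilic, $Q\not\equiv0$, so $\bwd L=\im(Q)$ is a quaternionic line bundle away from the isolated zeros of $Q$ and, by Lemma~\ref{L:twistor_projection_of_the_dual}, extends smoothly through those zeros and through the branch points of $L$ to a holomorphic curve in $\HP^1$.

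For the equivalence with Euclidean minimality I would argue as follows. By Section~\ref{sec:willmore_s4}, $L$ is Euclidean minimal in $\HP^1\setminus\{\infty\}$ exactly when the line $\infty=[x]\subset\H^2$ lies on every mean curvature sphere of $L$, i.e.\ when $x\H$ is a constant $S$--invariant line. If this holds, differentiating $Sx\in x\H$ using $\nabla x=0$ gives $2{*}Qx=(\nabla S)x\in[x]$, so $\im({*}Q)\subset[x]$ (as in Section~\ref{sec:eucl-minim-curv}, but now with no $A$--term); since $\bwd L=\im(Q)=\im({*}Q)$ is a line bundle and $Qx\not\equiv0$ (otherwise $\ker Q=\H^2$ generically and $L$ is totally umbilic, using that $L$ meets $\infty$ only at isolated points), we conclude $\bwd L\equiv[x]$. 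Conversely, the Hopf field relations ${*}Q=QS=-SQ$ (Section~\ref{sec:mean-curv-sphere}) show that $\im(Q)$ is pointwise $S$--invariant, so if $\bwd L=\im(Q)$ is a constant line $[x]$ then $Sx\in x\H$ and $L$ is Euclidean minimal with $\infty=[x]$.

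For the equivalence with $\hat L$ lying in a complex plane I would use Lemma~\ref{L:twistor_projection_of_the_dual}: the dual curve $\bwd L^\perp\subset(\H^2)^*$ is holomorphic and its twistor lift is the complex dual curve $\hat L^d=\hat L_2^\perp$ of $\hat L$, where $\hat L_2$ is the second osculating curve. Hence $\bwd L$ is constant iff $\bwd L^\perp$ is constant iff $\hat L^d$ lies in a single fibre of the twistor projection $\CP^3\to\HP^1$, and in particular in a projective line. By biduality of complex holomorphic curves (\cite{GriHa}) the dual curve of a \emph{full} curve in $\CP^3$ is again full, hence not contained in a projective line, so $\hat L$ cannot be full; and the non--umbilicity hypothesis rules out $\hat L$ lying in a projective line (that would make $\hat L_1$ constant, forcing $S$ constant, hence $L$ totally umbilic, or forcing $\hat L_1$ to be $\jj$--invariant, hence $L$ constant). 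Thus $\hat L$ lies in a complex $3$--plane $V\subset(\H^2,\ii)$ and $\hat L_2=V$. Conversely, if $\hat L\subset V$ for a fixed complex $3$--plane $V$, then $\hat L_2=V$, so $\hat L^d=V^\perp$ is a constant complex line and its twistor projection $\bwd L^\perp$, hence $\bwd L$, is constant.

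Finally I would identify $\infty$: combining the above, $\infty=[x]=\bwd L=\im(Q)$, and $\im(Q)\subset\hat L_2=V$ by the computation in the proof of Lemma~\ref{L:twistor_projection_of_the_dual}. So $\infty$ is a quaternionic line in $\H^2$, i.e.\ a $\jj$--invariant complex $2$--plane, contained in the complex $3$--plane $V$; since $V$ has odd complex dimension it is not $\jj$--invariant, so $V\cap V\jj$ is $\jj$--invariant of complex dimension exactly $2$, and every $\jj$--invariant $2$--plane inside $V$ also lies in $V\jj$, hence equals $V\cap V\jj$. Therefore $\infty=V\cap V\jj$ is the unique $\jj$--invariant line in the plane containing $\hat L$. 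The main obstacle in writing this out carefully is the bookkeeping at the degenerate loci — the zeros of $Q$, the branch points of $L$ where $S$ is undefined, and the precise ranks of the osculating curves $\hat L_1$ and $\hat L_2$ — but all the geometric content is supplied by Lemma~\ref{L:twistor_projection_of_the_dual} and the classical theory of osculating curves, so the argument amounts to assembling these and invoking continuity (Riemann's removable singularity theorem) where needed.
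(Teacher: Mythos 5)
Your proof is correct, and it overlaps with the paper's in two of its three legs: like the paper, you get ``$\bwd L$ constant $\Rightarrow$ Euclidean minimal'' from the anti--commutation $SQ=-QS$, and ``$\hat L$ planar $\Rightarrow$ $\bwd L$ constant'' from Lemma~\ref{L:twistor_projection_of_the_dual}. Where you genuinely diverge is in the remaining leg and in the overall logical shape. The paper closes a single cycle of implications (constant $\Rightarrow$ minimal $\Rightarrow$ planar $\Rightarrow$ constant), and its key geometric input for ``minimal $\Rightarrow$ planar'' is synthetic projective geometry: $\infty$ lies on all mean curvature spheres iff the corresponding line in $\CP^3$ meets every tangent line $\widehat{L}_1$, and a space curve admitting a line meeting all its tangents is planar. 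You instead prove the two biconditionals separately: you show ``minimal $\Rightarrow$ constant'' directly via $\im({*}Q)\subset[x]$ (the computation of Section~\ref{sec:eucl-minim-curv} with $A\equiv0$), and you get ``constant $\Rightarrow$ planar'' from the identification $\hat L^d=\hat L_2^\perp$ together with biduality of complex projective curves (the dual of a full curve is full), using the non--umbilicity hypothesis to exclude $\hat L$ lying in a projective line. Your route trades the classical tangent--line lemma for the duality theory of complex curves; it is slightly longer but has the merit of making each equivalence self--contained, and --- unlike the paper's proof, which leaves this implicit --- you actually verify the final assertion that $\infty=\im(Q)\subset\hat L_2=V$ equals the unique $\jj$--invariant line $V\cap V\jj$ of the plane containing $\hat L$. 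Both arguments are sound; the bookkeeping at zeros of $Q$ and branch points that you flag is indeed handled exactly as you say, by Lemma~\ref{L:twistor_projection_of_the_dual} and removable singularities.
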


\begin{proof}
  If $\bwd L$ is constant, than $\infty=\bwd L$ is a point contained in all
  mean curvature spheres (because $S$ anti--commutes with $Q$) and $L$ is
  Euclidean minimal in $\HP^1\backslash \{\infty\}$. The point $\infty$ is
  contained in all mean curvature spheres if and only if the corresponding
  complex line in $P(\H^2,\ii)\cong\CP^3$ intersects all tangent lines $\hat
  L_1$ of $\hat L$.  But this implies that $\hat L$ is planar (a space curve
  admitting a line that intersects all its tangents is always planar).  If
  $\hat L$ is planar, then $\hat L^d$ is constant and
  Lemma~\ref{L:twistor_projection_of_the_dual} implies that $\bwd L$ is
  constant.
\end{proof}

\subsection{Branching divisors of the osculating curves of the twistor lift}
Let $L$ be a twistor holomorphic curve with twistor lift of $\hat L$.  The
following lemma relates the branching divisors of the osculating curves of
$\hat L$ to the zero divisor $\ord(Q)$ of the Hopf field $Q$ of $L$ and the
branching divisors $b(L)$ and $b(\bwd L)$ of $L$ and its backward Bäcklund
transform $\bwd L$ (extended to the whole Riemann surface as in
Lemma~\ref{L:twistor_projection_of_the_dual}).

\begin{LemA}\label{L:branching_of_twistorlift}
  Let $L\subset\H^2$ be a twistor holomorphic curve in $\HP^1$ with
  holomorphic twistor lift $\hat L\subset (\H^2,{\ii})$ for which globally
  $\H^2=\hat L_1\oplus \hat L_1{\jj}$.  Then
  \begin{equation*}
    b(\hat L)=b(L),\qquad b(\hat L_1)=\ord (Q),\qquad b(\hat L_2)=b(\bwd L).
  \end{equation*} 
  The last equality holds if $\hat L_2$ and hence $\bwd L$ is non--constant.
\end{LemA}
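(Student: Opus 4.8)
The plan is to prove the three divisor identities one at a time; the first and the third reduce to comparing the derivative of a twistor holomorphic curve in $\HP^1$ with the derivative of its complex holomorphic twistor lift, and the middle one is a direct computation with the Hopf field $Q$ based on the formula already derived in the proof of Lemma~\ref{L:twistor_projection_of_the_dual}. For $b(\hat L)=b(L)$ I would write $\delta$ and $\hat\delta$ for the derivatives of $L$ and of $\hat L$; by Proposition~\ref{P:Branch_points_of_holomorphic_curves} and its classical complex analogue (cf.\ Appendix~\ref{subsec:branch_points_holomorphic_curves} and \cite{GriHa}) the branching divisors are the zero divisors $b(L)=\ord(\delta)$ and $b(\hat L)=\ord(\hat\delta)$. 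For a local holomorphic section $\varphi$ of $\hat L$ one has $\nabla^{0,1}\varphi\in\Omega^{0,1}(\hat L)$, hence $\delta\varphi$ is the image of $\hat\delta\varphi=\nabla^{1,0}\varphi\bmod\hat L$ under the canonical projection $(\H^2,\ii)/\hat L\to\H^2/L$. Since $\nabla^{1,0}\varphi$ is a section of $\hat L_1$, the section $\hat\delta\varphi$ takes values in the line subbundle $\hat L_1/\hat L$ (tensored with the appropriate canonical bundle), on which that projection is injective, because $\hat L_1\cap\hat L_1\jj=0$ forces $\hat L_1\cap L=\hat L_1\cap(\hat L\oplus\hat L\jj)=\hat L$; as $\hat L$, $\hat L_1$ and $L$ extend smoothly through the branch points, this is an injective smooth bundle homomorphism, and such a map preserves orders of vanishing, so $\ord(\delta)=\ord(\hat\delta)$.

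For the middle identity $b(\hat L_1)=\ord(Q)$ I would first dispose of the trivial case (if $L$ is totally umbilic then $\hat L_1$ is constant and $Q\equiv0$), so assume $Q\not\equiv0$. Choose a local holomorphic frame $\varphi,\tilde\varphi$ of the rank two bundle $\hat L_1$ with $\varphi\in\Gamma(\hat L)$; because $\hat L_1\cap\hat L_1\jj=0$ this is also an $\H$--frame of $\H^2$, and the hypothesis lets me write $\nabla\tilde\varphi=\tilde\omega_1+\tilde\omega_2\jj$ globally with $\tilde\omega_i\in\Omega^1(\hat L_1)$. The computation in the proof of Lemma~\ref{L:twistor_projection_of_the_dual}, together with $\nabla S=2{*}Q$, gives $2{*}Q\varphi=0=2{*}Q(\varphi\jj)$ (consistent with $L=\varphi\H\subset\ker Q$), $2{*}Q\tilde\varphi=2\tilde\omega_2\jj\ii$, and the analogous computation on $\hat L_1\jj$ gives $2{*}Q(\tilde\varphi\jj)=2\tilde\omega_2\ii$. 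Thus, in the frame $\varphi,\tilde\varphi$, the endomorphism valued one--form $Q$ is $\tilde\omega_2$ up to multiplication by the fixed invertible quaternions $\ii$, $\jj$ and the Hodge star, whence $\ord(Q)=\ord(\tilde\omega_2)$. On the other hand the derivative of the first osculating curve $\hat L_1$ vanishes on $\hat L$ and sends $\tilde\varphi\bmod\hat L$ to $\tilde\omega_2\jj$, which under the complex isomorphism $\hat L_1\jj\cong(\H^2,\ii)/\hat L_1$ is precisely that derivative (the second fundamental form of $\hat L$); hence $b(\hat L_1)=\ord(\tilde\omega_2)$ as well.

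For the third identity, assume $\hat L_2$ (equivalently $\hat L$ is full, equivalently $\bwd L$ and $\bwd L^\perp$ are non--constant). By Lemma~\ref{L:twistor_projection_of_the_dual} the curve $\bwd L^\perp\subset(\H^2)^*$ is twistor holomorphic, with twistor lift the complex dual curve $\hat L^d=\hat L_2^\perp$ and mean curvature sphere congruence $-S^*$. By Lemma~\ref{L:smooth_S_equiv_E1_not_quaternionic} the hypothesis $\H^2=\hat L_1\oplus\hat L_1\jj$ says precisely that $S$ extends smoothly through every branch point of $L$, hence that $S$, and therefore $-S^*$, is globally smooth; a second application of Lemma~\ref{L:smooth_S_equiv_E1_not_quaternionic} then shows that $\bwd L^\perp$ satisfies the hypotheses of the present lemma. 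Applying the first identity, already proved, to $\bwd L^\perp$ gives $b(\bwd L^\perp)=b(\widehat{\bwd L^\perp})=b(\hat L_2^\perp)$. Finally $b(\bwd L^\perp)=b(\bwd L)$ because a holomorphic curve in $\HP^1$ and its dual have the same branching divisor (cf.\ Appendix~\ref{sec:left-right-normal} and Proposition~\ref{P:Branch_points_of_holomorphic_curves}), and $b(\hat L_2^\perp)=b(\hat L_2)$ because the biholomorphism $V\mapsto V^\perp$ between $3$--planes in $(\H^2,\ii)$ and lines in its dual identifies the two curves; combining these gives $b(\hat L_2)=b(\bwd L)$.

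The step I expect to be the main obstacle is the bookkeeping in the middle identity: one has to verify that the maps relating the $\End(\H^2)$--valued one--form $Q$ to the scalar second fundamental form $\tilde\omega_2$ of $\hat L_1$ are genuine line bundle (or fibrewise injective) homomorphisms, so that composing with them and with the fixed quaternionic twists changes no order of vanishing, and that this remains valid at the branch points of the osculating curves, where the chosen frames degenerate even though the bundles $\hat L$, $\hat L_1$, $\hat L_2$ and the form $Q$ themselves extend smoothly.
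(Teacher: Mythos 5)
Your proof is correct and follows essentially the same route as the paper's: the hypothesis $\H^2=\hat L_1\oplus\hat L_1\jj$ makes the relevant projection $\hat L_1/\hat L\to\H^2/L$ injective (giving $b(\hat L)=b(L)$, and the third identity by applying this to $\bwd L^\perp$ with twistor lift $\hat L^d$ via Lemma~\ref{L:twistor_projection_of_the_dual}), while the middle identity is read off from the displayed formula $2{*}Q\varphi=2\omega_2\jj\ii$ in the proof of that lemma. The only cosmetic difference is that you verify the hypothesis for $\bwd L^\perp$ indirectly through smoothness of $-S^*$ and Lemma~\ref{L:smooth_S_equiv_E1_not_quaternionic}, where one could also note directly that $(\hat L^d)_1=\hat L_1^\perp$ satisfies $\hat L_1^\perp\cap\hat L_1^\perp\jj=(\hat L_1\oplus\hat L_1\jj)^\perp=0$.
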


\begin{proof}
  Clearly $b_p(\hat L)\leq b_p(L)$ for all $p\in M$ with equality if
  ${\hat{L}}_{1|p}$ is not a quaternionic subspace. Thus $\hat L_1\cap \hat
  L_1{\jj}=\{0\}$ implies $b(\hat L)=b(L)$. The same argument applied to
  $(\bwd L)^\perp$ shows $b(\hat L_2)=b(\bwd L)$, because the complex dual
  $\hat L^d$ of $\hat L$ is the twistor lift of $(\bwd L)^\perp$, by
  Lemma~\ref{L:twistor_projection_of_the_dual}, and $b(\hat L_2)=b(\hat
  L^d)$.  The displayed formula in the proof of
  Lemma~\ref{L:twistor_projection_of_the_dual} and $\H^2=\hat L_1\oplus \hat
  L_1{\jj}$ yield $ b_p(\hat L_1)=
  \min\theset{\ord_p(\nabla\varphi{\ii}-S\nabla\varphi)}{\varphi\in\Gamma(\hat
    L_1)} =\min\theset{\ord_p(Q\varphi)}{\varphi\in\Gamma(\hat
    L_1)}=\ord_p(Q)$.
\end{proof}

\subsection{Superminimal curves in
  $\HP^1$}\label{subsec:superminimal} A superminimal curve is a twistor
holomorphic curve that is minimal with respect to some 4--dimensional space
form subgeometry \cite{Br82,Fr84,Fr88,Fr97}.

As in~\cite{BFLPP02,J03} we use quaternionic Hermitian forms in order to
describe the space form subgeometries of 4--dimensional M\"obius geometry in
the quaternionic projective framework: let $\thespr$ be a non--trivial
quaternionic Hermitian form on $\H^2$ and denote
$I=\theset{[v]\in\HP^1}{\spr{v,v}=0}$ its set of null lines. Depending on
whether $\thespr$ is definite, indefinite, or degenerate, the set $I$ is
empty, a round 3--sphere in $\HP^1$, or a point.

If $\thespr$ is non--degenerate one can define the Riemannian metric
\begin{equation*}
  g_{[x]}(v,w)
  :=\frac4{\spr{x,x}^2}\Re\big(\spr{v(x),w(x)}\spr{x,x}
  -\spr{v(x),x}\spr{x,w(x)}\big)
\end{equation*}
on $\HP^1\setminus I$, where $x\in\H^2$ and $v,w\in
T_{[x]}\HP^1=\Hom([x],\H^2/[x])$. The Riemannian metric is compatible with the
conformal structure on $\HP^1$ and the Riemannian manifold $(\HP^1/I,\pm g)$
is isometric to either $S^4$ or two copies of hyperbolic 4--space depending on
the signature of $\thespr$.

If $\thespr$ is degenerate, the affine chart
$\HP^1\backslash\{\infty\}\rightarrow \H$ of
Section~\ref{sec:ident-hp1-with} induces, uniquely up to scale, a Euclidean
structure on $\HP^1\setminus I\cong S^4\setminus \{\infty \}$.

\begin{LemA}\label{lem:minimal_s_skew}
  A holomorphic curve in $\HP^1$ is minimal with respect to the space form
  geometry defined by a Hermitian form $\thespr$ if and only if its mean
  curvature sphere congruence is skew Hermitian with respect to $\thespr$.
\end{LemA}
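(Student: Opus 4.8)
The plan is to reduce the statement to a single pointwise assertion about totally umbilic $2$--spheres and then verify that assertion in each of the three space form models. First I would observe that both sides are pointwise conditions on the open dense set where $L$ is immersed and $S$ is therefore smooth (Section~\ref{sec:mean-curv-sphere}): the curve $L$ is minimal with respect to the space form metric $\pm g$ of $\thespr$ exactly when its mean curvature vector $\mathcal H$ vanishes identically, while $S$ being skew Hermitian with respect to $\thespr$ means $S_p^\dagger=-S_p$ for all $p$, where $\dagger$ denotes the $\thespr$--adjoint. By the defining property of the mean curvature sphere congruence (Section~\ref{sec:mean-curv-sphere}), for any compatible metric on $S^4=\HP^1$, hence in particular for $\pm g$, the sphere $S_p$ is the \emph{unique} totally umbilic $2$--sphere tangent to $L$ at $L_p$ whose mean curvature vector at $L_p$ equals $\mathcal H(p)$. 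Since a totally geodesic $2$--plane of $(\HP^1\setminus I,\pm g)$ is itself a totally umbilic $2$--sphere with vanishing mean curvature vector, uniqueness gives $\mathcal H(p)=0$ if and only if $S_p$ is that totally geodesic plane, i.e.\ if and only if $S_p$ is totally geodesic. Thus the lemma reduces to the claim: \emph{for $T\in\End(\H^2)$ with $T^2=-\Id$, the sphere $\mathcal S_T=\theset{[v]\in\HP^1}{[Tv]=[v]}$ is totally geodesic in the space form defined by $\thespr$ if and only if $\langle Tv,w\rangle+\langle v,Tw\rangle=0$ for all $v,w\in\H^2$.}

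To prove this claim I would exploit its invariance under the group of projective transformations of $\HP^1$ that preserve $\thespr$ up to a real scalar: this group is the isometry group of $\pm g$ (and the group of similarities of $\H$ in the degenerate case), it preserves both the set of totally geodesic $2$--planes and the set of $\thespr$--skew complex structures, and it acts transitively on totally geodesic $2$--planes. Hence it is enough to normalize $\thespr$ to a standard model --- the degenerate form $\langle v,w\rangle=\bar v_2 w_2$ with $\HP^1\setminus I=\H=\R^4$, the indefinite form $\langle v,w\rangle=\bar v_1 w_2+\bar v_2 w_1$ with hyperbolic $\H^4$, or the definite form $\langle v,w\rangle=\bar v_1 w_1+\bar v_2 w_2$ with the round $S^4$ --- and to check in each that the $\thespr$--skew complex structures are exactly those whose spheres are totally geodesic. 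Writing $T=\tvector{a&b\\c&d}$ with $T^2=-\Id$, the degenerate case is immediate and recovers the formula of Appendix~\ref{sec:mean-curv-sphere-affine}: skewness forces $c=0$ (and then $d\in\Im\H$ automatically from $T^2=-\Id$), and $c=0$ says $Te_1\in e_1\H$, i.e.\ $[e_1]=\infty\in\mathcal S_T$, which is precisely the condition that $\mathcal S_T$ be an affine $2$--plane; equivalently, since the mean curvature sphere of $f=\sigma\circ L$ has lower--left block $-H$, the congruence $S$ is skew if and only if $H\equiv0$, i.e.\ $f$ is Euclidean minimal. For the two non--degenerate models the analogous bare--hands computation identifies the $\thespr$--skew $T$'s with the great $2$--spheres (spherical case) resp.\ the $2$--spheres meeting the ideal boundary $I$ orthogonally (hyperbolic case); alternatively, one checks that the unitary group $\theset{g\in\GL(2,\H)}{g^\dagger g=\Id}$ of $\thespr$ acts transitively on complex structures $T$ with $T^\dagger=-T$, $T^2=-\Id$, so that it suffices to exhibit one totally geodesic sphere that is $\thespr$--skew and invoke transitivity on both sides.

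The routine parts are the Leibniz--type bookkeeping; I expect the main obstacle to be the non--degenerate case of the pointwise claim, i.e.\ matching ``$T$ is $\thespr$--skew'' with the metric description of totally geodesic $2$--planes in $S^4$ and $\H^4$ --- either via the explicit quaternionic matrix computation, or, along the homogeneity route, via verifying that $\theset{g\in\GL(2,\H)}{g^\dagger g=\Id}$ acts transitively on the relevant skew complex structures (with the two connected components of that set matching the two orientations of the totally geodesic sphere). A secondary point requiring care is the behaviour at branch points of $L$ and at points of $M$ mapped into $I$ --- such as the planar ends of a Euclidean minimal sphere --- where one should argue by continuity from the open dense set on which $S$ is smooth and $L$ avoids $I$, using that $S_p$ nonetheless remains a well--defined $2$--sphere in $\HP^1$ there.
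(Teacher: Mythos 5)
Your proposal is correct, and its first half coincides with the paper's: both reduce minimality to the pointwise statement that the mean curvature sphere $S_p$ is totally geodesic, using that $S_p$ is the unique totally umbilic sphere touching $L$ at $L_p$ with the same mean curvature vector (your spelling-out of this via the uniqueness property is exactly what the paper compresses into its first sentence). Where you genuinely diverge is in the second half, the pointwise claim that $\mathcal S_T$ is totally geodesic iff $T$ is $\thespr$--skew. The paper's argument is a one-liner: since $T^2=-\Id$, the induced M\"obius transformation $[T]$ is an involution with fixed point set $\mathcal S_T$, and $\mathcal S_T$ is totally geodesic iff $[T]$ is an isometry of the space form, i.e.\ iff $T$ preserves $\thespr$ up to a positive real scalar, which together with $T^2=-\Id$ forces $T$ skew (and conversely every totally geodesic $2$--sphere is the fixed set of such an isometric reflection, pinning down $T$ up to sign). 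Your route instead normalizes $\thespr$ to the three standard models and either computes directly or argues by transitivity of the unitary group on skew complex structures; your degenerate-case computation ($c=0$, hence $[e_1]=\infty\in\mathcal S_T$, hence $S$ skew iff $H\equiv 0$) is correct and matches the formula of Appendix~\ref{sec:mean-curv-sphere-affine}, and the homogeneity argument in the nondegenerate cases does go through (an orthonormal eigenbasis argument shows the unitary group acts transitively on skew $T$'s with $T^2=-\Id$). What the paper's approach buys is uniformity --- no case distinction and no model computations; what yours buys is explicitness, in particular the concrete identification of the skew congruences with affine planes, great spheres, and spheres orthogonal to the ideal boundary, which is the geometric picture the fixed-point argument leaves implicit. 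The only part of your plan left genuinely unfinished is the nondegenerate verification, but both routes you sketch for it are sound.
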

\begin{proof}
  A surface in a space form is minimal if and only if all its mean curvature
  spheres are totally geodesic.  The 2--sphere described by the eigenlines of
  an endomorphism $S$ of $\H^2$ that squares to $-1$ is totally geodesic if
  and only if the corresponding M\"obius transformation is an isometry of the
  space form defined by $\thespr$ which is equivalent to $S$ being skew
  Hermitian.
\end{proof}

We call a twistor holomorphic curve in $\HP^1$ all of whose mean curvature
spheres are skew with respect to some Hermitian form on $\H^2$
\emph{spherical}, \emph{hyperbolic}, or \emph{Euclidean superminimal}
depending on the type of the Hermitian form. Note that compact superminimal
curves in $\HP^1$ exist in all three cases, although in the hyperbolic and
Euclidean case they go through infinity~$I$.

The $(\jj\C)$--part $\jj\Omega=\frac12(\thespr +\ii\thespr \ii)$ of the
Hermitian form $\thespr$ defines an alternating complex 2--form $\Omega$ on
$(\H^2,\ii)\cong \C^4$.  Push forward with the multiplication by the
quaternion $\jj$ induces real structures on the complex vector spaces
$\Lambda^2(\H^2,\ii)\cong \C^6$ and $\Lambda^4(\H^2,\ii)\cong \C$.  One can
check that, fixing an element in the real part of $\Lambda^4(\H^2,\ii)$, the
wedge product defines a Minkowski product on
$\Re(\Lambda^2(\H^2,\ii))\cong\R^6$.

Because $\Omega$ is a real linear form on $\Lambda^2(\H^2,\ii)$ it can be
realized as the scalar product with a real element $\Omega^\sharp \in
\Re(\Lambda^2(\H^2,\ii))$. One can check that $\Omega^\sharp$
is time like, space like, or light like depending on whether the Hermitian
form is definite, indefinite, or degenerate. Moreover, it can be proven that a
twistor holomorphic curve $L$ in $\HP^1$ is superminimal with respect to the
Hermitian form $\thespr$ on $\H^2$ if and only if the tangent line congruence
$\hat L_1$ of its twistor lift $\hat L$ is polar to $\Omega^\sharp$. 

\begin{ProA}\label{P:superminimal_polar_to_time_space_light_like_vector}
  A holomorphic curve $L\subset \H^2$ is spherical, hyperbolic, or Euclidean
  superminimal if and only if its twistor lift $\hat L\subset (\H^2,\ii)$ is
  holomorphic and its tangent line congruence $\hat L_1\subset
  \Lambda^2(\H^2,\ii)$ is polar to a time like, space like, or light like
  vector in $\Re(\Lambda^2(\H^2,\ii))$.
\end{ProA}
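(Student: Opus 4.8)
The plan is to deduce the proposition from two algebraic facts about a non--trivial quaternionic Hermitian form $\thespr$ on $\H^2$ and an endomorphism $S$ of $\H^2$ with $S^2=-1$: (a) $S$ is skew Hermitian with respect to $\thespr$ if and only if the Pl\"ucker line of the $\ii$--eigenspace of $S$ in $(\H^2,\ii)$ is polar to the vector $\Omega^\sharp\in\Re(\Lambda^2(\H^2,\ii))$ representing the $(\jj\C)$--part $\jj\Omega$ of $\thespr$ (by Lemma~\ref{lem:minimal_s_skew} the skew condition is precisely minimality with respect to the space form geometry defined by $\thespr$); and (b) $\Omega^\sharp$ is time like, space like, or light like according to whether $\thespr$ is definite, indefinite, or degenerate. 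Since a holomorphic curve $L$ is twistor holomorphic exactly when its twistor lift $\hat L$ is complex holomorphic (the lemma of \cite{Fr84} in Section~\ref{sec:twist-holom-curv-1}), and since for such $L$ the tangent line congruence $\widehat{L_1}$ is the $\ii$--eigenbundle of its mean curvature sphere congruence $S$ (Appendix~\ref{sec:twist-holom-curv}), applying (a) and (b) fiberwise reduces the proposition to the observation that $\thespr\mapsto\Omega^\sharp$ is a real--linear bijection from the $6$--dimensional space of quaternionic Hermitian forms on $\H^2$ onto $\Re(\Lambda^2(\H^2,\ii))\cong\R^6$: it is injective because $\thespr$ is recovered from $\Omega$ via the identity $\Omega_0(\cdot,\cdot)=\Omega(j\,\cdot,\cdot)$ relating the $\C$-- and $(\jj\C)$--parts of $\thespr$ (here $j$ is right multiplication by $\jj$), and surjectivity follows by dimension count. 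Hence every time like (resp.\ space like, light like) vector arises as $\Omega^\sharp$ of a definite (resp.\ indefinite, degenerate) form, which gives both implications.

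It remains to prove (a) and (b). For (a), note that $S$ is skew Hermitian with respect to $\thespr$ if and only if, taking $(\jj\C)$--parts, the complex bilinear alternating form $\Omega$ is $S$--skew, $\Omega(S\,\cdot,\cdot)=-\Omega(\cdot,S\,\cdot)$; the corresponding condition on the $\C$--part $\Omega_0$ is then automatic because $\Omega_0(\cdot,\cdot)=\Omega(j\,\cdot,\cdot)$ and $jS=Sj$. Writing $\H^2=V^{+}\oplus V^{-}$ for the $\pm\ii$--eigenspaces of $S$ in $(\H^2,\ii)$, with $V^{-}=jV^{+}$, one evaluates the $S$--skewness on $V^{+}\times V^{+}$, where $S$ acts as multiplication by $\ii$, to get $2\ii\,\Omega|_{V^{+}}=0$, hence $\Omega|_{V^{+}}=0$; the $j$--compatibility $\Omega(j\,\cdot,j\,\cdot)=\overline{\Omega(\cdot,\cdot)}$ (a consequence of the Hermitian symmetry of $\thespr$ together with the alternation of $\Omega$) then yields $\Omega|_{V^{-}}=0$ as well, and conversely $\Omega|_{V^{+}}=\Omega|_{V^{-}}=0$ forces $S$--skewness because the mixed terms automatically match. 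Finally $\Omega|_{V^{+}}=0$ is exactly the polarity of the Pl\"ucker line of $V^{+}$ with $\Omega^\sharp$, since under the pairing $\Lambda^2\times\Lambda^2\to\Lambda^4(\H^2,\ii)$ one has $\langle v_1\wedge v_2,\Omega^\sharp\rangle$ a nonzero multiple of $\Omega(v_1,v_2)$. For (b), one writes a general $\thespr$ as a matrix $\tvector{a&q\\\bar q&b}$ with $a,b\in\R$, $q\in\H$, computes its $(\jj\C)$--part and the corresponding $\Omega^\sharp$ in the real basis $\hat e_1,\dots,\hat e_6$ of $\Lambda^2(\H^2,\ii)$ from Appendix~\ref{sec:weierstrass-representation-and-1-step-bt}, and checks that the Minkowski norm of $\Omega^\sharp$ (for the form given there by $\langle x,y\rangle\,\hat e_5\wedge\hat e_6=x\wedge y$) equals, up to a constant of fixed sign, the discriminant $ab-|q|^2$; completing the square shows $\thespr$ is definite, indefinite, or degenerate according to the sign of $ab-|q|^2$, which proves (b).

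The main obstacle is the bookkeeping in (b): one must fix the real structures on $\Lambda^2(\H^2,\ii)$ and $\Lambda^4(\H^2,\ii)$ induced by multiplication by $\jj$, identify the complex bilinear wedge form with a Minkowski form on the $6$--dimensional real part, and compute $\Omega^\sharp$ explicitly, all with consistent sign conventions; the argument for (a) is shorter but still requires carefully tracking how the quaternionic Hermitian symmetry splits under the decomposition $\H=\C\oplus\jj\C$. One minor point deserves mention: $S$ is defined only away from the branch points of $L$, but this causes no difficulty, since $\widehat{L_1}$ extends holomorphically through the branch points, the polarity condition on it is closed, and ``superminimal'' is by definition a condition imposed on the sphere congruence of a (possibly branched) twistor holomorphic curve wherever that congruence is defined.
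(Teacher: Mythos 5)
Your proposal is correct and follows exactly the route the paper indicates: the paper itself gives no proof of this proposition, merely asserting in the preceding paragraphs that ``one can check'' the causal type of $\Omega^\sharp$ matches the type of $\thespr$ and that superminimality with respect to $\thespr$ is equivalent to polarity of $\hat L_1$ with $\Omega^\sharp$, and your facts (a) and (b) are precisely these two assertions, worked out correctly via the eigenspace decomposition of $S$ and the discriminant computation. The one ingredient you add that the paper's text does not even mention --- the real--linear bijectivity of $\thespr\mapsto\Omega^\sharp$, needed so that an arbitrary polar vector of the given causal type can be realized as the $\Omega^\sharp$ of a Hermitian form of the corresponding type --- is genuinely required for the ``if'' direction and is handled correctly.
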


\begin{RemA}\label{rem:E_isom_to_Ed}
  The 2--form $\Omega$ corresponding to $\Omega^\sharp \in
  \Re(\Lambda^2(\H^2,\ii))$ with $\Omega^\sharp\wedge\Omega^\sharp\neq0$
  induces an isomorphism between the complex vector space $(\H^2,\ii)$ and its
  dual. The tangent line congruence $\hat L_1$ of $\hat L$ is polar to
  $\Omega^\sharp$ if and only if this isomorphism maps $\hat L$ to its complex
  dual $\hat L^d$: take a local holomorphic section $\psi$ of $\hat L$. Then
  $\hat L_1$ is polar to $\Omega^\sharp$ if and only if
  $\Omega(\psi,\psi')=0$. But this implies $\Omega(\psi,\psi'')=0$ and is
  hence equivalent to $\ker(\Omega(\psi,.))=\operatorname{span}_\C\{\psi,
  \psi',\psi''\}$, i.e., to $\hat L$ being self--dual in the sense that $\hat
  L\cong \hat L^d$ with respect to the isomorphism induced by $\Omega$.
  This shows that the twistor lift of a spherical or hyperbolic superminimal
  curve in $\HP^1$ is self dual. 
%   If the underlying Riemann surface $M$ is
%   compact of genus $g$, the complex Plücker formulae \cite[p.~270]{GriHa}
%   thus imply \[2|b(\hat L)| =-|b(\hat L_1)|-2\deg(\hat L)+6(g -1).\]
\end{RemA}

\end{document}